   \def\MR#1{}
\newtheorem{thm}{Theorem}[section]
\newtheorem{prop}[thm]{Proposition}
\newtheorem{lem}[thm]{Lemma}
\newtheorem{q}[thm]{Question}
\newtheorem{cor}[thm]{Corollary}
\newtheorem{claim}[thm]{Claim}
\theoremstyle{definition}
\newtheorem{definition}[thm]{Definition}
\newtheorem{example}[thm]{Example}
\theoremstyle{remark}
\newtheorem{remark}[thm]{Remark}
\numberwithin{equation}{section}
\newcommand{\bQ}{\mathbb{Q}}
\newcommand{\rounddown}[1]{\lfloor{#1}\rfloor}
\newcommand\FF{{\mathcal{F}}}
\newcommand\HH{{\mathcal{H}}}
\newcommand\GG{{\mathcal{G}}}
\newcommand\Sing{{\text{\rm Sing}}}
\newcommand\Nklt{{\rm{Nklt}}}
\newcommand\nklt{{\rm{Nklt}}}
\newcommand\Nlc{{\rm{Nlc}}}
\newcommand\Supp{{\rm{Supp}}}
\newcommand{\bb}[1]{\mathbb{#1}}
\newcommand{\cal}[1]{\mathcal{#1}}
\newcommand{\ff}[1]{\mathfrak{#1}}
\begin{document}

\title
[Applications of the MMP for co-rank 1 foliations on 3-folds]
{Local and global applications of the Minimal Model Program for co-rank one foliations on threefolds}
\date{\today}

\author{Calum Spicer}
\address{Department of Mathematics, Imperial College London, London SW7 2AZ, United Kingdom.}
\email{calum.spicer@imperial.ac.uk}

\author{Roberto Svaldi}
\address{EPFL, SB MATH-GE, MA B1 497 (B\^{a}timent MA), Station 8, CH-1015 Lausanne, Switzerland.}
\email{roberto.svaldi@epfl.ch}

\begin{abstract}
We provide several applications of the minimal model program to the local and global study of co-rank one foliations on threefolds.
Locally, we prove a singular variant of Malgrange's theorem, a classification of terminal foliation singularities
and the existence of separatrices for log canonical singularities.
Globally, we prove termination of flips, a connectedness theorem on lc centres, a non-vanshing theorem 
and some hyperbolicity properties of foliations.
\end{abstract}

\keywords{Foliations, Minimal Model Program, Singularities, Separatrices, Hyperbolicity}
\subjclass[2010]{14E30, 37F75, 32S65}
\maketitle
\tableofcontents

\section*{Introduction}
Our primary goal in this paper is to use techniques and ideas from the foliated minimal model program (MMP) to deduce some
structural and dynamical results for foliation singularities.  Along the way of proving these results we further develop the MMP and explore
some applications of these developments to global properties of foliations.

\subsection*{Local results}

To every foliation singularity the MMP associates a numerical invariant, the discrepancy, which measures how 
the canonical class of the foliation changes under blow ups.

Our local results explore to what extent this numerical invariant
characterizes the structural and dynamical behavior of the foliation singularity.  
Here we are mostly interested in three classes of foliation singularities which are defined according to the behavior of the discrepancy,
namely, terminal, canonical and log canonical, see Definition \ref{defn_sing_of_fmmp}.  Terminal singularities can be viewed as 
the mildest class of singularities
of the MMP whereas log canonical are the most severe. For a two quick illustrations of this principle, 
terminal foliations on smooth surfaces are smooth foliations, and simple singularities (see Definition \ref{can.form.simple}) are canonical,
we refer the reader to Section \ref{ss_comparing_classes_of_sing} for a further 
discussion on the relations and parallels between the classes of singularities of the MMP
and other classes of singularities.

The singularities of the MMP are birational generalizations of nice classes of foliation singularities and are natural from the perspective of the geometry of foliations.
For example, canonical foliation singularities  play a central role in the study of hyperbolicity properties of surfaces by McQuillan, see \cite{McQuillan98, McQuillan08},
and the classification of foliations with trivial canonical bundle, see \cite{LPT11, Druel18}. 
%For example, the class of simple foliation singularities is roughly analogous to smooth
%normal crossings pairs.

We remark that simple singularities (which are roughly analogous to the singularities of smooth normal crossings pairs)
are not preserved by the operations of the MMP, since the underlying variety may become
singular in the course of the MMP.  In other words,
if one seeks to improve the global geometry of the foliation (by making $K_{\cal F}$ more positive) one loses some control
on the local geometry of the foliation.  As a result, canonical singularities
should be viewed as a compromise.  They are flexible enough to allow for the operations of the MMP, but mild enough to have
many of the same desirable properties as simple singularities.

A fundamental result in the study of singular foliations on smooth varieties is a theorem of Malgrange~\cite{Malgrange76} asserting that the classical Frobenius integrability criterion holds even in the presence of foliated singularities, provided that the codimension of the singular locus of the foliation is at least 3.
We prove a version of Malgrange's theorem on singular threefolds.

\begin{thm}
[= Theorem \ref{t_Malgrange}]
\label{Malg.thm.intro}
%[Generalized Malgrange's Theorem for canonical singularities]
Let $P \in X$ be a germ of an isolated (analytically) $\bb Q$-factorial threefold singularity with a co-rank 1 foliation $\cal F$. 
Suppose that 
$\cal F$ has an isolated canonical singularity at $P$.

Then $\cal F$ admits a holomorphic first integral.
\end{thm}

The above statement is close to optimal, cf.~\cite[Examples~1.1-1.3]{CLN08}.

As a consequence of the above theorem, we obtain the following strong classification result.

\begin{thm}[= Theorem \ref{t_3fold_term}]
\label{3foldtermintro}
Let $P \in X$ be a germ of normal threefold with a co-rank 1 foliation $\cal F$ with terminal singularities.
Then $\cal F$ admits a holomorphic first integral.

Moreover, up to a $\bb Z/n\bb Z \times \bb Z/m\bb Z$-cover, $\cal F$ admits a holomorphic first integral $\phi\colon (P\in X) \rightarrow (0 \in \bb C)$, where $\phi^{-1}(0)$ is a Du Val surface singularity and $\phi^{-1}(t)$ is smooth for $t \neq 0$.  In particular, $X$ is terminal.
Moreover, $(X, \cal F)$ fits into the finite list of families contained in Proposition \ref{prop_cart_case}.
\end{thm}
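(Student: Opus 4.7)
The strategy is to derive the classification from the Generalized Malgrange Theorem (Theorem~\ref{Malg.thm.intro}) and to analyze the geometry of the resulting first integral.

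First, I would set up the hypotheses of Theorem~\ref{Malg.thm.intro}. For a terminal co-rank one foliation singularity on a threefold, $\Sing(\cal F)$ is an isolated point: any positive-dimensional component of $\Sing(\cal F)$ could be blown up to extract a foliated exceptional divisor of non-positive discrepancy, contradicting terminality. Replacing $X$ by a small (analytic) $\bb Q$-factorialization if needed, I may assume $P \in X$ is $\bb Q$-factorial while the foliated singularity remains terminal and isolated. Theorem~\ref{Malg.thm.intro} then applies and produces a holomorphic first integral $\phi\colon (P \in X) \to (0 \in \bb C)$.

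The next step is to identify $S := \phi^{-1}(0)$ as a Du Val surface. Since $\Sing(\cal F)$ is isolated, the fibers $\phi^{-1}(t)$ for $t \neq 0$ small are smooth. Any exceptional divisor $E$ over $P$ is necessarily tangent to $\cal F$, so the foliated discrepancy of $E$ is linked---through the conormal sequence associated to the first integral $\phi$---to the log discrepancy of the pair $(X, S_{\mathrm{red}})$ along $E$. Terminality of $\cal F$ thus translates into canonicity of $(X, S_{\mathrm{red}})$, and adjunction along $S_{\mathrm{red}}$ yields that $S$ has only canonical surface singularities, i.e.\ is Du Val. Consequently $X$ is a $\bb Q$-Gorenstein smoothing of a Du Val surface with Cartier central fiber $S$, hence a compound Du Val singularity in the sense of Reid; since $X$ is isolated, this forces $X$ to be terminal.

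To obtain the explicit classification and the $\bb Z/n\bb Z \times \bb Z/m\bb Z$-cover, I would account for the two distinct sources of non-triviality of the first integral. The multiplicity $m$ of $S$ as a scheme-theoretic fiber of $\phi$ is killed by a cyclic $\bb Z/m\bb Z$-cover of the base, after which the central fiber becomes reduced. The remaining cyclic factor $\bb Z/n\bb Z$ arises from the abelianized local fundamental group of the Du Val singularity of the reduced central fiber; the Cartier condition on $S$ together with the existence of the first integral restricts the ADE type enough that only cyclic local fundamental groups actually contribute. On this $\bb Z/n\bb Z \times \bb Z/m\bb Z$-cover the pair $(X, \cal F)$ has become sufficiently standard that a direct case-by-case check, organized by the ADE type of $S$ and the allowed cyclic symmetries, matches $(X, \cal F)$ to one of the families in Proposition~\ref{prop_cart_case}.

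The main technical obstacle is the identification of $S$ as Du Val: the terminal condition on $\cal F$ is phrased in terms of foliated discrepancies computed on a foliated log resolution, whereas Du Val-ness of $S$ is a condition on ordinary discrepancies of a log resolution of $(X, S)$. Bridging these two notions requires carefully tracking how foliated and classical discrepancies compare in the presence of a holomorphic first integral, and this is where the bulk of the technical work would be concentrated.
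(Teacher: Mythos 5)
There is a genuine gap at the very first step, and it is the step the paper's argument is specifically built to handle. First, your claim that $\mathrm{sing}(\cal F)$ is automatically an isolated point for a terminal co-rank one foliation is not justified: by McQuillan's classification, terminal foliation singularities on surface germs are cyclic quotients of smooth foliated points, and such quotients propagate to threefolds along curves, so a terminal $\cal F$ can perfectly well have a one-dimensional singular locus. The paper only gets isolatedness (of $\mathrm{sing}(X)$, and smoothness of $(X,\cal F)$ in codimension $2$) \emph{after} first passing to the index-one cover of $K_{\cal F}$; your blow-up argument for why a positive-dimensional component would violate terminality does not work against these quotient examples. Second, your reduction ``replace $X$ by a small $\bb Q$-factorialization'' is doubly problematic: (a) it presupposes that $K_X$ is $\bb Q$-Cartier (indeed klt), whereas the paper explicitly remarks that a priori only $K_{\cal F}$ is known to be $\bb Q$-Cartier --- the $\bb Q$-Cartierness of $K_X$ is part of the \emph{conclusion} of Corollary \ref{c_term_malg}, not an available hypothesis; and (b) even granting it, the modification $\mu\colon Y\to X$ replaces the one-point germ by a neighborhood of the exceptional curve $\mu^{-1}(P)$, so Theorem \ref{t_Malgrange} only yields local first integrals at each point $Q\in\mu^{-1}(P)$. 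Gluing these into a single first integral that descends to $X$ is the actual content of the paper's proof: it requires showing $Y$ is analytically $\bb Q$-factorial at every exceptional point (Claim \ref{anqfact}), that $\mu^{-1}(P)$ is a simply connected tree of rational curves (Claim \ref{simplyconn}), and then killing the holonomy representation into $\mathrm{Aut}(0\in\bb C)$ using the Mattei--Moussu factorization theorem. None of this appears in your sketch.

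For the second half, your plan to deduce Du Val-ness of the central fibre by comparing foliated and classical discrepancies, and your description of the $\bb Z/n\times\bb Z/m$ cover (one factor reducing the fibre, one from the local fundamental group of the Du Val point), also diverge from the paper, which instead takes the Galois cover making $K_{\cal F}$ and $K_X$ simultaneously Cartier and then invokes Proposition \ref{prop_cart_case} (where the first integral is literally $dt$ on a hypersurface $\{\psi(x,y,z)+tg=0\}$ and the Du Val statement is read off from Reid's classification of smoothings). Your route is not obviously wrong here, but as written it is a programme rather than a proof; the load-bearing steps are the ones identified above.
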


We remark that in Theorem \ref{3foldtermintro} we make no assumption on the singularities of the underlying space other than normality.

We also remark that Theorems \ref{Malg.thm.intro} and \ref{3foldtermintro} 
should be viewed as analogues of the classification of terminal and canonical singularities
on threefolds.  These classification results have been crucial in understanding the global geometry of threefolds, as well as
the moduli space of surfaces, and we expect that the above results to play a corresponding role in the study of
the global geometry of foliations of threefolds and moduli of surface foliations.

We next  prove a result on the existence of separatrices of log canonical foliation singularities. 
Loosely speaking a separatrix may be thought of as a local solution to the differential equation defining the foliation, see Definition 
\ref{defn_sep} for a precise definition.
It is an interesting and challenging problem to decide when a foliation singularity admits a separatrix.
The existence of a (converging) separatrix is an essential element in the study of foliations singularities as 
it provides a way to ``organize" the dynamics around the foliation singularity.
While separatrices do not necessarily exist for a general foliation singularity, 
we prove their existence for log canonical singularities.

\begin{thm}
[= Theorem \ref{t_lc_sep}]
\label{t_lc_sep_intro}
Let $\cal F$ be a germ of a log canonical foliation singularity on $0 \in \bb C^3$.  Then $\cal F$ admits a separatrix.
\end{thm}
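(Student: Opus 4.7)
The plan is to deploy the foliated MMP developed earlier in the paper, together with adjunction down to surfaces and the generalized Malgrange theorem (Theorem \ref{Malg.thm.intro}), in order to produce an $\cal F$-invariant hypersurface germ through $0$.

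First, I would take a foliated log resolution $\pi\colon (Y,\cal G)\to (\bb C^3,\cal F)$ with simple normal crossing exceptional divisors $\sum E_i$, chosen so that $\cal G$ has at worst simple (hence canonical) singularities. Log canonicity of $(\bb C^3,\cal F)$ translates into $a(E_i,\cal F)\ge -\epsilon(E_i)$, with $\epsilon(E_i)\in\{0,1\}$ according to whether $E_i$ is $\cal G$-invariant. I would then run a $K_{\cal G}$-MMP over $\bb C^3$ with scaling; the termination of threefold co-rank one foliated flips, proved earlier in this paper, ensures that this halts on a relative minimal model $\pi'\colon(Y',\cal G')\to \bb C^3$. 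By the negativity lemma, every remaining $\pi'$-exceptional divisor is then a log canonical place of $(\bb C^3,\cal F)$, and non-invariant exceptional divisors of strictly positive discrepancy have been eliminated. If some $\cal G'$-invariant exceptional divisor $E$ has $\pi'(E)$ of dimension two, then $\pi'(E)$ is itself a separatrix and the proof concludes.

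In the remaining case, every $\cal G'$-invariant exceptional divisor contracts to a curve or point in $\bb C^3$ containing $0$. Picking such an invariant $E$, foliated adjunction makes $\cal G'|_E$ a log canonical foliation on the surface $E$, and the classical existence of separatrices for log canonical surface foliation singularities yields a $\cal G'|_E$-invariant curve $C\subset E$ through a singularity of $\cal G'|_E$ that lies above $0$. At each isolated canonical singularity of $\cal G'$ along $C$, I would invoke Theorem \ref{Malg.thm.intro} to produce a holomorphic first integral, whose level set through $C$ is an $\cal G'$-invariant surface germ transverse to $E$; gluing these level sets along $C$ would yield an $\cal G'$-invariant divisor on a neighborhood of $E$ in $Y'$, whose image under $\pi'$ is the desired separatrix of $\cal F$ at $0$.

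The principal obstacle I anticipate is this final extension and gluing: the local invariant surfaces produced by Theorem \ref{Malg.thm.intro} at isolated points of $C$ must fit together consistently into a single $\cal G'$-invariant divisor along $C$, rather than remain a disconnected collection of formal leaves. Overcoming this likely requires an Artin-type approximation argument together with a careful verification that the canonical singularities of $\cal G'$ along $C$ are indeed isolated, so that the hypothesis of Theorem \ref{Malg.thm.intro} genuinely applies; a further subtlety is to arrange the MMP so that it respects the analytic germ structure at $0$ throughout. If the gluing along $C$ failed, I would fall back on iterating the adjunction procedure on the next invariant stratum of the exceptional configuration, using connectedness of the non-klt locus to ensure such a stratum through $0$ always exists.
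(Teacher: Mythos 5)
Your proposal diverges from the paper's argument and has two genuine gaps.

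First, the restriction step is set up on the wrong kind of divisor. For a co-rank one foliation on a threefold, a $\cal G'$-invariant divisor $E$ is tangent to the foliation (generically a leaf closure), so "$\cal G'|_E$" is not a rank one foliation on the surface $E$: foliated adjunction along an invariant divisor (the $\epsilon(S)=0$ case of Lemma \ref{adjunction}) produces a log \emph{pair} $(E,\Theta)$, not a foliated pair, and there is no "log canonical surface foliation singularity" on $E$ to which one could apply the two-dimensional separatrix theorem. (Your preliminary dichotomy is also vacuous: an exceptional divisor never has two-dimensional image.) The paper's key move is the opposite one: because a log canonical but non-canonical singularity is dicritical, the F-dlt modification must extract a divisor $E_0$ \emph{transverse} to the foliation with discrepancy $-1$ (Lemma \ref{l_lc_modification}); restricting to $E_0$ then genuinely yields a rank one foliation $\cal G_0$ with $K_{\cal G_0}+\Delta_0\sim_{\bb Q}0$, i.e.\ of Calabi--Yau type.

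Second, the gluing along $C$, which you correctly identify as the principal obstacle, is not overcome by the tools you invoke. Theorem \ref{Malg.thm.intro} requires an \emph{isolated canonical} foliation singularity at a $\bb Q$-factorial point; along an invariant curve $C$ in the exceptional locus the singular locus of $\cal G'$ is typically one-dimensional and, in the genuinely log canonical case, not canonical, so the hypothesis fails at every point where you would need it, and Artin approximation does not repair the holonomy obstruction to patching local first integrals. The paper avoids this entirely: it uses McQuillan's classification of rank one foliations with $c_1(K)=0$ (Theorem \ref{t_triv_surface_foliation}) to locate an algebraic $\cal G_0$-invariant curve $V\subset E_0$ satisfying the trapping condition $(\star)$ that every separatrix of $\cal G$ through a point of $V$ meets $E_0$ inside $V$, and then applies the extension lemma for non-dicritical foliations (Lemma \ref{l_extension}) to propagate a single local separatrix to a neighborhood of $E_0$ before pushing forward. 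Without an analogue of $(\star)$ and the classification input, your construction produces at best a disconnected family of formal local leaves, not a separatrix at $0$.
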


Our strategy of proof actually provides a more general version of this result allowing the underlying analytic germ to be singular.

In \cite{CC92} it is shown that non-dicritical foliation singularities always admit separatrices, confirming a conjecture of R. Thom.  
Log canonical singularities are in general dicritical and so \cite{CC92} does not apply to prove existence of separatrices for this class of singularities.
Theorem \ref{t_lc_sep_intro} is also closely related to a local analogue of a conjecture of Brunella that has been formulated in \cite{CRVS15} and explored in \cite{CRV15}.

We remark that results analogous to Theorem \ref{Malg.thm.intro} and Theorem \ref{t_lc_sep_intro} have been shown in \cite{CLN08}
and \cite{MolinaSamper19}, respectively, under differing assumptions on the singularities of the foliation and variety.
An advantage of our statements is that they hold for very natural classes of singularities which are satisfied by a wide range of foliations.

Classically, the technique of inversion of adjunction has proven crucial for understanding singularities by providing a precise relation between the singularities
of a variety and the singularities of a divisor in the variety. 
We prove a foliated analogue of this result which should prove equally useful in the study of foliation singularities.

\begin{thm}[= Theorem \ref{t_inversion_of_adjunction}]
Let $X$ be a $\mathbb{Q}$-factorial threefold and let $\FF$ be a co-rank one foliation.
Consider a prime divisor $S$ and an effective $\mathbb{Q}$-divisor $\Delta$ on $X$ which does not contain $S$ in its support.  
Let $\nu\colon S^{\nu}\to S$ be the normalization and let $\cal G$ be the restricted foliation to $S^\nu$
and write $\nu^*(K_{\cal F}+\Delta) =K_{\cal G}+\Theta$.  
Suppose that 
\begin{itemize}
    \item if $S$ is transverse to $\FF$, then $(\GG, \Theta)$ is lc;
    \item if $S$ is $\FF$-invariant, then $(S^\nu, \Theta)$ is lc.
\end{itemize}
Then $(\FF, \epsilon(S)S+ \Delta)$ is lc in a neighborhood of $S$.
\end{thm}

We refer the reader to \S \ref{inv.adj.sect} for a discussion of this result and its relationship to the adjunction formula for lc pairs, cf. \S \ref{adj.sect}.

We now take a moment to explain some of the key ideas of the proofs in the above statements. 
Indeed, our central innovation is the systematic use of F-dlt modifications to study foliation singularities, 
%see Section \ref{ss_fdltmodification}
see Theorem \ref{t_existencefdlt}
for a recollection on the definition and existence of F-dlt modifications which were first shown to exist in \cite{CS18}.  
An F-dlt modification (which is a foliated analogue
of a classical dlt modification) is a special kind of partial resolution which extracts divisors for which the global properties of the foliation restricted to these divisors strongly reflect the local properties of the foliation singularity.
In particular, for dicritical singularities, an F-dlt modification will always extract one exceptional geometric valuation transverse to the foliation.

To prove Theorem \ref{t_lc_sep_intro} we extract, by way of an F-dlt modification, 
an exceptional divisor which is transverse to the foliation (in other words, a dicritical component of the singularity).
Showing the existence of a separatrix is then reduced to producing a global invariant algebraic divisor 
for the restricted foliation on this exceptional divisor.  An adjunction calculation shows
that this restricted foliation has trivial first chern class, and so the existence of an invariant algebraic divisor
is a consequence of the classification of foliations with trivial first chern class.

To prove Theorem \ref{Malg.thm.intro} we provide a precise bound on the singularities of $X$ (we show
that $X$ is klt) by controlling the geometry of the invariant divisors on an F-dlt modification.  
We then show that the singularities of $X$ are mild enough to allow us to prove the existence
of a holomorphic Godbillon-Vey sequence associated to the foliation, \S \ref{godb.sect}, and we may then conclude roughly along the lines
of Malgrange's original proof.

%We hope that these results help convince the reader that the F-dlt modification is a power and user friendly technique 
%with the potential for a wide range of applications.

\subsection*{Global results}

In \cite{Spicer17} and \cite{CS18} much the of minimal model program for rank 2 foliations on threefolds was completed, including a cone and contraction theorem, existence of flips and special termination.  
However, the termination of flips was not proven.  
In this paper, we termination of flips, thereby completing the statement of the MMP for F-dlt pairs.
We refer the reader to Definition \ref{defn_fdlt} for the definition of 
F-dlt singularities, but we emphasize here that they 
are a very large and natural class of foliated singularities; for example, they include pairs 
$(X, \FF)$ such that $X$ is smooth, $\cal F$ has simple singularities.

\begin{thm}[= Theorem \ref{t_term}]
Let $X$ be a $\bb Q$-factorial quasi-projective threefold.
Let $(\cal F, \Delta)$ be an F-dlt pair.
Then starting at $(\cal F, \Delta)$ there is no infinite sequence
of flips.
\end{thm}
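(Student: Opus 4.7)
The plan is to reduce the problem, via special termination, to the case of F-terminal flips, and then to invoke the classification of terminal foliated threefold singularities (Theorem \ref{t_3fold_term}) together with the structural theorem for terminal flips (Theorem \ref{t_term_flip_struct}) to reduce further to termination of an ordinary sequence of classical threefold flips.

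First I would establish special termination in the F-dlt setting. The strategy follows Fujino's classical argument: the restriction of the foliated MMP to an lc centre of $(\mathcal{F},\Delta)$ yields an MMP for a pair of strictly lower dimension, which terminates by induction, using that the rank $1$ and surface cases are already understood \cite{McQuillan08, Brunella00}. The key ingredients are the foliated adjunction formula (cf. \S\ref{adj.sect}) and the fact that F-dlt pairs restrict nicely to their lc centres. The conclusion is that after finitely many flips, the flipping and flipped loci of every subsequent flip are disjoint from $\nklt(\mathcal{F},\Delta)$ and from the strata of $\Delta$, so that $(\mathcal{F},\Delta)$ is F-terminal in a neighbourhood of each remaining flipping locus.

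Second, I would analyse the surviving F-terminal flips. By Theorem \ref{t_3fold_term}, up to a finite abelian cover $\mathcal{F}$ is, near each terminal point, defined by a holomorphic first integral $\phi\colon X\to \mathbb{C}$ with Du Val central fibre and smooth general fibres; by Theorem \ref{t_term_flip_struct} every F-terminal flip is controlled by an ordinary flip of the associated fibration. An infinite sequence of F-terminal flips would thus induce an infinite sequence of threefold flips of the underlying fibration, contradicting Shokurov's classical termination of terminal threefold flips. To make this precise, I would track a Shokurov-type foliated difficulty
$$ d(\mathcal{F},\Delta):= \#\{E \mid E \text{ exceptional prime divisor over } X,\ a(E;\mathcal{F},\Delta)\leq 0,\ E \text{ non-invariant}\} $$
(together with an auxiliary count of lc centres to interface with Step~1), check that it is finite under the F-dlt hypothesis using the finiteness of extractable divisors coming from the terminal classification, and show that it is non-increasing under flips and strictly decreasing along the F-terminal portion of the sequence, as pulled back from the difficulty of the auxiliary fibration.

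The principal obstacle will be the transfer of strict decrease from the classical to the foliated setting. Foliation discrepancies behave asymmetrically for invariant versus transverse exceptional divisors, and the foliated negativity lemma is weaker than its classical counterpart, so the bookkeeping under flips requires care. The explicit local normal form of $\mathcal{F}$ provided by the holomorphic first integral near each terminal point, together with the adjunction formula, is precisely what should allow one to match each F-terminal flip with a terminal threefold flip of the fibration and to pull back the classical strict decrease of difficulty to $(\mathcal{F},\Delta)$, thereby ruling out an infinite tail of F-terminal flips and completing the termination argument.
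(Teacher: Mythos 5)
Your first step is where the argument breaks: special termination (Theorem \ref{t_spter}) only guarantees that, after finitely many flips, the flipping and flipped loci avoid the lc centres of $(\cal F_i,\Delta_i)$, i.e.\ that the pair is \emph{log terminal} in a neighbourhood of each flipping curve --- not F-terminal. For foliations these notions genuinely differ: a non-invariant divisor $E$ over the flipping curve may have discrepancy in $(-1,0]$, which is compatible with log terminality (indeed even with canonicity) without making its centre an lc centre, so such points survive special termination. Consequently the entire second half of your plan --- the classification of terminal singularities (Theorem \ref{t_3fold_term}), the local first integrals, and the structure theorem for terminal flips (Theorem \ref{t_term_flip_struct}) --- does not apply to the surviving flips. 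There is also an ordering issue: those results are established in the paper after, and as applications of, the MMP, so one would at least need to verify their proofs are independent of termination before invoking them here. Finally, even granting terminality, your difficulty function is neither shown to be finite nor shown to decrease, and you yourself identify the transfer of strict decrease as the unresolved obstacle; the authors note in \S\ref{term.flip.sect} that upgrading Theorem \ref{t_term_flip_struct} to a genuine reduction to (semistable) flips of a fibration requires gluing the fibrewise boundaries $\Delta_s$ into a global divisor, which is exactly the point your sketch leaves open.

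The paper's argument after special termination is different and bypasses all of this. For a flipping curve $C$ in the log terminal locus one takes a germ of an invariant surface $S\supset C$ (after passing to a small $\bb Q$-factorialization of an analytic neighbourhood over the base of the flip), and uses the singular Bott partial connection of Lemma \ref{l_connection} on $N_{S/X}$, pulled back to the normalization of $C$ via the reflexive differentials of \cite{GKKP11}, to conclude $S\cdot C=0$ (Lemma \ref{l_zerointersection}). Combined with $(K_{\cal F}+\Delta)\cdot C=(K_X+\Delta+S)\cdot C$ from \cite[Corollary 3.10]{CS18}, this yields $(K_X+\Delta)\cdot C<0$, so every remaining foliated flip is an honest klt threefold flip of $(X_i,\Delta_i)$ and classical termination \cite[Theorem 6.17]{KM98} concludes. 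To salvage your approach you would need either a substitute for $S\cdot C=0$ in the log terminal case or a genuine reduction from log terminal to terminal flipping loci, neither of which your sketch provides.
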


A direct consequence of termination and the work in \cite{CS18} is the following non-vanishing theorem.
%This type of result constitutes the starting point in the study of the foliated version of the Abundance Conjecture for foliated pairs in dimensione higher than 2.

\begin{thm}[= Theorem \ref{t_nonvan}]
Let $\mathcal F$ be a co-rank one foliation on a normal projective $\bb Q$-factorial threefold $X$. 
Let $\Delta$ be a $\bb Q$-divisor such that $(\mathcal F,\Delta)$ is a F-dlt pair. 
Let $A\ge 0$ and $B\ge 0$ be $\bb Q$-divisors such that $\Delta= A+B$ and $A$ is ample. Assume that $K_{\mathcal F}+\Delta$ is 
pseudo-effective 

Then $K_{\mathcal F}+\Delta \sim_{\bb Q} D \geq 0$.
\end{thm}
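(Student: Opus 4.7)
The plan is to combine Theorem \ref{t_term} with the conditional non-vanishing for co-rank one F-dlt pairs already present in \cite{CS18}, whose only missing ingredient was the termination of flips. Concretely, after replacing $A$ by a sufficiently general member of its linear equivalence class (so that no component of $A$ is contained in the exceptional locus of any subsequent MMP step), I would run a $(K_{\mathcal F}+\Delta)$-MMP with scaling of $A$. Each extremal contraction exists by the cone and contraction theorems of \cite{Spicer17, CS18}, each F-dlt flip exists by \cite{CS18}, and termination is precisely Theorem \ref{t_term}. Since $K_{\mathcal F}+\Delta$ is pseudo-effective, the MMP cannot terminate with a Mori fiber space; it therefore produces a birational map $\phi : X \dashrightarrow X'$ onto a $\bb Q$-factorial model on which $\mathcal F' := \phi_* \mathcal F$, $\Delta' := \phi_* \Delta$ form an F-dlt pair with $K_{\mathcal F'}+\Delta'$ nef. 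Writing $\Delta' = A' + B'$ with $A' := \phi_* A$ and $B' := \phi_* B$, the divisor $A'$ remains big throughout (each step is either a divisorial contraction that does not contract a component of $A$, or a small map, and in either case the pushforward of a big $\bb Q$-divisor is big).

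On the minimal model I would then invoke the non-vanishing theorem for co-rank one F-dlt foliated pairs from \cite{CS18}, which applies in the present setting (nef $K_{\mathcal F'}+\Delta'$ on a $\bb Q$-factorial F-dlt model with a big component $A'$ in the boundary) and yields $K_{\mathcal F'}+\Delta' \sim_{\bb Q} D' \geq 0$. To transport $D'$ back to $X$, I would take a common log resolution $p : W \to X$, $q : W \to X'$ of $\phi$ and apply the foliated negativity lemma to the fact that $\phi$ is the output of a $(K_{\mathcal F}+\Delta)$-MMP: this gives $p^*(K_{\mathcal F}+\Delta) = q^*(K_{\mathcal F'}+\Delta') + E$ with $E \geq 0$ and $q$-exceptional. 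Hence $p^*(K_{\mathcal F}+\Delta) \sim_{\bb Q} q^*D' + E \geq 0$, and pushing forward by $p_*$ yields the desired $D \geq 0$ on $X$ with $K_{\mathcal F}+\Delta \sim_{\bb Q} D$.

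The main obstacle, strictly speaking, has already been removed: it was precisely the termination of F-dlt flips that blocked an unconditional non-vanishing statement in \cite{CS18}, and this is what Theorem \ref{t_term} supplies. What remains is a bookkeeping check that the MMP with scaling produces a model meeting exactly the hypotheses of the conditional non-vanishing in \cite{CS18} — most importantly the preservation of bigness of $\phi_* A$ along the MMP and the availability of the foliated negativity lemma for F-dlt pairs on threefolds — and this verification involves no new ideas beyond those already used elsewhere in the paper.
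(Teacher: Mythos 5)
Your proposal follows essentially the same route as the paper: run a $(K_{\mathcal F}+\Delta)$-MMP, invoke Theorem \ref{t_term} for termination, note that pseudo-effectivity rules out a Mori fibre space, and then apply the non-vanishing/semi-ampleness result of \cite{CS18} on the minimal model. The one place where your write-up is imprecise is the claim that the \cite{CS18} result applies on $X'$ because $A' := \phi_*A$ is \emph{big}: the theorem you need (\cite[Theorem 9.4]{CS18}) requires the boundary to contain an \emph{ample} component, and the pushforward of an ample divisor under an MMP is big and nef but not ample in general. The paper repairs exactly this point with Lemma \ref{l_A+B}, which replaces $\phi_*\Delta$ by a $\mathbb{Q}$-linearly equivalent $\Delta' = A'+B'$ with $A'$ genuinely ample and $(\mathcal F',\Delta')$ still F-dlt (a Bertini-type perturbation, not just a pushforward); since non-vanishing is a statement about the $\mathbb{Q}$-linear equivalence class, this loses nothing. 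Your final step — transporting $D'$ back via a common resolution and the negativity lemma applied to the MMP — is a valid alternative to the paper's direct identification of $H^0(X,\mathcal O(m(K_{\mathcal F}+\Delta)))$ with $H^0(X',\mathcal O(m(K_{\mathcal F'}+\Delta')))$; both are standard and neither buys anything over the other here.
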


We then turn our attention to the study of the non-klt centres
of foliations.  One of our central results in this direction is the proof of a foliated analogue of the connectedness of non-klt centres.

\begin{thm}[= Theorem \ref{conn.f-dlt.thm}]
Let $X$ be a projective $\mathbb{Q}$-factorial threefold and let $\FF$ be a rank $2$ foliation on $X$.
Let $(\FF, \Delta)$ be an $F$-dlt pair on $X$.
Assume that $-(K_\FF+\Delta)$ is nef and big.
Then $\nklt(\FF, \Delta)$ is connected.
\end{thm}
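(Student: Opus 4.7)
My strategy will mirror the classical Koll\'ar--Shokurov connectedness theorem, but since Kawamata--Viehweg vanishing has no useful foliated analogue, I would replace it entirely by the foliated MMP together with termination from Theorem \ref{t_term}.

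First, using the existence of F-dlt modifications from \cite{CS18}, I would take a birational morphism $g\colon (Y,\FF_Y)\to (X,\FF)$ with a boundary $\Delta_Y$ such that $(\FF_Y,\Delta_Y)$ is F-dlt, $K_{\FF_Y}+\Delta_Y=g^*(K_\FF+\Delta)$, and every $g$-exceptional prime divisor appears in $\lfloor \Delta_Y\rfloor$ with coefficient $1$. Since $\nklt(\FF_Y,\Delta_Y)=\Supp\lfloor \Delta_Y\rfloor$ and $g$ maps this locus onto $\nklt(\FF,\Delta)$, it suffices to prove that $\lfloor \Delta_Y\rfloor$ is connected.

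Next, for $0<\epsilon\ll 1$, I would set $\Gamma_\epsilon:=\{\Delta_Y\}+(1-\epsilon)\lfloor \Delta_Y\rfloor$, so that $(\FF_Y,\Gamma_\epsilon)$ is F-klt and
\[
K_{\FF_Y}+\Gamma_\epsilon=g^*(K_\FF+\Delta)-\epsilon\lfloor \Delta_Y\rfloor.
\]
Thus $-(K_{\FF_Y}+\Gamma_\epsilon)$ is big, as the sum of a nef and big divisor and an effective one, so $K_{\FF_Y}+\Gamma_\epsilon$ is not pseudo-effective. By Theorem \ref{t_term} I can run the $(K_{\FF_Y}+\Gamma_\epsilon)$-MMP with scaling of a suitable ample, and the run must terminate; by non-pseudo-effectivity the output is a Mori fiber space $\phi\colon Y'\to W$. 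The key observation is that each divisorial contraction or flip can only preserve or merge the connected components of the strict transform of $\lfloor \Delta_Y\rfloor$: exceptional loci of extremal contractions are connected and, even if a component of $\lfloor \Delta_Y\rfloor$ is itself contracted to a lower dimensional locus, that locus remains attached to the other components meeting the exceptional set, while flipping loci are small connected neighbourhoods of centers already lying in $\nklt$. It is thus enough to prove connectedness on $Y'$.

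Finally, on the Mori fiber space $\phi\colon Y'\to W$ with relative Picard rank $1$, $-(K_{\FF_{Y'}}+\Gamma'_\epsilon)$ is $\phi$-ample. Writing $K_{\FF_{Y'}}+\Gamma'_\epsilon=(K_{\FF_{Y'}}+\Delta_{Y'})-\epsilon\lfloor \Delta_{Y'}\rfloor$ and using that the strict transform of $K_{\FF_Y}+\Delta_Y$ remains numerically non-positive on $\phi$-vertical curves (via the negativity lemma applied along each MMP step), I would deduce that $\lfloor \Delta_{Y'}\rfloor$ itself is $\phi$-ample and hence meets every fiber. Connectedness of $W$ then propagates fiberwise connectedness globally, and the fiberwise statement reduces, via the foliated adjunction formula of \S \ref{adj.sect} applied to a component of $\lfloor \Delta_{Y'}\rfloor$, to a lower dimensional connectedness problem that is already known in the classical setting. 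The main obstacle will be the third step: without vanishing, one must carefully use the structure of foliated extremal rays from \cite{Spicer17, CS18} to verify that connected components indeed survive each MMP step, and the base case on the Mori fiber requires a separate bend-and-break-style argument to produce a connecting rational curve should connectedness fail a priori.
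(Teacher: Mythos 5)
Your overall architecture (F-dlt modification, perturb to a klt pair whose log canonical divisor is not pseudo-effective, run the MMP to a Mori fibre space, analyse the end product) matches the skeleton of the paper's proof, but the way you track the non-klt locus through the MMP has the logic reversed, and this is a genuine gap. You assert that each divisorial contraction or flip ``can only preserve or merge the connected components'' of $\lfloor \Delta_Y\rfloor$ and conclude that ``it is thus enough to prove connectedness on $Y'$.'' If merging is allowed, the implication goes the wrong way: two genuinely disjoint components of $\nklt(\FF_Y,\Delta_Y)$ on $Y$ could be glued together by a contraction, so connectedness on the final model $Y'$ tells you nothing about connectedness on $Y$. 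What the argument actually requires is the opposite statement: assuming $\nklt$ is disconnected at the start, the number of connected components \emph{cannot decrease} along the MMP, so the disconnectedness survives to the Mori fibre space, where one derives a contradiction. Proving that components cannot merge is the real content that replaces vanishing. In the paper this is Lemma \ref{connct.lemma.bir.case.rho=1}: for a single $(K_\FF+\Delta')$-negative extremal contraction with $\rho=1$, one shows via foliated adjunction to the contracted divisor that a connecting rational curve $\Sigma_0$ tangent to the restricted foliation would satisfy $(K_{\GG}+\Gamma_E)\cdot\Sigma_0\ge \deg(K_{\Sigma_0}+p_1+p_2)\ge 0$, contradicting negativity; and for flips one uses the discrepancy comparison of Lemma \ref{l_flipdiscrep} to rule out the flipped locus becoming a new lc centre. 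Your proposal contains no substitute for this step, and you yourself flag it as ``the main obstacle,'' so the plan as written does not close.

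There is a second, smaller gap at the Mori fibre space. Knowing that $\lfloor \Delta_{Y'}\rfloor$ is $\phi$-ample and meets every fibre does not yield connectedness: two disjoint horizontal components can each dominate $W$. The paper's endgame instead splits into cases on the fibre dimension of $\phi$: when the fibres are surfaces, $\rho(Y'/W)=1$ forces any two horizontal divisors to intersect; when the fibres are curves, the $\phi$-ampleness of $-(K_{\FF'}+\Delta')$ bounds the number of horizontal non-klt components by one (the invariant components being automatically vertical). Your appeal to ``fiberwise connectedness'' plus adjunction to a component of $\lfloor \Delta_{Y'}\rfloor$ does not address the multi-horizontal-component configuration, which is exactly the configuration one must exclude.
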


One of the fundamental ideas of the foliated MMP is that the negativity of foliated log pairs $(\FF, \Delta)$ 
with mild singularities is governed by the presence of rational curves, see, for example, \cite{Spicer17}.
As a final application we prove a foliated version of the main result of \cite{Svaldi14}
which relates the hyperbolicity of a foliation to an analysis of the log canonical singularities
of a foliation. 
Given a foliated pair $(\FF, \Delta)$ and an lc center $S$ we will denote by $\bar{S} \subset S$ the locally closed subvariety obtained by removing from $S$ the lc centers of $(\FF, \Delta)$ strictly contained in $S$.

\begin{thm}[= Theorem \ref{hyperb.thm}, Foliated Mori hyperbolicity] 
\label{hyperb.thm.intro}
Let $(\FF, \Delta)$ be a foliated log canonical pair on a normal projective threefold $X$. 
Assume that 
  \begin{itemize}
   \item $X$ is klt,
   \item there is no non-constant morphism $f : \mathbb{A}^1 \to X \setminus \nklt(\FF, \Delta)$ which is tangent to $\cal F$, and
   \item for any stratum $S$ of $\nklt(\FF, \Delta)$ there is no non-constant morphism $f : \mathbb{A}^1 \to \bar{S}$ which is tangent to $\FF$.
  \end{itemize}
Then $K_\FF + \Delta$ is nef.
\end{thm}

Finally, we remark that the central idea in the proof of our connectedness and hyperbolicty 
results is a refinement of the technique of F-dlt modifications, see Theorem \ref{t_sp_dlt_mod}, and a careful analysis
of the properties of F-dlt modifications through adjunction.

\subsection*{Acknowledgements}
%\thanks{
We would like to thank Paolo Cascini, Michael McQuillan and Jorge V. Pereira for many valuable conversations, suggestions and comments.
We also thank the referees for many suggestions and improvements to the exposition of the paper.
\newline
This project was started during a visit of CS and RS to FRIAS; we would like to thank FRIAS
and Prof. Stefan Kebekus for their hospitality and the financial support during the course of our visit.
Most of this work was completed during several visits of RS to Imperial College, London. 
RS would like to thank Imperial College for the hospitality and the nice working environment. 
He would also like to thank University of Cambridge and Churchill College, Cambridge where he was a research fellow when part of this work was completed.
\newline
RS was partially supported from the European Union's Horizon 2020 research and innovation programme under the Marie Sk\l{}odowska-Curie grant agreement No. 842071.

\section{Preliminaries}

\subsection*{Notations and conventions}
By the term variety, we will always mean an integral separated %projective\footnote{R: do we really need this?}
scheme over an algebraically closed field $k$. 
Unless otherwise stated, it will be understood that $k = \mathbb{C}$.
\newline
Unless otherwise specified, we adopt the same notations and conventions as in~\cite{KM98}.

A \emph{contraction} is a projective morphism $f\colon X \to Y$ of quasi-projective varieties with $f_\ast \mathcal{O}_X = \mathcal{O}_Z$.
If $X$ is normal, then so is $Z$ and the fibers of $f$ are connected.
A proper birational map $f \colon X \dashrightarrow Y$ of normal quasi-projective varieties is a \emph{birational contraction} if $f^{-1}$ does not contract any divisor.

Given a Weil $\mathbb{R}$-divisor $D$ and a prime divisor $E$ on a normal variety $X$, we will denote by $\mu_E D$ the coefficient of $E$ in D.
If $D$ is an Weil $\mathbb{R}$-divisor on $X$ then for any $c \in \mathbb{R}$ we define 
\[
D^{*c} := \sum_{\mu_{E}D \; \ast \; c} \mu_{E}D \; E,
\]
where $\ast$ is any of $=, \geq, \leq, >, <$.
We define the {\it round down} $\lfloor D \rfloor$ of $D$ to be $\sum_{i=1}^n \lfloor \mu_{P_i} D \rfloor P_i$.
The {\it round up} $\lceil D \rceil$ of $D$ is defined analogously.
The {\it fractional part} $\{D\}$ of $D$ is defined as $\{D\} := D - \lfloor D \rfloor$.

The \textit{support} $\mathrm{Supp}(D)$ of an $\mathbb{R}$-divisor $D$ is the union of the prime divisors appearing in $D$ with non-zero coefficient, $\mathrm{Supp}(D):=\bigcup_{\mu_{E} D\neq 0}E$.

\subsection{Recollection on foliations}
\label{ss_recollectionsonfol}

We refer the reader to \cite{Brunella00} for basic notions in foliation theory. 

A {\bf foliation} on a normal variety $X$ is a coherent subsheaf $\cal F \subset T_X$ such that
\begin{enumerate}
\item $\cal F$ is saturated, i.e. $T_X/\cal F$ is torsion free, and

\item $\cal F$ is closed under Lie bracket.
\end{enumerate}

The {\bf rank} of $\cal F$ is its rank as a sheaf.  Its {\bf co-rank} is its co-rank as a subsheaf
of $T_X$.

Let $X$ be a normal variety and let $\cal F$ be a rank $r$ foliation
on $X$.  
A {\bf canonical divisor} of $\cal F$ is a divisor $K_{\cal F}$
such that $\cal O_X(-K_{\cal F}) \cong \det (\cal F)$.
We define the {\bf normal sheaf} of 
$\cal F$ as $\mathcal N_{\cal F}:= (T_X/\cal F)^{**}$. The {\bf conormal sheaf} $\mathcal N_{\cal F}^*$ of $\cal F$ is the dual of $\cal N_{\cal F}$.
If $\cal F$ is a foliation of co-rank one then, by abuse of notation, we denote by $N^*_{\cal F}$ a divisor associated to $\cal N^*_{\cal F}$. 

We can associate to $\cal F$ a morphism
\[\phi\colon \Omega^{[r]}_X \rightarrow \cal O_X(K_{\cal F})\]
defined by taking the double dual of the  $r$-wedge product of the map $\Omega^{[1]}_X\to \cal F^*$, induced by the inclusion
$\cal F \subset T_X$.  This yields a map
\[
\phi'\colon (\Omega^{[r]}_X \otimes \cal O_X(-K_{\cal F}))^{**} \rightarrow \cal O_X\]
and we define the {\bf singular locus} of $\cal F$,   denoted by $\text{sing}(\cal F)$, to be the cosupport of the image of $\phi'$.

Given a dominant rational map $f\colon Y \dashrightarrow X$ and a foliation $\cal F$ on $X$ we may
pullback $\cal F$ to a foliation on $Y$, denoted $f^{-1}\cal F$.

\begin{remark}
\label{rem_cover_smooth}
If  $q\colon X' \rightarrow X$ is a quasi-\'etale cover
and $\cal F' = q^{-1}\cal F$ then $K_{\cal F'}=q^*K_{\cal F}$ and 
\cite[Proposition 5.13]{Druel18} implies that $\cal F'$ is non-singular if and only if
$\cal F$ is.  In particular, it is not the case that we always have $\text{sing}(X) \subset \text{sing}(\cal F)$.
\end{remark}

\subsection{Invariant subvarieties}
\label{s_invariant}

Let $X$ be a normal variety and let $\cal F$ be a rank $r$ foliation on $X$. 
Let $S\subset X$ be a subvariety. Then  $S$ is said to be  {\bf $\cal F$-invariant}, or {\bf invariant by }$\cal F$, if for any open subset $U\subset X$ and any section $\partial \in H^0(U,\cal F)$, we have that 
\[ \partial (\mathcal I_{S\cap U})\subset \mathcal I_{S\cap U},
\]
where $\mathcal I_{S\cap U}$ denotes the ideal sheaf of $S\cap U$. 
If  $D\subset X$ is a prime divisor then we define $\epsilon(D) = 1$ if $D$ is not $\cal F$-invariant and $\epsilon(D) = 0$ if it is $\cal F$-invariant. 

\subsection{Foliation singularities}
Frequently in birational geometry it is useful to consider pairs $(X, \Delta)$ where $X$ is a normal variety, and $\Delta$ is a $\bb Q$-Weil divisor such that $K_X+\Delta$ is $\bb Q$-Cartier.  
We refer the reader to~\cite[{\S~2}]{KM98} for the relevant definitions and notations on the singularities of pairs.
We will use the following definition that we recall as it is non-standard.
\begin{definition}
\label{def:potent.klt}
A normal variety $X$ is said to be potentially lc (resp. potentially klt) if there exists an effective $\mathbb R$-divisor $D$ on $X$ such that the log pair $(X, D)$ has lc (resp. klt singularities).
\end{definition}

It is possible to define singularities for pairs also in the foliated world, in analogy with the classical case of pairs.
\begin{definition}
A {\bf foliated pair} $(\cal F, \Delta)$ is a pair of a foliation and a $\bb Q$-Weil 
($\bb R$-Weil) divisor
such that $K_{\cal F}+\Delta$ is $\bb Q$-Cartier ($\bb R$-Cartier).
\end{definition}

Note also that we are typically interested only in the cases when $\Delta \geq 0$,
although it simplifies some computations to allow $\Delta$ to have negative coefficients.

Given a birational morphism $\pi: \widetilde{X} \rightarrow X$ 
and a foliated pair $(\cal F, \Delta)$ on $X$ 
let $\widetilde{\cal F}$ be the pulled back foliation on $\tilde{X}$ and $\pi_*^{-1}\Delta$ be the strict
transform.
We can write
\[K_{\widetilde{\cal F}}+\pi_*^{-1}\Delta=
\pi^*(K_{\cal F}+\Delta)+ \sum a(E_i, \cal F, \Delta)E_i.\]

\begin{definition}
\label{defn_sing_of_fmmp}
We say that $(\cal F, \Delta)$ is {\bf terminal, canonical, log terminal, log canonical} if
$a(E_i, \cal F, \Delta) >0$, $\geq 0$,
$> -\epsilon(E_i)$, $\geq -\epsilon(E_i)$, respectively, where
$\epsilon(D) = 0$ if $D$ is invariant and 1 otherwise and where $\pi$
varies across all birational morphisms.

If $(\cal F, \Delta)$ is log terminal and $\lfloor \Delta \rfloor = 0$
we say that $(\cal F, \Delta)$ is {\bf foliated klt}.
\end{definition}

Notice that these notions are well defined, i.e., $\epsilon(E)$ and $a(E, \cal F, \Delta)$
are independent of $\pi$.  We say $a(E, \cal F, \Delta)$ is the discrepancy of $E$ (with respect to $(\cal F, \Delta)$),
or the foliated discrepancy.

Observe that in the case where $\cal F = T_X$ no exceptional divisor
is invariant, i.e., $\epsilon(E)=1$, and so this definition recovers the usual
definitions of (log) terminal, (log) canonical.

We remark that we will be using the terminal, etc. classification to refer to both the singularities of the foliation
and the singularities of the underlying variety.  If necessary we will use the term foliation terminal, etc. to emphasize the fact
that we are talking about the singularities of the foliation rather than the variety.

\begin{definition}
\label{nonklt.loc.def}
Let $(\cal F, \Delta)$ be foliated log pair.
\begin{enumerate}
\item 
We say that $W \subset X$ is a log canonical centre (lc centre) of $(\cal F, \Delta)$ provided $(\cal F, \Delta)$ is log canonical at the generic point of $W$ and there exists some divisor $E$ of discrepancy $-\epsilon(E)$ on some birational model over $X$ such that $E$ dominates $W$.
\item 
The nonklt locus $\nklt(\FF, \Delta)$ of $(\cal F, \Delta)$ is the union of the centres of all divisors divisorial valuations $E$ of discrepancy $\leq -\varepsilon(E)$.
\item
The nonlc locus $\Nlc(\FF, \Delta)$ of $(\cal F, \Delta)$ is the union of the centers of all divisorial valuations $E$ of discrepancy $<-\epsilon(E)$.
\end{enumerate}
\end{definition}

\begin{remark}
\label{nonklt.loc.def.rmk}
\begin{enumerate}
\item 
If $\epsilon(E_i) = 0$ for all exceptional divisors $E_i$
over a centre $W \subset X$, the notions of log canonical and canonical centre coincide for $W$.  
In this case, we will refer to canonical centres as log canonical centres.
\item 
Any $\cal F$-invariant divisor $D$ is an lc centre
of $(\cal F, \Delta)$ since $D$ shows up in $\Delta$ with coefficient at least $0 = \epsilon(D)$.
\\
Moreover, a direct computation shows that any strata of a simple singularity is an lc centre.
\end{enumerate}
\end{remark}

We have the following nice characterization due to \cite[Corollary I.2.2.]{McQuillan08}:

\begin{prop}
\label{prop_term_surf_class}
Let $0 \in X$ be a normal surface germ with a terminal foliation $\cal F$ of rank one.  

Then there exists a cyclic cover
$\sigma\colon  Y \rightarrow X$ such that $Y$ is a smooth surface and  $\sigma^{-1}\cal F$ is a smooth foliation.

In particular, by Remark \ref{rem_cover_smooth}, we have $0 \notin \text{sing}(\cal F)$.
\end{prop}

We emphasize that the above shows that even if $0 \in \text{sing}(X)$ it may be the case
that $0 \notin \text{sing}(\cal F)$.

We will also make use of the class of simple foliation singularities,
see \cite[Appendix: About simple singularities]{Cano}.

\begin{definition}
\label{can.form.simple}
We say that $p \in X$ with $X$ smooth is a {\bf simple singularity} for $\cal F$
provided in formal coordinates $x_1, ..., x_n$ around $p,$ $N^*_{\cal F}$ is generated by a $1$-form which is 
in one of the following two forms, where $1 \leq r \leq n$.

\begin{enumerate}
\item There are $\lambda_i \in \bb C^*$ such that
\begin{equation}
\label{can.form.simple.eqn1}
\omega = (x_1\cdot ... \cdot x_r)(\sum_{i = 1}^r \lambda_i \frac{dx_i}{x_i})
\end{equation}
and if $\sum a_i\lambda_i = 0$ for some non-negative integers $a_i$
then $a_i = 0$ for all $i$.

\item There is an integer $k \leq r$ such that
\begin{equation}
\label{can.form.simple.eqn2}
\omega=(x_1\cdot ... \cdot x_r)(\sum_{i = 1}^kp_i\frac{dx_i}{x_i} + 
\psi(x_1^{p_1}...x_k^{p_k})\sum_{i = 2}^r \lambda_i\frac{dx_i}{x_i})
\end{equation}
where $p_i$ are positive integers, without a common factor, $\psi(s)$
is a series which is not a unit, and $\lambda_i \in \bb C$
and if $\sum a_i\lambda_i = 0$ for some non-negative integers $a_i$
then $a_i = 0$ for all $i$.

\end{enumerate}
%We say the integer $r$ is the {\bf dimension-type} of the singularity.
\end{definition}

By Cano, \cite{Cano}, every foliation on a smooth threefold admits a resolution
by blow ups centred in the singular locus of the foliation
such that the transformed
foliation has only simple singularities.

We recall the definition of non-dicritical foliation singularities,
see \cite[\S 2]{CM92}.

\begin{definition}
\label{defn_ndc}
Given a foliated pair $(X, \cal F)$ we say that 
$\cal F$ has {\bf dicritical} singularities if for 
some $P \in X$ there exists a germ of a surface $P \in S$
such that the restricted foliation $\cal F\vert_S$ has
infinitely many invariant curves passing through $\text{sing}(\cal F) \cap S$.

Otherwise, we say that $\cal F$ has {\bf non-dicritical} singularities.
\end{definition}

\begin{remark}
Observe that non-dicriticality implies that if $W$ is $\cal F$ invariant, then
$\pi^{-1}(W)$ is $\cal F'$ invariant.
\end{remark}

We remark that the above definition is equivalent on threefolds to the characterization appearing in \cite{CS18}, thanks to the existence
of resolution of singularities.
Namely, $\cal F$ has non-dicritical singularities if for any sequence of
blow ups $\pi:(X', \cal F') \rightarrow (X, \cal F)$
and any closed $q \in X$ we have $\pi^{-1}(q)$ is tangent to
the foliation.

\begin{definition}
\label{defn_sep}
Given a germ $0 \in X$ with a foliation $\cal F$
such that $0$ is a singular point for $\cal F$
we call a (formal) hypersurface germ $0 \in S$ a {\bf (formal) separatrix}
if it is invariant under $\cal F$.
\end{definition}

Note that away from the singular locus of $\cal F$
a separatrix is in fact a leaf.  Furthermore being non-dicritical 
implies that there are only finitely
many separatrices through a singular point. The converse of this statement is false.

\begin{definition}
\label{definitionlogsmooth}
Given a normal variety $X$, a co-rank one foliation $\cal F$ and a foliated pair $(\cal F, \Delta)$ we say that $(\cal F, \Delta)$
is {\bf foliated log smooth} provided the following hold:
\begin{enumerate}
\item $(X, \Delta)$ is log smooth.

\item $\cal F$ has simple singularities. 

\item If $S$ is the support of the non-invariant components of $\Delta$ then
for any $p \in X$ if $\Sigma_1, ..., \Sigma_k$ are the separatrices of $\cal F$ at $p$ (formal or otherwise),
then $S \cup \Sigma_1 \cup... \cup \Sigma_k$ is normal
crossings at $p$. 
\end{enumerate}

Given a normal variety $X$, a co-rank one foliation $\cal F$ and a foliated pair $(\cal F, \Delta)$ a 
{\bf foliated log resolution} is a high enough model 
$\pi\colon (Y, \cal G) \rightarrow (X, \cal F)$ so that $(Y, \pi_*^{-1}\Delta+\sum_i E_i)$ is foliated log smooth where 
the $E_i$ are all the $\pi$-exceptional divisors.

Such a resolution on threefolds is known to exist by \cite{Cano}
\end{definition}

We recall the class of F-dlt singularities introduced in \cite[Definition 3.6]{CS18}.

\begin{definition}
\label{defn_fdlt}
Let $X$ be a normal variety and let $\cal F$ be a co-rank one foliation on $X$.
Suppose that $K_{\cal F}+\Delta$ is $\bb Q$-Cartier.

We say $(\cal F, \Delta)$ is {\bf foliated divisorial log terminal (F-dlt)}
if 
\begin{enumerate}
\item each irreducible component of $\Delta$ is transverse to $\mathcal F$ and has coefficient at most 1, and
\item there exists a foliated log resolution $(Y, \cal G)$ of $(\cal F, \Delta)$ which only
extracts divisors $E$ of discrepancy $>-\epsilon(E)$.
\end{enumerate}
\end{definition}

In the case of surfaces F-dlt singularities have a particularly simple characterization.

\begin{lem}
\label{lem_Fdlt_surface}
Let $X$ be a normal surface and let $\cal F$ be a co-rank one foliation on $X$.
Suppose that $K_{\cal F}$ is $\bb Q$-Cartier and that $\cal F$ is F-dlt.
Then for all $P \in X$ one of the following holds:
\begin{enumerate}
\item $\cal F$ is terminal at $P$; or

\item $X$ is smooth at $P$ and $\cal F$ has a simple singularity at $P$.
\end{enumerate}
In particular, if $K_{\cal F}$ is Cartier then $X$ is smooth.
\end{lem}
\begin{proof}
Our dichotomy is a direct consequence of \cite[Lemma 3.8]{CS18}.
Our second claim follows from Proposition \ref{prop_term_surf_class} by observing that if $P \in X$ is terminal for $\cal F$
and $K_{\cal F}$ is Cartier near $P$ then $X$ is smooth.
\end{proof}

\subsection{Pulling back 1-forms}

In \S~\ref{sect.malgr}, we will need the following result.

\begin{prop}
\label{c_pullbackvanishing}
Let $P \in X$ be an isolated potentially klt singularity and let $\mu \colon \tilde{X} \rightarrow X$ be
a resolution of singularities of $X$ and let $E$ be an irreducible $\mu$-exceptional divisor.  
Let $\omega \in \Omega^{[1]}_X$
and let $\widetilde{\omega} := d_{\textrm{refl}}\mu(\omega)$ and let $\widetilde{\omega}_E$ be the restriction
of $\widetilde{\omega}$ to $E$.  Then $\widetilde{\omega}_E = 0$. 
\end{prop}

\begin{proof}
This is a straightforward consequence of the existence of pull-back for differential forms on potentially klt varieties, cf.~\cite[Theorem~1.2]{Kebekus13}
\end{proof}

\subsection{Singularities of $X$ vs. singularities of $\cal F$}
The following is \cite[Theorem 11.3]{CS18}.  Because we will refer to it frequently we include it here.

\begin{thm}
\label{t_canimpliesnondicritical}
Let $(\cal F, \Delta)$ be a foliated pair on a quasi-projective threefold $X$.
Assume that either
\begin{enumerate}

\item $(\cal F, \Delta)$ is F-dlt or

\item $(\cal F, \Delta)$ is canonical.

\end{enumerate}

Then $\cal F$ has non-dicritical singularities.
Furthermore, if $(\cal F, \Delta)$ is F-dlt and $K_X$ is $\bb Q$-Cartier then $X$ is klt.
\end{thm}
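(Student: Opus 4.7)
The plan is to reduce the proof of non-dicriticality to a direct local computation for simple foliation singularities, and then bootstrap to the general case using the foliated log resolution provided by the F-dlt hypothesis (or, in the canonical case, via an F-dlt modification). For the klt assertion I would combine the identity relating $K_X$, $K_{\cal F}$, and the conormal sheaf $N^*_{\cal F}$ with the F-dlt bound on discrepancies.

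First I would establish that foliations with simple singularities (as in Definition \ref{can.form.simple}) are non-dicritical: working in the formal coordinates in which the defining 1-form $\omega$ takes one of the canonical forms, a direct computation shows that the blowup of a singular point produces an $\cal F$-invariant exceptional divisor. For type (1) with $\omega=(x_1\cdots x_r)(\sum \lambda_i dx_i/x_i)$ on a threefold, one verifies that in the standard affine chart $\mu^*\omega=t^{2}\omega'$ after factoring out the exceptional equation, and $\omega'\wedge dt$ is divisible by $t$, so $\{t=0\}$ is $\cal F$-invariant; type (2) is handled analogously, and since iterated blowups at singular points of the transformed foliation stay within the simple singularity class, non-dicriticality propagates. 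For the F-dlt case, let $\mu\colon Y\to X$ be the foliated log resolution guaranteed by the hypothesis, so that $\cal F_Y$ has simple singularities and each $\mu$-exceptional $E_i$ satisfies $a(E_i,\cal F,\Delta)>-\epsilon(E_i)$. Assume for contradiction that $\cal F$ is dicritical at some $q\in X$ and realize the dicritical behavior by a prime divisor $E$ on a higher model $\pi\colon X'\to X$ with $\pi(E)=\{q\}$ and $E$ non-invariant; passing to a common resolution $Z$ of $X'$ and $Y$, the center $T$ of $E$ on $Y$ sits in $\mu^{-1}(q)$. Three cases arise: if $T$ is a point, the non-dicriticality of $\cal F_Y$ established above forces invariance, contradicting non-invariance of $E$; if $T$ coincides with some $\mu$-exceptional divisor $E_i$, then $E_i$ would itself be a non-invariant divisor over $q$, but a coordinate analysis of $\mu^*\omega$ along $E_i$ shows that in this boundary situation $a(E_i,\cal F,\Delta)=-1$, violating the strict F-dlt inequality; if $T$ is a curve, a similar analysis exploiting both the foliated log smooth condition (Definition \ref{definitionlogsmooth}(3)) and the simple singularity structure of $\cal F_Y$ along $T$ delivers the desired invariance. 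The canonical case is then deduced by passing to an F-dlt modification via Theorem \ref{t_existencefdlt}.

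For the klt assertion I use the Weil divisor identity $K_X=K_{\cal F}+N^*_{\cal F}$. For any birational $\sigma\colon Z\to X$ and any $\sigma$-exceptional prime $E$, comparing orders of vanishing on both sides gives $a(E,X)=a(E,\cal F)+m(E)$, where $m(E)\geq 0$ is the order along $E$ of the pullback of a local generator of $N^*_{\cal F}$ (nonnegative by saturation of the conormal sheaf). The F-dlt condition, extended to all exceptional divisors via the canonical (nonnegative-discrepancy) nature of simple foliation singularities together with composition of discrepancies, yields $a(E,\cal F)>-\epsilon(E)\geq -1$, hence $a(E,X)>-1$ for every exceptional $E$, proving that $X$ is klt. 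The main obstacle I anticipate lies in the divisorial and curvilinear cases of the contradiction argument for non-dicriticality: ruling them out requires a delicate local computation that leverages the strict inequality built into F-dlt (rather than mere log canonicity), which is precisely what excludes the boundary dicritical behavior exemplified by radial-type foliations, which are log canonical but not F-dlt.
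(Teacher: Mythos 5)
First, a caveat: the paper does not actually prove this statement — it is quoted from \cite{CS18} (``The following is proven in \cite{CS18}. Because we will refer to it frequently we include it here.''), so there is no in-paper proof to compare against and I can only assess your argument on its own terms.

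Your overall shape (reduce to simple singularities via a foliated log resolution, descend non-dicriticality, and deduce klt-ness of $X$ from $K_X=K_{\cal F}+c_1(N^*_{\cal F})$) is reasonable, but the step you yourself flag as the main obstacle contains a genuine error. You claim that when the dicritical behaviour is witnessed by a non-invariant exceptional divisor over a point, a coordinate analysis gives $a(E,\cal F,\Delta)=-1$, contradicting the strict F-dlt inequality. This is false for a single divisor: take $\omega=x\,dy-y\,dx$ on $\bb C^3$. Blowing up the origin, $\pi^*\omega=t^2\,du$ with $u=y/x$, so the saturated transform is $du$, the exceptional divisor $E$ is \emph{not} invariant (its fibres are transverse to the leaves $u=\mathrm{const}$), yet $a(E,\cal F)=2-2=0>-1$. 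So dicriticality at the origin is witnessed by a divisor whose discrepancy does not violate F-dlt; the violation only appears for the divisor over the singular curve $\{x=y=0\}$, which has discrepancy exactly $-1$ and which does \emph{not} map to a point — so it also escapes your initial reduction ``realize the dicritical behaviour by a prime divisor $E$ with $\pi(E)=\{q\}$'' (the definition of non-dicriticality also forbids non-invariant divisors dominating a curve with non-tangent fibres). The upshot is that the contradiction cannot be extracted divisor-by-divisor; one must track the whole resolution process (Cano's algorithm, whose centres lie in the singular locus) and show that resolving a genuinely dicritical singularity forces \emph{some} extracted divisor to have discrepancy $\le-\epsilon(E)$. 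That inductive discrepancy bookkeeping is the real content of the lemma in \cite{CS18} and is missing from your argument.

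Two further points. In the canonical case you invoke Theorem \ref{t_existencefdlt}, but F-dlt modifications are themselves constructed in \cite{CS18} using non-dicriticality, so this risks circularity; the safer route is to apply Cano's resolution directly, noting that canonicity gives $a(E)\ge 0>-\epsilon(E)$ for every non-invariant exceptional divisor of that resolution. For the klt assertion, the identity $a(E,X)=a(E,\cal F)+\mathrm{ord}_E(\pi^*\omega)$ is correct, but klt-ness requires a bound for \emph{every} divisor over $X$, whereas F-dlt only constrains the divisors extracted by one chosen resolution; extending the bound to all divisors (and handling non-invariant divisors with $a(E,\cal F,\Delta)=-1$, where you would additionally need $\mathrm{ord}_E(\pi^*\omega)\ge 1$) again requires the non-dicriticality and a discrepancy analysis over foliated log smooth models, which is precisely the part not yet established.
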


We also have the following comparison of singularities result, which is a slight modification of \cite[Lemma 3.16]{CS18}.

\begin{lem}
\label{lem_fdlt_implies_dlt}
Let $X$ be a $\mathbb Q$-factorial threefold and let $\cal F$ be a co-rank one foliation. 
Suppose that $(\cal F, \Delta)$ is F-dlt.
Then $(X, \Delta)$ is dlt.
\end{lem}
\begin{proof}
Let $\pi\colon X' \rightarrow X$ be a foliated log resolution of $(\cal F, \Delta)$
which only extracts divisors of foliation discrepancy $>-\epsilon(E)$.
Observe that a foliated log resolution $\pi\colon X' \rightarrow X$ of $(\cal F, \Delta)$ 
is a log resolution of $(X, \Delta)$.  
By Theorem \ref{t_canimpliesnondicritical}, $\cal F$ has non-dicritical singularities, thus, we may apply \cite[Lemma 8.14]{Spicer17} to conclude that $\pi$ only extracts divisors of discrepancy $>-1$ with respect to $K_X+\Delta$, as required.
\end{proof}

\subsection{Extending separatrices} 
We recall the following extension of separatrices result.

\begin{lem}\label{l_formalseparatrix}
Let $X$ be a normal quasi-projective threefold.  Let $\cal F$ be a co-rank one foliation on $X$
with non-dicritical singularities. Let $V \subset X$ be a subvariety tangent to $\cal F$,
let $q \in V$ be any point
and let $S_q$ be a separatrix at $q$.

Then there exists an analytic open neighborhood $U$ of $V$ in $X$ and an analytic divisor $S$ on $U$
which contains $S_q$ near $q$. 
\end{lem}
\begin{proof}
This is proven in  \cite[Lemma 3.5]{CS18} (see also \cite[\S 5.1]{Spicer17}).
% where $S_q$ is merely assumed to
%be a formal separatrix.  If $S_q$ is in fact convergent then the arguments used there produce a convergent extension.
We remark that this is a slight extension of the techniques and ideas utilized in \cite[\S IV]{CC92}.
\end{proof}

\subsection{Special termination}

We recall the following theorem, \cite[Theorem 7.1]{CS18}:

\begin{thm}[Special Termination]\label{t_spter}
Let $X$ be a $\bb Q$-factorial quasi-projective threefold.
Let $(\cal F, \Delta)$ be an F-dlt pair. 
Suppose $(\cal F_i, \Delta_i)$ is an infinite sequence of $(K_{\cal F_i}+\Delta_i)$-flips.
Then after finitely many flips, the flipping and flipped locus is disjoint from the lc centres
of $(\cal F_i, \Delta_i)$.
In particular, $(\cal F_i, \Delta_i)$ is log terminal in a neighborhood of each flipping curve.
\end{thm}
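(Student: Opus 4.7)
The plan is to combine the monotonicity of discrepancies under flips, given by Lemma \ref{l_flipdiscrep}, with an induction on the codimension of lc centers via foliated adjunction. First, for any valuation $E$ over $X$, the sequence $a(E,\cal F_i,\Delta_i)$ is non-decreasing, and strictly increases whenever the flipping or flipped contraction is not an isomorphism over the generic point of the center of $E$ in the base $Z_i$ of the $i$-th flip. Since $(\cal F,\Delta)$ is F-dlt, only finitely many exceptional valuations sit at discrepancy $-\epsilon(E)$ on a fixed birational model, and once a discrepancy strictly exceeds this threshold it cannot return. Hence, after finitely many flips, every lc place that is exceptional over $X_i$ has its center on $X_i$ disjoint from the flipping locus; equivalently, every lc center of $(\cal F_i,\Delta_i)$ that is strictly contained in a divisorial lc place's center is disjoint from the flipping and flipped loci.

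For the divisorial lc centers themselves, I would argue by induction on their natural stratification using foliated adjunction. Let $S$ be a divisorial lc center on $X_i$, pass to the normalization $S^\nu$, and apply the adjunction formula to write $(K_{\cal F_i}+\Delta_i)|_{S^\nu}=K_{\GG_i}+\Theta_i$. When $\epsilon(S)=0$, so that $S$ is $\cal F$-invariant and $(S^\nu,\Theta_i)$ is an ordinary log canonical surface pair, any flipping curve meeting $S$ must lie in $S$ because flipping curves are tangent to $\cal F$; the flip then induces a genuine flip of $(S^\nu,\Theta_i)$ by the Negativity Lemma (Lemma \ref{NegLemma}) applied on $S^\nu$, and termination of surface log MMP bounds how many flips can have their flipping curve inside such an invariant $S$. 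When $\epsilon(S)=1$, the restriction is a foliated log canonical pair on $S^\nu$ with $\GG_i$ of rank one, and a flipping curve contained in $S$ similarly produces a flip of $(\GG_i,\Theta_i)$, controlled by termination of flips for rank-one foliated log canonical surface pairs. Iterating this restriction over the full stratification of the non-klt locus, one concludes that the flipping and flipped loci are eventually disjoint from every lc center. The final clause on log terminality near flipping curves then follows because F-dlt singularities are log terminal away from the non-klt locus.

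The main obstacle I anticipate is the case $\epsilon(S)=1$ with a flipping curve $C$ meeting $S$ transversely at a single point, where $C\not\subset S$ and so the flip does not directly induce a curve modification on $S^\nu$; this transverse case must be ruled out by an independent argument, likely combining the strict inequality of Lemma \ref{l_flipdiscrep} at an exceptional valuation $E$ over the intersection point with the behavior of the different contribution of $C\cdot S$ to $\Theta_i$ under the flip. A secondary delicate point is verifying that the strict discrepancy inequality on $X_i$ actually descends to a strict inequality on $S^\nu$, so that the induced modification on the surface is a non-trivial flip rather than an isomorphism; this requires careful bookkeeping of the foliated different, which is the main technical cost of deploying adjunction in this foliated setting.
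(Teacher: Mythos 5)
First, a point of bookkeeping: the paper does not prove Theorem \ref{t_spter} at all --- it is recalled from \cite{CS18} --- so there is no in-paper argument to compare against; your proposal must be measured against the Shokurov--Fujino special-termination template that \cite{CS18} adapts, and which is indeed the strategy you are following. As written, though, there are two genuine gaps. The first is in your opening step: the inequality of Lemma \ref{l_flipdiscrep} is strict only when the flip fails to be an isomorphism over the generic point of the centre of $E$ in the base of the flipping contraction. For a positive-dimensional lc centre $W$ this happens only when $W$ is \emph{contained in} the flipping (or flipped) locus, not merely when it meets it. So the counting argument yields that after finitely many flips no lc centre is contained in the flipping or flipped locus; it does not yield the disjointness you assert at the end of that paragraph. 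Disjointness for the zero- and one-dimensional centres has to be extracted from the adjunction step, and cannot be assumed when you set that step up.

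The second gap is the one you flag yourself: a flipping curve $C$ meeting a divisorial lc centre $S$ with $C \not\subset S$. This is not a peripheral case to be ``ruled out by an independent argument''; it is the heart of special termination. In that situation $S\cdot C>0$, hence $S^+\cdot C^+<0$ for every flipped curve, so the flipped curves lie in $S^+$ and the induced map $S^\nu \dashrightarrow (S^+)^\nu$ \emph{extracts} divisors --- it is not a flip of the restricted pair, and there is no theory of foliated surface flips to appeal to in any case. What one actually does is track the coefficients of the different $\Theta_i$ under these induced maps: monotonicity of discrepancies upstairs forces the coefficients to be non-increasing and confined to a finite (DCC) set determined by $\Delta$, which bounds the number of steps at which the induced surface map can extract a divisor or fail to be an isomorphism in codimension one; only after that is one reduced to a termination statement on the surface. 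Your ``secondary delicate point'' about descending strictness to $S^\nu$ is part of this same bookkeeping. Without carrying it out, the proof is incomplete exactly where the difficulty of the theorem lies.
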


\subsection{MMP with scaling} \label{scaling.ssect}

A version of the MMP with scaling was proven in \cite[\S 10]{CS18}, however, for our purposes we will need the MMP with scaling in a slightly different form than presented there.  Here we briefly explain the necessary adjustments.

We recall the following lemma proven in \cite[Lemma 3.27]{CS18}

\begin{lem}\label{l_A+B}
Let $X$ be a normal projective $\bb Q$-factorial threefold and let $\mathcal F$ be a co-rank one foliation on $X$. 
Let $\Delta=A+B$ be a $\bb Q$-divisor such that $(\mathcal F,\Delta)$ is a F-dlt pair, $A\ge 0$ is an ample $\bb Q$-divisor and $B\ge 0$. Let $\varphi\colon X\dashrightarrow X'$ be a sequence of steps 
of the $(K_{\mathcal F}+\Delta)$-MMP and let $\mathcal F'$ be the induced foliation on $X'$. 

Then, there exist $\bb Q$-divisors $A'\ge 0$ and $C'\ge 0$ on $X'$ such that 
\begin{enumerate}
\item $\varphi_* A \sim_{\bb Q}  A'+C'$,
\item $A'$ is ample, and 
\item if $\Delta':=A'+C'+\varphi_*B$ then $\Delta'\sim_{\bb Q} \varphi_*\Delta$ and $(\mathcal F',\Delta')$ is F-dlt. 
  \end{enumerate}
\end{lem}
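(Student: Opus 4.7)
The plan is to proceed by induction on the number of steps of the MMP $\varphi\colon X\dashrightarrow X'$, so it suffices to establish the statement for a single step $\psi\colon X\dashrightarrow X_1$, where $\psi$ is either a divisorial contraction or a flip of a $(K_{\mathcal F}+\Delta)$-negative extremal ray. The key input is that discrepancies do not decrease under MMP steps (cf.\ Lemma \ref{l_flipdiscrep} and the Negativity Lemma), so F-dlt is preserved: if $(\mathcal F,\Delta_0)$ is F-dlt on $X$ with $\Delta_0\sim_{\bb Q}\Delta$, then the induced $(\mathcal F_1,\psi_*\Delta_0)$ is F-dlt on $X_1$.

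First I would perturb $A$ on $X$. Fix $0<\varepsilon\ll 1$, choose $m$ sufficiently divisible so that $|mA|$ is base-point-free, and take a general element of $|m(1-\varepsilon)A|$ divided by $m$ to obtain a general effective $\bb Q$-divisor $H\sim_{\bb Q}(1-\varepsilon)A$. Then $A\sim_{\bb Q}\varepsilon A+H$ with $\varepsilon A$ ample and $H$ general, so by a Bertini-type choice the pair $(\mathcal F,\varepsilon A+H+B)$ is still F-dlt on $X$ and is $\bb Q$-linearly equivalent to $\Delta$. Since $\psi$ is therefore also a step of the $(K_{\mathcal F}+\varepsilon A+H+B)$-MMP, the preservation of F-dlt yields that $(\mathcal F_1,\varepsilon\psi_*A+\psi_*H+\psi_*B)$ is F-dlt on $X_1$.

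Next, since $A$ is ample on $X$ and $\psi^{-1}$ is a birational contraction (it extracts no divisors), $\psi_*A$ is big on $X_1$. Fix any ample $\bb Q$-divisor $H_1$ on $X_1$ and take $0<\delta\ll\varepsilon$: the $\bb Q$-divisor $\varepsilon\psi_*A-\delta H_1$ remains big and, by Kodaira's lemma, is $\bb Q$-linearly equivalent to an effective divisor. I would choose a general representative $E'$ in (a multiple of) its $\bb Q$-linear system, small enough in coefficients and in general position with respect to the lc centres of $(\mathcal F_1,\psi_*H+\psi_*B)$. Setting $A':=\delta H_1$ and $C':=E'+\psi_*H$, one has
\[A'+C'=\delta H_1+E'+\psi_*H\sim_{\bb Q}\varepsilon\psi_*A+\psi_*H=\psi_*A,\]
with $A'$ ample and $C'\geq 0$, and $(\mathcal F_1,A'+C'+\psi_*B)$ is F-dlt by the generic choice of $E'$, while $A'+C'+\psi_*B\sim_{\bb Q}\psi_*\Delta$, as desired.

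The main obstacle I anticipate is the genericity required to ensure that F-dlt is preserved when replacing $\bb Q$-linearly equivalent divisors by general representatives (for both $H$ on $X$ and $E'$ on $X_1$). For usual log pairs this is a routine Bertini argument, but in the foliated setting one needs the general member to be transverse to the foliation on its smooth locus and to avoid the relevant F-dlt centres; this requires a Bertini-type statement adapted to foliations, combined with shrinking $\varepsilon$ and $\delta$ so that the coefficients of the newly introduced components stay strictly below $1$ and are transverse to $\mathcal F_1$.
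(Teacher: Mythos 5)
The paper does not actually prove this statement; it is quoted verbatim from \cite[Lemma 3.14]{CS18}, so I am judging your argument against what a complete proof requires. Your skeleton (induction on the number of steps, Kodaira's lemma applied to the big divisor $\psi_*A$, a perturbation to restore an ample summand, and preservation of F-dlt under $(K_{\mathcal F}+\Delta)$-trivial replacements) is the standard route and the arithmetic $\delta H_1+E'+\psi_*H\sim_{\bb Q}\psi_*A$ is fine. The first perturbation on $X$ (replacing $A$ by $\varepsilon A+H$) does no real work and could be dropped.

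The genuine gap is the sentence ``I would choose a general representative $E'$ \dots in general position with respect to the lc centres.'' The divisor $\varepsilon\psi_*A-\delta H_1$ is big but not semiample, and a general member of a big $\bb Q$-linear system cannot avoid its stable base locus. In fact this base locus is forced to be nonempty in exactly the delicate place: if $\psi$ is a flip with flipped curve $C^+$, then since $A$ is ample and $\rho(X/Z)=1$ one has $A\equiv_{g}-c(K_{\mathcal F}+\Delta)$ with $c>0$, hence $\psi_*A\cdot C^+<0$, so \emph{every} effective representative of $\varepsilon\psi_*A-\delta H_1$ contains $C^+$ in its support (and similarly for the image of a contracted divisor when $\psi_*A$ is non-positive there). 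So $E'$ is not in general position, no Bertini-type statement can make it so, and your final claim that $(\mathcal F_1,A'+C'+\psi_*B)$ is F-dlt ``by the generic choice of $E'$'' is unjustified: if the forced locus met an lc centre, adding any positive multiple of $E'$ would destroy log canonicity. To close the gap you need the extra input that $\mathrm{Exc}(\psi^{-1})$ contains no lc centre of $(\mathcal F_1,\psi_*\Delta)$ — this follows from the strict inequality in Lemma \ref{l_flipdiscrep} applied to divisors whose centre on $Z$ lies in the flipped locus — together with writing $E'=\varepsilon E''$ for a \emph{fixed} effective $E''\sim_{\bb Q}\psi_*A-(\delta/\varepsilon)H_1$ and checking discrepancies on a single foliated log resolution of all the data, so that the discrepancy defect is linear in $\varepsilon$ and stays above $-\epsilon(E)$ for $\varepsilon$ small. (Your ``small enough in coefficients'' also only makes sense after this rescaling: an effective representative of $\varepsilon\psi_*A-\delta H_1$ need not have small coefficients.) The residual genericity needed away from $\mathrm{Exc}(\psi^{-1})$ is then the routine foliated Bertini you flagged, but that was not where the difficulty lay.
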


\subsubsection{Running the MMP with scaling}
\label{MMP_scaling_sect}

Let $X$ be a projective $\bb Q$-factorial threefold and let $\cal F$ be a co-rank 1 foliation on $X$.
Let $\Delta = A+B$ be a $\bb Q$-divisor where $A \geq 0$ is ample and $B \geq 0$ so that $(\cal F, \Delta)$ is a F-dlt pair.
Let $H$ be a divisor on $X$
so that $K_{\cal F}+\Delta+H$ is nef.  In practice we will often take $H$ to be some sufficiently ample divisor on $X$.

Let $\lambda = \inf \{t >0 : K_{\cal F}+\Delta+tH \text{ is nef}\}$.  By \cite[Lemma 9.2]{CS18}
there exists a $K_{\cal F}+\Delta$-negative extremal ray $R$ such that $(K_{\cal F}+\Delta+\lambda H)\cdot R = 0$.
Let $\phi\colon X \dashrightarrow X'$ be the contraction or flip associated to $R$.  

If $\phi$ is a fibre type contraction
the MMP terminates and so we may assume that $\phi$ is a divisorial contraction or a flip.
Let $\cal F'$ be the strict transform of $\cal F$, let $\Delta' = \phi_*\Delta$ and let $H' = \phi_*H$.
By Lemma \ref{l_A+B} we may find $\Theta\sim_{\bb Q}\Delta'$ so that $(\cal F', \Theta)$ is F-dlt
and $\Theta = A'+B'$ where $A' \geq 0$ is ample and $B' \geq 0$.  Thus we are free to again apply 
\cite[Lemma 9.2]{CS18}.
to $K_{\cal F'}+\Delta'+\lambda H'$ and letting
$\lambda' = \inf \{t >0 : K_{\cal F'}+\Delta'+tH' \text{ is nef}\}$ we see that
$\lambda' \leq \lambda$ and there exists 
a $K_{\cal F'}+\Delta'$-negative extremal ray $R'$ with $(K_{\cal F'}+\Delta'+\lambda'H')\cdot R' = 0$.  We are therefore
free to continue this process.

Setting $X_0:=X$,  $\cal F_0 := \cal F$, $\Delta_0 := \Delta$ and $H_0:=H$ we may produce
a sequence $\phi_i\colon X_i \dashrightarrow X_{i+1}$
of $K_{\cal F_i}+\Delta_i$ divisorial contractions and flips contracting an extremal ray $R_i$ and rational numbers $\lambda_i$
such that $K_{\cal F_i}+\Delta_i+\lambda_iH_i$ is nef and
 $(K_{\cal F_i}+\Delta_i+\lambda_iH_i)\cdot R_i = 0$, where $\cal F_i, \Delta_i, H_i$ are the strict transforms
of $\cal F_0, \Delta_0, H_0$.

Moreover we have that $\lambda_i \geq \lambda_{i+1}$, and that $R_i\cdot H_i >0$ for all $i$.  Assuming the relevant termination
of flips we see that this MMP terminates in either a Mori fibre space or a model where $K_{\cal F_i}+\Delta_i$ is nef.
We call this process the MMP of $(\cal F, \Delta)$ (or $K_{\cal F}+\Delta$) with scaling of $H$.

\subsection{(Pre) simple vs. (Log) canonical}
\label{ss_comparing_classes_of_sing}

We now briefly discuss some relations and parallels between the classes of singularities
defined by the MMP and some of the other classes of singularities described above.

Intuitively, for a given singularity, the smaller its discrepancy the more severe the singularity is.  
So terminal singularities are, in this sense, the mildest kind of singularities appearing in the MMP.
Indeed, terminal singularities on smooth surfaces are in fact smooth foliated points, although this equivalence fails in higher dimensions,
as we will see in Theorem \ref{t_3fold_term}.

We observe that simple singularities are both non-dicritical and canonical,
however canonical singularities are in general not simple, as the following example shows.

\begin{example}
\label{ex_can_not_sim}
The foliation on $\bb C^2$ defined by the vector field
$x\frac{\partial}{\partial x} + (x+y)\frac{\partial}{\partial y}$ has canonical singularities (which may be verified since
 a single blow up resolves the singularities of the foliation to simple singularities, and this blow up has discrepancy $=0$).  
 However, since both its eigenvalues are
 positive integers it is not a simple singularity.
 \end{example}
 
We remark that the above example also shows that canonical singularities are not in general F-dlt singularities.
On the other hand, Theorem~\ref{t_canimpliesnondicritical} shows that canonical and F-dlt singularities are non-dicritical.

Consider a germ of a vector field $\partial$ on $\bb C^2$ and 
suppose that $\partial$ is singular at $0$ and let $\ff m$ be the maximal ideal ideal at $0$.  
We get an induced linear map $\partial\colon \mathfrak{m}/\mathfrak{m}^2 \rightarrow \mathfrak{m}/\mathfrak{m}^2$
which is non-nilpotent if and only if the foliation generated by $\partial$ is log canonical, see \cite[Fact I.ii.4]{mp13}.
To our knowledge there is no similar criterion for characterizing log canonical foliations of rank $\geq 2$.

We refer to \cite[Definition 3]{Cano}
for the definition of presimple singularities.  The difference between simple and presimple is (roughly) the additional requirement of
a non-resonance condition on the eigenvalues of the foliation.  For instance, 
$x\frac{\partial}{\partial x}+y\frac{\partial}{\partial y}$ defines a foliation on $\bb C^2$ with
a presimple but not
simple singularity.  A single blow up of this foliation has discrepancy $=-1$ and resolves this foliation to 
a smooth foliation which shows that this foliation has a log canonical but not canonical singularity.

With this example in mind it might be useful to view the relation between simple and presimple singularities as analogous to
the relation between canonical and log canonical singularities.
However, we do not know if every presimple singularity is log canonical.  It also does not seem to be the case
that a canonical singularity is a log canonical singularity which satisfies a particular resonance condition, in light of 
Example \ref{ex_can_not_sim}.

We observe that on a smooth threefold a log canonical singularity which is non-dicritical is necessarily 
canonical.  Indeed, by \cite{Cano} and our assumption of non-dicriticality we may find a resolution
of singularities of $\cal F$ call it $\pi\colon X' \rightarrow X$, which only extracts $\cal F' := \pi^{-1}\cal F$ invariant divisors.
If $E$ is any $\pi$-exceptional divisor then $a(E, \cal F) \geq -\epsilon(E)$ by log canonicity and so $a(E, \cal F) \geq 0$
since $E$ is invariant and we may conclude that $\cal F$ has in fact canonical singularities.

We recall that the class of simple singularities is stable under blow ups contained in strata of the singular locus, however,
it is important to realize that a canonical singularity may not remain canonical after a blow up in the singular locus.
In fact, it is a subtle problem to decide when the blow up of a canonical singularity remains canonical.

We emphasize that in contrast to (pre)simple singularities the notion of (log) canonical singularities 
makes sense on singular varieties.  Take for instance the foliation on $\bb C^2$ generated by the vector field 
$\partial = x\frac{\partial}{\partial x} - y \frac{\partial}{\partial y}$.  This defines a canonical foliation singularity.  If we let $X = \bb C^2/(x, y) \sim (-x, -y)$
then $X$ is singular and note that $\partial$ descends to a vector field on $X$ which still defines a canonical singularity.

\section{Termination}
\label{term_sect}

Our goal in this section is to show the following:

\begin{thm}[Termination]
\label{t_term}
Let $X$ be a $\bb Q$-factorial quasi-projective threefold and
let $(\cal F, \Delta)$ be a F-dlt pair.
Then starting at $(\cal F, \Delta)$ there is no infinite sequence
of flips.
\end{thm}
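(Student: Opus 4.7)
The plan is to invoke the special termination theorem (Theorem \ref{t_spter}) to reduce to the situation where, after finitely many flips, the flipping/flipped loci of every subsequent flip avoid the lc centres of $(\cal F_i,\Delta_i)$. After this point we may assume that for every $i$ the pair $(\cal F_i,\Delta_i)$ is foliation log terminal in an analytic neighbourhood of the flipping curve $C_i \subset X_i$. By Theorem \ref{t_canimpliesnondicritical}, $X_i$ is klt and $\cal F_i$ is non-dicritical along $C_i$, and by the adjunction machinery from \S\ref{adj.sect} combined with the structure of simple singularities, one can produce a germ of an $\cal F_i$-invariant divisor $S_i \supset C_i$: intuitively, because $\cal F_i$ is non-dicritical and the flipping curve must be tangent to $\cal F_i$, every separatrix through a point of $C_i$ extends along $C_i$ to form such an $S_i$.

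Once such an invariant surface germ $S=S_i$ is available, the key point is to prove the intersection identity $S\cdot C = 0$. The idea is that log terminal foliation singularities of this kind are étale-locally quotients of smooth foliations on smooth varieties (cf. the terminal surface classification and its three-fold analogue obtained from Proposition \ref{prop_cart_case}). On such a quotient, Bott's partial connection on the normal bundle $N_{\cal F}$ extends across the singularities, and since $S$ is $\cal F$-invariant the restriction $N_{\cal F}|_S$ carries a flat partial connection along the leaves. Restricting this connection to $C$ (which is tangent to $\cal F|_S$) shows that $N_{\cal F}|_C$ is flat, hence of degree $0$. But $N_{\cal F}|_S = \cal O_X(S)|_S$ up to a class supported at the singularities of $\cal F$ on $S$, which contribute nothing to $S\cdot C$ because these singularities are isolated and the flipping curve avoids the deeper lc strata. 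Thus $S\cdot C=0$.

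With $S\cdot C=0$ in hand, the computation from \cite[Corollary~3.10, Lemma~3.16]{Spicer17}/\cite{CS18} for foliated adjunction along an invariant divisor gives
\[
K_{\cal F}\cdot C \;=\; (K_X+S)\cdot C \;-\; S\cdot C \;=\; (K_X+S)\cdot C,
\]
so $K_{\cal F}\cdot C<0$ is equivalent to $(K_X+S)\cdot C<0$. The same identity applies on the flipped side with the strict transform of $S$, and because $S$ is $\cal F$-invariant it is not contracted by the small map, so the flip of $(X,\cal F)$ along $R=\bb R_{\ge 0}[C]$ coincides, on a neighbourhood of $C$, with the $(K_X+S)$-flip along the same ray. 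Thus the $K_{\cal F}+\Delta$-flips in the tail of our sequence are honest $(K_X+S)$-flips for varying germs of invariant surfaces $S_i$, and we conclude by termination of threefold $K_X$-flips (applied to the klt pair $(X_i,S_i)$ after a small perturbation, or by the stronger termination of log-terminal threefold flips with scaling of an ample divisor, which applies because the $S_i$ have uniformly bounded multiplicity along $C_i$).

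The main obstacle is the second paragraph: extending Bott's partial connection through the mildly singular locus of $(\cal F,S)$ and turning this into the numerical identity $S\cdot C=0$. Verifying that the local étale quotient picture from the terminal/log-terminal classification really patches to give a global flat partial connection on $N_{\cal F}|_C$, and checking that the contributions from isolated foliation singularities on $S$ along $C$ vanish, is where the work is concentrated; everything else is either invoked from the cited results or is a direct consequence of adjunction and of termination of standard threefold flips.
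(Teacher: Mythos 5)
Your proposal follows essentially the same route as the paper: special termination reduces to log terminal flips, a germ of an invariant surface $S\supset C$ is produced via non-dicriticality, the extended Bott partial connection on $N_{S/X}$ (after a cyclic cover making $S$ and $K_{\cal F}$ Cartier, plus the reflexive pullback of differentials to the normalization of $C$) gives $S\cdot C=0$, and the adjunction identity then shows each foliated flip is a flip for the log terminal pair $(X_i,\Delta_i)$, so classical threefold termination applies. The only slip is at the very end: since the $S_i$ are only analytic germs they cannot enter a global pair, and the correct conclusion (which your computation already yields) is simply that $(K_{X_i}+\Delta_i)\cdot C<0$, i.e.\ the flip is an honest $(K_{X_i}+\Delta_i)$-flip.
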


Together with the existence of flips, \cite[Theorem 6.4]{CS18}, and divisorial contractions, \cite[Theorem 6.7]{CS18}, this has the following immediate corollary
(whose proof is identical to the proof for the corresponding statement for varieties).

\begin{cor}
\label{cor_MMP}
Let $X$ be a projective $\bb Q$-factorial threefold and let $(\cal F, \Delta)$ be an F-dlt pair.
Then there is birational contraction $f:X \dashrightarrow Y$ (which may be factored as a sequence of flips and divisorial contractions)
such that if $\cal G$ is the transformed foliation then either
\begin{enumerate}
\item $K_{\cal G}+f_*\Delta$ is nef; or

\item there is a fibration $g:Y \rightarrow Z$ such that $-(K_{\cal G}+f_*\Delta)$ is $g$-ample and the fibres
of $g$ are tangent to $\cal G$.
\end{enumerate}
We call such a contraction a $(\cal F, \Delta)$ or $K_{\cal F}+\Delta$-MMP.
\end{cor}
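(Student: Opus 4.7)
The plan is to run the $(K_{\cal F}+\Delta)$-MMP directly, now that the last missing ingredient, termination of flips, has been established in Theorem \ref{t_term}. The other building blocks are already in the literature: the cone and contraction theorem for co-rank one foliations on threefolds from \cite{Spicer17}, and the existence of flips for F-dlt pairs proven in \cite{CS18}. So what is really needed here is only a careful bookkeeping argument showing that the MMP terminates and that the resulting birational map is a birational contraction as claimed.

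Starting from $(\cal F_0,\Delta_0):=(\cal F,\Delta)$ on $X_0:=X$, I would inductively define a sequence $\phi_i\colon X_i\dashrightarrow X_{i+1}$ as follows. If $K_{\cal F_i}+\Delta_i$ is nef then the process stops in case $(1)$. Otherwise, by the cone theorem, there exists a $(K_{\cal F_i}+\Delta_i)$-negative extremal ray $R_i\subset\overline{NE}(X_i)$, and by the contraction theorem there exists a contraction $c_i\colon X_i\to Z_i$ of $R_i$. Three cases arise:
\begin{itemize}
\item[(a)] If $\dim Z_i<\dim X_i$, then $c_i$ is a Mori fibre space, and by \cite[Lemma 8.7]{Spicer17} every curve contracted by $c_i$ is tangent to $\cal F_i$; thus the general fibre of $c_i$ is tangent to $\cal F_i$, and the process stops in case $(2)$ with $(Y,\cal G,g)=(X_i,\cal F_i,c_i)$.
\item[(b)] If $c_i$ is a divisorial contraction, then $Z_i$ is $\mathbb{Q}$-factorial, $\rho(Z_i)=\rho(X_i)-1$, and setting $X_{i+1}:=Z_i$, $\cal F_{i+1}:=c_{i\ast}\cal F_i$, $\Delta_{i+1}:=c_{i\ast}\Delta_i$, the pair $(\cal F_{i+1},\Delta_{i+1})$ is again F-dlt.
\item[(c)] If $c_i$ is a flipping contraction, invoke \cite{CS18} to obtain the flip $\phi_i\colon X_i\dashrightarrow X_{i+1}$ and let $(\cal F_{i+1},\Delta_{i+1})$ be the transformed F-dlt pair.
\end{itemize}

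Termination of this procedure is then immediate: divisorial contractions strictly decrease the Picard number and can therefore only occur finitely many times, while an infinite tail of flips is ruled out by Theorem \ref{t_term}. Hence the process must stop after finitely many steps, in one of the two cases $(1)$ or $(2)$. The composite map $f\colon X\dashrightarrow Y$ is by construction a sequence of divisorial contractions and flips, and in particular $f^{-1}$ does not contract any divisor, so $f$ is a birational contraction in the sense stated.

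I do not anticipate a substantive obstacle in writing this out, as every ingredient is now in place; the only mild points to verify are that the F-dlt property is preserved under each step of the MMP (standard, using the negativity of the ray together with \cite[Lemma 3.11]{CS18} and Lemma \ref{l_flipdiscrep}) and that the tangency of the general fibre in case (a) indeed follows from the fact that the contracted curves are tangent to the foliation, which is exactly \cite[Lemma 8.7]{Spicer17}.
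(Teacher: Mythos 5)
Your proposal is correct and is exactly the argument the paper has in mind: the corollary is presented there as an immediate consequence of the cone/contraction theorem and existence of flips from \cite{CS18} together with Theorem \ref{t_term}, with termination following from the finiteness of divisorial contractions (Picard number drop) and the absence of infinite flip sequences. The bookkeeping you outline, including preservation of the F-dlt condition and tangency of the fibres in the Mori fibre space case via \cite[Lemma 8.7]{Spicer17}, matches the intended (unwritten) proof.
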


We will also frequently need to run the relative MMP.  The relative MMP can be deduced from the absolute MMP via
standard arguments, see for instance \cite[\S 3.6-7]{KM98}.
%which we explain here.

\begin{cor}
\label{cor_Rel_MMP}
Let $X$ be a $\bb Q$-factorial quasi-projective threefold and let $(\cal F, \Delta)$ be an F-dlt pair.
Let $p:X \rightarrow S$ be a surjective projective morphism.  Then there is a birational contraction
$f:X \dashrightarrow Y/S$ (which may be factored as a sequence of flips and divisorial contractions)
 such that if $\cal G$ is the transformed foliation  
and if $q:Y \rightarrow S$ is the structure map either
\begin{enumerate}
\item $K_{\cal G}+f_*\Delta$ is $q$-nef; or

\item there is a fibration $g:Y \rightarrow Z/S$ such that $-(K_{\cal G}+f_*\Delta)$ is $g$-ample and the fibres
of $g$ are tangent to $\cal G$.
\end{enumerate}
\end{cor}

We call the contraction $f \colon X \dashrightarrow Y/S$ constrcuted in the above corollary a $(\cal F, \Delta)$-MMP or $(K_{\cal F}+\Delta)$-MMP over $S$. 
In case (1) of the above statement, we then say that $Y$ (or, alternatively, $(\GG, f_\ast \Delta)$) is a minimal model of $X$ (or, alternatively, $(\FF, \Delta)$) over $S$;
in case (2), instead, we say that that $Y$ (or, alternatively, $(\GG, f_\ast \Delta)$) is a Mori fibre space for $X$ (or, alternatively, $(\FF, \Delta)$) over $S$.

Corollary~\ref{cor_Rel_MMP} immediately implies 
the following extension of the existence of F-dlt modification to the relative setting, cf.~\cite[Theorem 8.1]{CS18}.

\begin{thm}[Existence of F-dlt modifications]
\label{t_existencefdlt}
Let $\cal F$ be a co-rank one foliation on a 
normal quasi-projective threefold $X$.
Let $(\cal F, \Delta)$ be 
foliated pair.

Then there exists a birational morphism $\pi\colon Y \rightarrow X$ which only extracts divisors $E$ of foliation discrepancy $=-\epsilon(E)$ such that if we write $K_{\cal G}+\Gamma+F = \pi^*(K_{\cal F}+\Delta)$, where $\Gamma = \pi_*^{-1}\Delta+\sum_i\epsilon(E_i)E_i$ and where $E_i$ are the $\pi$-exceptional divisors,
then $F$ is an effective $\pi$-exceptional $\mathbb R$-divisor  and $(\cal G, \Gamma)$ is F-dlt.

Furthermore, we may choose $(Y, \cal G)$ so that 
\begin{enumerate}

\item 
if $W$ is an lc centre of $(\cal G, \Gamma)$ then $W$ is contained in a codimension one lc centre of $(\cal G, \Gamma)$,

\item 
$Y$ is $\bb Q$-factorial, and

\item 
$Y$ is klt.
\end{enumerate}
We call such a modification an F-dlt modification.
\end{thm}

\begin{proof}
The proof is analogous to that of~\cite[Theorem 8.1]{CS18}.
In particular, it suffices to consider a log resolution $\pi_1 \colon Y_1 \to X$ of $(\FF, \Delta)$ and then run the $K_{\FF_1}+\Gamma$-MMP relatively over $X$, where $\FF_1$ is the transform of $\FF$ on $Y_1$ and $\Gamma:=\pi_{1\ast}^{-1}\Delta+ \sum\varepsilon(E)E$, where the sum is taken over the prime $\pi_1$-exceptional divisors.
The relatively minimal model produced by this MMP is the desired modification $(\GG, \Gamma)$.
\end{proof}

\begin{remark}
\label{rmk:nonklt.F-dlt.mod}
We use the notation of Theorem~\ref{t_existencefdlt}. 
\begin{enumerate}
\item 
The equality $K_{\cal G}+\Gamma+F = \pi^*(K_{\cal F}+\Delta)$ implies that $\nklt(\FF, \Delta)=\pi(\nklt(\GG, \Gamma+
F))$ and $\Nlc(\FF, \Delta)=\pi(\Nlc(\GG, \Gamma+F))$.
Moreover, as $(\GG, \Gamma)$ is F-dlt, then $\Nlc(\GG, \Gamma+F))={\rm Supp}(F)$.
\item 
If the foliated log pair $(\mathcal F, \Delta)$ is lc, then the previous part of the remark implies that $F=0$ and $K_\mathcal{G}+\Gamma=\pi^\ast(K_\FF+\Delta)$.
Moreover, property $(1)$ in the statement of Theorem~\ref{t_existencefdlt} implies that $\nklt(\GG, \Gamma)$ is the union of all codimension $1$ subvarieties contained in it.
Hence, $\nklt(\GG, \Gamma)= I \cup \Supp(\lfloor \Gamma \rfloor)$, where $I$ is the union of the $\GG$-invariant divisors.
\end{enumerate}
\end{remark}

As a consequence of the existence of the MMP we 
have the following non-vanishing theorem.

\begin{thm}\label{t_nonvan}
Let $\mathcal F$ be a co-rank one foliation on a normal projective $\bb Q$-factorial threefold $X$. 
Let $\Delta$ be a $\bb Q$-divisor such that $(\mathcal F,\Delta)$ is a F-dlt pair. 
Let $A\ge 0$ and $B\ge 0$ be $\bb Q$-divisors such that $\Delta= A+B$ and $A$ is ample. Assume that $K_{\mathcal F}+\Delta$ is 
pseudo-effective 

Then $K_{\mathcal F}+\Delta \sim_{\bb Q} D \geq 0$.
\end{thm}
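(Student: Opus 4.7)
The strategy is to run a $(K_\FF+\Delta)$-MMP with scaling of an ample divisor, to deduce via pseudo-effectivity that it terminates with a nef minimal model (rather than with a Mori fibre space), to invoke a non-vanishing statement from \cite{CS18} on the minimal model, and finally to pull effectivity back to $X$.

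More precisely, I would first run the MMP of Section \ref{scaling.ssect} for $(\FF, \Delta)$ with scaling of a sufficiently ample divisor $H$. By Theorem \ref{t_term} this process terminates after finitely many divisorial contractions and flips $\phi\colon X \dashrightarrow X'$. Iterative application of Lemma \ref{l_A+B} shows that at each intermediate stage the strict transform of $\Delta$ is $\bb Q$-linearly equivalent to a divisor of the form $A_i+B_i$ with $A_i$ ample, $B_i\geq 0$, and with $(\FF_i, \Delta_i)$ remaining F-dlt, so the hypotheses of the theorem are propagated throughout the MMP.

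The key point is that this MMP cannot terminate in a Mori fibre space. Indeed, pseudo-effectivity of $K_\FF+\Delta$ is preserved under pushforward by birational maps, so at every step $K_{\FF_i}+\Delta_i$ remains pseudo-effective. On the other hand a Mori fibre space $g\colon X_n \to Z$ would make $-(K_{\FF_n}+\Delta_n)$ ample on the general fibre, which is tangent to the transformed foliation; in any case (surface or curve fibre) this produces a covering family of curves on which $K_{\FF_n}+\Delta_n$ has strictly negative degree, contradicting pseudo-effectivity. Hence the MMP terminates with a birational contraction $\phi\colon X \dashrightarrow X'$ on which $K_{\FF'}+\Delta'$ is nef, and on which $\Delta'$ still admits an ample-plus-effective decomposition in its numerical class.

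The central input is now the non-vanishing statement for nef F-dlt foliated pairs with an ample summand in the boundary established in \cite{CS18}, which yields a $\bb Q$-divisor $D'\geq 0$ with $K_{\FF'}+\Delta' \sim_{\bb Q} D'$. To conclude, I would pull effectivity back to $X$. On a common resolution $p\colon W \to X$, $q\colon W \to X'$ of $\phi$, iterated application of the Negativity Lemma \ref{NegLemma} to the $(K_\FF+\Delta)$-negative steps of the MMP yields
\[
p^*(K_\FF+\Delta) = q^*(K_{\FF'}+\Delta') + E
\]
with $E\ge 0$ an effective $q$-exceptional $\bb Q$-divisor, so that $p^*(K_\FF+\Delta) \sim_{\bb Q} q^*D' + E \geq 0$; pushing forward along $p$ produces the required effective $\bb Q$-divisor $D$ on $X$ with $K_\FF+\Delta\sim_{\bb Q} D$. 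The main obstacle I anticipate is locating and cleanly invoking the appropriate foliated non-vanishing result of \cite{CS18} in the nef-plus-ample setting; the remaining ingredients (termination, preservation of the ample-plus-effective decomposition, and the discrepancy pull-back) are essentially routine given Theorem \ref{t_term}, Lemma \ref{l_A+B} and the Negativity Lemma.
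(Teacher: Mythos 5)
Your proposal is correct and follows essentially the same route as the paper: run the $(K_\FF+\Delta)$-MMP, terminate it via Theorem \ref{t_term}, preserve the ample-plus-effective boundary decomposition by Lemma \ref{l_A+B}, and then invoke the base-point-free/semi-ampleness theorem \cite[Theorem 9.4]{CS18} on the resulting nef model before transporting sections back to $X$ (the paper does this by the standard equality of section spaces along the MMP, which is the same negativity-lemma computation you describe on a common resolution). The result from \cite{CS18} you were worried about locating is exactly Theorem 9.4 there, which gives semi-ampleness, slightly more than you need.
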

\begin{proof}
We run a $K_{\cal F}+\Delta$-MMP.  By \cite[Theorem 6.4 and Theorem 6.7]{CS18} %todo
all the required divisorial contractions and flips exist.  By Theorem \ref{t_term} there is no infinite sequence of flips
and so this MMP terminates, call it $\phi\colon X \dashrightarrow X'$.
Let $\cal F'$ be the transform of $\cal F$.
By Lemma \ref{l_A+B} 
%\cite[Lemma 3.14]{CS18} 
we may find an ample divisor $A'$ and $B' \geq 0$ such
that $\phi_*\Delta \sim_{\bb Q} A'+B'$ and $(\cal F', A'+B')$ is F-dlt. Set $\Delta' = A'+B'$

Thus, we may apply \cite[Theorem 9.4]{CS18} to conclude that $K_{\cal F'}+\Delta'$ is semi-ample and
so there exists $0 \leq D \sim_{\bb Q} K_{\cal F'}+\Delta'$.
For all $m$ sufficiently divisible we have
\begin{align*}
H^0(X, \cal O(m(K_{\cal F}+\Delta))) =& H^0(X', \cal O(m(K_{\cal F'}+\phi_*\Delta))) \\
=& H^0(X', \cal O(m(K_{\cal F'}+\Delta')))
\end{align*}
and our result follows.
\end{proof}

\subsection{Singular Bott partial connections}
We recall Bott's partial connection.
Let $\cal F$ be a smooth foliation on a complex manifold $X$.
We can define a partial connection on $N_{\cal F}$ locally by
\[\nabla: N_{\cal F} \rightarrow \Omega^1_{\cal F} \otimes N_{\cal F}\]
\[w \mapsto \sum \omega_i \otimes q([\partial_i, \widetilde{w}])\]
where $\widetilde{w}$ is any local lift of $w$ to $T_X$ and $\omega_i$ are local generators
of $\Omega^1_{\cal F}$, $\partial_i$ are dual generators of $\cal F$  and $q:T_X \rightarrow N_{\cal F}$ is the quotient map.  
One can check that these local connections patch to give a global connection.

\begin{lem}
\label{l_connection}
Let $\cal F$ be a rank $r$ foliation on a complex analytic variety $X$.
Let $S \subset X$ be a local complete intersection subvariety of $X$
of dimension $r$ and suppose that $S$ is $\cal F$-invariant.

Let $Z = \text{sing}(X) \cup \text{sing}(\cal F)$.  Suppose that $Z\cap S$ is codimension at least $2$ in $S$.
Then there is a connection 
\[\nabla:N_{S/X} \rightarrow \Omega^{[1]}_S \otimes N_{S/X}\]
where $\Omega^{[1]}_S = (\Omega^1_S)^{**}$ is the sheaf of reflexive differential on $S$.
\end{lem}
\begin{proof}
Let $X^\circ = X - Z$ and $S^\circ = S-(Z\cap S)$.
Notice that $N_{\cal F}\vert_{S^\circ} = N_{S^\circ/X^\circ}$ and that $\Omega^1_{\cal F}\vert_{S^\circ} = \Omega^1_{S^\circ}$.
Thus, if we restrict Bott's partial connection on $X^\circ$ to $S^\circ$ we get a connection
\[\nabla^\circ: N_{S^\circ/X^\circ} \rightarrow \Omega^1_{S^\circ} \otimes N_{S^\circ/X^\circ}.\]

Let $i:S^\circ \rightarrow S$ be the inclusion.  Since $N_{S/X}$ is locally free we have an isomorphism
$i_*(\Omega^1_{S^\circ} \otimes N_{S^\circ/X^\circ}) = \Omega_S^{[1]} \otimes N_{S/X}$
by the push-pull formula, \cite[Exercise II.5.1(d)]{Hartshorne77}.
Thus we get a map
\[i_*\nabla^\circ: N_{S/X} \rightarrow \Omega_S^{[1]} \otimes N_{S/X}\]
and by observing that $i_*\nabla^\circ$ satisfies the Leibniz condition (since it does so away
from a set of codimension at least 2) we see that this is our desired connection.
\end{proof}

\subsection{Proof of Theorem \ref{t_term}}

\begin{lem}
\label{l_zerointersection}
Let $X$ be a normal complex analytic threefold and let $(\cal F, \Delta)$
be a log terminal co-rank 1 foliation on $X$.
Let $C \subset X$ be a compact curve tangent to $\cal F$.  Let $S$ be a germ
of an invariant surface containing $C$.  Suppose that $K_X$, $K_{\cal F}$ and $S$ are $\bb Q$-Cartier.  Then
\[S\cdot C = 0.\]
\end{lem}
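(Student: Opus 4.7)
The plan is to produce a holomorphic connection on the pullback of $\cal O_S(mS)$ to the normalization $\tilde{C}$ of $C$, where $m$ is the Cartier index of $S$; degree considerations on the smooth compact curve $\tilde{C}$ will then force $m(S\cdot C) = 0$ and hence $S\cdot C = 0$.

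First, I invoke Lemma \ref{l_connection}. Since $S$ is $\cal F$-invariant and $\dim S$ equals the rank of $\cal F$, away from $\text{sing}(X)\cup \text{sing}(\cal F)$ the restriction $\cal F|_{S^\circ}$ coincides with $TS^\circ$. Hence Bott's partial connection on $N_{\cal F}$ restricts to a full, flat, holomorphic connection on $N_{S^\circ/X^\circ}$. Taking its $m$-th tensor power and pushing forward as in the proof of Lemma \ref{l_connection}, one obtains a connection
\[
\nabla^{\otimes m}\colon \cal O_S(mS) \to \Omega^{[1]}_S \otimes \cal O_S(mS).
\]

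Next, let $\nu\colon \tilde{C}\to C$ be the normalization and denote by $f\colon \tilde{C}\to S$ its composition with the inclusion. The log terminal hypothesis on $(\cal F, \Delta)$ together with $K_X$ being $\bb Q$-Cartier forces $X$ to be klt via (a variant of) Theorem \ref{t_canimpliesnondicritical}, and adjunction along the $\cal F$-invariant divisor $S$, as in Lemma \ref{adjunction}, then gives that $S$ has klt-type singularities. This allows me to apply Proposition \ref{t:extensionthm} to produce a reflexive pullback $f^*\Omega^{[1]}_S \to \Omega^1_{\tilde{C}}$; composing it with the pullback of $\nabla^{\otimes m}$ yields a holomorphic connection
\[
\tilde{\nabla}\colon f^*\cal O_S(mS) \to \Omega^1_{\tilde{C}} \otimes f^*\cal O_S(mS).
\]

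Finally, since $\Omega^2_{\tilde{C}}=0$, any holomorphic connection on a line bundle over the smooth compact curve $\tilde{C}$ is automatically flat, and its existence forces the first Chern class of the bundle to vanish (by Chern--Weil the integral of the curvature computes the degree). Therefore
\[
0 = \deg_{\tilde{C}} f^*\cal O_S(mS) = m(S\cdot C),
\]
and $S\cdot C = 0$. The main technical hurdle is checking that the singularities of $X$ and $S$ are mild enough along $C$ to justify the reflexive pullback of $\Omega^{[1]}_S$; if the log terminal hypothesis does not supply this directly, the standard workaround is to pass to an F-dlt modification of $(\cal F, \Delta)$, carry out the argument on the strict transforms of $S$ and $C$, and then transfer the intersection-number vanishing back to $X$ via the projection formula.
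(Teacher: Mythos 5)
Your overall strategy --- the singular Bott connection on $\cal O_S(mS)$, reflexive pullback to the normalization of $C$, and the degree-zero conclusion --- is exactly the strategy of the paper's proof. There is, however, a genuine gap at the very first step: Lemma \ref{l_connection} carries the hypothesis that $Z\cap S$ has codimension at least $2$ in $S$, where $Z=\text{sing}(X)\cup\text{sing}(\cal F)$, and you never verify it. This is not automatic here: a klt threefold can be singular along a curve lying inside $S$, and $\text{sing}(\cal F)\cap S$ can likewise contain a curve. In that case the open set $S^{\circ}$ used in the proof of Lemma \ref{l_connection} is the complement of a \emph{divisor} in $S$, the push-pull identity $i_*(\Omega^1_{S^\circ}\otimes N_{S^\circ/X^\circ})=\Omega^{[1]}_S\otimes N_{S/X}$ fails, and no connection on all of $\cal O_S(mS)$ is produced. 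The paper's proof exists precisely to handle this: it chooses a general ample $H$ transverse to $C$ and passes to a Galois cover $\pi\colon X'\to X$ ramified over $H$ and $\text{sing}(X)$ on which $S'=\pi^{-1}(S)$ and $K_{\cal F'}$ become Cartier; foliated Riemann--Hurwitz shows $(\cal F',\pi^*\Delta)$ is still log terminal, and it is the combination of log terminality with $K_{\cal F'}$ Cartier (via \cite[Corollary I.2.2]{McQuillan08}) that gives the codimension condition for $S'$. The klt-ness needed for the reflexive pullback to the normalization is then obtained from $(X',\Delta'+S')$ being plt and classical adjunction, rather than from foliated adjunction as you suggest (whose $\epsilon(S)=0$ case requires additional normality hypotheses you have not checked).

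Your proposed fallback --- pass to an F-dlt modification and ``transfer the intersection-number vanishing back via the projection formula'' --- does not work as stated: an F-dlt modification is in general not small, so $\pi^*S=\tilde S+E$ with $E$ exceptional and possibly meeting $\tilde C$, and $\tilde S\cdot\tilde C=0$ does not yield $S\cdot C=0$. (In the application of this lemma inside the proof of Theorem \ref{t_term} the paper only ever precomposes with a \emph{small} $\bb Q$-factorialization, for exactly this reason.) The remaining steps of your argument --- the reflexive pullback via Proposition \ref{t:extensionthm} and the fact that a line bundle on a smooth compact curve admitting a holomorphic connection has degree zero --- are fine once the connection actually exists on all of $S$.
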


\begin{proof}
Let $H \subset X$ be a sufficiently ample divisor meeting $C$ transversely
and choose $H$ to be sufficiently general so that
$(\cal F, \Delta+(1-\epsilon)H)$ is log terminal for all $\epsilon>0$.

We may then find a Galois cover $\pi:X' \rightarrow X$ ramified over
$H$ and $\text{sing}(X)$  such that if we write $S' = \pi^{-1}(S)$
and $\cal F' = \pi^{-1}\cal F$ 
then $S'$ and $K_{\cal F'}$ are both Cartier.

Write $\Delta' = \pi^*\Delta$ and $C' = \pi^{-1}(C)$.
We claim that $(\cal F', \Delta')$ is log terminal.
Indeed, let $r$ be the ramification index along $H$.  We have by foliated
Riemann-Hurwitz
that $K_{\cal F'}+\Delta' = \pi^*(K_{\cal F}+\Delta+\frac{r-1}{r}H)$.
Since $(\cal F, \Delta+\frac{r-1}{r}H)$ is log terminal
we see that the same is true of $(\cal F', \Delta')$, see \cite[Proposition 5.20]{KM98} or the proof
of \cite[Corollary 3.9]{Spicer17}.

Since $K_{\cal F'}$ is Cartier and $(\cal F', \Delta')$ is log terminal 
if we let $Z = \text{sing}(X') \cup \text{sing}(\cal F')$ then we have that $Z\cap S'$ is codimension at least 2 in $S'$,
see \cite[Corollary I.2.2]{McQuillan08}.
By construction $S'$ is Cartier and so we may apply Lemma \ref{l_connection} to produce a connection
\[\nabla:N_{S'/X'} \rightarrow \Omega^{[1]}_{S'}\otimes N_{S'/X'}.\]

Let $n:B \rightarrow C'$ be the normalization of an irreducible component of $C'$.
By \cite[Lemma 3.16]{CS18} we see that $(X', \Delta'+S')$ is plt and so by (usual) adjunction, 
\cite[Theorem 5.50]{KM98}, $S'$ is klt. 
By \cite[Theorem 4.3]{GKKP11} there exists a non-zero
morphism $d_{refl}n:n^*\Omega^{[1]}_{S'} \rightarrow \Omega^1_B$,
and so we may pull back $\nabla$ to get a connection by composing 
\[n^*\cal O(S') \xrightarrow{n^*\nabla} n^*\Omega^{[1]}_{S'} \otimes n^*\cal O(S') 
\xrightarrow{d_{refl}n\otimes \text{id}} \Omega^1_B \otimes n^*\cal O(S').\]

In particular, since $n^*\cal O(S')$ admits a holomorphic connection, 
it is flat which implies $0 = S'\cdot n(B) = m(S\cdot C)$.
\end{proof}

\begin{proof}[Proof of Theorem \ref{t_term}]
By Special Termination, \cite[Theorem 7.1]{CS18}, it suffices to show that any sequence of log terminal flips terminates.
Let \[\phi\colon (X_i, \cal F_i, \Delta_i) \dashrightarrow (X_{i+1}, \cal F_{i+1}, \Delta_{i+1})\]
be one such flip and let $C \subset X_i$ be an
irreducible component of $\text{exc}(\phi)$.
Let $f\colon X_i \rightarrow Z$ denote the base of the flip.

Since $C$ is tangent to the foliation
any divisor $E$ dominating $C$ on some model of $X_i$ is invariant, i.e., $\epsilon(E) =0$.
Since $(\cal F_i, \Delta_i)$ is log terminal we have $a(E, \cal F_i, \Delta_i) >\epsilon(E) =0$,
i.e., $\cal F_i$ is terminal at the generic point of $C$.

By \cite[Lemma 3.14]{CS18}
by taking $U$ to be a sufficiently small analytic neighborhood of
$z = f(C)$ we may find a unique $\cal F_i$-invariant divisor
on $X_{i, U} := f^{-1}(U)$ containing $C$.  Call this divisor $S$.

Since $X_{i, U}$ is klt and projective over $U$ we may find a small $\bb Q$-factorialization of $X_{i, U}$ denoted
$g\colon \overline{X_{i, U}} \rightarrow X_{i, U}$.  Let $\overline{\cal F_i}$ be the transformed foliation, write
$K_{\overline{\cal F_i}}+\overline{\Delta_i} = g^*(K_{\cal F_i}+\Delta_i)$,
let $\overline{S}$ be the strict transform of $S$ and let $\overline{C}$ be the strict transform
of $C$.
Since $g$ is small, we see that $(\overline{\cal F_i}, \overline{\Delta_i})$ is still log terminal.

Let $P \in \overline{C}$ be a point and let $T$ be a germ of a $\overline{\cal F_i}$-invariant divisor at $P$.
We claim that $T = \overline{S}$ (as germs).  Indeed, suppose otherwise.  Since $\overline{S}$ is $\bb Q$-Cartier we know
that $T \cap \overline{S}$ contains a $1$-dimensional component $\Sigma$.
Since $\Sigma$ is the intersection of two invariant divisors we see that $\Sigma \subset \text{sing}(\cal F)$ 
and $\Sigma$ is tangent to the foliation, in particular, we see that 
$\overline{\cal F_i}$ is terminal at the generic point of $\Sigma$.
This, however, is a contradiction of Proposition \ref{prop_term_surf_class} which implies that terminal
foliation singularities are non-singular in codimension $2$.

By Lemma \ref{l_zerointersection} 
we see that \[\overline{S} \cdot \overline{C} = 0.\]
On the other hand by \cite[Corollary 3.20]{CS18} and the observation in the previous paragraph
we see that the collection of $\overline{\cal F_i}$-invariant divisors meeting $\overline{C}$ is exactly $\overline{S}$ itself
and so
\[(K_{\overline{\cal F_i}}+\overline{\Delta_i})\cdot \overline{C} = (K_{\overline{X_i}}+\overline{\Delta_i}+\overline{S})\cdot \overline{C}.\]
Since $K_{\overline{X_i}}+\overline{\Delta_i} =g^*(K_{X_i}+\Delta_i)$ putting these equalities together yields
\[0 > (K_{\overline{X_i}}+\overline{\Delta_i})\cdot \overline{C} = (K_{X_i}+\Delta_i)\cdot C.\]
Thus, each $K_{\cal F_i}+\Delta_i$-flip is in fact a $K_{X_i}+\Delta_i$-flip.

By \cite[Lemma 3.16]{CS18} $(X_i, \Delta_i)$ is log terminal and so our result follows by termination for threefold log
terminal flips, see for example \cite[Theorem 6.17]{KM98}.
\end{proof}

To finish we present an example of a foliation flip, another example may be found in \cite[Example 9.1]{Spicer17}.
\begin{example}
\label{example_flip}
Let $b\colon Y \rightarrow \bb C^2$ be the blow up at the origin with exceptional curve $C$ and let $p\colon \widetilde{X} \rightarrow Y$ be the total space of the line bundle $\cal O_Y(C)$.  
Observe that $\widetilde{X}$ contains a single projective curve, which we will continue to denote by $C$.
Let $\cal G$ be the foliation on $Y$ given by the transform of the foliation generated by $\frac{\partial}{\partial x_1}$ on $\bb C^2$ (with coordinates $(x_1, x_2)$)  and let $\widetilde{\cal G} = p^{-1}\cal F$.  
Set $S = p^{-1}(C)$ and let $D_i = p^{-1}(b_*^{-1}\{x_i = 0\})$.
\\
It is straightforward to check that $K_{\widetilde{\cal G}}\cdot C = 0$ and that $\widetilde{\cal G}$ is smooth at the generic
point of $C$.  Moreover, observe that $S$ and $D_2$ are $\widetilde{\cal G}$-invariant whereas $D_1$ is not.
\\
Consider the map $\sigma\colon \bb C^2 \rightarrow \bb C^2$ given by $(x_1, x_2) \mapsto (-x_1, x_2)$ and observe
that $\sigma$ lifts to a map $\tau\colon \widetilde{X} \rightarrow \widetilde{X}$.  
Let $X:=\widetilde{X}/\langle \tau \rangle$ and observe that $\pi\colon \widetilde{X} \rightarrow X$
is ramified to order 2 along $S$ and $D_1$.  Observe moreover that $\tau$ preserves $\widetilde{\cal G}$ and so it descends
to a foliation $\cal F$ on $X$.  A foliated Riemann-Hurwitz computation shows that
$K_{\widetilde{\cal G}} = \pi^*K_{\cal F}+D_1$.  In particular, if we let $\Sigma = \pi(C)$ we see that
$K_{\cal F}\cdot \Sigma <0$ and so $\Sigma$ is a $K_{\cal F}$-flipping curve.

Notice that $\Sigma$ meets $\text{sing}(\cal F)$ at a single point which is a $\bb Z/2$ quotient singularity.
\end{example}

\section{Connectedness}
\label{conn.sect}

\subsection{Connectedness of the nonklt locus for foliated pairs}
The aim of this section is to prove the following connectedness statement which constitutes one of the pillars in the analysis of the birational structure of foliated singularities.
The analogue result in the non-foliated case has a long history and is rather classical; recently,~\cite{Bir20, FS20} fully settled the Connectedness Principle in full generality for pairs.

\begin{thm}
\label{conn.f-dlt.thm}
Let $f \colon X \to Y$ be a contraction of normal quasi-projective varieties, with $X$ a $\mathbb{Q}$-factorial threefold. 
Let $\FF$ be a co-rank $1$ foliation on $X$.
Let $(\FF, \Delta)$ be a foliated log pair with $\Delta= \sum a_i D_i$.
Assume that $-(K_{\FF}+\Delta)$ is $f$-nef and $f$-big and that $(\FF, \Delta')$ is F-dlt, where $\Delta':= \sum_{a_i < \epsilon(D_i)} a_i D_i + \sum_{a_j \geq \epsilon(D_j)} \epsilon(D_j)D_j$.
Then $\nklt(\FF, \Delta)$ is connected in a neighborhood of every fiber of $f$.
\end{thm}

Theorem \ref{conn.f-dlt.thm} immediately implies the following more general result.

\begin{thm}
\label{conn.f-lc.thm}
Let $f \colon X \to Y$ be a contraction of normal quasi-projective varieties.
Let $\FF$ be a co-rank $1$ foliation on $X$.
Let $(\FF, \Delta)$ be a foliated log pair.
Assume that $-(K_{\FF}+\Delta)$ is $f$-nef and $f$-big.
Then $\nklt(\FF, \Delta)$ is connected in a neighborhood of every fiber of $f$.
\end{thm}

\begin{proof}
It suffices to consider $g \colon X' \to X$ a F-dlt modification, $K_\GG+\Delta_{X'}= g^\ast(K_\FF +\Delta)$ and apply Theorem \ref{conn.f-dlt.thm} to the pair $(\GG, \Delta_{X'})$ and the map $f \circ g \colon X' \to Y$.
\end{proof}

We will prove Theorem \ref{conn.f-dlt.thm} in the course of this section by proving different cases that fit together to provide a argument for it.

Before proving the theorem we indicate a quick application of
Theorem \ref{conn.f-dlt.thm} to the geometry of (weak) Fano foliations, see also \cite{AD13}.

We will denote by $\text{sing}^*(\cal F)$ the union of all codimension 2 components of $\text{sing}(\cal F)$.

\begin{cor}
\label{cor_sample_app_conn}
Let $X$ be a smooth projective threefold and let $\cal F$ be a co-rank $1$ foliation on $X$.
Assume that $-K_{\cal F}$ is big and nef. 
Then either
\begin{enumerate}
\item $\cal F$ has an algebraic leaf; or

\item $\text{sing}^*(\cal F)$ is connected.
\end{enumerate}
\end{cor}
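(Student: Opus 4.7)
The plan is to apply the connectedness theorem (Theorem \ref{conn.f-dlt.thm}) on a suitable foliated log resolution of $\cal F$ and then transport the resulting conclusion down to $X$. Assume henceforth that $\cal F$ has no algebraic leaf, otherwise we are in case (1); in particular $\cal F$ admits no invariant divisor on $X$.

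First I would take a foliated log resolution $\pi\colon Y \to X$ (provided by \cite{Cano}) so that $\cal G := \pi^{-1}\cal F$ has only simple singularities on the smooth threefold $Y$. Write $K_{\cal G}+\Delta_Y = \pi^{\ast}K_{\cal F}$, where $\Delta_Y := -\sum_i a_i E_i$ and $a_i := a(E_i,\cal F,0)$ are the discrepancies of the $\pi$-exceptional divisors. Then $-(K_{\cal G}+\Delta_Y)=\pi^{\ast}(-K_{\cal F})$ is nef and big on $Y$, and every $\cal G$-invariant divisor on $Y$ is necessarily $\pi$-exceptional, since $\cal F$ has no invariant divisor. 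Letting $\Delta'_Y$ denote the truncation of $\Delta_Y$ prescribed by Theorem \ref{conn.f-dlt.thm}, the foliated log smoothness of $(Y,\cal G)$ together with the bound $\min(-a_i,\epsilon(E_i))\le \epsilon(E_i)$ on the resulting coefficients yields that $(\cal G,\Delta'_Y)$ is F-dlt. Applying Theorem \ref{conn.f-dlt.thm} with $f\colon Y\to \mathrm{Spec}(\mathbb{C})$ the constant map to a point, I would conclude that $\nklt(\cal G,\Delta_Y)$ is connected in $Y$.

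To transport this to $X$, observe that the identity $K_{\cal G}+\Delta_Y=\pi^{\ast}K_{\cal F}$ implies that the discrepancy of any divisor over $Y$ with respect to $(\cal G,\Delta_Y)$ coincides with its discrepancy over $X$ with respect to $(\cal F,0)$, so that $\nklt(\cal G,\Delta_Y)=\pi^{-1}(\nklt(\cal F,0))$. In the absence of $\cal F$-invariant divisors, every log canonical centre and every non-log-canonical point of $(\cal F,0)$ lies in $\text{sing}(\cal F)$, and conversely each codimension 2 component of $\text{sing}(\cal F)$ is contained in $\nklt(\cal F,0)$, whence $\text{sing}^*(\cal F)\subseteq \nklt(\cal F,0)\subseteq \text{sing}(\cal F)=\text{sing}^*(\cal F)\sqcup\{p_1,\dots,p_k\}$, with the $p_j$ the finitely many isolated singular points of $\cal F$. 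If $\text{sing}^*(\cal F)=Z_1\sqcup Z_2$ were disconnected, then the closed subsets $\pi^{-1}(Z_1),\pi^{-1}(Z_2),\pi^{-1}(p_j)$ of $Y$ would be pairwise disjoint; but $\nklt(\cal G,\Delta_Y)$ would meet both $\pi^{-1}(Z_1)$ and $\pi^{-1}(Z_2)$ since the codimension 2 strata of $\text{sing}(\cal G)$ lying over $Z_1$ and $Z_2$ are LC centres of $(\cal G,\Delta_Y)$, contradicting the connectedness of $\nklt(\cal G,\Delta_Y)$. Hence $\text{sing}^*(\cal F)$ must be connected.

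The main obstacle I expect is the verification that $(\cal G,\Delta'_Y)$ is F-dlt: the truncated coefficients $\min(-a_i,\epsilon(E_i))$ may be negative when $a_i>0$ (on exceptional divisors of positive discrepancy), so one needs to work with the generalized notion of F-dlt pair implicit in the statement of Theorem \ref{conn.f-dlt.thm}, which tolerates non-effective $\Delta$ subject to the truncation condition. I would justify this using the foliated log smoothness of $(Y,\cal G)$ and the SNC structure of the exceptional locus of $\pi$.
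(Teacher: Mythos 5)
Your overall strategy is the same as the paper's: pass to a birational model on which the crepant pullback of $K_{\cal F}$ satisfies the hypotheses of Theorem \ref{conn.f-dlt.thm}, apply connectedness there, and push the conclusion back down to $X$ using $\nklt(\cal G,\Delta_Y)=\pi^{-1}(\nklt(\cal F))$ and the identification $\nklt(\cal F)=\text{sing}^*(\cal F)\cup I\cup Z$ with $I=\emptyset$ in the absence of algebraic leaves. The descent step at the end is fine.

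The genuine gap is in your choice of model. On a foliated log resolution the crepant boundary $\Delta_Y=-\sum a_i E_i$ acquires \emph{negative} coefficients on every exceptional divisor of positive discrepancy, and such divisors will in general be present (already a blow-up centred away from, or transverse enough to, $\text{sing}(\cal F)$ produces one). You flag this yourself, but the proposed remedy --- a ``generalized notion of F-dlt pair implicit in the statement'' of Theorem \ref{conn.f-dlt.thm} tolerating non-effective boundaries --- is not actually available: the proof of that theorem runs a $(K_{\cal F}+\Delta'+G)$-MMP with scaling and invokes the cone theorem, Lemma \ref{l_A+B}, special termination and the adjunction computations, all of which are established only for effective boundaries (indeed the paper's standing convention and the proofs of Lemmas \ref{connct.lemma.bir.case.rho=1} and \ref{bir.conn.ample.lem} quietly use $\Delta'\geq 0$, e.g.\ when perturbing by $\eta\lfloor\Delta'\rfloor$ and applying Bertini). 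The correct fix is exactly what the paper does: replace the foliated log resolution by an F-dlt modification (Theorem \ref{t_existencefdlt}, or Theorem \ref{t_sp_dlt_mod} if $\cal F$ is not log canonical). Such a modification extracts only divisors of discrepancy $\leq-\epsilon(E)$, so the crepant pullback is $K_{\overline{\cal F}}+\Gamma+F=\mu^*K_{\cal F}$ with $\Gamma=\sum\epsilon(E_i)E_i\geq 0$, $F\geq 0$ and $(\overline{\cal F},\Gamma)$ F-dlt by construction; the hypotheses of Theorem \ref{conn.f-dlt.thm} are then satisfied on the nose, with no effectivity issue, and the rest of your argument goes through unchanged.
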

\begin{proof}
Let us observe that $\nklt(\cal F) = \text{sing}^*(\cal F)\cup I\cup Z$ where $I$ is the union of all the $\cal F$-invariant divisors and $Z$ is a finite collection of points.
\\
We take an F-dlt modification $\mu\colon \overline{X}\rightarrow X$ of $\cal F$ and let $\overline{\mathcal F}$ be the induced foliation on $\overline X$, which exists by Theorem~\ref{t_existencefdlt}.
Writing $K_{\overline{\mathcal F}} +\Delta = \mu^*K_{\cal F}$, 
then $\mu(\nklt(\overline{\cal F}, \Delta))=\nklt(\cal F)$, see Remark~\ref{rmk:nonklt.F-dlt.mod}.
If $\FF$ has no algebraic leaves then $I=\emptyset$.
We conclude applying Theorem \ref{conn.f-dlt.thm} that $\nklt(\overline{\cal F}, \Delta)$ is connected and our result follows.
\end{proof}

\begin{remark}
The corresponding statement to Theorem \ref{conn.f-dlt.thm} for rank 1 foliations is an essentially trivial consequence
of the arguments in \cite{BMc01}.
\end{remark}

We now turn to the proof of Theorem \ref{conn.f-dlt.thm}.

We will work in the following setting.
We denote by $f \colon X \to Y$ a contraction of normal quasi projective varieties, with $X$ a $\mathbb{Q}$-factorial threefold. 
Recall that $f$ being a contraction means that $f$ is surjective, projective with $f_*\cal O_X = \cal O_Y$.

We assume the existence of a co-rank $1$ foliation $\FF$ on $X$ and of a foliated log pair $(\FF, \Delta)$ 
with $\Delta= \sum a_i D_i$.
We will denote by 
\begin{eqnarray*}
\nonumber
H:= & -(K_\FF+\Delta), & \\
%\Delta^{<1} := \sum_{a_i < 1} a_i D_i, \; \text{and}\\
\nonumber 
\Delta':= & \sum_{a_i < \epsilon(D_i)} a_i D_i + \sum_{a_j \geq \epsilon(D_j)} \epsilon(D_j)D_j,\\ 
\nonumber
\Delta'' := & \Delta - \Delta', \\
\nonumber
F := & {\rm Supp}(\Delta^{\geq 1}).
\end{eqnarray*}

We remark that we allow $\Delta$ to have $\cal F$-invariant components, however
$\Delta'$ will have no $\cal F$-invariant components.

We start by addressing the birational case.
\begin{lem}
\label{connct.lemma.bir.case.rho=1}
With the notation above, we assume that $f$ is birational, $\rho(X/Y)=1$ and $(\FF, \Delta')$ is F-dlt.
Suppose moreover that every lc centre of $(\FF, \Delta)$ is contained in a codimension 1 lc centre of $(\FF, \Delta)$.
If $-(K_X+\Delta)$ is $f$-ample then $\Nklt(\FF, \Delta)$ is connected in a neighborhood of any fibre of $f$.
\end{lem}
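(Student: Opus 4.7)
My plan is to argue by contradiction, using $\rho(X/Y)=1$ and the $f$-ampleness of $-(K_\FF+\Delta)$ to force a contradiction from the assumed disconnectedness. I first reduce the problem to a purely divisorial one. Since $(\FF,\Delta')$ is F-dlt it is in particular log canonical, and $(\FF,\Delta)$ agrees with $(\FF,\Delta')$ away from $\Supp(\Delta-\Delta')$. Hence the non-lc locus of $(\FF,\Delta)$ equals the support of $\Delta-\Delta'$, which is an effective divisor. Combined with the hypothesis that every lc centre of $(\FF,\Delta)$ sits inside a codimension one lc centre, $\Nklt(\FF,\Delta)$ is set-theoretically the support of an effective Weil divisor on $X$. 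Supposing for contradiction that $\Nklt(\FF,\Delta)\cap f^{-1}(y_0)$ is disconnected for some $y_0\in Y$, we may write $\Nklt(\FF,\Delta)=\Supp N_1\cup\Supp N_2$ in a neighborhood of $f^{-1}(y_0)$, with $N_1,N_2$ nonempty effective divisors whose supports are disjoint and which both meet $f^{-1}(y_0)$.

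Since $X$ is $\mathbb{Q}$-factorial with $\rho(X/Y)=1$ and $H:=-(K_\FF+\Delta)$ is $f$-ample, we have $N^1(X/Y)=\mathbb{R}\cdot[H]$, so $[N_i]=c_i[H]$ for some $c_i\in\mathbb{R}$. If $c_i<0$, then $N_i\cdot C<0$ for every $f$-contracted curve $C$; since $N_i$ is effective, no such $C$ can avoid $\Supp N_i$, so the entire exceptional locus, and in particular $f^{-1}(y_0)$, lies in $\Supp N_i$, contradicting the disjointness with $N_{3-i}$. If $c_i=0$, any contracted curve is either contained in or disjoint from $\Supp N_i$; combining this with the connectedness of the fibres of the contraction $f$, one shows iteratively that every component of $f^{-1}(y_0)$ meeting $\Supp N_i$ lies in $\Supp N_i$, and hence $f^{-1}(y_0)\subseteq\Supp N_i$, again a contradiction. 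So we may assume $c_1,c_2>0$.

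In this remaining case, the $\mathbb{R}$-divisor $c_1 N_2-c_2 N_1$ is $f$-numerically trivial. Standard relative arguments, using the $\mathbb{Q}$-factoriality of $X$ and the connectedness of the fibres of $f$, yield $c_1 N_2-c_2 N_1\sim_{\mathbb{R},Y}f^*D$ for some $\mathbb{R}$-divisor $D$ on $Y$. Decomposing $D=D_+-D_-$ as a difference of effective $\mathbb{R}$-divisors with disjoint supports, we obtain the equality of effective $\mathbb{R}$-divisors
\[c_1 N_2+f^*D_-=c_2 N_1+f^*D_+.\]
Matching components on the two sides, the disjointness of $\Supp N_1$ from $\Supp N_2$ together with the disjointness of $\Supp D_+$ from $\Supp D_-$ forces $f(\Supp N_1)\subseteq\Supp D_-$ and $f(\Supp N_2)\subseteq\Supp D_+$. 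But then $y_0\in f(\Supp N_1)\cap f(\Supp N_2)\subseteq\Supp D_+\cap\Supp D_-=\emptyset$, a contradiction.

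The hard part, I expect, is the transition from $f$-numerical triviality of $c_1 N_2-c_2 N_1$ to its expression as a pullback $f^*D$ up to $\mathbb{R}$-linear equivalence; this is standard over a normal base with $\mathbb{Q}$-factorial total space, but needs to be carried out carefully, especially because $f$ may be either small or divisorial. A secondary subtlety is that the reduction in the first paragraph relies on the non-lc locus of $(\FF,\Delta)$ being purely divisorial, which uses F-dlt-ness of $(\FF,\Delta')$ to rule out the appearance of higher codimension non-lc strata arising from the interaction of $\Delta''=\Delta-\Delta'$ with the foliated geometry.
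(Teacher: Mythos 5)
Your argument diverges completely from the paper's (which splits into the divisorial and flipping cases, uses that $f$ contracts only curves tangent to $\FF$, foliated adjunction to the contracted divisor, and the discrepancy comparison of Lemma \ref{l_flipdiscrep}), and the route you take cannot work. The decisive failure is in the final step. First, from $c_1N_2-c_2N_1\equiv_f 0$ you only get $c_1N_2-c_2N_1\sim_{\mathbb{R},Y}f^*D$ at best, and $\mathbb{R}$-linear equivalence of effective divisors says nothing about their supports, so "matching components on the two sides" is not a valid operation. You could try to upgrade to an honest equality $c_1N_2-c_2N_1=f^*f_*(c_1N_2-c_2N_1)$ via the Negativity Lemma, but this already requires $f_*(c_1N_2-c_2N_1)$ to be $\mathbb{R}$-Cartier, and $Y$ is not assumed $\mathbb{Q}$-factorial. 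Second, and fatally, even granting the equality, the decomposition $D=D_+-D_-$ only guarantees that $D_+$ and $D_-$ have no common \emph{prime components}; their supports as closed subsets of $Y$ can and do meet. In the flipping case both $f(\Supp N_1)$ and $f(\Supp N_2)$ contain $y_0$ (each $N_i$ meets the flipping curve without containing it), so $y_0\in\Supp D_+\cap\Supp D_-$ and no contradiction arises.

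The deeper problem is that the configuration you are trying to exclude in the case $c_1,c_2>0$ is numerically consistent: for a small contraction of a single rational curve $C$, two disjoint effective $f$-ample divisors each meeting $C$ transversally at one point violate no intersection-theoretic constraint whatsoever. What rules this out is not numerics but either a vanishing theorem (the classical route, explicitly unavailable for foliations, as the paper stresses) or the foliated geometry: the paper passes to the flip, observes that the strict transforms of the two components force the flipped curve to be a non-klt centre of $(\FF^+,\Delta'^+)$, and contrasts this with the terminality of $(\FF,\Delta')$ along the flipping curve (which holds because the curve is tangent to $\FF$ and is not an lc centre) using the monotonicity of discrepancies under the flip; the divisorial case with $f(E)$ a curve likewise needs adjunction to $E$ and a degree count $\deg(K_{\Sigma_0}+p_1+p_2)\geq 0$ on rational curves tangent to the restricted foliation. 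Your proof uses the F-dlt hypothesis only to make $\Nklt(\FF,\Delta)$ divisorial and never uses tangency of the contracted curves to $\FF$; any correct proof must.
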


Let us recall that, as observed in Remark~\ref{rmk:nonklt.F-dlt.mod}, $\nklt(\FF, \Delta)= \nklt(\FF, \Delta') = {\rm Supp}(F+ I)$, where $I$ is the sum of the $\FF$-invariant divisors.

\begin{proof}
We assume that $\nklt(\FF, \Delta)$ is disconnected in a neighborhood of some fiber of $f$ and we will show that such assumption leads to a contradiction.
Let us observe that we may assume that $\Delta''\geq 0$ is $f$-ample,
otherwise $\text{exc}(f) \subset\Supp(\Delta'')$ and there is nothing to prove. 
Thus $-(K_{\cal F}+\Delta') = -(K_{\cal F}+\Delta)+\Delta''$ is $f$-ample.
By \cite[Lemma 8.10]{Spicer17} we see that $f$ only contracts curves tangent to $\cal F$.

{\bf Case 1}. \emph{The morphism $f$ is a divisorial contraction}. 
\newline
Suppose that $f$ contracts a divisor $E$.  Observe that since $\rho(X/Y) = 1$ we have that $E$ is irreducible.  
If $E$ is invariant then it is an lc centre and so there is nothing to prove.  Thus we may assume that $E$
is not invariant.
If $f(E)$ is a point and if $B$ is a component of $\nklt(\FF, \Delta)\cap E$
then observe that $B$ is ample in $E$, in particular we see that $\nklt(\FF, \Delta)\cap E$ is connected.

Thus we may assume that $f(E)=C$ is a curve in $Y$.
We may find $t \geq 0$ so that we may write $\Delta'+tE = \Gamma+E$ 
where $\Supp(\Gamma)$ does not contain $E$.  Since $-E$ is $f$-ample
we see that $-(K_{\cal F}+\Gamma+E)$ is $f$-ample.
By the foliated adjunction formula, cf.~\cite[Lemma 3.18]{CS18} and~\cite[Lemma 8.9]{Spicer17}, or cf.~Lemma~\ref{adjunction} below,  we may write $(K_{\cal F}+\Gamma+E)\vert_E = K_{\cal G}+\Gamma_E$ where
$\cal G$ is the restricted foliation,
$\Gamma_E \geq 0$ and $\nklt(\cal F, \Gamma)\cap E \subset \nklt(\cal G, \Gamma_E)$,
%C: note to self:I fixed a typo here
and so by assumption $\nklt(\cal G, \Gamma_E)$ contains at least two components meeting 
a fibre of $f$

Let $\Sigma_0$ be an irreducible curve contracted by $f$.
Since $\Sigma_0$ is tangent to $\cal G$ it is therefore a rational curve with $K_{\cal G}\cdot \Sigma_0 \geq -2$.
Moreover, exactly one of the following two scenarios hold:
\begin{enumerate}
\item 
either $\Sigma_0$ meets two distinct components
of $\nklt(\cal G, \Gamma_E)$; or,
\item 
the fibre containing $\Sigma_0$ is a union of two rational curves meeting at a point and, up to switching the two components of this fibre, we can assume that $\Sigma_0$ meets at least one connected component of $\nklt(\cal G, \Gamma_E)$.
\end{enumerate} 
Hence, $0 > (K_{\cal G}+\Gamma_E)\cdot \Sigma_0 \geq \deg (K_{\Sigma_0}+p_1+p_2)$
where, in scenario (1), $p_1, p_2$ are the intersections of $\Sigma_0$ with two distinct connected components of $\nklt(\FF, \Delta')$ along $\Sigma_0$, while in scenario (2), $p_1$, is the intersection of $\Sigma_0$ with the other component of the fibre and $p_2$ is the intersection of $\Sigma_0$ with $\nklt(\FF, \Delta')$.  
However, $\deg (K_{\Sigma_0}+p_1+p_2) \geq 0$, which provides a contradiction since $-(K_{\cal G}+\Gamma_E)$ is $f\vert_E$-ample.

{\bf Case 2}.
\emph{ The morphism $f$ is a flipping contraction}.
\\
We denote by
\[
\xymatrix{
X \ar@{-->}[rr] \ar[dr]^f & & X^+ \ar[dl]_{f^+}
\\
& Y &
}
\]
the flip of $f$ and by $\Sigma$ a curve in the exceptional locus.  
Then there exists two divisorial components $D_1, D_2$ of $\nklt(\FF, \Delta')$ which intersect $\Sigma$, and do not contain it.
But then on $X^+$, the strict transforms $D^+_i$ of the $D_i$ contain the exceptional locus of the map $f^+$, hence this must be contained in the intersection of the $D_i^+$ and as such it is a non-klt center.

Since $\Sigma$ is tangent to $\cal F$ we may assume that
$(\FF, \Delta')$ is terminal along $\Sigma$, as otherwise $\Sigma$ would be an lc centre, see \cite[Lemma 3.14]{CS18}); 
on the other hand, the above observation implies that $\FF^+$, the birational transform of $\FF$ on $X^+$, is canonical along the exceptional locus of $f^+$. 
But this leads to a contradiction, as by the Negativity Lemma the discrepancies of $(\FF, \Delta')$ along the $f^+$-exceptional locus must decrease since $-(K_\FF+\Delta')$ is $f$-ample, see, for example,~\cite[Lemma~2.7]{CS18}.
\end{proof}

\begin{lem}
\label{bir.conn.ample.lem}
With the notation above, we assume that $f$ is birational and $(\FF, \Delta')$ is F-dlt.
Suppose moreover that every lc centre of $(\FF, \Delta')$ is contained in a codimension 1 lc centre
of $(\FF, \Delta')$.
If $-(K_X+\Delta)$ is $f$-ample then $\Nklt(\FF, \Delta)$ is connected in a neighborhood of any fibre of $f$.
\end{lem}

\begin{proof}
Let $y \in Y$ be a point on $Y$ and let $X_y$ denote the fiber of $f$ over $Y$.
We assume that $\nklt(\FF, \Delta)$ is disconnected in a neighborhood of $X_y$ and we will show that such assumption leads to a contradiction.
As each lc center of $(\FF, \Delta')$ is contained in a codimension $1$ lc center and $\nklt(\FF, \Delta)$ is disconnected in a neighborhood of $X_y$, there exist prime divisors $E_1, E_2$ on $E$ such that:
\begin{itemize}
\item 
$E_1, E_2$ intersect $X_y$; and,
\item
$E_1, E_2$ belong to different connected components of $\nklt(\FF, \Delta)$ in a neighborhood of $X_y$;
in particular, $E_1 \cap E_2= \emptyset$ in a neighborhood of $X_y$.
\end{itemize}
%\newline
As $H$ is $f$-ample, there exists $0<\epsilon\ll1$ such that $G:=H-\epsilon (E_1+E_2)$ is $f$-ample.
Then
\begin{equation}
\label{eq.gen.lin.equiv}
K_\FF + \Delta' + G\sim_{\mathbb{R}, f} -\epsilon (E_1+E_2) - \Delta''.
\end{equation}
We can then run the $(K_\FF+\Delta'+G)$-MMP with scaling of $G$ over $Y$, see Section \ref{scaling.ssect},
\begin{equation}
\label{mmp.long.eq.1}
		\xymatrix{
			X=X_0 \ar@{-->}[r]^{s_1} \ar[drrr]_{f} & 
			X_{1}  \ar@{-->}[r]^{s_2} \ar[drr]^{f_1}& 
			\dots  \ar@{-->}[r]^{s_{i}} & 
			X_{i}  \ar@{-->}[r]^{s_{i+1}} \ar[d]^{f_i}& 
			X_{i+1} \ar@{-->}[r]^{s_{i+2}} \ar[dl]^{f_{i+1}}& 
			\dots \; \\
			& & & Y & &.
			}
\end{equation}
We quickly explain how to run such an MMP.
As we are only interested in what happens over a neighborhood of $y \in Y$, we can assume that each step of~\eqref{mmp.long.eq.1} is non-trivial in a neighborhood of $X_{i, y}$.

As $G$ is ample, then there exists $0< \eta \ll 1$ such that $G':= G +\eta \lfloor \Delta' \rfloor$ is also ample. 
Hence, choosing a suitable effective $P \sim_{\mathbb{R}, f} G'$, by Bertini's theorem we see that 
$K_\FF + (\Delta' - \eta \lfloor \Delta' \rfloor) + P$ is F-dlt. 
Hence, the MMP exists for $K_\FF + (\Delta' - \eta \lfloor \Delta' \rfloor) + P$ and a fortiori for $K_\FF+\Delta'+G$ as well.
Since each step of this MMP is $G_i$ positive, where $G_i$ is the strict transform of $G$ on $X_i$,
then each step of this MMP is in fact a step of the $(K_{\cal F}+\Delta')$-MMP.
In particular, we may observe moreover that at each step any lc centre of $(\FF_i, \Delta'_i)$ is contained in a codimension $1$ lc centre of $(\FF_i, \Delta'_i)$.
Indeed, by~\cite[Lemma~2.7]{CS18} an lc centre cannot lie in $\text{exc}(s_i^{-1})$ and so if $W$ is an lc centre of $(\FF_i, \Delta'_i)$ then each $s_j$ for $j\leq i$ must be an isomorphism at the generic point of $W$.

Lemma~\ref{connct.lemma.bir.case.rho=1} shows that the number of connected components of $\nklt(\FF_i, \Delta_i)$ in a neighborhood of $X_{i, y}$ cannot decrease with $i$.
Assume at the $i$-th step $X_{i-1} \dashrightarrow X_i$ of~\eqref{mmp.long.eq.1} the strict transform of one of the $E_j$, say $E_1$, gets contracted. 
Denoting by $R_{i-1}$ the generator of the extremal ray of $\overline{NE}(X_{i-1}/Y)$ contracted at this step and by $E_{1, i-1}$ the strict transform of $E_1$ on $X_{i-1}$, then $R_{i-1}\cdot E_{1, i-1} <0$.
Lemma~\ref{connct.lemma.bir.case.rho=1} implies that $E_{1, i-1} \cap E_{2, i-1} = \emptyset$ in a neighborhood of $X_{i-1, y}$, as otherwise the number of connected components would have decreased at some point of the MMP.
This observation implies that $E_{2, i-1} \cdot R_{i-1}=0$. 
As $-\epsilon(E_{1, i-1} + E_{2, i-1}) \cdot R_{i-1} < \Delta''_{i-1} \cdot R_{i-1}$, then $\Delta''_{i-1} \cdot R_{i-1} >0$; 
thus, there exists a component $D_1$ of $\Delta''_{i-1}$ that intersects $E_{1, i-1}$  and such that $f(D_1) \supset f(E_{1, i-1})$.
But then on $X_{i}$,
\[
K_{\FF_{i}} + \Delta'_{i} + G'_{i} \sim_{\mathbb{R}, f_{i}} -\epsilon E_{2, i} - D_{1, i} -\Delta''_{i}
\]
and we can repeat the argument just illustrated as $E_{2, i} \cap D_{1, i} = \emptyset$ around $X_{i, y}$ and they belong to different connected components of $\nklt(\FF_i, \Delta_i)$ in a neighborhood of $X_{i, y}$.
By Corollary~\ref{cor_Rel_MMP}, the MMP in~\eqref{mmp.long.eq.1} terminates with a minimal model $f' \colon X' \to Y$ of $X$ over $Y$, since $f$ is birational and hence $K_\FF+\Delta'+G$ is relatively pseudoeffective over $Y$.
Hence the strict transform of $-\epsilon(E_1+E_2)-\Delta''$ is $f'$-nef. 
By the Negativity Lemma,~\cite[Lemma~3.39]{KM98} it must contain the whole fiber $X'_y$, which leads to a contradiction.
\end{proof}

\begin{prop}
\label{bir.conn.prop}
With the above notation, we assume that $f$ is birational and $(\FF, \Delta')$ is F-dlt.
If $-(K_\FF+\Delta)$ is $f$-big and $f$-nef then $\nklt(\FF, \Delta)$ is connected in a neighborhood of any fibre of $f$.
\end{prop}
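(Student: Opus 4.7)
The plan is to reduce Proposition \ref{bir.conn.prop} to the already-established Lemma \ref{bir.conn.ample.lem}, which handles the case where $-(K_\FF+\Delta)$ is $f$-ample and every lc centre of $(\FF,\Delta')$ is contained in a codimension-one lc centre. Two successive reductions are needed: first, I would pass to an F-dlt modification to secure the codimension-one lc centre property; second, I would perturb $\Delta$ to upgrade ``$f$-nef and $f$-big'' to ``$f$-ample''.

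For the first reduction, apply Theorem \ref{t_existencefdlt} to the F-dlt (hence lc) pair $(\FF,\Delta')$ to produce a birational morphism $\pi\colon\widetilde{X}\to X$ extracting only divisors $E_i$ of foliation discrepancy $-\epsilon(E_i)$, such that the induced pair $(\widetilde{\FF},\widetilde{\Delta}')$ on $\widetilde X$, with $\widetilde{\Delta}':=\pi_\ast^{-1}\Delta'+\sum_i\epsilon(E_i)E_i$, is F-dlt and enjoys the property that every lc centre is contained in a codimension-one lc centre. Setting $\widetilde{\Delta}:=\widetilde{\Delta}'+\pi^\ast\Delta''$, one has $K_{\widetilde{\FF}}+\widetilde{\Delta}=\pi^\ast(K_\FF+\Delta)$ and hence $-(K_{\widetilde{\FF}}+\widetilde{\Delta})$ is $(f\circ\pi)$-nef and $(f\circ\pi)$-big. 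Since $\pi$ is a birational contraction of normal varieties, its fibres are connected and, observing that $\pi$-exceptional divisors are all lc centres of $(\widetilde{\FF},\widetilde{\Delta}')$ and hence lie in $\pi^{-1}(\nklt(\FF,\Delta))$, one gets $\nklt(\widetilde{\FF},\widetilde{\Delta})=\pi^{-1}(\nklt(\FF,\Delta))$; thus it suffices to establish the statement for $(\widetilde{\FF},\widetilde{\Delta})$ over $f\circ\pi$.

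For the second reduction, apply Kodaira's lemma to write $-(K_{\widetilde{\FF}}+\widetilde{\Delta})\sim_{\mathbb{Q},f\circ\pi}A+E$ with $A$ a $(f\circ\pi)$-ample $\mathbb{Q}$-divisor and $E\geq 0$. For $0<\delta\ll 1$, set $\widetilde{\Delta}_\delta:=\widetilde{\Delta}+\delta E$; then
\[
-(K_{\widetilde{\FF}}+\widetilde{\Delta}_\delta)\sim_{\mathbb{Q},f\circ\pi}(1-\delta)\bigl(-(K_{\widetilde{\FF}}+\widetilde{\Delta})\bigr)+\delta A
\]
is $(f\circ\pi)$-ample. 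The key point to verify is that for $\delta$ sufficiently small the non-klt locus is unchanged, i.e.\ $\nklt(\widetilde{\FF},\widetilde{\Delta}_\delta)=\nklt(\widetilde{\FF},\widetilde{\Delta})$: on the one hand, coefficients of non-$\widetilde\FF$-invariant components of $\widetilde{\Delta}_\delta$ stay strictly below the threshold $\epsilon(D)=1$ whenever they did in $\widetilde{\Delta}$; on the other hand, any $\widetilde{\FF}$-invariant component of $E$ that picks up a positive coefficient in $\widetilde{\Delta}_\delta$ is automatically already contained in $\nklt(\widetilde{\FF},\widetilde{\Delta})$, because every $\widetilde{\FF}$-invariant prime divisor is an lc centre of the F-dlt pair $(\widetilde{\FF},\widetilde{\Delta}')$. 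Moreover, since invariant components are excluded from the prime version by definition, the pair $(\widetilde{\FF},\widetilde{\Delta}_\delta')$ remains F-dlt and inherits the codimension-one lc centre property.

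Once these points are verified, Lemma \ref{bir.conn.ample.lem} applies directly to $(\widetilde{\FF},\widetilde{\Delta}_\delta)$ with respect to $f\circ\pi$, yielding connectedness of $\nklt(\widetilde{\FF},\widetilde{\Delta}_\delta)=\nklt(\widetilde{\FF},\widetilde{\Delta})$ in a neighbourhood of each fibre; pushing down through $\pi$ concludes the proof. The principal technical hurdle in executing this plan is the bookkeeping in the perturbation step: one must rule out the possibility that adding $\delta E$ creates genuinely new connected components of the non-klt locus. This is exactly where the foliated setting departs from the classical one, and the resolution rests on the observation that $\widetilde{\FF}$-invariant divisors are automatically lc centres of an F-dlt pair, so perturbations by invariant components cannot add anything truly new to $\nklt$.
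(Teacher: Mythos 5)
Your overall strategy coincides with the paper's: first pass to an F-dlt modification to secure the codimension-one lc centre property, then use Kodaira's lemma to trade ``$f$-nef and $f$-big'' for ``$f$-ample'' and invoke Lemma \ref{bir.conn.ample.lem}. The first reduction is fine. The gap is in the second step, at the assertion that $(\widetilde{\FF},\widetilde{\Delta}_\delta')$ ``remains F-dlt''. You perturb by the \emph{full} effective part $E$ of the Kodaira decomposition, and $E$ is a fixed divisor whose components are forced to lie in the non-ample locus of $-(K_{\widetilde\FF}+\widetilde\Delta)$; you cannot choose them generically. If a component of $E$ with coefficient $<1$ in $\widetilde\Delta$ passes through an lc centre of $(\widetilde\FF,\widetilde\Delta')$ --- a stratum of $\mathrm{sing}(\widetilde\FF)$, an invariant divisor, or the intersection of two coefficient-one components --- then adding $\delta E$ pushes a discrepancy that was exactly $-\epsilon$ strictly below $-\epsilon$, so the truncated perturbed pair need not even be log canonical, let alone F-dlt (compare the classical example $(\mathbb{C}^2, L_1+L_2+\delta L_3)$ with three concurrent lines). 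Your justification only checks condition (1) of the F-dlt definition (coefficients and transversality) and says nothing about condition (2) (existence of a foliated log resolution extracting only divisors of discrepancy $>-\epsilon$), which is exactly where the failure occurs; without F-dlt-ness of the truncation, Lemma \ref{bir.conn.ample.lem} cannot be invoked. A secondary, more benign issue: your argument that $\nklt(\widetilde\FF,\widetilde\Delta_\delta)=\nklt(\widetilde\FF,\widetilde\Delta)$ only discusses coefficients of divisors on $\widetilde X$, whereas one must also rule out new non-klt centres of higher codimension; this is true and follows from the finiteness and linearity in $\delta$ of the discrepancies on a fixed foliated log resolution, but it needs to be said.

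The paper's proof is built precisely to dodge the first problem. It splits the Kodaira divisor $B=B_1+B_2$ into the part $B_1$ supported inside $\nklt(\FF,\Delta)$ and the rest $B_2$, proves the nklt locus is unchanged by adding $\delta B_2$ via the log-resolution argument above, and then takes a \emph{second} F-dlt modification, this time of the perturbed pair $(\FF,\Delta'+\delta B_2)$, so that F-dlt-ness of the new truncation holds by construction rather than by perturbation. All the remaining excess ($\delta\pi^*B_1$, the non-lc part $C$, the pullback of $\Delta''$, and the small anti-ample exceptional $\epsilon G$) is arranged to be supported on divisors that are invariant or already have coefficient one in the new boundary, so the truncation is untouched and Lemma \ref{bir.conn.ample.lem} applies. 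To repair your argument you would need to incorporate this splitting and the second modification; the direct perturbation by $\delta E$ does not go through as written.
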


\begin{proof}
First, observe that we may freely replace $(\FF, \Delta')$ by a higher model so
that every lc centre of $(\FF, \Delta')$ is contained in a codimension $1$ lc centre.
Indeed, by Theorem \ref{t_existencefdlt} we may find a modification
$\mu\colon \overline{X} \rightarrow X$ so that if we write 
$K_{\overline{\cal F}}+\Theta = \mu^*(K_{\cal F}+\Delta')$ where $\overline{\cal F} = \mu^{-1}\cal F$
then $(\overline{\cal F}, \Theta)$ is F-dlt, every lc centre is contained in a codimension 1 lc centre
and $\nklt(\overline{\cal F}, \Theta)\subset \mu^{-1}(\nklt(\FF, \Delta'))$.
Thus $\nklt(\overline{\cal F}, \Theta)$
is connected in a neighborhood of a fibre over $Y$
if and only if $\nklt(\FF, \Delta')$ is.

We next reduce the general case to the case of ample $H$, which then follows from Lemma \ref{bir.conn.ample.lem}.
\newline
As $H=-(K_\FF +\Delta)$ is $f$-big and $f$-nef there exists an effective $\mathbb{R}$-divisor $B = \sum a_iB_i$ for which $H-\delta B$ is $f$-ample for any $0<\delta \ll1$. 
We can decompose $B$ as 
\[
B = B_1+B_2, \quad \text{where} \;  B_1:= \sum_{B_i \subset \nklt(\FF, \Delta)} a_iB_i, \; B_2 :=B-B_1.
\]
We claim that for $\delta$ sufficiently small then 
\begin{equation}
\label{nklt.eq}
\nklt(\FF, \Delta') =\nklt(\FF, \Delta'+\delta B_2)
\end{equation}
Indeed, let $r_Z \colon Z \to X$ be a foliated log resolution of $(\FF, \Delta'+\delta B_2)$.
We denote by $\GG$ the strict transform of $\FF$ on $Z$.
Thus,
\[
K_\GG + r_{Z\ast}^{-1}(\Delta' + \delta B_2) = r_Z^\ast(K_\FF + \Delta' + \delta B_2)+ \sum_i b_i(\delta) E_i
\]
and $\nklt(\cal F, \Delta'+\delta B_2) = r_Z(\nklt(\cal G, r_{Z\ast}^{-1}(\Delta' + \delta B_2)-\sum_i b_i(\delta)E_i))$.
Each $b_i(\delta)$ depends linearly on $\delta$ and $b_i(0) \geq -\epsilon(E_i)$, since $(\FF, \Delta')$ is F-dlt.
Hence, if $b_i(\delta) \leq -\epsilon(E_i)$ for all $\delta >0$, then $E_i$ is an lc center for $(\FF, \Delta')$.
\newline
For fixed sufficiently small $\delta>0$ satisfying \eqref{nklt.eq}, let $\pi \colon Y \to X$ be a F-dlt modification of $(\FF, \Delta'+\delta B_2)$.
By Theorem~\ref{t_existencefdlt}, writing
\[
K_\GG + \Gamma + C=\pi^\ast(K_\FF+\Delta'+\delta B_2),
\]
then
\begin{itemize}
\item $\Gamma= \pi_\ast^{-1} \Delta' + E$ and $E = \sum_i \epsilon(E_i)E_i$ where we sum over the expceptional divisors of $\pi$, 
\item $(\GG, \Gamma)$ is F-dlt, 
\item $\nklt(\GG, \Gamma)= \rounddown{\Gamma}$,
\item $C \geq 0$, and 
\item the support of $C$ is contained in $\nklt(\GG, \Gamma)$ and is $\pi$-exceptional.
\end{itemize} 
Moreover, the $\mathbb{Q}$-factoriality of $X$ implies that there exists an effective $\pi$-exceptional divisor $G\geq 0$ such that $-G$ is $\pi$-ample.
Since $K_\FF+\Delta+\delta B_2 \sim_{\bb R, X} -(H- \delta B + \delta B_1)$ then
\[
K_\GG +\Gamma + C + \delta \pi^\ast B_1 + \pi^*(\Delta-\Delta')+  \epsilon G \sim_{\bb R, X} -\pi^\ast (H-\delta B) + \epsilon G.
\]
Let $\Theta:=C + \delta \pi^\ast B_1 + \pi^*(\Delta-\Delta')+  \epsilon G$ and
observe that we have
\[\pi^{-1}(\nklt(\FF, \Delta)) =  \pi^{-1}(\nklt(\FF, \Delta+\delta B)) = 
\nklt(\GG, \Gamma+\Theta)\]
For $\epsilon >0$ sufficiently small we know that $(\Gamma+\Theta)' = \Gamma$ (in the notation at the beginning of the section)
and 
$-(K_{\GG}+\Gamma+\Theta) = \pi^\ast (H-\delta B) - \epsilon G$ is $f$-ample and this concludes the proof.
\end{proof}

\begin{proof}[Proof of Theorem \ref{conn.f-dlt.thm}]
In view of Lemma \ref{connct.lemma.bir.case.rho=1} and \ref{bir.conn.ample.lem}, we are only left to prove the case where $f$ is a non-birational contraction.
Hence, we assume that $\nklt(\FF, \Delta)$ is disconnected in a neighborhood of some fiber $X_y, \; y \in Y$ of $f$ with $\dim X > \dim Y$ and we derive a contradiction.

{\bf Step 1.} \emph{ In this step, we assume $H$ to be $f$-ample}.
\newline
As $H$ is $f$-ample, there exists $0<\epsilon\ll1$ such that $G=H-\epsilon F$ is $f$-ample.
We can then run the $(K_\FF+\Delta')$-MMP with scaling of $G$ over $Y$, see Section \ref{scaling.ssect},
\begin{equation}\label{mmp.long.eq}
		\xymatrix{
			X=X_0 \ar@{-->}[r]^{s_1} \ar[drrr]_{f} & 
			X_{1}  \ar@{-->}[r]^{s_2} \ar[drr]^{f_1}& 
			\dots  \ar@{-->}[r]^{s_{i}} & 
			X_{i}  \ar@{-->}[r]^{s_{i+1}} \ar[d]^{f_i}& 
			X_{i+1} \ar@{-->}[r]^{s_{i+2}} \ar[dl]^{f_{i+1}}& 
			\dots \; \\
			& & & Y & &.
			}
\end{equation}
We denote $\FF_i := s_{i \ast} \FF_{i-1}, \; \Delta_i := s_{i \ast} \Delta_{i-1}, \; \Delta_i' := s_{i \ast} \Delta_{i-1}', \; \Delta_i'' := s_{i \ast} \Delta_{i-1}'',$ $F_i := s_{i\ast}F_{i-1}, \; I_i := s_{i\ast}I_{i-1}$, $G_i:= s_{i\ast} G_{i-1}$, and $f_i$ is the structural map for $X_i$.
\vspace{.2cm}

\begin{claim} 
\label{claim_11}
For any $i$, either $F_i \cdot R_i >0$ or $\Delta''_i \cdot R_i >0$.
\end{claim}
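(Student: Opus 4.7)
The plan is to reduce the claim to a lower bound on the scaling factor $\lambda_i$ via the tautological $\mathbb{R}$-linear equivalence built into the definition of $G$.

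First I would observe that on $X = X_0$, directly from $G = H - \epsilon F$ and $H = -(K_\FF+\Delta) = -(K_\FF+\Delta')-\Delta''$, one has the identity $K_\FF + \Delta' + G \sim_{\mathbb{R}, f} -\epsilon F - \Delta''$. Since each step of the MMP with scaling over $Y$ preserves $\mathbb{R}$-linear equivalence over $Y$, pushing forward through $X \dashrightarrow X_i$ yields
\[
K_{\FF_i} + \Delta_i' + G_i \sim_{\mathbb{R}, f_i} -\epsilon F_i - \Delta_i''.
\]
By construction of the MMP with scaling of $G$, the contracted extremal ray $R_i$ satisfies $R_i \cdot G_i > 0$, $R_i \cdot (K_{\FF_i}+\Delta_i') < 0$, and $R_i \cdot (K_{\FF_i}+\Delta_i'+\lambda_i G_i) = 0$. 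Intersecting the displayed equivalence with $R_i$ produces the key identity
\[
\epsilon\, F_i \cdot R_i + \Delta_i'' \cdot R_i = (\lambda_i - 1)\, G_i \cdot R_i.
\]
Since $G_i \cdot R_i > 0$, it will be sufficient to prove $\lambda_i > 1$, equivalently, that $-\epsilon F_i - \Delta_i''$ is not $f_i$-nef.

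For the base case $i = 0$, I would exploit the disconnectedness of $\nklt(\FF, \Delta)$ in a neighborhood of the fiber $X_y$: the support of $F + \Delta''$ contains every codimension-one stratum of $\nklt(\FF, \Delta)$ up to $\FF$-invariant divisors appearing with zero coefficient in $\Delta$, so one can locate a curve $C \subset X_y$ crossing two distinct connected components of $F \cup \Supp(\Delta'')$ transversally; then $(\epsilon F + \Delta'') \cdot C > 0$, so $-\epsilon F - \Delta''$ is not $f$-nef and $\lambda_1 > 1$. The inductive step is the propagation of this non-nefness through MMP operations. I would proceed by noting that the MMP with scaling of $G$ is actually a MMP for $K_{\FF_i}+\Delta_i'$, so the special-termination machinery of Theorem \ref{t_spter} applies, together with the $\rho = 1$ birational connectedness statement in Lemma \ref{connct.lemma.bir.case.rho=1}, to show that no single step can identify the two witnessing components of the non-klt locus and collapse the transversal fiber-curve between them.

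The main obstacle is exactly this inductive step. One needs to rule out the scenario in which an MMP step, while formally $K_{\FF_i}+\Delta_i'$-extremal, brings the two relevant components of $F_i \cup \Supp(\Delta_i'')$ together and destroys the transversal bridge witnessing the non-$f_i$-nefness of $-\epsilon F_i - \Delta_i''$; this would cause $\lambda_i$ to drop to at most $1$ and defeat the claim. A case analysis separating divisorial contractions (which can contract at most one irreducible component and so cannot identify distinct divisorial components in a single step) from flipping contractions (which preserve the divisorial structure of $F_i$ and $\Delta_i''$ while only altering intersection numbers) should handle both cases, but the foliated flip case in particular will require invoking Lemma \ref{l_flipdiscrep} to control how the configuration of lc centres evolves across the flip.
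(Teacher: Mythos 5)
Your reduction of the claim to the inequality $\lambda_i>1$ is correct and coincides with the paper's computation: intersecting $K_{\FF_i}+\Delta_i'+G_i\sim_{\mathbb{R},f_i}-\epsilon F_i-\Delta_i''$ with $R_i$ and using \eqref{inters.1.eq} and \eqref{inters.2.eq} does give $(\epsilon F_i+\Delta_i'')\cdot R_i=(\lambda_i-1)\,G_i\cdot R_i$ with $G_i\cdot R_i>0$. The genuine gap is in how you propose to prove $\lambda_i>1$. Your base case is not justified: the disconnectedness hypothesis concerns $\nklt(\FF,\Delta)=\mathrm{Supp}(F+I)$, and the two components meeting $X_y$ may well be $\FF$-invariant divisors that do not appear in $F$ or in $\mathrm{Supp}(\Delta'')$ at all, in which case no curve in the fiber witnesses positivity of $\epsilon F+\Delta''$; and even when the components do lie in $\mathrm{Supp}(\epsilon F+\Delta'')$, the existence of an irreducible curve in $X_y$ meeting two of them while being contained in neither is asserted, not proved. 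Your inductive step is, by your own admission, only a sketch: propagating the existence of such a transversal curve through divisorial contractions and flips is precisely the hard point, and the tools you cite (special termination, Lemma \ref{connct.lemma.bir.case.rho=1}) do not obviously control the fate of that curve. The whole base-case-plus-induction architecture is also structurally awkward, since Claim \ref{claim_11} is what feeds the connectedness-preservation statement of Claim \ref{claim_12}, not the other way around.

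The paper avoids all of this with a direct pseudoeffectivity argument that works uniformly in $i$ and uses the two hypotheses you never invoke: that $H$ is $f$-ample (this is Step 1 of the proof) and that $\dim X_i>\dim Y$. Namely, if $\lambda_i\le 1$ then
\[
K_{\FF_i}+\Delta_i'+\lambda_i G_i=(1-\lambda_i)(K_{\FF_i}+\Delta_i')-\lambda_i\epsilon F_i-\lambda_i\Delta_i'',
\]
and since $-(K_{\FF_i}+\Delta_i')=H_i+\Delta_i''$ is $f$-big while the fibers of $f_i$ are positive dimensional, the right-hand side is not pseudo-effective over $Y$; this contradicts the $f$-nefness of $K_{\FF_i}+\Delta_i'+\lambda_iG_i$. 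So the correct route to $\lambda_i>1$ is numerical and global over $Y$, not a curve-by-curve non-nefness argument on $-\epsilon F_i-\Delta_i''$; you should replace your base case and induction with this one-line computation.
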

\begin{proof}[Proof of Claim \ref{claim_11}]
By the definition of the MMP with scaling, at each step of \eqref{mmp.long.eq} there exists a positive real number $\lambda_i$ such that $K_{\FF_i}+\Delta_i'+\lambda_i G_i$ is $f$-nef and moreover
\begin{eqnarray}
\label{inters.1.eq}
(K_{\FF_i}+\Delta_i'+\lambda_i G_i) \cdot R_i=0, & \; \text{and}\\ 
\label{inters.2.eq}
(K_{\FF_i}+\Delta_i') \cdot R_i<0. &
\end{eqnarray}
For any $i, \; \lambda_i >1$: in fact, assuming $\lambda_i \leq 1$ we reach an immediate contradiction since
\[
K_{\FF_i}+\Delta_i'+\lambda_i G_i = (1-\lambda_i)(K_{\FF_i}+\Delta_i') - \lambda_i \epsilon F_i -\lambda_i \Delta_i''
\]
would then be non-pseudoeffective over $Y$ -- this holds true in view of the fact that $\dim X_i > \dim Y$.
By \eqref{inters.1.eq}, 
\[
((1-\lambda_i)(K_{\FF_i}+\Delta_i')\cdot R_i=\lambda_i (\epsilon F_i+ \Delta_i'') \cdot R_i,
\]
and the condition $\lambda_i > 1$ together with \eqref{inters.2.eq} imply that $(\epsilon F_i+ \Delta_i'') \cdot R_i >0$, which proves the claim.
%If $F_i \cdot R_i \leq 0$ for infinitely many $i$, then $\Delta_i^{=1} \cdot R_i <0$ which contradicts Special Termination. 
\end{proof}

\begin{claim}
\label{claim_12}
For any $i, \; \nklt(\FF_i, \Delta_i) = \nklt(\FF_i, \Delta'_i) = {\rm Supp}(F_i+ I_i)$ and the number of connected components of $\nklt(\FF_i, \Delta'_i)$ is independent of $i$.
\end{claim}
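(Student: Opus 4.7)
The plan is to separate the statement of Claim \ref{claim_12} into the set-theoretic equalities $\nklt(\FF_i,\Delta_i)=\nklt(\FF_i,\Delta'_i)=\Supp(F_i+I_i)$, which are purely formal, and the invariance of the number of connected components of $\nklt(\FF_i,\Delta'_i)$ in a neighborhood of $f_i^{-1}(y)$, which is obtained inductively by applying Lemma \ref{connct.lemma.bir.case.rho=1} to the extremal contraction associated to $R_i$ at each step of the MMP.

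The equalities are handled first. Since F-dlt-ness of $(\FF_i,\Delta'_i)$ is preserved under the steps of the MMP with scaling (Lemma \ref{l_A+B}), the non-klt locus $\nklt(\FF_i,\Delta'_i)$ coincides with the union of codimension-one lc centres, which by construction of $\Delta'$ are exactly the $\FF_i$-invariant divisors together with the non-invariant divisors appearing in $\Delta_i$ with coefficient $\geq 1$; set-theoretically this is $\Supp(F_i+I_i)$. The inclusion $\nklt(\FF_i,\Delta'_i)\subset\nklt(\FF_i,\Delta_i)$ is immediate from $\Delta'_i\leq\Delta_i$. For the reverse inclusion, observe that $\Supp(\Delta_i-\Delta'_i)=\Supp(\Delta''_i)\subset\Supp(F_i+I_i)$, so any valuation whose discrepancy strictly decreases in passing from $\Delta'_i$ to $\Delta_i$ has centre contained in $\nklt(\FF_i,\Delta'_i)$, and no genuinely new non-klt centre can appear on $X_i$.

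For the invariance of the component count, after replacing $X$ by an F-dlt modification (Theorem \ref{t_existencefdlt}) we may assume that every lc centre of $(\FF_0,\Delta'_0)$ is contained in a codimension-one lc centre; this property persists along the MMP by Lemma \ref{l_flipdiscrep}, so the hypotheses of Lemma \ref{connct.lemma.bir.case.rho=1} are always available. Factor the step $s_{i+1}$ through the extremal contraction of $R_i$ as $X_i\xrightarrow{\pi_i}Z_i\xleftarrow{\pi_i^+}X_{i+1}$. Since $(K_{\FF_i}+\Delta'_i)\cdot R_i<0$, $\rho(X_i/Z_i)=1$ and $(\FF_i,\Delta'_i)$ is F-dlt, Lemma \ref{connct.lemma.bir.case.rho=1} applied with the roles of both $\Delta$ and $\Delta'$ played by $\Delta'_i$ gives that $\nklt(\FF_i,\Delta'_i)$ is connected in a neighborhood of every fibre of $\pi_i$. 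Consequently the number of connected components of $\nklt(\FF_i,\Delta'_i)$ in a neighborhood of $f_i^{-1}(y)$ equals the number of connected components of its image $\pi_{i,\ast}\nklt(\FF_i,\Delta'_i)$ in a neighborhood of $\pi_i(f_i^{-1}(y))\subset Z_i$. In the divisorial case $Z_i=X_{i+1}$ and the counts on $X_i$ and $X_{i+1}$ are identified immediately.

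The main obstacle is the flip case, where Lemma \ref{connct.lemma.bir.case.rho=1} cannot be applied to $\pi_i^+$ because $K_{\FF_{i+1}}+\Delta'_{i+1}$ is $\pi_i^+$-ample. The plan here is to leverage Claim \ref{claim_11}, which provides a divisor $D\subset\Supp(F_i+I_i)=\nklt(\FF_i,\Delta'_i)$ with $D\cdot R_i>0$; by the standard flip calculation its strict transform $D^+$ on $X_{i+1}$ satisfies $D^+\cdot R_i^+<0$ and must therefore contain the flipped curve $C^+$. Since $D^+$ still lies in $\nklt(\FF_{i+1},\Delta'_{i+1})$, every other non-klt divisor meeting $C^+$ intersects $D^+$ along $C^+$ and hence lies in the same connected component of $\nklt(\FF_{i+1},\Delta'_{i+1})$. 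Combining this with the fact that the flip is an isomorphism away from $C\cup C^+$, we transfer the connected-component count from $X_i$ to $X_{i+1}$, completing the induction step.
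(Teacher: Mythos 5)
Your proof is correct and follows essentially the same route as the paper: the set-theoretic equalities via $\Delta'_i\leq\Delta_i$ and $\Supp(\Delta''_i)\subset\Supp(F_i+I_i)$, and the component count via Claim \ref{claim_11} together with Lemma \ref{connct.lemma.bir.case.rho=1} applied to each extremal contraction. Your only real departure is that you explicitly justify the passage from $Z_i$ back up to $X_{i+1}$ in the flip case (using $D^+\supset C^+$ to see that no component can split), a step the paper leaves implicit.
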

\begin{proof}[Proof of Claim \ref{claim_12}]
For any $i, \; \Delta_i \geq \Delta'_i$ hence $\nklt(\FF_i, \Delta_i) \supset \nklt(\FF_i, \Delta'_i)$. 
On the other hand, as the support of $\Delta_i - \Delta_i'$ is contained in $\Delta_i'+I_i$, then $\nklt(\FF_i, \Delta_i) \subset \nklt(\FF_i, \Delta'_i)$.
Moreover, since $(\FF_i, \Delta'_i)$ is F-dlt, $\nklt(\FF_i, \Delta'_i) = {\rm Supp}(F_i+ I_i)$.

We now prove the second part of the statement.
If $s_{i+1} \colon X_{i} \to X_{i+1}$ is a divisorial contraction, let $E$ be the prime divisor contracted by $s_{i+1}$.
Since $F_i \cdot R_i > 0$ or $\Delta''_i \cdot R_i >0$ it follows that the image of the exceptional locus of $s_{i+1}$ is contained in $\nklt(\FF_{i+1}, \Delta'_{i+1})$. 
But then Lemma \ref{connct.lemma.bir.case.rho=1} implies that the number of connected components of $\nklt(\FF_{i+1}, \Delta'_{i+1})$ in a neighborhood of $X_{i+1, y}$ must be the same as that of $\nklt(\FF_{i}, \Delta'_{i})$ around $X_{i, y}$, since $s_{i+1}$ is $(K_{\FF_i}+ \Delta_i')$-negative.

If $s_{i+1} \colon X_{i} \dashrightarrow X_{i+1}$ is a flip, let $z^-_{i} \colon X_i \to Z_i$ be the associated flipping contraction and $z^+_{i+1} \colon X_{i+1} \to Z_i$ the other small map involved in the flip.
By the first part of the proof, we know that $\nklt(\FF_{i}, \Delta'_{i})=F_i+I_i$ and $\nklt(\FF_{i+1}, \Delta'_{i+1})=F_{i+1}+I_{i+1}$.
Hence, on $Z_i$, $z_i^-(\nklt(\FF_{i}, \Delta'_{i})=z_i^{+}(\nklt(\FF_{i+1}, \Delta'_{i+1}))$, as $s_{i+1}, z_i^-, z_i^+$ are all small maps.
Hence it suffices to prove that the number of connected components of $\nklt(\FF_{i}, \Delta'_{i})$ (resp. $\nklt(\FF_{i+1}, \Delta'_{i+1})$) around $X_{i, y}$ (resp. $X_{i+1, y}$) is the same as that of $z_i^-(\nklt(\FF_{i}, \Delta'_{i}))$ (resp. $z_i^{+}(\nklt(\FF_{i+1}, \Delta'_{i+1}))$) around $Z_{i, y}$.
Lemma \ref{connct.lemma.bir.case.rho=1} implies that the number of connected components of $\nklt(\FF_{i}, \Delta'_{i})$ in a neighborhood of $X_{i, y}$ must be the same as that of $z_i^-(\nklt(\FF_{i}, \Delta'_{i}))$ around $Z_{i, y}$, since $z_{i}^-$ is $(K_{\FF_i}+ \Delta_i')$-negative.
On the other, by Claim~\ref{claim_11} either $F_i \cdot R_i >0$ or $\Delta''_i \cdot R_i >0$, which implies that the exceptional locus of $z_i^-$ is contained in either $F_{i+1}$ or $\Delta''_{i+1}$. 
Since $F_i = \Supp(\Delta_i'') \subset \nklt(\FF_{i+1}, \Delta'_{i+1})$, then $\nklt(\FF_{i+1}, \Delta'_{i+1})$ is connected around every fibre of $z_i^-$, hence the number of connected components of $\nklt(\FF_{i+1}, \Delta'_{i+1})$ around $X_{i+1, y}$ is the same as that of $z_i^{+}(\nklt(\FF_{i+1}, \Delta'_{i+1}))$ around $Z_{i, y}$, which concludes the proof.
\end{proof}
By Special Termination, \cite[Theorem 7.1]{CS18}, and Claim \ref{claim_11}, the run of the MMP in \eqref{mmp.long.eq} must terminate and, since $K_\FF + \Delta'$ is non-pseudoeffective over $Y$, the final step will be a Mori fibre space
\begin{equation*}
\xymatrix{ 
    X_n \ar[dr]^g \ar[dd]_{f_n} & \\
    & Z \ar[dl]. \\
    Y &
}
\end{equation*}
By Claim \ref{claim_12} it suffices to prove that $\nklt(\FF_n, \Delta_n)$ is connected in a neighborhood of $X_{n, y}$.
On $X_n, \; \nklt(\FF_n, \Delta_n) = {\rm Supp}(F_n+ I_n)$.
As $I_n$ is $\cal F_n$-invariant every component of $I_n$ must be vertical over $Z$.
As $F_n \cdot R_n >0$ or $\Delta''_n \cdot R_n >0$, there exists at least one component of $F_n$ which dominates $Z$ and contains only one horizontal component.  
Let $z \in Z$ be a point and observe that $\text{dim}(g^{-1}(z)) \leq 2$.

If $\text{dim}(g^{-1}(z)) = 2$ for all $z \in Z$, then since $\rho(X/Z) = 1$, it follows
that every horizontal component of $F_n$ is $g$-ample;
hence, any 2 horizontal components of $F_n$ intersect along any fibre of $g$.
If $\text{dim}(g^{-1}(z)) = 1$ for some (equivalently any) $z$ then since $-(K_{\FF_n}+\Delta_n)$ is $g$-ample
we see that $F_n$ contains at most $1$ horizontal component.
Thus, $\nklt(\FF_n, \Delta_n)$ must be connected a neighborhood of $X_{n, y}$. 
But this gives a contradiction.

{\bf Step 2.} \emph{ In this step we reduce the general case to the case of $f$-ample $H$}.
\newline
Here it suffices to copy the proof of Proposition \ref{bir.conn.prop} verbatim.
\end{proof}

\subsection{Adjunction for foliated pairs}
\label{adj.sect}
The goal of this section is to illustrate adjunction theory for foliated threefolds.
Let us highlight the fact that in~\cite{CS18} a $\mathbb Q$-factorial threefold $X$ is simply an analytic variety which is (globally) $\mathbb Q$-factorial.
We will work in this set-up throughout \S~\ref{adj.sect}-\ref{inv.adj.sect}; 
the reader should keep this observation in mind when encountering foliated adjunction throughout the paper.

Let us recall the following adjunction for foliations with non-dicritical singularities.

\begin{lem}[Adjunction]
\cite[Lemma~3.18]{CS18}
\label{adjunction}
Let $X$ be a $\bb Q$-factorial threefold, let $\cal F$ be a co-rank one foliation with non-dicritical singularities.
\\
Suppose that $(\cal F, \epsilon(S)S+\Delta)$ is lc (resp. lt, resp. F-dlt) for a prime divisor $S$ and a $\bb Q$-divisor $\Delta\ge 0$ on $X$ which does not contain $S$ in its support.  
Let $\nu\colon S^{\nu}\to S$ be the normalization and let $\cal G$ be the restricted foliation to $S^\nu$.  
\\
Then, there exists $\Theta\ge 0$ on $S^\nu$ such that  
\begin{equation}
\label{old.adj.form.eqn}
\nu^*(K_{\cal F}+\epsilon(S)S+\Delta) = K_{\cal G}+\Theta.
\end{equation}

Moreover, we have: 
\begin{itemize}
\item Suppose $\epsilon(S) =1$. Then 
$(\cal G, \Theta)$ is lc (resp. lt, resp. F-dlt).

\item Suppose  $\epsilon(S) = 0$ and that $(\cal F, \Delta)$ is F-dlt and $S$ and $\text{sing}(\cal F)\cap S$ are normal.
Then $(S^\nu, \Theta' := \lfloor \Theta \rfloor_{red}+\{\Theta\})$
is lc (resp. lt, resp. dlt).
\end{itemize}
\end{lem}

We wish to generalize this result to an adjunction formula which holds in full generality.

\begin{lem}[General Adjunction]
\label{adjunction_new}
Let $X$ be a threefold and let $\cal F$ be a co-rank one foliation on $X$.
\newline
Suppose that $(\cal F, \epsilon(S)S+\Delta)$ is a foliated log pair for a prime divisor $S$ and a $\bb Q$-divisor $\Delta\ge 0$ on $X$ which does not contain $S$ in its support.  
Let $\nu\colon S^{\nu}\to S$ be the normalization and let $\cal G$ be the restricted foliation to $S^\nu$.  
\newline
Then, there exists $\Theta\ge 0$ on $S^\nu$ such that  
\begin{equation}
\label{new.adj.form.eqn}
\nu^*(K_{\cal F}+\epsilon(S)S+\Delta) = K_{\cal G}+\Theta.
\end{equation}
\end{lem}

In the hypotheses of Lemma \ref{adjunction_new}, we will refer to $\Theta$ as the different ${\rm Diff}_S{\Delta}$ of $\Delta$ on $S$.

\begin{proof}
Let $\pi \colon Y \to X$ be a F-dlt modification for $(\FF, \epsilon(S)+\Delta)$ and let $S'$ be the strict transform of $S$ on $Y$. Writing
\begin{equation}
\label{lc.inv.adj.eq}
K_{\FF_Y}+ \epsilon(S')S' + \Delta_Y = \pi^\ast(K_\FF+\epsilon(S)S+\Delta),
\end{equation}
the pair $(\FF_Y,\epsilon(S')S'+ \Delta'_Y)$ is F-dlt, where $\Delta'_Y := \pi_\ast^{-1} \Delta + \sum_{\pi-{\rm exc}} \epsilon(E) E$ 
%and $K_{\FF_Y} + \Delta'$ is $\pi$-nef, see \cite[Proof of 8.3]{CS18}.
Denoting $\Delta''_Y := \Delta_Y- \Delta'_Y$, it is immediate that the support of $\Delta''_Y$ does not contain $S'$ and $K_{\FF_Y} + \epsilon(S')S'+\Delta_Y'+\Delta''_Y = \pi^\ast(K_\FF+\epsilon(S)S+\Delta)$.
As $(\FF_Y,\epsilon(S')S'+ \Delta'_Y)$ is F-dlt, Lemma \ref{adjunction} implies that there exists $\Theta_1$ such that
\[
(K_{\FF_Y}+\epsilon(S')S'+\Delta'_Y)\vert_{S'^\nu} = K_{\GG}+\Theta_1.
\]
Hence, 
\[
(K_{\FF_Y}+\epsilon(S')S'+\Delta_Y)\vert_{S'^\nu} = 
(K_{\FF_Y}+\epsilon(S')S'+\Delta'_Y + \Delta_Y'')\vert_{S'^\nu}
= K_{\GG}+\Theta_1 + \Delta_Y''\vert_{S'^\nu}.
\]
Hence, it suffices to take $\Theta:= \Theta_1+\Delta_Y''\vert_{S'^\nu}$.
\end{proof}

The two equations \eqref{old.adj.form.eqn}, \eqref{new.adj.form.eqn} represent the adjunction formula for foliations, where \eqref{new.adj.form.eqn} is a 
generalized version of the one proven in \cite{CS18}.
On the other hand, in the more general framework of Lemma \ref{adjunction_new}, it is not possible control the 
singularities of the restriction of the pair $(\FF, \Delta)$ to a codimension one log canonical center.
%The main difference between the results contained in Lemma \ref{adjunction} and \ref{adjunction_new}  

\subsection{Inversion of adjunction}
\label{inv.adj.sect}
We are now ready to prove inversion of adjunction for foliated pairs.
\begin{thm}
\label{t_inversion_of_adjunction}
Let $X$ be a $\mathbb{Q}$-factorial threefold and let $\FF$ be a co-rank one foliation.
Consider a prime divisor $S$ and an effective $\mathbb{Q}$-divisor $\Delta$ on $X$ which does not contain $S$ in its support.  
Let $\nu\colon S^{\nu}\to S$ be the normalization and let $\cal G$ be the restricted foliation on $S^\nu$ and $\Theta$ be the foliation different for $(\FF, \Delta)$ on $S^\nu$.  
Suppose that 
\begin{itemize}
    \item if $\epsilon(S)=1$ then $(\GG, \Theta)$ is lc;
    \item if $\epsilon(S) = 0$ then $(S^\nu, \Theta)$ is lc.
   % \item if $\epsilon(S)=0$ $(S^\nu, \Thetadj)$ is numerically lc, where $\Thetadj$ is the different of $\Delta$ on the normalization $S^\nu$ of $S$, as defined in the General Adjunction Lemma.
\end{itemize}
Then $(\FF, \epsilon(S)S+ \Delta)$ is lc in a neighborhood of $S$.
\end{thm}
%Distinguish invariant case from non-invariant one.
%For a foliated log pair $(\FF, D)$, we will denote by 

\begin{proof}
Let $\pi \colon Y \to X$ be an F-dlt modification for the pair $(\FF, \epsilon(S)S+ \Delta)$ and let $S'$ be the strict transform of $S$ on $Y$. 
Writing
\begin{equation}
\label{dlt.mod.adj.eq}
K_{\FF_Y} + \varepsilon(S')S'+ \Delta_Y = \pi^\ast(K_\FF+\varepsilon(S)S+\Delta),
\end{equation}
the pair $(\FF_Y, \varepsilon(S')S'+ \Delta'_Y)$ is F-dlt, where $\Delta'_Y := \pi_\ast^{-1} \Delta + \sum_{\pi-{\rm exc}} \epsilon(E) E$ and $K_{\FF_Y}+ \varepsilon(S')S' + \Delta_Y'$ is $\pi$-nef, see \cite[Proof of Theorem~8.1]{CS18}.
Denoting $\Delta''_Y := \Delta_Y- \Delta'_Y$, then $-\Delta''_Y$ is $\pi$-nef, since by \eqref{dlt.mod.adj.eq} $-\Delta''_Y \sim_{\pi, \mathbb{R}} K_{\FF_Y}+\Delta'_Y$.
When $\epsilon(S') =1$, we will denote by $\GG'$ the restriction of $\FF_Y$ to the normalization $\nu_1 \colon S'^\nu \to S'$ of $S'$ and let $\Xi'$ be the different given by adjunction of $(\FF_Y, \varepsilon(S')S'+\Delta_Y)$.

{\bf Step 1}. {\it In this step we prove that $(\FF, \varepsilon(S)S+\Delta)$ is lc in a neighborhood of $S$ if and only if $(\FF_Y, \varepsilon(S')S'+\Delta_Y)$ is lc in a neighborhood of $S'$}.
\newline
It follows from Definition~\ref{nonklt.loc.def} that $(\FF, \varepsilon(S)S+ \Delta)$ is lc in a neighborhood $U$ of $S$ if and only $S \cap \Nlc(\FF, \varepsilon(S)S+\Delta) = \emptyset$, since $\Nlc(\FF, \varepsilon(S)S+\Delta)$ is closed.
By Remark~\ref{rmk:nonklt.F-dlt.mod}, as $(\FF_Y, \varepsilon(S')S'+ \Delta_Y')$ is F-dlt, $\Nlc(\FF_Y, \varepsilon(S')S'+\Delta_Y) = \Supp(\Delta''_Y)$ and $\Nlc(\FF, \varepsilon(S)S+\Delta) = \pi (\Supp(\Delta''_Y))$. 
By the Negativity Lemma, see~\cite[Lemma~1.3 and Appendix~A]{1907.06705},
%for a proof in the analytic setting which we are using here, 
since $-\Delta''_Y$ is $\pi$-nef, then $-\Delta''_Y$ contains any fibre of $\pi$ intersecting $\Supp(\Delta_Y'')$;
thus, $\Nlc(\FF, \varepsilon(S)S +\Delta) \cap S = \emptyset$ if and only if $\Nlc(\FF_Y, \varepsilon(S')S'+ \Delta_Y) \cap S' = \emptyset$.
%Thus, $(\FF, \Delta)$ is lc in a neighborhood of $S$ if and only if $(\FF_Y, \Delta_Y)$ is lc in a neighborhood of $S'$.
%as $\pi$ is crepant and $-\Delta''$ is $\pi$-nef.

{\bf Step 2} {\it In this step we prove that if $\epsilon(S')=1$, then $(\GG', \Xi')$ is lc, then
we deduce that $(\FF_Y, S'+\Delta_Y)$ is lc in a neighborhood of $S'$}.\\
%\newline
%Let $\nu_1 \colon S'^\nu \to S'$ be the normalization 
By Lemma~\ref{lem_fdlt_implies_dlt}, $S'$ is normal.
Hence, $S'^\nu=S'$ and $\mathcal G'=\mathcal G \vert_{S'}$.
\begin{equation}
\nonumber
K_{\GG'}+ \Xi' = 
(K_{\FF_Y}+S'+\Delta_Y)\vert_{S'} =  
\pi^\ast(K_{\FF}+S+\Delta)\vert_{S'}.
\end{equation}
Hence, considering the birational morphism $\psi \colon S' \to S^\nu$, then 
\[
K_{\GG'}+ \Xi' = \psi^\ast(K_{\GG}+ \Theta).
\]
As $(\GG, \Theta)$ is lc, the same holds for $(\GG', \Xi')$.
As shown in Step $1$, we need to show that $\Supp(\Delta_Y'') \cap S' =\emptyset$.
Seeking a contradiction, let $E$ be a prime component of $\Supp(\Delta_Y'')$ intersecting $S'$, so that $\mu_E \Delta_Y >\varepsilon(E)$ and $\mu_E \Delta'_Y =\varepsilon(E)$.
Let $G$ be any prime component of $E \cap S'$.
%\iffalse
\begin{claim}
\label{claim:G.lc.center}
$G$ is an lc center of $(\FF_Y, \Delta_Y')$
\end{claim}
\begin{proof}[Proof of Claim~\ref{claim:G.lc.center}]
This fact is an immediate consequence of Lemma~\ref{lem_fdlt_implies_dlt} and its proof if $\varepsilon(E)=1$, while, if $\varepsilon(E)=0$, then~\cite[Lemma~3.16]{CS18} implies that $Y$ has quotient singularities at the generic point of $G$ at which point the conclusion follows from a local computation on foliated surfaces, upon localizing at the generic point of $G$.
\end{proof}

By~\cite[Lemma~3.8]{CS18} and Claim~\ref{claim:G.lc.center}, $(\FF_Y, \Delta_Y')$ is log smooth at the generic point of $G$: in particular, $Y$ is smooth at the generic point of $G$ and $E$ meets $S'$ generically transversely along $G$.
Hence, taking
$
K_{\GG'}+ \Psi' = 
(K_{\FF_Y}+S'+\Delta'_Y)\vert_{S'},
$
then $\mu_G \Psi'=\varepsilon(G)=\varepsilon(E)$.
As $\mu_{E} \Delta''_{Y} >0$, 
\[
K_{\GG'}+ \Xi' = 
%(K_{\FF_Y}+S'+\Delta_Y)\vert_{S'} =
(K_{\FF_Y}+S'+\Delta_Y' +\Delta''_Y)\vert_{S'}=
K_{\GG'}+ \Psi' +\Delta''_Y\vert_{S'}.
\]
and $\mu_G\Xi' >\varepsilon(G)$ which contradicts the fact that $(\GG', \Xi')$ is lc.

{\bf Step 3} 
{\it In this step we prove that if $\epsilon(S')=0$ and if $(S^\nu, \Theta)$ is lc, then $(S'^\nu, \Xi')$ is lc, then
we prove that $(\FF_Y, \Delta_Y)$ is lc in a neighborhood of $S'$}.\\
%\newline
We have
\begin{equation}
\nonumber
K_{S'^\nu}+ \Xi' = 
\nu_1^\ast(K_{\FF_Y}+\Delta_Y) =  
(\nu_1\circ \pi)^\ast(K_{\FF}+\Delta).
\end{equation}
Considering the birational morphism $\psi \colon S'^\nu \to S^\nu$, then
\[
K_{S'^\nu}+ \Xi' = \psi^\ast(K_{S}+ \Theta).
\]
As $(S^\nu, \Theta)$ is lc, the same holds for $(S'^\nu, \Xi')$.
We need to show that $\Supp(\Delta_Y'') \cap S' =\emptyset$.
Seeking a contradiction, let $E$ be a prime component of $\Supp(\Delta_Y'')$ intersecting $S'$, 
so that $\mu_E \Delta_Y >\varepsilon(E)$ and $\mu_E \Delta'_Y =\varepsilon(E)$.
Let $G$ be any prime component of $\nu_1^{-1}(E)$.
Setting 
$K_{S'^\nu}+ \Psi' =  \nu_1^\ast(K_{\FF_Y}+\Delta'_Y)$,
then $\Psi' \leq \Xi'$ and ~\cite[Corollary~3.20]{CS18} implies that $\mu_G \Psi' \geq 1$.
As $\mu_{E} \Delta''_{Y} >0$ and 
$
K_{S'^\nu}+ \Xi' = 
%(K_{\FF_Y}+S'+\Delta_Y)\vert_{S'} =
\nu_1^\ast(K_{\FF_Y}+S'+\Delta_Y' +\Delta''_Y)=
K_{S'^\nu}+ \Psi' + \nu_1^\ast\Delta''_Y,
$
then $\mu_G\Xi' >1$ which contradicts the fact that $(S'^\nu, \Xi')$ is lc.
\end{proof}

%The statement of Theorem \ref{t_inversion_of_adjunction} is quite natural as, in view of Lemma \ref{adjunction_new}, we are just 
%requiring that the log (foliated) pair coming from the restriction to the codimension one 
%component $S$ be lc, to imply that the lcness of $(\FF, \Delta)$ holds in a neighborhood of $S$.
This is the adaptation to foliations of the classic statement of inversion of adjunction for log pairs, cf. \cite[Theorem 5.50]{KM98}.
Nonetheless, it is not the most general form of inversion of adjunction that one could hope for.
In fact, if we look at the statement of Lemma \ref{adjunction}, we see that the natural divisor to look at when $\epsilon(S)=0$ would be, in the notation of the lemma, the divisor $\Theta'$ rather than the foliated different $\Theta$ -- 
let us recall that $\Theta':=\lfloor \Theta \rfloor_\mathrm{red} + \{ \Theta\}.$
As, by definition $\Theta' \leq \Theta$ it follows immediately that if $(S, \Theta)$ is lc, so is 
$(S, \Theta')$, but it would be even more interesting to have a statement of inversion of adjunction 
that only assumes $(S, \Theta')$ is lc.

\section{A vanishing result}

In this section we prove a relative vanishing theorem for foliations.

We make the following easy observation whose proof we leave to the reader.

\begin{lem}
\label{l_stupidcover}
Let $f\colon Y \rightarrow X$ be a morphism of varieties

Let $\sigma \colon X' \rightarrow X$ be surjective and \'etale and let $Y' = Y \times_{X} X'$.
Let $f'\colon Y' \rightarrow X'$ and $\tau\colon Y' \rightarrow Y$ be the projections.

Let $L$ be a line bundle on $Y$.  Suppose that $R^if'_*\tau^*L = 0$. Then $R^if_*L = 0$.
\end{lem}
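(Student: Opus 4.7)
The plan is to invoke flat base change along the étale cover $\sigma$. Consider the Cartesian square
\[
\begin{tikzcd}
Y' \arrow{r}{\tau} \arrow{d}{f'} & Y \arrow{d}{f} \\
X' \arrow{r}{\sigma} & X.
\end{tikzcd}
\]
Since $\sigma$ is étale, it is in particular flat, so the flat base change theorem yields a canonical isomorphism of quasi-coherent sheaves
\[
\sigma^* R^i f_* L \;\cong\; R^i f'_* \tau^* L.
\]
The hypothesis states that the right-hand side vanishes, hence $\sigma^* R^i f_* L = 0$.

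To conclude, I would use that $\sigma$ is surjective and flat, i.e.\ faithfully flat. For any quasi-coherent sheaf $\mathcal{G}$ on $X$, faithful flatness of $\sigma$ implies that $\sigma^*\mathcal{G} = 0$ forces $\mathcal{G} = 0$: indeed, vanishing of $\sigma^*\mathcal{G}$ at every point $x' \in X'$ means that $\mathcal{G}_{\sigma(x')} \otimes_{\mathcal{O}_{X,\sigma(x')}} \mathcal{O}_{X',x'} = 0$, and faithful flatness of the local ring extension $\mathcal{O}_{X,\sigma(x')} \to \mathcal{O}_{X',x'}$ then forces $\mathcal{G}_{\sigma(x')} = 0$; since $\sigma$ is surjective this covers every point of $X$. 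Applying this to $\mathcal{G} = R^i f_* L$ gives the desired conclusion $R^i f_* L = 0$.

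There is essentially no obstacle here; the statement is a formal consequence of flat base change together with faithful flatness of a surjective étale morphism. The only point to be careful about is that $\sigma$ need not be finite, but this is irrelevant for both the base change isomorphism and the faithful-flatness descent of vanishing.
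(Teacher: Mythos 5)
Your proof is correct and follows essentially the same route as the paper: flat base change along the \'etale morphism $\sigma$ to identify $R^if'_*\tau^*L$ with $\sigma^*R^if_*L$, followed by the observation that pullback along a surjective flat (hence faithfully flat) morphism detects vanishing. The only difference is that you spell out the faithful-flatness step in more detail, which the paper leaves implicit.
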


\begin{lem}
\label{l_approx}
Let $f\colon Y \rightarrow X$ be a surjective birational
projective morphism
of normal varieties of dimension at most 3
 and let $(\cal F, \Delta)$ be an F-dlt foliated pair on $Y$ with $\lfloor \Delta \rfloor = 0$.
Suppose that $Y$ is $\bb Q$-factorial
and that every fibre of $f$ is tangent to $\cal F$. 

Let $P \in X$ be a closed point.  Then there exists an \'etale neighborhood
$\sigma\colon X' \rightarrow X$ of $P$,
a small $\bb Q$-factorialization $\mu \colon W \rightarrow Y':=Y \times_X X'$
and a reduced divisor $\sum T_i$ on $W$
%\footnote{C: there is an issue here: we need $Y$ to be $\bb Q$-factorial for this to work, I'll have to think more about this} 
such that writing
$\Delta_W = \mu^*\Delta$ and $\cal F_W = \mu^{-1}\cal F$ and $f'\colon W \rightarrow X'$ for the induced map
we have
\begin{enumerate}
\item \label{i_aa} $\sum T_i$ is nef over $Y'$;
\item \label{i_bb} $(K_{\cal F_W}+\Delta_W) - (K_{W}+\Delta_W+\sum T_i)$ is $f'$-nef; and
\item \label{i_cc} $(W, \Delta_W+(1-\epsilon)\sum T_i)$ is klt for all $\epsilon>0$.
\end{enumerate}
\end{lem}
\begin{proof}
First, notice that since $Y$ is $\bb Q$-factorial we may apply \cite[Theorem 11.3]{CS18}
to see that $Y$ is klt.  Since $(\cal F, \Delta)$ is F-dlt we also know that $\cal F$ is non-dicritical by 
Theorem \ref{t_canimpliesnondicritical}.

Let $\{S_1, ..., S_N\}$ be the collection of all separatrices of
$\cal F$ meeting $f^{-1}(P)$, formal or otherwise.
Fix $n\gg 0$ sufficiently large. By \cite[\S 4, 5]{CS18} there is an \'etale cover 
$\sigma\colon X' \rightarrow X$ such that
we may find divisors $R_i$ on $Y'$
such that $R_i\vert_{Y'_n} = \overline{S_i}\vert_{Y'_n}$
where $\overline{S_i}= \tau^*S_i$, $Y'_n$ be the $n$-th infinitessimal neighborhood of $\sigma^{-1}(f^{-1}(P))$ and
where $\tau \colon Y' \rightarrow Y$ is the projection
and such that $\cal O_{\widehat{Y'}}(\overline{S_i}) \cong \cal O_{\widehat{Y'}}(R_i)$ where
$\widehat{Y'}$ is the formal completion of $Y'$ along $\sigma^{-1}(f^{-1}(P))$.
Let $g\colon Y' \rightarrow X'$ be the other projection.

Set $(\cal F':=\tau^{-1}\cal F, \Delta':=\tau^*\Delta)$.   Since $(\cal F, \Delta)$ is F-dlt we see
that $(\cal F', \Delta')$ is as well and we may find 
an F-dlt modification $\pi\colon Z \rightarrow Y'$ such that $\mu$ is small.
Observe that $Z$ is $\bb Q$-factorial and
so $R_i':=\pi_*^{-1}R_i$ and $\overline{S_i}' :=\pi_*^{-1}\overline{S_i}$ are
$\bb Q$-Cartier.
Set $\cal G = \pi^{-1}\cal F$ and we may write $K_{\cal G}+\Gamma = \pi^*(K_{\cal F'}+\Delta')$.
We also remark that $(\cal G, \Gamma)$ is necessarily terminal at the generic point of a curve
$C \subset \text{exc}(\pi)$.  Indeed, otherwise $C$ would be a lc centre of
$(\cal G, \Gamma)$ since it is tangent to $\cal G$ which by \cite[Lemma 3.8]{CS18}
would imply that $(\cal F', \Delta')$ is log smooth at $\pi(C)$, a contradiction.

Note that $\overline{S_i}'$ are all the separatrices (formal or otherwise) which meet 
$\pi^{-1}(g^{-1}(p))$ and that $R_i'$ still approximate all the $\overline{S_i}'$.
Note that in particular they have the same intersection numbers with all
curves contained in $\pi^{-1}(g^{-1}(p))$.

Since $Y$ is klt, the same is true of $Z$ and so we may run a
$K_Z+\delta \sum R'_i$-MMP over $Y'$ for some $\delta>0$ sufficiently small.  
Denote this MMP $\phi\colon Z \dashrightarrow W$ 
and set $T_i:=\phi_*R'_i$ and $\widetilde{S}_i =\phi_*\overline{S}'_i$.
Set $\cal F_W = \phi_*\cal G$ and $\Delta_W = \phi_*\Gamma$.
Observe that each step of this MMP is $K_{\cal G}+\Gamma$ trivial
and that $(\cal F_W, \Delta_W)$ is F-dlt.
We claim that $W$ satisfies all our required properties.

Item \ref{i_aa} holds since $K_Y$ is $\bb Q$-Cartier and so $K_Z$ (and hence $K_W$)
is trivial over $Y'$.  Thus $K_W+\delta\sum T_i$ being nef over $Y'$ implies
that $\sum T_i$ is nef over $Y'$.  

To prove~(\ref{i_bb}), 
let $C \subset \pi^{-1}(P)$.  
Note that by non-dicriticality of $(\cal F, \Delta)$ and our assumptions on $f$ we have that $C$ is 
tangent to $\cal F_W$.   Moreover, if $(\cal F_W, \Delta_W)$ is canonical at the generic point
of $C$ then $(\cal F_W, \Delta_W)$ is log smooth at a general point of $C$, \cite[Lemma 3.8]{CS18}.
So, up to relabelling the $S_i$ we may assume that $C \subset \widetilde{S}_1$ and that $\widetilde{S}_1$
gives a strong separatrix at a general point of $C$ if $(\cal F_W, \Delta_W)$ is canonical at the generic point of $C$ and $(\cal F_W, \Delta_W)$
has a saddle node at the general point of $C$, 
\cite[pg. 3]{Brunella00} for a recollection on saddle nodes
and weak separatrices on surfaces, but which works equally in the current setting.

By \cite[Corollary 3.20]{CS18} we may write
\[(K_{\cal F_W}+\Delta_W)\vert_{\widetilde{S}_1} = K_{\widetilde{S}_1}+\Theta\]
and
\[(K_{W}+\Delta_W+\sum \widetilde{S}_i)\vert_{\widetilde{S}_1} = K_{\widetilde{S}_1}+\Theta'\]
where $\Theta \geq \Theta'$ and the coefficient of $C$ in both these divisors is the same.
It follows that 
\[((K_{\cal F_W}+\Delta_W)-(K_{W}+\Delta_W+\sum \widetilde{S}_i))\cdot C \geq 0\]
and since $\{T_1, ..., T_N\}$ approximate the $\widetilde{S}_1$
we have
\[((K_{\cal F_W}+\Delta_W)-(K_{W}+\Delta_W+\sum T_i))\cdot C \geq 0.\]
Since $C$ was arbitrary we get our claimed nefness.

Next, observe that each step of the MMP $\phi\colon Z \dashrightarrow Y$
is $K_{\cal G}+\Gamma$ trivial so 
we still have that $(\cal F_W, \Delta_W)$ is F-dlt and so $\cal F_W$ is non-dicritical by Theorem \ref{t_canimpliesnondicritical}
 and all the
log canonical centres of $(\cal F_W, \Delta_W)$ are contained in $\Supp(\sum T_i)$. 
So we may apply \cite[Lemma 3.16]{CS18}
to see that $(W, \Delta_W+(1-\epsilon)(\sum T_i))$ is klt for all $\epsilon>0$. This gives us item \ref{i_cc}.
\end{proof}

\begin{thm}
\label{t_vanishing}
Set up as above.
Let $(\cal F, \Delta)$ be a F-dlt pair and
let $L$ be a line bundle such that $L-(K_{\cal F}+\Delta)$ is $f$-nef and big.

Suppose moreover that either 
\begin{enumerate}
\item $L-(K_{\cal F}+\Delta)$ is $f$-ample; or
\item $\Delta = A+B$ where $A$ is $f$-ample and $B \geq 0$.
\end{enumerate}

Then $R^if_*L = 0$ for $i>0$.
\end{thm}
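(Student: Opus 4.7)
The plan is to reduce the theorem to a relative Kawamata--Viehweg vanishing statement on a klt pair built on the $\mathbb{Q}$-factorial algebraic model $W$ produced by Lemma \ref{l_approx}, after a boundary perturbation and \'etale base-change.

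First, I would reduce to Case (1), where $L-(K_\cal F+\Delta)$ is $f$-ample. In Case (2), write $\Delta = A+B$ with $A$ $f$-ample, and for $0<\eta\ll 1$ set $\Delta_\eta := (1-\eta)A + B$. Then $(\cal F, \Delta_\eta)$ remains F-dlt, and
\[
L-(K_\cal F+\Delta_\eta) = \bigl(L-(K_\cal F+\Delta)\bigr)+\eta A
\]
is $f$-ample. After also replacing $\Delta$ by $\Delta - \epsilon\lfloor\Delta\rfloor$ for $\epsilon>0$ small, which preserves F-dlt-ness and the positivity of $L-(K_\cal F+\Delta)$, I may assume $\lfloor\Delta\rfloor = 0$, as required to apply Lemma \ref{l_approx}. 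Next, since the statement is local on $X$, for every $P\in X$ I apply Lemma \ref{l_approx} to obtain the data $(\sigma\colon X'\to X,\ \mu\colon W \to Y':= Y\times_X X',\ \sum T_i)$. Writing $\tau\colon Y' \to Y$ and $f'\colon Y'\to X'$ for the base-changes, $g_W := f'\circ\mu\colon W\to X'$ and $L_W := \mu^*\tau^*L$, Lemma \ref{l_stupidcover} reduces the claim to $R^if'_*\tau^*L=0$ for $i>0$. Since $\mu$ is a small proper birational morphism between klt threefolds, $R\mu_*\cal O_W = \cal O_{Y'}$ (as one sees by passing to a common resolution and using that both $W$ and $Y'$ have rational singularities), and the projection formula together with the Leray spectral sequence further reduces the claim to
\[
R^ig_{W*}L_W = 0, \quad i>0.
\]

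I would then apply relative Kawamata--Viehweg vanishing to $g_W$, the klt pair $(W, \Delta_W+(1-\epsilon)\sum T_i)$ given by Lemma \ref{l_approx}(c), and the $\mathbb{Q}$-Cartier divisor $L_W$. The point is to verify that
\[
D_\epsilon := L_W - K_W - \Delta_W - (1-\epsilon)\sum T_i = \mu^*\tau^*\bigl(L-(K_\cal F+\Delta)\bigr) + (K_{\cal F_W}-K_W-\sum T_i) + \epsilon\sum T_i
\]
is $g_W$-nef and $g_W$-big for some $\epsilon > 0$ sufficiently small. The first summand is $g_W$-nef and $g_W$-big by the $f$-ampleness of Case (1); the second summand is $g_W$-nef by Lemma \ref{l_approx}(b). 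Combining (a) and (b) on $\mu$-contracted curves $C$, both $K_W$ and $K_{\cal F_W}$ are $\mu$-trivial, so (b) forces $\sum T_i \cdot C \leq 0$ while (a) gives $\sum T_i \cdot C \geq 0$; hence $\sum T_i$ is numerically $\mu$-trivial, and the $\epsilon\sum T_i$ correction contributes $0$ on $\mu$-contracted curves. On a $g_W$-contracted curve $C$ with $\mu_*C = C' \neq 0$, the curve $C'$ is $f'$-contracted, and $\tau^*(L-(K_\cal F+\Delta))\cdot C'>0$ strictly by $f'$-ampleness, so the first summand contributes a strict positive amount. By the cone theorem there are finitely many $g_W$-extremal rays on which nefness must be checked; extracting uniform lower bounds on the first summand and upper bounds on $|\sum T_i\cdot C|$, for $\epsilon>0$ small enough $D_\epsilon$ is $g_W$-nef. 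Bigness of $D_\epsilon$ over $X'$ is inherited from the first summand.

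The principal obstacle is establishing $g_W$-nefness of $D_\epsilon$ on $g_W$-contracted curves not contracted by $\mu$: the $\mu$-contracted curves are handled cleanly by the numerical $\mu$-triviality of $\sum T_i$ forced by the combination of (a) and (b), while the remaining, non-$\mu$-contracted contribution relies critically on the $f$-ampleness of $L-(K_\cal F+\Delta)$ together with the finiteness of the cone of $g_W$-extremal rays. The initial perturbation in Case (2) is precisely what feeds the argument the stronger $f$-ampleness hypothesis it needs.
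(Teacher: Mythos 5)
Your proposal is correct and follows essentially the same route as the paper: reduce Case (2) to Case (1) by perturbing the boundary by a small multiple of $A$, pass to the \'etale neighbourhood and the small $\mathbb{Q}$-factorialization $W$ of Lemma \ref{l_approx} (using Lemma \ref{l_stupidcover} and rational singularities of the klt models), decompose $L_W-(K_W+\Delta_W+(1-\epsilon)\sum T_i)$ into the pullback of the $f$-ample divisor plus the $f'$-nef difference from Lemma \ref{l_approx}(2) plus $\epsilon\sum T_i$ absorbed via the $\mu$-nefness of $\sum T_i$, and conclude by relative Kawamata--Viehweg vanishing for the klt pair of Lemma \ref{l_approx}(3). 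The only differences are expository (you make the Leray/projection-formula step and the uniform choice of $\epsilon$ explicit, and note the stronger fact that $\sum T_i$ is $\mu$-numerically trivial, which is not needed since $\mu$-nefness already suffices).
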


\begin{proof}
If $\Delta = A+B$ where $A$ is $f$-ample then replacing $\Delta$ by $\Delta-\delta A$ for $\delta>0$
small we may freely assume that $L-(K_{\cal F}+\Delta)$ is $f$-ample.  Moreover, perhaps replacing
$\Delta$ by $\Delta -\epsilon \lfloor \Delta \rfloor$ for some $\epsilon >0$ sufficiently small
we may assume that $\lfloor \Delta \rfloor = 0$.

As in Lemma \ref{l_approx} we see that $Y$ is klt and so it has rational singularities.
By Lemm \ref{l_stupidcover}  and the fact that $Y$ has rational singularities
we see that $R^if_*L = 0$ provided $R^if'_*L' = 0$
where $L' = \mu^*\tau^*L$ where $\mu$ and $\tau$ are as in Lemma \ref{l_approx} (and its proof).

Next, 
observe that $L'- (K_{\cal F_W}+\Delta_W)$ is $f'$-big and nef
and is strictly positive on any curve not contracted by $\mu$.
Thus we see by Lemma \ref{l_approx} Item \ref{i_bb}
 that
\begin{multline*}
%\begin{aligned} 
L' - (K_W+\Delta_W+\sum T_i) = \\
 (L'-(K_{\cal F_W}+\Delta_W))+((K_{\cal F_W}+\Delta_W)-(K_W+\Delta_W+\sum T_i))
%\end{aligned}
\end{multline*}
is $f'$-big and nef and is strictly positive on any curve not contracted by $\mu$.

So for $\epsilon>0$ sufficiently small since $\sum T_i$ is $\mu$-nef
by Lemma \ref{l_approx} Item \ref{i_aa}
\[L'-(K_W+\Delta_W+(1-\epsilon)\sum T_i)\]
is $f'$-big and nef.

Thus we may apply relative Kawamata-Viehweg vanishing to conclude that $R^if'_*L' =0$ for $i>0$.
\end{proof}

\section{Malgrange's theorem}
\label{sect.malgr}

In this section we prove a version of Malgrange's theorem on singular threefolds. 
A weaker version of this statement was proven in \cite{Spicer17}.
Results in this direction were
achieved in \cite{CLN08} and some of our ideas have been inspired by their approach.

Let $P \in X$ be a germ of a threefold and let $\cal F$ be a co-rank 1 foliation on $X$ defined by a holomorphic
1-form $\omega$.  We say that $f \in \cal O_{X, p}$ is a first integral for $\cal F$ if $df\wedge \omega =0$.

\begin{thm}
\label{t_Malgrange}
Let $P \in X$ be a germ of an isolated (analytically) $\bb Q$-factorial threefold singularity with a co-rank 1 foliation $\cal F$. 
Suppose that $\cal F$ has an isolated canonical singularity at $P$.

Then $\cal F$ admits a holomorphic first integral.
\end{thm}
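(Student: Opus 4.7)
The plan is to follow the strategy already outlined in the introduction: first show that the underlying analytic germ $(P \in X)$ is klt, then leverage this to produce a holomorphic Godbillon–Vey sequence for a defining 1-form, then promote this to a formal first integral, and finally upgrade formal to convergent.

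First, I would reduce to showing that $X$ itself is klt at $P$. To this end, I would construct a well-chosen partial resolution $\mu\colon Y \to X$ which extracts only divisors transverse to $\FF$ and over which the transformed foliation is as mild as possible — for instance, by taking a foliated log resolution via Cano's theorem \cite{Cano} and then running a suitable relative MMP using Theorem \ref{t_term} and Corollary \ref{cor_Rel_MMP}. The goal is to arrange that every $\mu$-exceptional divisor $E$ is non-invariant, so that the foliation discrepancies $a(E,\FF) \geq 0$ force, after adapting the argument of \cite[Theorem IV.2.2]{McQuillan08}, that the geometric discrepancies $a(E,X) > -1$. Because $P$ is $\mathbb{Q}$-factorial and isolated, Theorem \ref{t_canimpliesnondicritical} ensures $\FF$ is non-dicritical, which is what allows us to compare foliation discrepancies with ordinary discrepancies along exceptional divisors. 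This is the step I expect to be the main technical obstacle: translating the McQuillan surface argument into the three-dimensional setting requires delicate control of the birational geometry of both the foliation and the ambient space simultaneously.

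Next, having established that $X$ is klt at $P$, I would exploit the topological consequences: klt singularities have finite local fundamental group (Braun's theorem) and simply connected smooth locus up to a finite cover, and admit the reflexive pullback of Proposition \ref{t:extensionthm}. Let $\omega$ be a local defining 1-form for $\FF$. The integrability condition $\omega \wedge d\omega = 0$ produces, on the smooth locus, a 1-form $\eta$ with $d\omega = \eta \wedge \omega$, and iteratively one constructs a Godbillon–Vey sequence. Potential ambiguities in this construction are parametrized by closed 1-forms, hence by first cohomology of the smooth locus; after passing to the index-one cover (which exists and is still klt by the classification of klt singularities), this cohomology vanishes by the topological triviality of klt germs, and Corollary \ref{c_pullbackvanishing} together with the reflexive extension theorem allows us to extend the locally defined forms across the singular point to a holomorphic Godbillon–Vey sequence on the whole germ.

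Once a holomorphic Godbillon–Vey sequence exists, a standard argument (see \S\ref{godb.sect}) produces a formal first integral: the sequence truncates in the appropriate sense because $\FF$ has rank equal to codimension one, and one solves the formal integrability problem inductively in the maximal ideal. Finally, to upgrade from formal to convergent, I would invoke the Mattei–Moussu type arguments of \cite{MM80}: the existence of a formal first integral, combined with the fact that the restriction of $\FF$ to a general hyperplane section through $P$ is a germ of an isolated surface foliation singularity whose leaves have the requisite dynamical behaviour (closed, periodic holonomy inherited from the formal first integral), forces convergence. Concretely, one checks that the holonomy of $\FF$ on a transversal is finite, and then Mattei–Moussu's criterion yields a convergent holomorphic first integral $\phi\colon (P\in X) \to (0 \in \mathbb{C})$, completing the proof.
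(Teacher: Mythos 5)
Your high-level outline matches the paper's strategy (klt $\Rightarrow$ Godbillon--Vey $\Rightarrow$ formal first integral $\Rightarrow$ convergence via Mattei--Moussu), but two of the four steps contain genuine errors or gaps.

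First, the klt step. You propose to build a partial resolution all of whose exceptional divisors are \emph{non-invariant}, so that foliation discrepancies $\geq 0$ force ordinary discrepancies $>-1$. This is impossible: since $\FF$ is canonical, Theorem \ref{t_canimpliesnondicritical} makes it non-dicritical, and non-dicriticality means precisely that every exceptional divisor over $P$ is tangent to (hence invariant under) the pulled-back foliation; moreover a canonical foliation admits no exceptional divisors of discrepancy $-\epsilon(E)$ with $\epsilon(E)=1$. The actual argument (Lemma \ref{l_canimpliesklt}) takes an F-dlt modification $\mu$ with \emph{invariant} exceptional divisors $E_i$, writes $K_{\overline X}+\sum E_i=\mu^*K_X+\sum a_iE_i$, and uses the comparison $K_{\overline{\FF}}-(K_{\overline X}+\sum E_i)$ being $\mu$-nef off finitely many curves plus the Negativity Lemma to conclude either $a_i>0$ for all $i$ (klt, done) or $a_i=0$ for all $i$. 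Ruling out the second case is where McQuillan's argument enters: one first excludes saddle nodes via adjunction, then restricts to a general ample surface section and applies the Camacho--Sad formula, showing the eigenvalue ratios are roots of unity and contradicting $C^2<0$ for the contractible exceptional curve. Your sketch does not identify this dichotomy or the role of Camacho--Sad, and the mechanism you do describe cannot be set up.

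Second, the convergence step. Asserting that the holonomy on a transversal is finite because a formal first integral exists is unsupported and is essentially the whole difficulty. A formal Frobenius argument at the point $P$ only produces a first integral in the completion of the local ring at $P$, which is too weak to feed into \cite{MM80}. The paper instead extends the Godbillon--Vey forms to a log resolution $Y\to X$ (using the reflexive extension theorem and Corollary \ref{c_pullbackvanishing} to see they vanish on the exceptional divisor $E$), produces a formal first integral on the completion of $Y$ \emph{along $E$} (Lemmas \ref{l_formal_frobenius} and \ref{l:formal_holonomy}, the latter needing $\pi_1(E)=\{1\}$, which is arranged by passing to the cover trivializing the finite fundamental group of the link), then applies semistable reduction so the central fibre is reduced, invokes \cite[Th\'eor\`eme A]{MM80} to get convergence locally at each point of the exceptional set, and finally kills the resulting monodromy representation into $\mathrm{Aut}(\widehat{\bb C})$ using simple connectivity after a further quasi-\'etale cover. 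None of this global-along-$E$ structure appears in your proposal, and without it the passage from formal to convergent does not go through.
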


It would be ideal to drop the $\bb Q$-factoriality assumption in the theorem.  We are able to do this when $\cal F$
is terminal, see Corollary \ref{c_term_malg}. 

Theorem \ref{t_Malgrange} has the following immediate consequence.

\begin{cor}
\label{c_sep_can}
Let $P \in X$ be a germ of an isolated threefold singularity with a co-rank 1 foliation $\cal F$.
Suppose that $X$ is $\bb Q$-factorial and that $\cal F$ is canonical.
Then $\cal F$ has a separatrix at $P$.
\end{cor}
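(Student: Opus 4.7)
My plan is to extract a separatrix directly from the first integral produced by Theorem \ref{t_Malgrange}. I would first consider the case where $\cal F$ has an isolated singularity at $P$. In that case, Theorem \ref{t_Malgrange} yields a holomorphic first integral $f \in \cal O_{X, P}$, which is automatically non-constant (this is built into the proof of that theorem, where $f$ arises by integrating a holomorphic Godbillon-Vey sequence into an actual submersion, rather than as a trivial solution of $df \wedge \omega = 0$). I would then replace $f$ by $f - f(P)$ so that $f(P) = 0$, which does not affect the first-integral property, and consider the hypersurface germ $S := \{f = 0\} \ni P$. Writing $\omega$ for a local defining $1$-form of $\cal F$, the identity $df \wedge \omega = 0$ forces $\ker df$ to coincide with $\ker \omega$ at every smooth point of $S$, so $\cal F$ is tangent to $S$. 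Hence $S$ is an $\cal F$-invariant hypersurface germ through $P$, i.e.\ a separatrix.

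It remains to handle the case where $\cal F$ has a non-isolated singularity at $P$, i.e.\ some one-dimensional component $C$ of $\text{sing}(\cal F)$ passes through $P$. By Theorem \ref{t_canimpliesnondicritical}, canonicity of $\cal F$ forces non-dicriticality. At a generic point $q \in C \setminus \{P\}$ the variety $X$ is smooth, since $P$ is the only singularity of $X$, and the restriction of $\cal F$ to a smooth disc transverse to $C$ at $q$ is a plane foliation germ with an isolated singularity. I would invoke the classical Camacho--Sad theorem to obtain a convergent separatrix in this transverse slice, then propagate these transverse separatrices along $C$ into a germ of invariant surface $S'$ in a neighbourhood of $C \setminus \{P\}$, and finally extend $S'$ across the isolated singularity $P \in X$ using normality and a Hartogs-type argument.

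The main obstacle is the non-isolated case: Camacho--Sad produces at least one separatrix in each transverse slice but these are generally not unique, so I must argue that a coherent holomorphic choice can be made along $C$ so that the slices assemble into an honest hypersurface germ, and that the resulting $S'$ extends across $P$. Non-dicriticality is essential here, as it bounds the set of separatrices at each transverse point by a finite list that can be tracked holomorphically along $C$. In the isolated case, by contrast, everything follows essentially immediately from Theorem \ref{t_Malgrange}.
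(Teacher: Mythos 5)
Your treatment of the case where $\cal F$ has an isolated singularity at $P$ is correct and is exactly the paper's argument: the paper simply says this case "follows directly from Theorem \ref{t_Malgrange}", and the content of that remark is precisely your observation that the level set $\{f = f(P)\}$ of a non-constant holomorphic first integral is an invariant hypersurface germ through $P$.

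The non-isolated case is where there is a genuine gap. The paper's route is to note that at every $Q \in Z \setminus \{P\}$ (with $Z \subset \text{sing}(\cal F)$ a curve) the ambient space is smooth and $\cal F$ is non-dicritical by Theorem \ref{t_canimpliesnondicritical}, so the Cano--Cerveau theorem \cite{CC92} already supplies a germ of a \emph{hypersurface} separatrix at each such $Q$; it then appeals to the arguments of \cite{CC92} to produce a separatrix at $P$ itself. You instead try to manufacture these separatrices from scratch: Camacho--Sad in a transverse slice at a generic $q \in C \setminus \{P\}$, followed by a "propagation" along $C$. That propagation step is not a technicality to be filled in later --- it is essentially the content of the Cano--Cerveau theorem. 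Concretely: for a germ of a curve $C$ at $P$, each branch of $C \setminus \{P\}$ is a punctured disc, so there is genuine monodromy acting on the (finite, but generally not singleton) set of slice separatrices; the Camacho--Sad separatrix is produced by a non-constructive index argument and there is no reason for a chosen one to vary holomorphically with $q$; and even granting a coherent choice over the generic locus, one must show the assembled surface is analytic and converges at the non-generic points of $C$. Non-dicriticality alone does not "track a finite list holomorphically along $C$" --- resolving exactly this assembly problem is what \cite{CC92} does via reduction of singularities. The repair is straightforward: do not reprove \cite{CC92}. Invoke it directly at the points of $Z \setminus \{P\}$, where its hypotheses (smooth ambient threefold, non-dicritical singularity) hold, to get hypersurface-germ separatrices there, and reserve your argument only for the final extension across the isolated point $P$ (where a Remmert--Stein type extension, or the paper's appeal to the arguments of \cite{CC92}, is the appropriate tool, since $P$ has codimension $\geq 2$ in the candidate surface's closure).
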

\begin{proof}
If $\cal F$ is smooth outside of $P$ then this follows directly from Theorem \ref{t_Malgrange}.
Otherwise let $Z \subset \text{sing}(\cal F)$ be a curve and note that $Z$ is tangent to $\cal F$.  
Observe that there is a germ of a separatrix for all $Q \in Z-P$.
By Theorem \ref{t_canimpliesnondicritical} $\cal F$ is non-dicritical 
and so by Lemma \ref{l_formalseparatrix} we may extend $S_Q$ to a neighborhood of $Z$, which in turn 
gives separatrix at $P$.
\end{proof}

Recall that in general even if $\cal F$ is non-dicritical, if $P \in X$ is a singular point then there may be no separatrices at $P$.  See \cite{Camacho88} results in this direction on surfaces.

\subsection{Controlling the singularities of $X$ and $\cal F$}
\label{ss:controllingsing}

The goal of this subsection is to show that under the hypotheses of Theorem \ref{t_Malgrange} we have that $X$ has log terminal singularities.

We will need the following version of the classical Camacho-Sad formula for F-dlt foliations.
It follows as a special case of the Camacho-Sad formula for foliations on varieties with quotient singularities proven in
\cite[Proposition 3.12]{DruelOu19}.  We refer to \cite[Definition 3.10]{DruelOu19} for the definition of the Camacho-Sad index.

\begin{lem}
\label{FdltCS}
Let $X$ be a normal surface and $\cal F$ an F-dlt foliation.
Let $C$ be a compact $\cal F$-invariant curve.
Then 
\[C^2 = \sum_{p \in \text{sing}(\cal F)\cap C} CS(p, \cal F, C).\]
\end{lem}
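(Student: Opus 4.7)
The plan is to reduce to the classical Camacho--Sad formula on a smooth surface by passing through a foliated log resolution adapted to the F-dlt hypothesis, and then to keep track of the correction terms contributed by the exceptional divisors.

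First, I would take a foliated log resolution $\pi\colon Y \to X$ of $(\cal F, 0)$. Since $(\cal F,0)$ is F-dlt and $X$ is a surface, we can arrange simultaneously that $Y$ is smooth, that $\cal G := \pi^{-1}\cal F$ has only simple singularities, and that every $\pi$-exceptional divisor $E_i$ has discrepancy $a(E_i, \cal F) > -\epsilon(E_i)$. Let $\widetilde{C} := \pi^{-1}_\ast C$; it is compact and $\cal G$-invariant, and by blowing up further if necessary we may assume each of its components is smooth and that $\widetilde{C}\cup \bigcup E_i$ has normal crossings. Applying the classical Camacho--Sad formula on the smooth surface $Y$, componentwise, gives
\[
\widetilde{C}^2 = \sum_{q \in \Sing(\cal G)\cap \widetilde{C}} CS(q, \cal G, \widetilde{C}).
\]

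Second, I would relate $C^2$ to $\widetilde{C}^2$ via the projection formula. Writing $\pi^*C = \widetilde{C} + \sum c_i E_i$ for rational numbers $c_i$ determined by the condition $\pi^*C \cdot E_j = 0$, we get
\[
C^2 = \pi^*C \cdot \pi^*C = \widetilde{C}\cdot \pi^*C = \widetilde{C}^2 + \sum_i c_i (\widetilde{C}\cdot E_i).
\]
The singular points $q$ of $\cal G$ lying on $\widetilde{C}$ naturally partition into two groups: those whose image $\pi(q)$ is already a singularity of $\cal F$ on $C$, and those arising only on the exceptional locus at points $\widetilde{C}\cap E_i$ (these must be $\cal G$-singular because $\widetilde{C}$ and $E_i$ are both $\cal G$-invariant). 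The plan is to show that, after grouping, the sum
\[
\sum_{q \in \pi^{-1}(p)\cap \widetilde{C}} CS(q, \cal G, \widetilde{C}) \;+\; \text{(contribution from $\pi^*C-\widetilde C$ over $p$)}
\]
is precisely the local invariant $CS(p, \cal F, C)$ at each $p \in \Sing(\cal F)\cap C$, so that the sum over all $p$ reproduces the right-hand side of the stated formula while the sum over all $q$ plus the correction gives $C^2$.

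The main obstacle is the case in which $p \in \Sing(\cal F)\cap C$ is also a singular point of the underlying surface $X$. Under F-dlt in dimension two, such points are cyclic quotient singularities by the McQuillan-type classification recalled in the preliminaries; the natural definition of $CS(p, \cal F, C)$ in this situation is obtained by pulling back to a local index-one cover and dividing by the order of the cover. The technical heart of the proof is to verify that this intrinsic local definition agrees with the contribution of $\pi^{-1}(p)$ computed on the resolution. This amounts to an explicit computation at cyclic quotient points: one uses the negative-definiteness of the exceptional intersection form together with the explicit formulas for the coefficients $c_i$ and the numbers $\widetilde{C}\cdot E_i$ read off from the dual graph, and combines them with the local normal form of a simple singularity of $\cal G$ at each $q\in \pi^{-1}(p)\cap \widetilde{C}$ to recover the expected $CS(p, \cal F, C)$. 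At points where $X$ is smooth this reduces to the standard birational invariance of the Camacho--Sad index, so the new content is exactly the cyclic quotient case.
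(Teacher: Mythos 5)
Your plan is correct in outline but follows a genuinely different route from the paper. The paper's proof is essentially one line: it observes that the F-dlt hypothesis forces $X$ to have at worst cyclic quotient singularities at which $\cal F$ is the quotient of a smooth foliation, and then invokes McQuillan's version of the Camacho--Sad formula \cite[I.3]{McQuillan08}, whose arguments already accommodate cyclic quotients of smoothly foliated points. You instead re-derive the quotient-singularity case from the classical smooth formula by passing to a foliated log resolution, applying Camacho--Sad on $Y$, and tracking the discrepancy between $C^2$ and $\widetilde{C}^2$ via $\pi^*C = \widetilde{C}+\sum c_iE_i$ and the negative definiteness of the exceptional intersection form. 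What the paper's approach buys is brevity and the fact that the index at a quotient point is already \emph{defined} and its formula \emph{proved} in the reference; what yours buys is self-containedness and an explicit mechanism that would also let a reader see where the cyclic quotient data enters. The caveat is that the step you call the technical heart --- verifying that the index-one-cover definition of $CS(p,\cal F,C)$ at a cyclic quotient point coincides with the resolution contribution $\sum_{q\in\pi^{-1}(p)\cap\widetilde C}CS(q,\cal G,\widetilde C)+\sum_{\pi(E_i)=p}c_i(\widetilde C\cdot E_i)$ --- is not a formality: it is precisely the content of McQuillan's result, and carrying it out requires the dual-graph/continued-fraction computation together with the behaviour of the index under a single blow-up. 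Since you have identified it correctly and it is a known, doable computation, I would count your plan as sound but note that, once that step is written out, you will have essentially reproduced the proof of the cited result rather than avoided it.
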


\begin{lem}
\label{l_canimpliesklt}
Let $P \in X$ be a germ of an isolated threefold singularity and let $\cal F$ be a co-rank 1 foliation with canonical singularities such that $\cal F$ is smooth away from $P$. 
Suppose that $K_X$ is $\bb Q$-Cartier. Then $X$ is log terminal.
\end{lem}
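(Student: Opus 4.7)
The plan is to prove log terminality of $X$ by comparing divisorial discrepancies of $K_{\cal F}$ and $K_X$ on a foliated log resolution, by means of the defining $1$-form of the foliation.

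First, I would take $\pi\colon Y \to X$ a foliated log resolution in the sense of Definition \ref{definitionlogsmooth}: $Y$ is smooth, $\widetilde{\cal F}:=\pi^{-1}\cal F$ has only simple singularities, and the $\pi$-exceptional locus $\sum_i E_i$ is a reduced SNC divisor. By Theorem \ref{t_canimpliesnondicritical}, the canonicity of $\cal F$ forces $\cal F$ to be non-dicritical; hence each $E_i$ is $\widetilde{\cal F}$-invariant, i.e.\ $\epsilon(E_i)=0$. Setting $a_i := a(E_i, \cal F)$ and $b_i := a(E_i, X)$, the canonical assumption gives $a_i \geq 0$ for every $i$, and the goal is to show that $b_i > -1$ for every $i$.

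Next, I would establish a discrepancy comparison of the form $b_i = a_i + m_i$, where $m_i\in\mathbb{Z}$ records the vanishing order along $E_i$ of the pulled-back defining form of $\cal F$. On $X\setminus P$ the foliation $\cal F$ is smooth, hence the kernel of a twisted $1$-form $\omega\in H^0(\Omega^{[1]}_X\otimes N_{\cal F})$, where $N_{\cal F}$ is the normal (rank one reflexive) sheaf, extended across $P$. Pulling back to $Y$, the rational section $\pi^*\omega$ factors as $\widetilde{\omega}\cdot\prod_i f_{E_i}^{m_i}$ with $\widetilde{\omega}$ the saturated defining form of $\widetilde{\cal F}$; in line-bundle terms this reads $N_{\widetilde{\cal F}} = \pi^*N_{\cal F} - \sum_i m_i E_i$. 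Combining with the identities $N_{\cal F}=K_{\cal F}-K_X$ and $N_{\widetilde{\cal F}}=K_{\widetilde{\cal F}}-K_Y$ yields the desired formula $b_i=a_i+m_i$.

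The main obstacle is showing $m_i\geq 0$ for every $i$: since $X$ is not yet known to be klt we cannot apply the Kebekus extension theorem (Proposition \ref{t:extensionthm}) to the morphism $\pi$, and a priori the reflexive $\omega$ could pull back with poles along exceptional divisors. To bypass this, I would perform a local computation at a generic point of each invariant $E_i$, using that $\widetilde{\cal F}$ admits one of the local normal forms of Definition \ref{can.form.simple}: the invariance of $E_i$ forces $\widetilde{\omega}$ to vanish on $T E_i$, and a direct inspection rules out any pole of $\pi^*\omega$ along $E_i$. Alternatively, adapting the arguments of \cite[Theorem IV.2.2]{McQuillan08} to the threefold setting, the same conclusion can be derived by combining the foliated adjunction formula (Lemma \ref{adjunction}) on the invariant surfaces $E_i$ with the partial Bott connection of Lemma \ref{l_connection} and a Camacho--Sad-type analysis as in Lemma \ref{FdltCS}. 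Once $m_i\geq 0$ is established, we obtain $b_i = a_i + m_i \geq 0 > -1$, so $X$ is klt -- in fact canonical -- at $P$, as required.
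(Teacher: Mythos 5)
Your discrepancy bookkeeping $b_i=a_i+m_i$ with $N_{\widetilde{\cal F}}=\pi^*N_{\cal F}-\sum m_iE_i$ is correct, but the key claim $m_i\geq 0$ is false, and with it the whole strategy collapses. Concretely: let $X=\bb C^3/(\bb Z/4)$ with $\bb Z/4$ acting by scalar multiplication and let $\cal F$ descend from the smooth fibration $dz=0$. Then $\cal F$ is terminal (hence canonical) and smooth away from the vertex, but for the exceptional divisor $E$ of the blow-up of the vertex one computes $b_E=-1/4$ and $a_E=1/2$, so $m_E=-3/4<0$; the pulled-back form genuinely acquires a pole relative to the saturated one. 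This also shows your final conclusion ("$X$ is klt --- in fact canonical") overshoots: the lemma cannot yield canonicity of $X$. Your proposed fix via the normal forms of Definition \ref{can.form.simple} cannot work even in principle, because those normal forms describe the saturated form $\widetilde\omega$ of $\widetilde{\cal F}$ up to units and coordinate changes, whereas $m_i$ is precisely the (global, numerical) discrepancy between $\pi^*\omega$ and $\widetilde\omega$, which is invisible in the local model.

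The paper's proof is structured quite differently and its hard part is exactly what your argument skips. It passes to an F-dlt modification and compares $K_{\overline{\cal F}}$ not with $K_{\overline X}$ but with the \emph{log} canonical divisor $K_{\overline X}+\sum E_i$: by \cite[Lemma 9.6]{Spicer17} the difference $K_{\overline{\cal F}}-(K_{\overline X}+\sum E_i)$ is $\pi$-nef away from finitely many curves, so the Negativity Lemma (applied globally over the fiber $\pi^{-1}(P)$, not pointwise along each $E_i$) gives only $\sum a_iE_i\geq 0$ with $a_i=1+b_i$, i.e.\ a dichotomy: either all $a_i>0$ (then $X$ is klt) or all $a_i=0$, i.e.\ $X$ strictly log canonical. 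Ruling out this boundary case is the bulk of the proof: one shows the exceptional singularities have no saddle nodes and two nonzero eigenvalues, restricts to a general hyperplane section, and runs McQuillan's Camacho--Sad argument to force the eigenvalue ratios to be roots of unity, contradicting $C^2<0$. You mention Camacho--Sad and Bott connections as an ``alternative'' route to $m_i\geq 0$, but that is not what these tools are for here; they are needed to exclude the genuinely log canonical, non-klt case, which your framework never isolates.
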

\begin{proof}
If $\cal F$ is terminal then the result follows from Theorem \ref{t_canimpliesnondicritical}.
So suppose that $\cal F$ has canonical but not terminal singularities.

Let $\mu:(\overline{X}, \overline{\cal F}) \rightarrow (X, \cal F)$ be an F-dlt modification.
Let $E = \sum E_i = \text{exc}(\mu)$. Since $\cal F$ is canonical we have $\mu^*K_{\cal F} = K_{\overline{\cal F}}$.
Moreover, we may freely assume that $\mu$ is not the identity.
By Theorem \ref{t_canimpliesnondicritical} $\cal F$ is non-dicritical and so $E$ is $\overline{\cal F}$-invariant.
Let  $Z$ be a 1-dimensional component of $\text{sing}(\overline{\cal F})\cap E$.
By \cite[Lemma 3.14]{CS18} either $\cal F$ is terminal at the generic point of $Z$ or
$\overline{X}$ is smooth at the generic point of $Z$, and at a general point of $Z$
$\overline{\cal F}$ has simple singularities and there there are two separatrices (possibly formal) containing $Z$.  
However, Proposition \ref{prop_term_surf_class} applied to a general hyperplane passing through $Z$ and the 
restricted foliation on this hyperplane implies that $\cal F$ cannot be terminal at the generic point of $Z$.

Write $K_{\overline{X}}+\sum E_i = \pi^*(K_X)+\sum a_iE_i$.
By \cite[Lemma 8.9]{Spicer17} we see that $K_{\overline{\cal F}} - (K_{\overline{X}}+\sum E_i) = -\sum a_iE_i$ is $\pi$-nef 
away from finitely many curves which implies by the Negativity Lemma,~\cite[Lemma~1.3]{1907.06705} , that $\sum a_iE_i \geq 0$ and since $\Supp(\sum E_i) = \pi^{-1}(P)$, then either
\begin{enumerate}
\item \label{i_lt} $a_i > 0$ for all $i$; or 
\item \label{i_lc} $a_i=0$ for all $i$.
\end{enumerate}

By \cite[Lemma 3.16]{CS18}, we see that that $(\overline{X}, (1-\epsilon)\sum E_i)$ is klt and so 
if we are in Case \ref{i_lt} then we see immediately that $X$ is klt.

So suppose for sake of contradiction that we are in Case \ref{i_lc}, i.e., $a_i = 0$ for all $i$ and so $K_{\overline{X}}+E \sim_{\bb Q} 0$.

We first claim that if $Z \subset \text{sing}(\overline{\cal F})\cap E$ is a 1-dimensional component admitting two separatrices contained in $E$
then $Z$ is not a saddle node. Indeed, suppose for sake of contradiction 
that there exists $Z \subset E_i$ such that
$Z$ is a saddle node and $E_i$ is the weak separatrix of the saddle node, see \cite[pg. 3]{Brunella00} for a recollection on saddle nodes
and weak separatrices on surfaces, but which works equally in the current setting.
Write
\[K_{\overline{\cal F}}\vert_{E_i} = K_{E_i}+\Theta\]
and
\[(K_{\overline{X}}+\sum E_j)\vert_{E_i} = K_{E_i}+\Theta'.\]
By Lemma \ref{adjunction} we know that $\Theta \geq \Theta'$.
Since $E_i$ is the weak separatrix of a saddle node along $Z$
in appropriate (formal) local coordinates around a general point of $Z$
we see that $\overline{\cal F}$ is generated by a $1$-form $\omega$ of the form
$z(1+\nu w^k)dw+w^kdz$ where $E_i = \{z = 0\}$, $\nu \in \bb C$ and $k \geq 2$.
The coefficient of $Z$ in $\Theta$ is the order of vanishing 
of $\omega\vert_{E_i}$ along $Z$, which in turn is exactly $k\geq 2$.
On the other hand, since $(\overline{X}, \sum E_j)$ is log canonical we see that
the coefficient of $Z$ in $\Theta'$ is at most 1. However, this implies that 
$K_{\overline{\cal F}} - (K_{\overline{X}}+\sum E_j)$ cannot be $\pi$-trivial, a contradiction.

A similar argument shows that for each 1-dimensional component $Z \subset \text{sing}(\overline{\cal F})\cap E$
that $Z$ admits two separatrices, both of which are contained in $E$.
In particular, each 1-dimensional component $Z \subset \text{sing}(\overline{\cal F})\cap E$ admits 2 non-zero eigenvalues.

The rest of the argument proceeds in an essentially identical fashion to the proof of the first part of \cite[Theorem IV.2.2]{McQuillan08}.
We will explain the rest of his argument for the reader's convenience.
Since $K_{\overline{\cal F}}\sim_{\bb Q} 0$ and $K_{\overline{X}}+E \sim_{\bb Q} 0$ 
we see that $N^*_{\overline{\cal F}}+E \sim_{\bb Q}0$.

Let $H \subset \overline{X}$ be a general ample divisor,
and let $\cal G$ be the restricted foliation on $H$
and let $E \cap H = \cup C_i = C$.  Set $S = \text{sing}(\cal G)\cap C$
and notice that $C_i \cap C_j \subset S$ for $i \neq j$.

If $H$ is general enough we see that $(\overline{\cal F}, H)$ is F-dlt and so
$\cal G$ is F-dlt.  Even better, if $H$ is general enough we see
that $N^*_{\overline{\cal F}}\vert_H = N^*_{\cal G}$ and so $N^*_{\cal G}+\sum C_i \sim_{\bb Q}0$.
Observe that since $\cal G$ is F-dlt we see that $C$ is a nodal curve.

For $	q \in S$ let $\partial_q$ be a vector field generating $\cal G$ near $q$.
\begin{claim}
The ratio $\lambda_q$ of the eigenvalues of $\partial_q$ is a root of unity. 
\end{claim}
\begin{proof}[Proof of claim]
We may check this after taking a cover ramified along a general ample divisor $A$, and so after taking the index one cover
associated to $K_{\cal G}$ on $H\setminus A$ we may freely assume that $K_{\cal G}$ is Cartier.  By Lemma \ref{lem_Fdlt_surface}
it follows that $H$ is smooth.

For any $p \in C \setminus S$ set $U_p$ to be small open set so that $\cal G$ is defined by a $1$-form $\omega_p = dz_p$ where $\{z_p = 0\} = C \cap U_p$.
For any $q \in S$ set $U_q$ to be a small open subset so that $\cal G$ is defined by a $1$-form $\omega_q = x_qa_qdy_q+y_qb_qdx_q$
where $\{x_qy_q = 0\} = C \cap U_q$ and where $a_q(q), b_q(q) \neq 0$.

Let $\{(U_{pq}, h_{pq})\}, \{(U_{pq}, g_{pq})\} \in H^1(H, \cal O_H^*)$ be the coycles associated to $\cal O_H(C)$ and $N_{\cal G}$ respectively
where $U_{pq} = U_p \cap U_q$.

If $p, p' \in C \setminus S$ so that $U_{pp'} \neq \emptyset$ then $dz_p = h_{pp'} dz_{p'}$ and so $g_{pp'} = h_{pp'}$
when restricted to $C$.
If $p \in C \setminus S$ and $q \in S$ so that $U_{pq} \neq \emptyset$ we have that $z_p = h_{pq}(x_qy_q)$ and so $dz_p = h_{pq}d(x_qy_q)$,
when restricted to $C$,
which in turn gives that $dz_p = h_{pq}a_q^{-1}\omega_q$ or $=h_{pq}b_q^{-1}\omega_q$ depending on whether
$p \in \{x_q=0\}$ or $p \in \{y_q = 0\}$.  In particular, we see that after restricting to $C$ we have an equality
$g_{pq} = h_{pq}b_q^{-1} = h_{pq}a_q^{-1}$.

Since $N^*_{\cal G}+C\sim_{\bb Q}0$ we see that $\{(U_{pq}, h_{pq}g_{qp})\}$ is a torsion cocycle, and so for some $m$
we have that $\{(U_{pq}, (h_{pq}g_{qp})^m)\}$ is a trivial cocycle.  
Set $C_{p} = U_p \cap C$, $C_{pq} = U_{pq}\cap C$ and note that $\{(C_{pq}, (h_{pq}g_{qp})^m)\}$ is still a trivial cocyle.

We may therefore find invertible functions $f_p$ on $C_p$
so that $f_p/f_q =  (h_{pq}g_{qp})^m)$.  Without loss of generality we may assume that for $p \in C \setminus S$
that $f_p = 1$.  From our previous calculations we see that for $q \in S$ that $f_q = a_{q}^m = b^m_q$ (where we consider $a_q, b_q$
as functions restricted to $C$).  In particular, $1 = f_q/f_q = (a_q(q)/b_q(q))^m = \lambda_q^m$ as required.
\end{proof}

Since $C$ is contractible we see that $C^2<0$ which implies
$\sum_i C_i^2 < - \sum_{i, j} C_i\cdot C_j = -2\#S$.
On the other hand, Lemma \ref{FdltCS} gives
us
\[(\sum C_i)^2 = \sum_{p \in Z} CS(p, \cal G, \sum C_i) = \sum_{p \in S} 2+\lambda_p+\frac{1}{\lambda_p}\]
which in turn gives us
\[
\sum C_i^2 = \sum \lambda_p+\frac{1}{\lambda_p}.
\]
However, each $\lambda_p$ is a root of unity and so the modulus of  $\sum_{p \in S} \lambda_p+\frac{1}{\lambda_p} = \sum_{p \in S} \lambda_p +\overline{\lambda_p}$  is bounded by $2\#S$.
This is our sought after contradiction.
\end{proof}

\subsection{Holomorphic Godbillon-Vey sequences}
\label{godb.sect}

We say that a 1-form $\omega$ is integrable provided $\omega \wedge d\omega = 0$.

\begin{definition}
Let $M$ be a complex manifold of dimension $\geq 2$ and let $\omega$ be an integrable holomorphic 1-form on $M$.
A holomorphic Godbillon-Vey sequence for $\omega$ is a sequence of holomorphic 1-forms $(\omega_k)$ on $M$
such that $\omega_0 = \omega$ and the formal 1-form
\[\Omega = dt +\sum_{j = 0}^\infty \frac{t^j}{j!}\omega_j\]
is integrable.
\end{definition}

\begin{lem}
\label{l:kill_cohom}
Let $P \in X$ be an analytic germ of an isolated $\bb Q$-factorial klt singularity with $\text{dim}(X) \geq 3$. 
Then \[H^1(X-P, \cal O_{X-P}) = 0.\]
\end{lem}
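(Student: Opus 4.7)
The plan is to translate the vanishing of $H^1(X-P, \cal O_{X-P})$ into a depth statement via the local cohomology long exact sequence. After replacing $X$ by a sufficiently small Stein representative of the germ, Cartan's Theorem B gives $H^i(X, \cal O_X) = 0$ for all $i \geq 1$, so the local cohomology sequence with support at $P$
\begin{equation*}
H^1(X, \cal O_X) \to H^1(X-P, \cal O_{X-P}) \to H^2_P(X, \cal O_X) \to H^2(X, \cal O_X)
\end{equation*}
collapses to an isomorphism $H^1(X-P, \cal O_{X-P}) \cong H^2_P(X, \cal O_X)$.

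It therefore suffices to prove that $H^2_P(X, \cal O_X) = 0$. By the standard characterization of local cohomology in terms of depth (Grothendieck vanishing), this follows as soon as $\mathrm{depth}_P \cal O_X \geq 3$. I would obtain this depth bound by invoking the chain of implications klt $\Rightarrow$ rational $\Rightarrow$ Cohen--Macaulay: in characteristic zero, klt singularities are rational (cf.\ \cite[Theorem 5.22]{KM98}), and rational singularities are Cohen--Macaulay by Kempf's criterion. Since $\cal O_{X,P}$ is then Cohen--Macaulay of Krull dimension $\dim X \geq 3$, the required depth inequality is immediate.

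There is no substantive obstacle here; the only bookkeeping needed is to verify that the algebraic results (klt $\Rightarrow$ rational, and the depth interpretation of local cohomology) transfer to the analytic germ, which is routine via faithful flatness of the completion. I would also note that the $\mathbb{Q}$-factoriality assumption plays no role in this particular lemma: it is presumably inherited from the global setup of the section rather than being essential for the vanishing itself.
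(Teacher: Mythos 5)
Your proof is correct, but it takes a genuinely different route from the paper's. The paper argues via the exponential sequence on $X-P$: a lemma of Flenner shows that the image of $H^1(X-P,\mathbb{Z})$ in $H^1(X-P,\mathcal{O}_{X-P})$ vanishes, so $H^1(X-P,\mathcal{O}_{X-P})$ injects into $\Pic(X-P)$; pushing line bundles forward to reflexive sheaves embeds $\Pic(X-P)$ into $Cl(P\in X)$, which is torsion precisely because of the $\mathbb{Q}$-factoriality hypothesis, and a torsion-free divisible group ($\mathbb{C}$-vector space) injecting into a torsion group is zero. You instead identify $H^1(X-P,\mathcal{O}_{X-P})$ with $H^2_P(X,\mathcal{O}_X)$ on a Stein representative and kill it by the depth bound coming from klt $\Rightarrow$ rational $\Rightarrow$ Cohen--Macaulay together with $\dim X\geq 3$. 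Both arguments are valid; yours is shorter and, as you correctly note, it shows that $\mathbb{Q}$-factoriality is irrelevant to this particular vanishing (it is genuinely needed elsewhere in the section, e.g.\ in Lemma \ref{l:q_fact} and in the covering constructions feeding into Theorem \ref{t_Malgrange}, which is presumably why it appears in the hypotheses here and in Corollary \ref{c:cover_hgvs}). What the paper's route buys in exchange is the intermediate information that $\Pic(X-P)$ is torsion, which is in the same spirit as the topological input used later in the Godbillon--Vey construction. The only points requiring care in your version are the analytic forms of the ingredients --- the vanishing $H^i_P(X,\mathcal{F})=0$ for $i<\mathrm{depth}_P\mathcal{F}$ (Scheja, Siu--Trautmann) and klt $\Rightarrow$ rational $\Rightarrow$ Cohen--Macaulay for analytic germs --- and these are standard, as you say.
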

\begin{proof}
Notice that since $X$ is klt it is also a rational singularity.
Consider the long exact sequence coming from the exponential exact sequence
\[H^1(X-P, \bb Z) \xrightarrow{a} H^1(X-P, \cal O_{X-P}) 
\xrightarrow{b} H^1(X-P, \cal O^*_{X-P}).\]
By \cite[Lemma 6.2]{Flenner81} we know that $\text{im}(a) = 0$, in particular $b$ is injective.

On the other hand, we see that we have an injection 
\[H^1(X-P, \cal O^*_{X-P}) = \text{Pic}(X-P) \rightarrow Cl(P \in X)\] given by 
$L \mapsto i_*L$ where $i\colon X-P \rightarrow X$ is the inclusion, indeed  by \cite{Siu69} $i_*L$
is a coherent reflexive sheaf on $X$.  
By assumption $Cl(P \in X)$ is torsion 
and so the same is true of $H^1(X-P, \cal O^*_{X-P})$.  Since $H^1(X-P, \cal O_{X-P})$
is a $\bb C$-vector space it is a divisible group, which implies that $\text{im}(b) =0$.
Thus $H^1(X-P, \cal O_{X-P}) = 0$.
\end{proof}

The following result is proven in \cite[Lemma 2.1.1]{CLN08}.

\begin{lem}
\label{lem_hgvs_cln}
Let $M$ be a complex manifold of dimension $\geq 3$ and let $\omega$ be a holomorphic
$1$-form on $M$.  Assume that the codimension of $\text{sing}(\omega)$ is at least 3
and $H^1(M, \cal O_M) = 0$.  Then $\omega$ admits a holomorphic Godbillon-Vey sequence.
\end{lem}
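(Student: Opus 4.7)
The plan is to construct $\omega_1, \omega_2, \ldots$ one at a time, reducing the existence of each $\omega_{n+1}$ to a cohomological vanishing that both hypotheses are tailor-made to supply. First I would unpack the integrability condition $\Omega \wedge d\Omega = 0$ for the formal 1-form $\Omega = dt + \sum_{j\ge 0} \frac{t^j}{j!}\omega_j$ on $M \times \bb C_t$. Writing $\eta := \sum \frac{t^j}{j!} \omega_j$ and expanding, the $dt$-containing component of $\Omega \wedge d\Omega$ gives, after collecting the coefficient of $\frac{t^n}{n!}$, a recursive system
\[
\omega \wedge \omega_{n+1} \;=\; d\omega_n \;-\; \sum_{j=1}^{n} \binom{n}{j}\, \omega_j \wedge \omega_{n+1-j} \;=:\; \alpha_n, \qquad (\ast_n)
\]
for $n \ge 0$. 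The equation $(\ast_0)$ reads $\omega\wedge\omega_1 = d\omega$, and its solvability rests on the integrability $\omega\wedge d\omega = 0$ of $\omega$ itself. So the entire argument is an inductive construction of $\omega_{n+1}$ solving $(\ast_n)$ from $\omega_0, \ldots, \omega_n$.

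Assume inductively that $\omega_0,\ldots,\omega_n$ have been built satisfying $(\ast_0), \ldots, (\ast_{n-1})$. Two things must be checked: (a) that the right-hand side $\alpha_n$ satisfies $\omega \wedge \alpha_n = 0$, so $(\ast_n)$ is not obstructed formally, and (b) that one can actually find $\omega_{n+1}$ globally. For (a), I would apply $d$ to both sides of $(\ast_{n-1})$, use Leibniz together with the lower-order identities to rewrite every term as a combination of expressions involving $\omega\wedge\omega_k$ (and apply the induction hypothesis), eventually producing an identity equivalent to $\omega\wedge\alpha_n = 0$. This step is purely combinatorial and is the formal heart of the Godbillon–Vey recursion; it is routine but has to be organized carefully — I regard it as the main bookkeeping obstacle, although it is not an analytic difficulty.

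For (b), let $M^\circ := M\setminus\text{sing}(\omega)$. Since $\omega$ is nowhere zero on $M^\circ$, in local coordinates where $\omega = dx_1$ the kernel of $\omega\wedge\cdot\colon\Omega^1_{M^\circ}\to\Omega^2_{M^\circ}$ equals $\mathcal{O}_{M^\circ}\cdot\omega$, and the image equals the kernel of $\omega\wedge\cdot\colon\Omega^2_{M^\circ}\to\Omega^3_{M^\circ}$. This gives a short exact sequence of sheaves on $M^\circ$:
\[
0 \longrightarrow \mathcal{O}_{M^\circ} \xrightarrow{\;\cdot\,\omega\;} \Omega^1_{M^\circ} \xrightarrow{\;\omega\wedge\cdot\;} \ker\!\bigl(\omega\wedge\cdot : \Omega^2_{M^\circ}\to\Omega^3_{M^\circ}\bigr) \longrightarrow 0.
\]
By step (a), $\alpha_n|_{M^\circ}$ is a global section of the third sheaf, and the obstruction to lifting it to a global section $\omega_{n+1}^\circ$ of $\Omega^1_{M^\circ}$ lives in $H^1(M^\circ, \mathcal{O})$. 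Here the two hypotheses enter: since $\text{sing}(\omega)$ has codimension $\ge 3$ and $\mathcal{O}_M$ has depth equal to $\dim M$, the local cohomology sheaves $\mathcal{H}^i_{\text{sing}(\omega)}(\mathcal{O}_M)$ vanish for $i \le 2$, so the restriction map $H^1(M,\mathcal{O}) \to H^1(M^\circ,\mathcal{O})$ is an isomorphism; the hypothesis $H^1(M,\mathcal{O}) = 0$ then forces the obstruction to vanish.

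Finally, $\omega_{n+1}^\circ$ is a holomorphic 1-form defined on the complement of $\text{sing}(\omega)$, which has codimension $\ge 3 \ge 2$; since $\Omega^1_M$ is locally free, hence reflexive, the Hartogs-type extension theorem produces a unique extension $\omega_{n+1}\in H^0(M,\Omega^1_M)$. The identity $\omega\wedge\omega_{n+1} = \alpha_n$ holds on the dense open $M^\circ$ and therefore on all of $M$, completing the inductive step and producing the desired Godbillon–Vey sequence. Summarizing: the conceptual work is the setup of the recursion and the algebraic identity in (a); the analytic content is bundled in the cohomology vanishing and Hartogs extension, both of which are exactly calibrated to the hypotheses $\dim M \ge 3$, $\operatorname{codim}\text{sing}(\omega)\ge 3$, and $H^1(M,\mathcal{O}) = 0$.
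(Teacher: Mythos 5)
Your argument is correct, and note that the paper does not actually prove this lemma: it is quoted from \cite[Lemma 2.1.1]{CLN08} with no proof supplied, so there is no in-paper argument to compare against. What you have written is essentially a complete, self-contained proof of the cited result. The two load-bearing points are exactly the ones you isolate. First, the formal identity $\omega\wedge\alpha_n=0$: this does follow by applying $d$ to $(\ast_{n-1})$ and substituting the relations $(\ast_j)$, $j\le n-1$, for the resulting terms $d\omega_j$ (the cases $n=1,2$ check out directly, e.g.\ $\omega\wedge d\omega_1 = d\omega\wedge\omega_1 = (\omega\wedge\omega_1)\wedge\omega_1=0$), and the general case is the classical Godbillon--Vey bookkeeping; you are right to flag it as routine but in need of careful organization. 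Second, the vanishing of the obstruction: your route --- solve $\omega\wedge\omega_{n+1}=\alpha_n$ on $M^\circ = M\setminus\mathrm{sing}(\omega)$ via the Koszul short exact sequence, kill the obstruction in $H^1(M^\circ,\mathcal{O})$ by the isomorphism $H^1(M,\mathcal{O})\cong H^1(M^\circ,\mathcal{O})$ (Scheja's extension theorem, equivalently the vanishing of $\mathcal{H}^i_{\mathrm{sing}(\omega)}(\mathcal{O}_M)$ for $i\le 2$ when the codimension is $\ge 3$), and then extend the resulting form across $\mathrm{sing}(\omega)$ by reflexivity of $\Omega^1_M$ --- is clean and sidesteps the de Rham--Saito division lemma at the singular points, which is the other standard way the hypothesis $\operatorname{codim}\mathrm{sing}(\omega)\ge 3$ gets used in arguments of this type. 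One cosmetic remark: on $M^\circ$ the local normal form is $\omega=u\,dx_1$ with $u$ a unit rather than $\omega=dx_1$; in fact no normal form is needed at all, since exactness of $\mathcal{O}\xrightarrow{\cdot\omega}\Omega^1\xrightarrow{\omega\wedge\cdot}\Omega^2\xrightarrow{\omega\wedge\cdot}\Omega^3$ at a point where $\omega$ is nonvanishing is plain Koszul exactness. This does not affect the argument.
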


\begin{cor}
\label{c:cover_hgvs}
Let $P \in X$ be a germ of an isolated analytically $\bb Q$-factorial klt 3-fold singularity.
Let $\omega$ be an integrable 1-form on $X-P$ such that $\text{sing}(\omega)$ has codimension at least 3 in $X-P$.  
Then $\omega$ admits a holomorphic Godbillon-Vey sequence.
\end{cor}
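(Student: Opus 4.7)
The plan is to deduce this corollary as an essentially immediate combination of Lemma \ref{l:kill_cohom} and Lemma \ref{lem_hgvs_cln}. The key observation is that since $P \in X$ is an \emph{isolated} singularity, the punctured germ $M := X \setminus \{P\}$ is a smooth complex manifold of dimension $3$. Thus $\omega$ is a genuine holomorphic $1$-form on a complex manifold, which places us squarely in the framework required by Lemma \ref{lem_hgvs_cln}.

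Next, I would invoke Lemma \ref{l:kill_cohom} applied to the isolated analytically $\bb Q$-factorial klt germ $P \in X$: since $\dim X = 3 \geq 3$, this gives the vanishing
\[H^1(M, \cal O_M) = H^1(X-P, \cal O_{X-P}) = 0.\]
This is exactly the cohomological hypothesis needed to invoke Lemma \ref{lem_hgvs_cln}. The only other hypothesis of that lemma is that $\text{sing}(\omega)$ have codimension at least $3$ in $M$, which is part of the assumption of the corollary.

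With both hypotheses verified, Lemma \ref{lem_hgvs_cln} applied to $M$ and $\omega$ produces holomorphic $1$-forms $(\omega_k)_{k \geq 0}$ on $M$ with $\omega_0 = \omega$ such that the formal $1$-form
\[\Omega = dt + \sum_{j = 0}^{\infty} \frac{t^j}{j!} \omega_j\]
is integrable, which is by definition a holomorphic Godbillon-Vey sequence for $\omega$ on $X - P$.

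There is no substantial obstacle in this argument: all the work is concentrated in the preparatory Lemmas \ref{l:kill_cohom} and \ref{lem_hgvs_cln}, and the role of the corollary is simply to package them together. The essential point is that the klt and analytic $\bb Q$-factoriality hypotheses on $P \in X$ are used only via the rationality and local class group torsion input of Lemma \ref{l:kill_cohom}, while the isolatedness guarantees that $X - P$ is a bona fide complex manifold so that the classical argument of Cerveau-Lins Neto applies verbatim.
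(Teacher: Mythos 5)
Your proof is correct and follows exactly the same route as the paper, which likewise combines Lemma \ref{l:kill_cohom} (giving $H^1(X-P,\mathcal{O}_{X-P})=0$) with Lemma \ref{lem_hgvs_cln} applied to the punctured germ. The only difference is that you spell out the (correct and worth noting) observation that isolatedness of the singularity makes $X-P$ a complex manifold, which the paper leaves implicit.
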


\begin{proof}
By Lemma \ref{l:kill_cohom} we have $H^1(X-P, \cal O_{X-P}) = 0$ in which case we may apply Lemma \ref{lem_hgvs_cln} to conclude.
\end{proof}

\subsection{A few technical lemmas}

\begin{lem}
\label{l:q_fact}
Let $P \in X$ be an analytic germ of a $\bb Q$-factorial and klt singularity with $\text{dim}(X) \geq 3$.
Let $\pi\colon (Q \in Y) \rightarrow (P \in X)$ be a quasi-\'etale morphism of germs.  
Then $Q \in Y$ is $\bb Q$-factorial.
\end{lem}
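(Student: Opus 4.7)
The plan is to show that the local class group $\text{Cl}(Q \in Y)$ is torsion, which for an analytic germ is equivalent to $\mathbb{Q}$-factoriality at $Q$ (the local Picard of a germ being trivial). First I would reduce to the Galois case: taking the Galois closure $\tilde{Y} \to X$ of $\pi$, which remains quasi-\'etale over $X$, and applying \cite[Lemma 5.16]{KM98} (a finite normal quotient of a $\mathbb{Q}$-factorial germ is $\mathbb{Q}$-factorial) to the intermediate cover $Y = \tilde{Y}/H$, it suffices to prove the statement when $\pi$ itself is Galois with finite group $G$. I also observe at this point that $Y$ is klt: since $\pi$ is quasi-\'etale, the ramification divisor vanishes and $K_Y = \pi^{*}K_X$, so discrepancies pull back exactly.

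The next step uses push-pull on class groups. Given a Weil divisor $D$ on $Y$ near $Q$, the pushforward $\pi_{*}D$ is a Weil divisor on $X$ near $P$; by $\mathbb{Q}$-factoriality of $X$ and the triviality of local Picard of a germ, some positive multiple is principal, $m\pi_{*}D = (f)$ for an analytic function $f$ defined in a neighborhood of $P$. Pulling back via $\pi$ and using the Galois identity $\pi^{*}\pi_{*}D = \sum_{g \in G} g^{*}D$ (which holds as an identity of Weil divisors in codimension one, since $\pi$ has trivial ramification along every prime divisor), one finds that $m \sum_{g \in G} g^{*}D = (f \circ \pi)$ is principal on $Y$. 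Thus every $G$-invariant class in $\text{Cl}(Q \in Y)$ is torsion, i.e.\ $\text{Cl}(Q \in Y)_{\mathbb{Q}}^{G} = 0$.

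The main obstacle is to upgrade this to the vanishing $\text{Cl}(Q \in Y)_{\mathbb{Q}} = 0$, since a priori the non-invariant part of $\text{Cl}(Q \in Y)_{\mathbb{Q}}$ could be nonzero. My approach here is to invoke the cohomological framework of Lemma \ref{l:kill_cohom} on the punctured germ: decompose $\pi_{*}\mathcal{O}_{Y \setminus \pi^{-1}(P)}$ into $G$-eigensheaves on $X \setminus P$ (away from the codimension-two non-\'etale locus), each a rank-one reflexive sheaf whose class is torsion in $\text{Cl}(P \in X)$, and use a Hochschild--Serre / Leray-type argument together with the vanishing $H^{1}(X \setminus P, \mathcal{O}) = 0$ from Lemma \ref{l:kill_cohom} to deduce $H^{1}(Y \setminus \pi^{-1}(P), \mathcal{O}) = 0$. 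Combined with Flenner's vanishing $\mathrm{im}\bigl(H^{1}(Y \setminus \pi^{-1}(P), \mathbb{Z}) \to H^{1}(Y \setminus \pi^{-1}(P), \mathcal{O})\bigr) = 0$ (applicable since $Y$ is klt of dimension $\geq 3$), the exponential sequence then forces $\text{Pic}(Y \setminus \pi^{-1}(P)) \cong \text{Cl}(Q \in Y)$ to embed in a torsion group. The hardest part will be making the Leray decomposition rigorous across the codimension-two non-\'etale locus, since unlike in the fully \'etale case one cannot simply excise this locus without potentially disturbing the local class group.
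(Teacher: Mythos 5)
Your reduction to the Galois case via the Galois closure and \cite[Lemma 5.16]{KM98} is exactly the paper's first step, and the push--pull computation correctly shows that the $G$-invariant part of $\mathrm{Cl}(Q\in Y)\otimes\mathbb{Q}$ vanishes. The gap is in the final step, and it is not merely the technical difficulty you flag with the Leray decomposition: the implication you are trying to extract from Lemma \ref{l:kill_cohom} runs in the wrong direction. In the proof of that lemma, $\mathbb{Q}$-factoriality (torsion local class group) is an \emph{input}: it is used to show that $H^1(X\setminus P,\mathcal{O}^*)$ is torsion, whence the divisible group $H^1(X\setminus P,\mathcal{O})$, which injects into it by Flenner's result, must vanish. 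Reversing this, a vanishing $H^1(Y\setminus\pi^{-1}(P),\mathcal{O})=0$ together with the exponential sequence only yields an injection $\operatorname{Pic}(Y\setminus\pi^{-1}(P))\hookrightarrow H^2(Y\setminus\pi^{-1}(P),\mathbb{Z})$, and that target is in general \emph{not} torsion. The cone over $\mathbb{P}^1\times\mathbb{P}^1$ is an isolated canonical (hence klt) Gorenstein threefold singularity with $H^1$ of the punctured germ equal to zero, yet its local class group is $\mathbb{Z}$ and it is not $\mathbb{Q}$-factorial. So even if you establish $H^1(Y\setminus\pi^{-1}(P),\mathcal{O})=0$ --- itself delicate, since the nontrivial isotypic pieces of $\pi_*\mathcal{O}$ are not handled by Lemma \ref{l:kill_cohom}, and the natural route to the vanishing on $Y$ would invoke the $\mathbb{Q}$-factoriality of $Y$, which is circular --- you cannot conclude that the non-invariant part of $\mathrm{Cl}(Q\in Y)_{\mathbb{Q}}$ vanishes. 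Some genuinely new input is needed to control divisors on $Y$ that are not $G$-invariant.

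The paper supplies that input with an MMP argument rather than a cohomological one: assuming $Y$ is not $\mathbb{Q}$-factorial, it constructs a $G$-equivariant small $\mathbb{Q}$-factorialization $f\colon Y'\to Y$ with $f$ not an isomorphism (possible because $Y$ is klt: take a $G$-equivariant resolution and run a $G$-equivariant relative MMP), and then passes to the quotient $X'=Y'/G\to X=Y/G$. This is a nontrivial small birational morphism onto the $\mathbb{Q}$-factorial variety $X$, which is impossible since the exceptional locus of a birational morphism onto a $\mathbb{Q}$-factorial normal variety has pure codimension one. This is the step you would need to substitute for your cohomological argument; note that it is also where the klt hypothesis on $X$ --- which plays no role in your torsion computation --- is actually used.
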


\begin{proof}
Let $\overline{\pi} \colon \overline{Y} \rightarrow X$ be the Galois closure of $\pi$.  
Observe that $\overline{\pi}$ is quasi-\'etale and if $\overline{Y}$ is $\bb Q$-factorial then $Y$ is $\bb Q$-factorial, \cite[Lemma 5.16]{KM98}.
Thus we may freely replace $Y$ by $\overline{Y}$ and so may assume that $\pi$ is Galois with Galois group $G$.
\newline
Suppose for sake of contradiction that $Y$ is not $\bb Q$-factorial.  Since $\pi$ is quasi-\'etale we see that $Y$ is klt and therefore $Y$ admits a small $\bb Q$-factorialization $f\colon Y' \rightarrow Y$ such that 
\begin{enumerate}
\item $G$ acts on $Y'$;
\item $f$ is $G$ equivariant; and
\item $f$ is not the identity.
\end{enumerate}
Indeed, such a $Y'$ can be found by taking a $G$-equivariant resolution $\mu\colon W \rightarrow X$ and running a $G$-equivariant $K_W+(1-\epsilon)\sum E_i$-MMP over $X$ where $\sum E_i$ is the union of the $\mu$-exceptional divisors and $\epsilon>0$ is sufficiently small.
\newline
Let $X' = Y'/G$ and observe that we have a birational morphism $g\colon X' \rightarrow X$.  Moreover, we see that $g\colon X' \rightarrow X$ is small, a contradiction of the fact that $X$ is $\bb Q$-factorial.
\end{proof}

\begin{lem}
\label{l:replace_cover}
Let $\pi\colon Y \rightarrow X$ be a finite morphism of complex varieties  and let $\cal F$ be a co-rank one foliation on $X$.
Then $\cal F$ admits a holomorphic (resp. meromorphic) first integral if and only if $\pi^{-1}\cal F$ does.
\end{lem}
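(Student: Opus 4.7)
\medskip

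\textbf{Proof proposal.} The forward implication is immediate. If $\omega$ is a local defining $1$-form for $\cal F$ and $f$ satisfies $df\wedge\omega=0$, then $\pi^*f$ satisfies
\[
d(\pi^*f)\wedge \pi^*\omega \;=\; \pi^*(df\wedge\omega)\;=\;0,
\]
and $\pi^*\omega$ is a defining form for $\pi^{-1}\cal F$. Since $\pi$ is finite and surjective, $\pi^*f$ is non-constant whenever $f$ is, and the pullback of a meromorphic function is meromorphic.

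For the converse, let $g$ be a (non-constant) first integral on $Y$ for $\pi^{-1}\cal F$. Pass to the Galois closure $\bar\pi\colon \bar Y\to X$ of $\pi$, with Galois group $G$, and denote by $\bar g$ the pullback of $g$ to $\bar Y$: since $\bar\pi$ factors through $\pi$, $\bar g$ is still a first integral for $\bar\pi^{-1}\cal F$, so it suffices to descend $\bar g$ to a first integral for $\cal F$ on $X$. Because $\bar\pi^{-1}\cal F$ is pulled back from $X$, it is preserved by each $\sigma\in G$; hence every $\sigma^*\bar g$ is again a first integral for $\bar\pi^{-1}\cal F$ on $\bar Y$. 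Form the elementary symmetric functions
\[
h_k \;:=\; e_k\bigl((\sigma^*\bar g)_{\sigma\in G}\bigr),\qquad k=1,\ldots,|G|.
\]
As sums and products of first integrals are first integrals, each $h_k$ satisfies $dh_k\wedge \bar\pi^*\omega=0$. Being $G$-invariant, the $h_k$ descend via the quotient $\bar Y\to \bar Y/G=X$ to holomorphic (resp.\ meromorphic) functions $\tilde h_k$ on $X$; here one uses that the quotient of a normal complex space by a finite group is normal, so $G$-invariant holomorphic/meromorphic functions on $\bar Y$ come from $X$. Since $\bar\pi^*\tilde h_k=h_k$ and $\bar\pi$ is finite surjective (in particular $\bar\pi^*$ is injective on $1$-forms at the generic point), the identity $d\tilde h_k\wedge\omega=0$ follows on a dense open set and therefore everywhere.

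It remains to check that at least one $\tilde h_k$ is non-constant. If every $h_k$ were constant, the polynomial $\prod_{\sigma\in G}(t-\sigma^*\bar g)\in \cal O_{\bar Y}[t]$ would have constant coefficients, so the multiset $\{\bar g(\sigma p)\}_{\sigma\in G}$ would be independent of $p\in \bar Y$; this would force $\bar g$ to take only finitely many values and hence be locally constant, contradicting the non-constancy of $g$. Thus some $\tilde h_k$ is a first integral of $\cal F$ of the required type.

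\medskip
The argument is essentially routine; the only mild subtlety is the non-constancy of the descended first integral, handled by the elementary symmetric-function observation above. No vanishing or singularity hypothesis on $X$ or $Y$ is needed beyond what is implicit in the existence of the quotient $\bar Y/G=X$, which is automatic for a finite morphism of complex varieties.
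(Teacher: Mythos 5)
Your proof is correct and follows essentially the same route as the paper: pass to the Galois closure, symmetrize the first integral over the Galois group, and descend the resulting $G$-invariant function to $X$. The one refinement is that the paper symmetrizes only via the product $\prod_{g\in G} g\cdot F'$ and does not address whether the descended function is non-constant, whereas you keep all elementary symmetric functions and verify non-constancy of at least one of them — a worthwhile extra precaution, since the norm alone could in principle degenerate.
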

\begin{proof}
We may assume without loss of generality that $\pi\colon Y \rightarrow X$ is Galois, in which case the claim is easy.
\end{proof}

\iffalse
\begin{proof}
The only if direction is immediate, and moreover, without loss of generality we may replace $\pi\colon Y \rightarrow X$ by its Galois closure and so may assume that $\pi$ is Galois with Galois group $G$.
\newline
Suppose that $\pi^{-1}\cal F$ admits a holomorphic (resp. meromorphic) first integral $F'$.  
Then $\Pi_{g \in G} g\cdot F'$ is still a holomorphic (resp. meromorphic) first integral of $\pi^{-1}\cal F$ and  there exists a holomorphic (resp. meromorphic) function $F$ on $X$ with $\pi^*F = \Pi_{g \in G} (g\cdot F')$.
We see that $F$ is our desired first integral.
\end{proof}
\fi

We say that $f \in \bb C[[x_1, ..., x_n]]$ is a {\bf power} if there exists $g \in \bb C[[x_1, ..., x_n]]$ and an integer $m \geq 2$ such that $g^m = f$.
Observe that if $f$ is a first integral of $\omega$  and $g^m = f$ then $g$ is also a first integral of $\omega$.
Let $\widehat{\Delta}$ denote the formal completion of $\bb C$ at the origin.

\begin{lem}
\label{l_formal_frobenius}
Consider $\bb C^3 \times \bb C$ with coordinates $(z_1, z_2, z_3, t)$ and let $\Omega = dt +\sum t^i\omega_i$ be a formal 1-form where $\omega_i \in H^0(U, \Omega^1_U)$ is a  holomorphic 1-form on $0 \in U \subset \bb C^3$.  
Suppose that $\Omega$ is integrable. 
\newline
Let $0 \in D \subset U$ be a normal crossings divisor such that $\omega_i$ is zero when restricted to $D$ for all $i$.
Let $\widehat{X}$ be the formal completion of $U \times \bb C$ along $D \times 0$.
\newline
Then $\Omega$ admits a first integral in $H^0(\widehat{X}, \cal O_{\widehat{X}})$.
\end{lem}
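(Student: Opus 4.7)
The plan is to construct $F$ as the unique formal first integral of $\Omega$ normalized by the condition that $F - t$ lies in $\mathcal{I}_D \cdot \mathcal{O}_{\widehat{X}}$, where $\mathcal{I}_D$ denotes the ideal of $D \times \bb C$ in $U \times \bb C$. Writing $\Omega = dt + \sum_{i=1}^{3} a_i\, dz_i$ with $a_i(z,t) = \sum_j t^j \omega_j(\partial_{z_i})$, the equation $dF \wedge \Omega = 0$ unwinds (using that the $dt$-coefficient of $\Omega$ is $1$) into the PDE system $\partial_{z_i} F = a_i\, \partial_t F$, $i = 1, 2, 3$.

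In a local chart around a smooth point of $D$, I would take coordinates $(x_1, z_2, z_3, t)$ with $D = \{x_1 = 0\}$; the hypothesis $\omega_j|_D = 0$ forces $a_2, a_3 \in (x_1)\cdot \mathcal{O}_U[[t]]$, while $a_1$ may be nonzero along $D$. I would then solve the formal ODE $\partial_{x_1} F = a_1\, \partial_t F$ with initial datum $F|_{x_1 = 0} = t$ recursively on powers of $x_1$, obtaining $F = t + \sum_{k \geq 1} x_1^k F_k(z_2, z_3, t)$ with $F_k$ holomorphic in $z_2, z_3$ and formal in $t$. The delicate step is to verify that the resulting $F$ automatically satisfies the remaining equations $\partial_{z_j} F = a_j\, \partial_t F$ for $j = 2, 3$: setting $u_j := \partial_{z_j} F - a_j\, \partial_t F$, a direct computation using $\partial_{x_1} F = a_1\, \partial_t F$ yields
\[
\partial_{x_1} u_j = \bigl(\partial_{z_j} a_1 - \partial_{x_1} a_j - a_j\, \partial_t a_1 + a_1\, \partial_t a_j\bigr)\, \partial_t F + a_1\, \partial_t u_j,
\]
whose bracketed coefficient vanishes by the integrability identity $\partial_{x_1} a_j - \partial_{z_j} a_1 = a_1\, \partial_t a_j - a_j\, \partial_t a_1$, which is the $dx_1 \wedge dz_j \wedge dt$ component of $\Omega \wedge d\Omega = 0$. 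Together with $u_j|_{x_1 = 0} = 0$, this linear formal ODE in $x_1$ forces $u_j \equiv 0$.

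At a double (resp.\ triple) crossings point of $D$, with local equations $D = \{x_1 x_2 = 0\}$ (resp.\ $D = \{x_1 x_2 x_3 = 0\}$), the hypothesis on the $\omega_j$ gives the stronger divisibility that each $a_i$ lies in $\bigl(\prod_{\ell \neq i} x_\ell\bigr) \cdot \mathcal{O}_U[[t]]$. The same ODE method then produces $F$ with $F - t$ divisible by $\prod_\ell x_\ell$, so that the normalization $F|_{D_\ell} = t$ is automatically satisfied on every local branch of $D$ simultaneously, without having to impose it by hand on each component.

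For globalization: if $F_1, F_2$ are two formal first integrals with the same normalization, their difference $G = F_1 - F_2$ is itself a formal first integral with $G|_{x_1 = 0} = 0$, and induction on powers of $x_1$ via $\partial_{x_1} G = a_1\, \partial_t G$ forces $G \equiv 0$. By this uniqueness the local constructions agree on overlaps and patch to a global section $F \in H^0(\widehat{X}, \mathcal{O}_{\widehat{X}})$. The main technical obstacle I anticipate is that at normal crossings points the structure sheaf $\mathcal{O}_{\widehat{X}}$, being the $\mathcal{I}_{D \times 0}$-adic completion, is subtler than a naive product of power series rings, and one has to check that the local ODE solutions genuinely define elements of it; this should be ensured by the fact that $F - t$ lies in the ideal generated by the local equations of the components of $D$, so convergence in the $\mathcal{I}_{D \times 0}$-adic topology is manifest.
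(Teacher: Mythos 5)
Your local construction and the integrability check are correct: the recursion $\partial_{x_1}F = a_1\,\partial_t F$, $F|_{x_1=0}=t$, together with the vanishing of the bracket $\partial_{z_j}a_1 - \partial_{x_1}a_j - a_j\partial_t a_1 + a_1\partial_t a_j$ extracted from $\Omega\wedge d\Omega=0$, does produce a formal first integral; this is essentially a re-derivation of \cite[Lemma 3.1.1]{CLN08}, which is also where the paper starts. The genuine gap is at the crossing points of $D$, and it is exactly the issue you flag in your last sentence and then dismiss with an incorrect justification. Near a double point with $\mathcal{I} = \mathcal{I}_{D\times 0} = (x_1x_2, t)$, your solution has the shape $F = t + \sum_{k\ge 1} x_1^k F_k(x_2,z_3,t)$ with each $F_k \in (x_2)$ and formal in $t$. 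Having $F - t \in (x_1x_2)$ term by term does \emph{not} make the series converge $\mathcal{I}$-adically: a monomial such as $x_1^k x_2$ lies in $\mathcal{I}$ but not in $\mathcal{I}^2$ (mod $t$, every element of $\mathcal{I}^2$ is divisible by $x_1^2x_2^2$), so the tails of your series never enter $\mathcal{I}^n$ for $n\ge 2$. What your recursion actually produces is a section of the completion along the single branch $\{x_1 = t = 0\}$ (convergent in $x_2, z_3$, formal in $x_1, t$); membership in $H^0(\widehat{X},\mathcal{O}_{\widehat{X}})$ additionally requires the analogous convergence along the other branches, i.e.\ that the re-expansion in powers of $(x_2,t)$ have coefficients holomorphic in $(x_1,z_3)$, and nothing in your construction controls this.

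This branch-matching is precisely the hard point, and it is what the paper's proof is organized around: it constructs a first integral $G_j$ on the completion along \emph{each} branch $\{t = z_j = 0\}$ separately (by the CLN argument you reproduce), and then invokes the Mattei--Moussu factorisation theorem \cite{MM80} to find automorphisms $\phi_{ij}\in\mathrm{Aut}(\widehat{\Delta})$ with $G_i = \phi_{ij}\circ G_j$; after adjusting, the three agree as a single element lying in all three branch completions at once, hence in $H^0(\widehat{X},\mathcal{O}_{\widehat{X}})$. Your normalization $F|_{x_j=0}=t$ could in fact be used to show that the solutions of the ODEs in the $x_1$- and $x_2$-directions coincide (any two first integrals differ by a reparametrization $\phi$ of the formal disc, and the common boundary condition forces $\phi=\mathrm{id}$), which would be a pleasant variant of the paper's gluing; but that comparison step, or some substitute for Mattei--Moussu, is indispensable and is absent from your argument as written.
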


\begin{remark}
A priori, the formal Frobenius theorem only guarantees that $\Omega$ admits a first integral in $H^0(\widehat{\bb C^4}, \cal O_{\widehat{\bb C^4}})$, with $\widehat{\bb C^4}$ 
the completion of $\bb C^4$ at the origin. 
\end{remark}
\begin{proof}
Following a change of coordinates and for ease of notation we will assume that $D = \{z_1z_2z_3 = 0\}$ (the cases where $D$ has 1 or 2 components are simpler). 

Since $\omega_i$ vanishes when restricted to $D$ for $j= 1, 2, 3$ we may write 
\[\omega_i = f^i_jdz_j+z_j\theta^i_j\]
where $f^i_j$ and $\theta^i_j$ are holomorphic.  It follows that we may write $\Omega = dt+F_jdz_j+z_j\Theta_j$
where $F_j(z_1, z_2, z_3, t) \in H^0(X_j, \cal O_{X_j})$, 
$\Theta_j = \sum H^j_i(z_1, z_2, z_3, t)dz_i \in H^0(X_j, \Omega^1_{X_j})$ and where $X_j$ is the formal completion
of $U \times \bb C$ along  $\{t = z_j = 0\}$.

We may then apply \cite[Lemma 3.1.1.]{CLN08} (or, more precisely, its proof) 
to find a first integral $G_j \in H^0(X_j, \cal O_{X_j})$ of $\Omega$.
Moreover, if we write $G_j = \sum_{m, n}t^mz_j^ng^j_{mn}$ where $g^j_{mn}$ is a convergent power series in the set of variables 
$\{z_1, z_2, z_3\} \setminus z_j$
then we may choose $G_j$ so that $g^j_{00} = 0$.  In particular, observe that this implies that if $\phi \in \text{Aut}(\widehat{\Delta})$,
then $\phi \circ G_j$ is still an element of $H^0(X_j, \cal O_{X_j})$.  Indeed, if we write $\phi\circ G_j = \sum_{mn}t^mz_j^ng'_{mn}$
then $g'_{mn} = P(g^j_{lp})_{l \leq m, p \leq n}$ where $P$ is some polynomial depending on $\phi$, in particular, $g'_{mn}$ is convergent
provided all the $g^j_{lp}$ are.
Without loss of generality we may also assume
that $G_j$ is not a power.

By considering $G_1, G_2, G_3$ as elements in $H^0(\widehat{\bb C^4}, \cal O_{\widehat{\bb C^4}})$, with $\widehat{\bb C^4}$ 
the completion of $\bb C^4$ at the
origin, we may apply \cite[Th\'{e}orem\`{e} de factorisation]{MM80} to find $\phi_{ij} \in \text{Aut}(\widehat{\Delta})$
so that $G_i = \phi_{ij} \circ G_j$.  Thus, perhaps replacing $G_j$ by $\phi_{ij}\circ G_j$ we may assume
that $G_1, G_2, G_3$ all give the same element in $H^0(\widehat{\bb C^4}, \cal O_{\widehat{\bb C^4}})$ call it $G$.
However, since $G_i \in H^0(X_i, \cal O_{X_i})$ this implies that $G$ is in fact an element of 
$H^0(\widehat{X}, \cal O_{\widehat{X}})$ and we are done.
\end{proof}

\begin{lem}
\label{l:formal_holonomy}
Let $X$ be a normal complex variety, let $D \subset X$ be a compact subvariety and let $\widehat{X}$ be the completion
of $X$ along $D$.
Let $\cal F$ be a co-rank 1 formal foliation on $\widehat{X}$
and suppose that that $D$ is tangent to $\cal F$.

Suppose that the following hold:
\begin{enumerate}
\item for all $p \in D$ there exists an open neighborhood
$p \in U_p \subset X$ and $F_p \in H^0(\widehat{U_p}, \cal O_{\widehat{U_p}})$ with $F_p$ a first integral of $\cal F$ and where $\widehat{U_p}$
is the formal completion of $U_p$ along $D$;

\item for any $p, q \in D$ we have $\text{sing}(X)\cap U_p\cap U_q = \emptyset$; and

\item for any $p, q \in D$ if $U_p \cap U_q \neq \emptyset$ then $F_p\vert_{\widehat{U_p} \cap \widehat{U_q}}$ is not a power.
\end{enumerate}

Then we may produce a representation $\rho\colon \pi_1(D) \rightarrow \text{Aut}(\widehat{\Delta})$ such
that if this representation if trivial then $\cal F$ admits a first integral
$F \in H^0(\widehat{X}, \cal O_{\widehat{X}})$.  Moreover, if the $F_p$ can be taken to be convergent, then $F$ may be taken to 
be convergent as well.
\end{lem}
\begin{proof}
Without loss of generality we may assume that $F_p\vert_D = 0$ for all $p$.

If $p \neq q$ is such that $U_p \cap U_q \neq \emptyset$ choose some $z \in U_p\cap U_q \cap D$. 
By considering $F_p, F_q$ as elements in the completion $\widehat{\cal O_{\widehat{X}, z}}$ we may apply 
\cite[Th\'{e}orem\`{e} de factorisation]{MM80}
to find a $\phi_{p, q} \in \text{Aut}(\widehat{\Delta})$ such that $F_p = \phi_{p, q} \circ F_q$.

We may then produce a representation of $\pi_1(D)$ 
along the same lines of the classical holonomy representation, 
see for instance \cite[Chapter IV]{CN85}.
Let $\gamma \in \pi_1(D)$ be a path 
$\gamma\colon [0, 1] \rightarrow D$.  We may find a collection of points $p_0, ..., p_{n-1}, p_n = p_0 \in D$ and a partition 
$0 = t_0 <t_1 <... <t_n = 1$ of $[0, 1]$
so that $\gamma([t_{i-1}, t_i]) \subset U_{p_{i-1}}$.  We may then define our representation
by setting $\rho(\gamma) = \phi_{p_n, p_{n-1}}\circ ... \circ \phi_{p_1, p_0}$.

If $\rho(\gamma) = 1$, then for $1 \leq j\leq n-1$ we may replace 
$F_{p_j}$ by $(\phi_{p_j, p_{j-1}}\circ... \circ \phi_{p_1, p_0})^{-1}\circ F_{p_j}$ and so we may freely assume that $F_{p_j} = F_{p_0}$ 
for all $j$.  Thus, if the image of $\rho$ is $\{1\}$ it follows that all the $F_p$ glue to give a section $F\in H^0(\widehat{X}, \cal O_{\widehat{X}})$.

Our claim about convergence follows by observing that if the $F_p$ are all convergent then the $\phi_{p, q}$ may be taken
to be convergent as well.
\end{proof}

\subsection{Proof of Theorem \ref{t_Malgrange}}

\begin{proof}[Proof of Theorem \ref{t_Malgrange}]
First, by Lemma \ref{l_canimpliesklt} we know that $X$ is klt.

Next, by Lemma \ref{l:q_fact} we may replace $X$ by a quasi-\'etale cyclic cover
and so may assume that $N^{[*]}_{\cal F}$ is Cartier and so $\cal F_{|X\setminus P}$ is defined by an integrable
1-form $\omega$ which is non-vanishing on $X\setminus P$.
By Corollary \ref{c:cover_hgvs} we have that that 
$\omega$ admits a holomorphic Godbillon-Vey sequence
$(\omega_k)$.

Let $L_X$ be the link of $X$.  By \cite[Corollary 1.4]{TX17} we know that $\pi_1(L_X)$ is finite and let
$\widetilde{L} \rightarrow L_X$ be the universal cover. We may find a Galois \'etale
morphism of complex spaces $Y' \rightarrow X\setminus P$ corresponding to this cover
and by \cite[Proposition 3.13]{GKP16} this cover extends to a Galois quasi-\'etale cover
$\pi \colon Y \rightarrow X$. So by replacing $X$ by $Y$
we may assume that $\pi_1(L_X) = \{1\}$.
% where $L_X$ is the link of $X$.

Let $\mu:Y \rightarrow X$ be a log resolution of $X$ and let $E = \sum E_k$ be the sum of the $\mu$-exceptional divisors.
Let $Y^* := Y \setminus \mu^{-1}(P) \cong X\setminus P$.
By \cite[Theorem 4.3]{GKKP11} we see that $\omega_i\vert_{X\setminus P}$ extends to
a holomorphic 1-form $\widetilde{\omega}_i$ on $Y$.  

There exist maps
\[\pi_1(L_X) \cong \pi_1(Y^*) \xrightarrow{a} \pi_1(Y) \xrightarrow{b} \pi_1(E)\]
where $a$ is a surjective and $b$ is an isomorphism, since $Y$ deformation retracts onto $E$.  
This implies that $\pi_1(E)$ is trivial. 

Define
\[\Omega = dt + \sum_{k = 0}^\infty \frac{t^k}{k!} \widetilde{\omega}_k \]
and recall that by definition $\Omega$ is an non-singular integrable 1-form defined on $\widehat{Y \times \bb C}$,
the completion of $Y \times \bb C$ along $E\times 0$, and where $t$ is a local coordinate on $\bb C$. 
We then have that $\Omega$ defines a smooth foliation $\widehat{\cal G}$ on $\widehat{Y \times \bb C}$.
Observe that by construction $\widehat{\cal G}\vert_{\widehat{Y}\times 0} = \mu^{-1}\cal F\vert_{\widehat{Y}}$ where $\widehat{Y}$ 
is the formal completion of $Y$ along $E$.

Since $\cal F$ has non-dicritical singularities $E$ is $\mu^{-1}\cal F$ invariant which implies 
that $E\times 0$
is tangent to $\widehat{\cal G}$.

By Corollary \ref{c_pullbackvanishing} we have $\widetilde{\omega}_k$ vanishes when restricted to $E$.
Thus we may apply Lemma \ref{l_formal_frobenius} to $\Omega$ to find for all $p \in E$ a neighborhood $p \in U_p \subset Y$
and first integral of $\Omega$ denoted $F_p \in H^0(\widehat{U_p \times \bb C}, \cal O_{\widehat{U_p \times \bb C}})$
where $\widehat{U_p \times \bb C}$ is the completion of $U_p \times \bb C$ along $E\times 0$.
Since $\widehat{\cal G}$ is smooth without loss of generality we may assume that $F_p$ is not a power
on $\widehat{U_p \times \bb C} \cap \widehat{U_q \times \bb C}$ for any $p, q$.

We may therefore
apply Lemma \ref{l:formal_holonomy} 
and since $\pi_1(E) = \{1\}$ we
produce a formal first integral $\hat{F} \in H^0(\widehat{Y\times \bb C}, \cal O_{\widehat{Y\times C}})$.
Restricting $\hat{F}$ to $\widehat{Y} \times 0$ we see that $\widetilde{\omega}_0$ admits a first integral  
$\hat{f} \in H^0(\widehat{Y}, \cal O_{\widehat{Y}})$.
We now show that we can take this first integral to be convergent.

Write $\hat{f}^*0 = \sum a_iE_i$.  By \cite[Chapter II]{KKMS73} we may find dominant proper generically 
finite morphism $W \xrightarrow{\sigma} Y$
such that the central fibre of $(\hat{f} \circ \sigma)$ is reduced and $\sigma$ is ramified only
over foliation invariant divisors. 
Write $\widetilde{E} = \sigma^{-1}(E)$, $\widehat{W}$ the completion of $W$ along
$\widetilde{E}$
and $\tilde{f} = \hat{f}\circ \sigma$.  

From the above construction we see that we may write 
$\tilde{f} = \bar{f}^r$ such that for all $p \in \widetilde{E}$ we have $\bar{f}$
is not a power in $\cal O_{\widehat{W}, p}$.
Thus we may apply \cite[Theor\'em\`e A]{MM80} to find a $\phi_p \in \text{Aut}(\widehat{\Delta})$
so that $\phi_p \circ \bar{f}$ is convergent on a neighborhood $U_p$ of $p$. 
We may apply Lemma \ref{l:formal_holonomy} by taking $F_p = \phi_p \circ \bar{f}$ 
to produce a representation
$\rho\colon \pi_1(W) \rightarrow \text{Aut}(\widehat{\Delta})$ 
which vanishes when $\sigma^{-1}\mu^{-1}\cal F$ admits a convergent first integral.

By taking the Stein factorization of $W \rightarrow X$ we produce a birational morphism $W \rightarrow X'$ contracting $\widetilde{E}$ to a point, 
and so that $r\colon X' \rightarrow X$ is branched only over the separatrices of $\cal F$.
We claim that $X'$ is klt.  Indeed, 
we see that $K_{r^{-1}\cal F} = r^*K_{\cal F}$ and so $r^{-1}\cal F$ has canonical singularities.
Let $S$ be a separatrix of $r^{-1}\cal F$ at $r^{-1}(P)$ 
(which exists since 
we know that $r^{-1}\cal F$ admits a formal first integral).
%note to self: a priori $S$ is only formal, but after taking a root of $f$ we may assume that $(f=0)$ is reduced on $X$.
%looking at its strict transform on $Y$ we may use \cite[Theoreme A]{MM80} to conclude that it is in fact convergent.
By \cite[Lemma 3.16]{CS18} we know that $(X', S)$ is log canonical
and since $S$ is $\bb Q$-Cartier it follows that $X'$ is in fact klt.

Thus, perhaps passing to a higher quasi-\'etale cover we may freely assume that $\pi_1(W) = 0$.
 Thus $\sigma^{-1}\mu^{-1}\cal F$ admits a convergent first integral.  By Lemma \ref{l:replace_cover} this implies that
$\mu^{-1}\cal F$, and hence $\cal F$, admits a convergent first integral.
\end{proof}

\subsection{Classification of terminal foliation singularities}

We will need the following which is a direct generalization of \cite[Lemma 9.7]{Spicer17}

\begin{cor}
\label{c_term_malg}
Let $P \in X$ be a normal threefold germ and let $\cal F$ be a terminal co-rank 1 foliation.
Then $\cal F$ admits a holomorphic first integral.  In particular $K_X$ is $\bb Q$-Cartier.
\end{cor}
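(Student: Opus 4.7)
The plan is to reduce to Theorem \ref{t_Malgrange}. If $\cal F$ is smooth at $P$, the holomorphic Frobenius theorem yields a local first integral, so we may assume $P$ is a singular point of $\cal F$. Terminality of $\cal F$ combined with $\dim X=3$ forces $\text{sing}(\cal F)$ to consist of isolated points, so after shrinking we may assume $\text{sing}(\cal F)=\{P\}$; by Theorem \ref{t_canimpliesnondicritical}, $\cal F$ is non-dicritical.

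The core of the argument is to verify the hypotheses of Theorem \ref{t_Malgrange} after passing to a suitable finite quasi-\'etale cover. At any point $Q\in X\setminus\{P\}$, the foliation $\cal F$ is smooth and terminal; combining McQuillan's classification of terminal surface foliation singularities \cite[Corollary I.2.2]{McQuillan08} applied to a generic two-dimensional slice transverse to the leaf through $Q$ with the co-rank one structure, we deduce that $X$ is analytically a cyclic quotient of a smooth threefold near $Q$. In particular $X$ is analytically $\bb Q$-factorial (in fact klt) on $X\setminus\{P\}$. A suitable finite quasi-\'etale cover $\pi\colon Y\to X$ trivializing these local cyclic quotients, together with a further cover extracted from the link of $P\in X$ (using finiteness of its local fundamental group via \cite[Corollary 1.4]{TX17} and \cite[Proposition 3.13]{GKP16}), yields a germ $Q=\pi^{-1}(P)\in Y$ which is analytically $\bb Q$-factorial with an isolated singularity. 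The pullback $\pi^{-1}\cal F$ remains terminal with an isolated singularity at $Q$ by foliated Riemann-Hurwitz combined with non-dicriticality. I expect the main obstacle to be establishing sufficient regularity of the link of $P\in X$ -- in particular ensuring that its local fundamental group is finite -- without knowing a priori that $X$ is klt at $P$, since Lemma \ref{l_canimpliesklt} requires $K_X$ to be $\bb Q$-Cartier which is exactly what we are trying to prove.

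Having arranged these hypotheses, Theorem \ref{t_Malgrange} produces a holomorphic first integral for $\pi^{-1}\cal F$ on $Y$, which descends to a first integral $f\colon X \to \bb C$ for $\cal F$ by Lemma \ref{l:replace_cover}. For the last assertion, following the reduction argument used in the proof of Theorem \ref{t_Malgrange} (passing to a further cover to trivialize multiple fibers of $f$), we may assume that $df$ does not vanish in codimension one on $X$. Then $df$ generates the reflexive rank one sheaf $N^*_{\cal F}$ outside a subset of codimension at least two, which forces $N^*_{\cal F}$ to be a line bundle on the normal germ. Since $K_X = K_{\cal F}+c_1(N^*_{\cal F})$ as Weil divisor classes, and $K_{\cal F}$ is $\bb Q$-Cartier by the terminal hypothesis, we conclude that $K_X$ is $\bb Q$-Cartier.
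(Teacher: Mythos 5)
There is a genuine gap at the crux of your argument: you cannot manufacture the analytic $\bb Q$-factoriality required by Theorem \ref{t_Malgrange} by passing to quasi-\'etale covers of the germ $P \in X$. Two things go wrong. First, the finiteness of the local fundamental group of the link via \cite[Corollary 1.4]{TX17} requires $X$ to be klt at $P$, which presupposes that $K_X$ is $\bb Q$-Cartier --- precisely the conclusion you are trying to reach; you flag this yourself but do not resolve it, and there is no way around it within your framework. Second, and more fundamentally, even if one could kill the local fundamental group of the link, the resulting germ need not be analytically $\bb Q$-factorial: simple connectedness of the link only controls the torsion in the local class group (via the exponential sequence, as in Lemma \ref{l:kill_cohom}), not its free part. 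The cone over a smooth quadric surface, $\{xy=zw\}\subset \bb C^4$, has simply connected link but is not $\bb Q$-factorial. Since a terminal foliation can live on a germ that is genuinely not $\bb Q$-factorial, no cover will restore the hypothesis of Theorem \ref{t_Malgrange} on $X$ itself; indeed, the entire point of Corollary \ref{c_term_malg} is to remove that hypothesis.

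The paper's proof takes a different route that you should compare with. After a cyclic cover making $K_{\cal F}$ Cartier (whence $P\in X$ is an isolated singularity), it takes an F-dlt modification $\mu\colon (Y,\cal G)\to (X,\cal F)$, which is \emph{small} because $\cal F$ is terminal, and works on $Y$: one proves (Claim \ref{anqfact}) that the $\bb Q$-factorial variety $Y$ is in fact \emph{analytically} $\bb Q$-factorial at every point of the curve $\mu^{-1}(P)$ --- by producing, for any local divisor, Cartier divisors $T_i$ supported near single points of $\mu^{-1}(P)$ that correct it to something $\bb Q$-Cartier and then extending them globally --- and (Claim \ref{simplyconn}) that $Y$ is simply connected, using an invariant surface $T\supset\mu^{-1}(P)$ whose image is a log terminal surface singularity, so that $\mu^{-1}(P)$ is a tree of rational curves. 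Theorem \ref{t_Malgrange} is then applied \emph{locally at each point of $Y$}, and the local first integrals are glued via the triviality of the monodromy representation $\pi_1(Y)\to \mathrm{Aut}(0\in\bb C)$, after which the global first integral descends to $X$. Your final step (deducing that $K_X$ is $\bb Q$-Cartier from the first integral) is essentially sound in spirit and close to the paper's computation $K_{\cal F}=K_{X/\bb C}(-mF)$, but it cannot rescue the earlier gap.
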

\begin{remark}
A priori we only know that $K_{\cal F}$ is $\bb Q$-Cartier.
\end{remark}
\begin{proof}
After replacing $P\in X$ by a finite cover we may assume that $K_{\cal F}$ is Cartier.  Since $\cal F$ is terminal and $K_{\cal F}$ is Cartier
this implies that $P \in X$ is in fact an isolated singularity. 
Moreover, perhaps shrinking about $P$ we may assume that $Cl(P \in X)$
is generated by the classes of divisors $D_1, ..., D_N$ on $X$.

By Theorem \ref{t_existencefdlt}
we may take
\[\mu \colon (Y, \cal G) \rightarrow (X, \cal F)\] an F-dlt modification of $\cal F$.  Since $\cal F$ is terminal
we see that $\mu$ is small, i.e., $\mu^{-1}(P)$ is a union of curves.  Observe that $Y$ is $\bb Q$-factorial.
In particular, $D_i' := \mu_*^{-1}D_i$ is $\bb Q$-Cartier
and so if $P \in U \subset X$ is a smaller germ then 
$\mu\colon \mu^{-1}(U) \rightarrow U$ is also an F-dlt modification of
$\cal F\vert_U$. Indeed, to see this it suffices to show that $\mu^{-1}(U)$ is globally $\bb Q$-factorial.  If $D$ is any global divisor on $U$ then observe
that $\mu_*D \sim \sum a_iD_i$ by assumption and so $D \sim \sum a_iD'_i$ and hence
is $\bb Q$-Cartier.  
Thus we may freely replace $X$ by a smaller germ about $P$
at any point should we need to do so. 

We claim
\begin{claim}
\label{anqfact} 
For all $Q \in \mu^{-1}(P) \subset Y$ we have that $Y$ is analytically $\bb Q$-factorial about $Q$.
\end{claim}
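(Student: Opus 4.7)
The plan is to deduce analytic local $\mathbb{Q}$-factoriality of $Y$ at $Q$ from the global algebraic $\mathbb{Q}$-factoriality of $Y$, passing through klt singularities and a topological argument on the link of $Q$.

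I would begin by verifying that $Y$ has klt singularities. Since $Y$ is (globally, algebraically) $\mathbb{Q}$-factorial, $K_Y$ is $\mathbb{Q}$-Cartier; moreover $(\mathcal{G},0)$ is F-dlt (recall $K_{\mathcal{G}} = \mu^\ast K_{\mathcal{F}}$ by the smallness of $\mu$ and the fact that $\mathcal{F}$ is terminal). Theorem \ref{t_canimpliesnondicritical} then yields that $Y$ is klt; in particular $Q \in Y$ is an isolated klt threefold singularity. Algebraic local $\mathbb{Q}$-factoriality at $Q$ is automatic from the global statement: every Weil divisor in $\mathrm{Spec}\,\mathcal{O}_{Y,Q}$ extends to a Zariski-closed Weil divisor on $Y$, which is $\mathbb{Q}$-Cartier by assumption.

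The main work is the passage from algebraic to analytic local $\mathbb{Q}$-factoriality. First I would reduce to the case where the link $L_Q$ of $Q \in Y$ is simply connected: by \cite{TX17}, $\pi_1(L_Q)$ is finite, so by \cite[Proposition 3.13]{GKP16} the universal cover of $L_Q$ extends to a Galois quasi-\'etale cover $\pi\colon (Q' \in Y') \to (Q \in Y)$ of germs, with Galois group $G$. Given an analytic Weil divisor $D$ near $Q$, the reflexive pullback $\pi^{[\ast]}D$ is $G$-invariant, and if some multiple of $\pi^{[\ast]}D$ is Cartier on $Y'$, then averaging over $G$ makes a further multiple of $D$ Cartier on $Y$; thus it suffices to handle the case $\pi_1(L_Q) = 1$. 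Let $V$ be a small Stein neighborhood of $Q$ in $Y$ and set $V^\ast := V \setminus \{Q\}$; reflexivity identifies $\mathrm{Cl}(V) = \Pic(V^\ast)$. By Lemma \ref{l:kill_cohom} we have $H^1(V^\ast,\mathcal{O}_{V^\ast})=0$, and $\pi_1(V^\ast)=\pi_1(L_Q)=1$ gives $H^1(V^\ast,\mathbb{Z})=0$, so the exponential exact sequence produces an injection $\Pic(V^\ast) \hookrightarrow H^2(V^\ast,\mathbb{Z})$.

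To conclude that $\Pic(V^\ast)$ is torsion, I would invoke the smallness of $\mu$ and the global algebraic $\mathbb{Q}$-factoriality of $Y$: since $Q$ is a rational (klt) isolated germ and $\mu$ is small, any reflexive rank one sheaf on $V$ can be approximated, via an Artin-type result at the rational germ $Q \in Y$, by an algebraic rank one reflexive sheaf on an \'etale neighborhood of $Q$, which is $\mathbb{Q}$-Cartier by the algebraic $\mathbb{Q}$-factoriality established above; transporting this back to the analytic category yields that the original Weil divisor is $\mathbb{Q}$-Cartier. The hard part will be precisely this final algebraic-analytic comparison of local class groups at the klt threefold germ $Q \in Y$: for rational isolated singularities the two class groups are known to coincide up to torsion, but the careful verification in the present setting — leveraging simply-connectedness of the link after the quasi-\'etale cover, the Flenner-type vanishing of $H^1(V^\ast,\mathcal{O})$, and smallness of $\mu$ — is the main obstacle.
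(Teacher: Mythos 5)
There is a genuine gap, and it sits exactly at the step you yourself flag as ``the hard part.'' The assertion that for a rational (or klt) isolated threefold germ the algebraic and analytic local class groups ``coincide up to torsion'' is false, and with it the whole strategy of deducing analytic local $\mathbb{Q}$-factoriality from global algebraic $\mathbb{Q}$-factoriality by a purely local comparison collapses. The standard counterexample is an ordinary double point on a one-nodal quintic threefold: the variety is globally factorial (Grothendieck--Lefschetz), the Zariski-local ring $\mathcal{O}_{Y,Q}$ is factorial, yet the analytic germ $xy=zw$ has local class group $\mathbb{Z}$ generated by a plane no multiple of which is Cartier. Your topological reductions do not close this: $H^1(V^\ast,\mathcal{O})=0$ and $H^1(V^\ast,\mathbb{Z})=0$ only give an injection $\Pic(V^\ast)\hookrightarrow H^2(V^\ast,\mathbb{Z})$, and for the ODP the link is $S^2\times S^3$ with $H^2=\mathbb{Z}$, so there is no torsion conclusion to be had. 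Likewise, Artin approximation transports an analytic divisor only to an \'etale (Henselian) neighborhood, and Zariski-local $\mathbb{Q}$-factoriality does not pass to \'etale neighborhoods (again the ODP: the Henselian class group is already $\mathbb{Z}$). Finally, your preliminary reduction to a simply connected link via a quasi-\'etale cover of the germ severs all contact with the global $\mathbb{Q}$-factoriality of $Y$, which is the only hypothesis you have, so nothing is left to feed into the last step.

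The paper's proof avoids all of this by using the specific geometry of the situation rather than a general local-to-analytic comparison: it takes a small $\mathbb{Q}$-factorialization $f\colon Z\to(Q\in Y)$ of the analytic germ (possible since $Y$ is klt), chooses Cartier divisors $S_i$ on $Z$ dual to the components $C_i$ of $f^{-1}(Q)$ and meeting $f^{-1}(Q)$ in a single point, and uses the relative base point free theorem to show that $D+\sum a_iT_i$ (with $T_i=f_\ast S_i$) is $\mathbb{Q}$-Cartier near $Q$ for suitable $a_i\in\mathbb{Q}$. The decisive point — which has no analogue in your argument — is that each $T_i$ meets the compact curve $\mu^{-1}(P)$ only at the single point $Q$, hence extends by zero to a \emph{global} Weil divisor $\widetilde{T_i}$ on $Y$; global $\mathbb{Q}$-factoriality of $Y$ then makes each $T_i$, and therefore $D$, $\mathbb{Q}$-Cartier. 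In other words, global $\mathbb{Q}$-factoriality is imported into the analytic germ not through a class-group comparison but by explicitly extending the relevant local divisors to global ones, which is only possible because $\mu^{-1}(P)$ is one-dimensional.
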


\begin{claim}
\label{simplyconn}
 $Y$ is simply connected.
\end{claim}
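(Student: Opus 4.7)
The plan is to first reduce, via a quasi-\'etale cover of $X$, to the case where the link $L_X$ of $P\in X$ is simply connected, and then to deduce simple connectedness of $Y$ from that of $Y\setminus \mu^{-1}(P)$ using that $\mu$ is small.

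First, I would observe that $Y$ is (analytically) klt: by Claim \ref{anqfact}, $K_Y$ is analytically $\bb Q$-Cartier in a neighborhood of $\mu^{-1}(P)$, and since $(Y,\GG)$ is F-dlt (as $\mu$ is an F-dlt modification), Theorem \ref{t_canimpliesnondicritical} gives that $Y$ is klt. The smallness of $\mu$ together with Claim \ref{anqfact} then implies that $K_X$ is analytically $\bb Q$-Cartier at $P$ as well, so $X$ is analytically klt, and \cite[Corollary 1.4]{TX17} gives that $\pi_1(L_X)$ is finite. Following the argument at the beginning of the proof of Theorem \ref{t_Malgrange}, we pass to the Galois quasi-\'etale cover $X'\to X$ corresponding to the universal cover of $L_X$ (which extends across $P$ by \cite[Proposition 3.13]{GKP16}), and replace $X, Y, \FF, \GG$ by their pullbacks. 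Lemma \ref{l:q_fact} ensures that Claim \ref{anqfact} is preserved on the new $Y$, and the remaining standing hypotheses (isolatedness of the singularity of $X$ at $P$, terminality of $\FF$, smallness of $\mu$) are obviously inherited. We may therefore assume $\pi_1(L_X) = 1$.

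Next, for $X$ chosen as a sufficiently small contractible Stein germ, $X^* := X\setminus \{P\}$ deformation retracts onto $L_X$, so $\pi_1(X^*) = 1$. Since $\mu$ is small, restriction of $\mu$ induces an isomorphism $Y\setminus \mu^{-1}(P) \xrightarrow{\sim} X^*$, and hence $\pi_1(Y\setminus \mu^{-1}(P)) = 1$.

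Finally, $\mu^{-1}(P)\subset Y$ is a union of compact curves, hence has complex codimension at least two in the threefold $Y$. By the general fact that removing a closed analytic subset of complex codimension at least two from a normal analytic variety does not enlarge its fundamental group, the inclusion $Y\setminus \mu^{-1}(P)\hookrightarrow Y$ induces a surjection on fundamental groups; combining this with the previous step yields $\pi_1(Y) = 1$. The main obstacle I anticipate is justifying this topological surjection cleanly for the singular variety $Y$: the cleanest route is to pass to a resolution $\sigma\colon Z\to Y$, where $Z$ is smooth and the corresponding assertion is classical by transversality, and then transfer the conclusion back to $Y$ using that the exceptional fibres of $\sigma$ are simply connected (a consequence of $Y$ having rational, and in fact klt, singularities).
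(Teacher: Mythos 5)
Your argument has two genuine gaps, both in the reduction to the case $\pi_1(L_X)=1$.

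First, the step ``the smallness of $\mu$ together with Claim \ref{anqfact} implies that $K_X$ is analytically $\bb Q$-Cartier at $P$'' is false: $\bb Q$-Cartierness of $K_Y$ does not push forward along a small morphism (this failure is exactly what makes small $\bb Q$-factorializations non-trivial). In the setting of Corollary \ref{c_term_malg} it is emphatically \emph{not} known at this stage that $K_X$ is $\bb Q$-Cartier --- the Remark following the corollary points out that a priori only $K_{\cal F}$ is $\bb Q$-Cartier, and the $\bb Q$-Cartierness of $K_X$ is deduced only at the very end from the first integral. So you cannot invoke \cite[Corollary 1.4]{TX17} to get finiteness of $\pi_1(L_X)$, and the whole reduction stalls at its first step.

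Second, even granting finiteness of $\pi_1(L_X)$, replacing $X$ (and hence $Y$) by the quasi-\'etale cover associated to the universal cover of the link proves simple connectedness of the \emph{new} $Y$, not of the original one: simple connectedness does not descend along finite covers (if anything, a connected nontrivial cover of $Y$ being simply connected would show $\pi_1(Y)\neq 1$). The analogous reduction in the proof of Theorem \ref{t_Malgrange} is legitimate there because the statement being proved (existence of a first integral) descends along finite covers via Lemma \ref{l:replace_cover}; the statement ``$\pi_1(Y)=1$'' does not. To make your strategy work you would have to restructure the ambient proof of Corollary \ref{c_term_malg} so that only the covered object is ever used, which you do not address. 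The paper avoids all of this by an entirely different and purely two-dimensional argument: it takes a $\cal G$-invariant surface germ $T\supset\mu^{-1}(P)$, shows $\mu|_T\colon T\to S=\mu_*T$ is crepant between log terminal surface germs (using that $\cal F$ and $\cal G$ are terminal and Gorenstein, so adjunction gives $K_{\cal G}|_T=K_T$ and $K_{\cal F}|_S=K_S$), concludes that $\mathrm{exc}(\mu)=\mu^{-1}(P)$ is a tree of rational curves, hence simply connected, and then uses that $Y$ deformation retracts onto $\mu^{-1}(P)$. Your final topological steps (surjectivity of $\pi_1$ after removing a codimension two set, handled via a resolution) are fine, but they rest on the two unsupported reductions above.
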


\begin{proof}[Proof of Claim \ref{simplyconn}]
Let $T$ be a germ of a $\cal G$-invariant surface containing $\mu^{-1}(P)$.  Since $\cal G$
is terminal and $\mu^{-1}(P)$ is connected we see that $T$ is irreducible. Let $S = \mu_*T$ and observe by the proper mapping theorem
that $S$ is a divisor on $X$.

Observe that since $\cal F$, and hence $\cal G$, is terminal and Gorenstein (i.e., $K_{\cal F}$ is Cartier) 
we have that $(X, \cal F)$ and $(Y, \cal G)$ are both smooth
in codimension $2$ and so $K_{\cal G}\vert_T = K_T$ and $K_{\cal F}\vert_S = K_S$ and so we see that $\mu^*K_S = K_T$.
By \cite[Lemma 3.16]{CS18} we see that $T$ is a log terminal surface and so $P \in S$ is a germ of a log terminal singularity.
Thus we see that $\text{exc}(T \rightarrow S) = \text{exc}(\mu)$ is a tree of rational curves and therefore $\mu^{-1}(P)$ is simply connected.
Notice that $Y$ deformation retracts onto $\mu^{-1}(P)$ and so $Y$ is simply connected.
\end{proof}

Assuming Claim \ref{anqfact} we complete the proof as follows. Observe that $\mu^{-1}\cal F$ is terminal and so for all $q \in \mu^{-1}(P)$
by Theorem \ref{t_Malgrange} there exists a holomorphic first integral $F_q$ defined on a neighborhood $U_q$ of $q$ so 
that $F_q\vert_T = 0$.  

Let $s\colon Y' \rightarrow Y$ be an index 1 cover associated to $T$ ramified only over $T$, see \cite[Definition 2.52, Lemma 2.53]{KM98},
and let $\mu'\colon Y' \rightarrow X'$ be the Stein factorization of $Y' \rightarrow X$.
Notice that $r\colon X' \rightarrow X$ is ramified only along invariant divisors so
$K_{r^{-1}\cal F} = r^*K_{\cal F}$, in particular $r^{-1}\cal F$ is still terminal.
Replacing $X$ by $X'$ we may freely assume that $T$ is Cartier.

In particular, for any $q$, up to taking a root, we may assume that $(F_q = 0) = T\cap U_q$, i.e., $(F_q = 0)$ is reduced.
Thus for any $q$ and $q'$ so that $U_q \cap U_{q'} \neq \emptyset$, we see that $F_{q}$ is not a power
on $U_q \cap U_{q'}$.  Moreover, since $Y$ is smooth in codimension $2$ we see that $\mu^{-1}(P) \cap \text{sing}(X)$ consists
of a finite collection of points, and so by shrinking the $U_q$ if necessary we may also assume that 
$U_q \cap U_{q'} \cap \text{sing}(X) = \emptyset$.
We may then apply
Lemma \ref{l:formal_holonomy} 
to produce representation
$\rho \colon \pi_1(Y) \rightarrow \text{Aut}(\widehat{\Delta})$.  
Since $\pi_1(Y)$
is trivial we see that $\rho$ is trivial and so 
we get global first integral on $Y$,
which descends to $X$.

To show that $K_X$ is $\bb Q$-Cartier, let $\phi \colon (P \in X) \rightarrow (0 \in \bb C)$ be a holomorphic first integral for $\cal F$
where $0 \in \bb C$ is a (germ of a) curve.  Let $F = \phi^{-1}(0)$ and observe that $K_{\cal F} = K_{X/\bb C}(-mF)$ 
where $K_{X/\bb C} = K_X-\phi^*K_{\bb C}$
and where $m+1$ is the multiplicity of the fibre over $0$.  By assumption $K_{\cal F}$ is $\bb Q$-Cartier, 
$\phi^*K_{\bb C}$ is Cartier 
since $\bb C$ 
is a smooth curve and $F = \frac{1}{m+1}\phi^*0$ is $\bb Q$-Cartier and so $K_X$ is $\bb Q$-Cartier as claimed, thus completing the proof.

We now prove Claim \ref{anqfact},
\begin{proof}[Proof of Claim \ref{anqfact}]
Let $Q \in \mu^{-1}(P) \subset Y$ be any point.  We make the following preliminary observation. Let $D$ be any divisor defined in
an (analytic) neighborhood $U$ of $Q$ and suppose that $D \cap \mu^{-1}(P) = Q$.  Then, perhaps shrinking $X$ to a smaller neighborhood 
of $P$, we may extend $D$ to a divisor on all of $Y$.  Indeed,
for any $Q' \in \mu^{-1}(P) \setminus (\mu^{-1}(P) \cap U)$ we may find an open set $V_{Q'} \subset Y$ such
that $V_{Q'} \cap D = \emptyset$.  By compactness of $\mu^{-1}(P) \setminus (\mu^{-1}(P) \cap U)$
we may find $Q_1, ..., Q_n$ such that $\mu^{-1}(P) \subset U' := U \cup V_{Q_1} \cup \cdots \cup V_{Q_n}$.
By construction we see that $D$ is an analytic divisor defined on all of $U'$, by setting $D \cap V_{Q_i} = \emptyset$.
We may then find an open subset $W$ of $P$ in $X$ such that $\mu^{-1}(W) \subset U'$.  Replacing $X$ by $W$ 
we see that our observation follows.

So, suppose that $Q \in \mu^{-1}(P)$ and and suppose for sake of contradiction
that $Y$ is not analytically $\bb Q$-factorial about $Q$
 and let $D$ be a local divisor defined on a neighborhood $V$ of $Q$
which is not $\bb Q$-Cartier.  A priori, it is possible $D \cap \mu^{-1}(P)$ is $1$-dimensional
and so it is not clear if we can extend $D$ to a divisor on all of $Y$.

Since $Y$ is klt this implies there exists
a small $\bb Q$-factorialization about $Q$. Let $f\colon Z \rightarrow (Q \in Y)$ be this $\bb Q$-factorialization
and let $D'$ be the strict transform of $D$ and let $f^{-1}(Q) = \bigcup_i C_i$ be a decomposition into irreducible
components.  

Observe that for all $i$ we may find an irreducible effective Cartier 
divisor $S_i$ defined on $Z$ such that $S_i\cdot C_j = \delta_{ij}$
and such that $S_i \cap f^{-1}(Q)$ is a single point.

By choosing $a_i \in \bb Q$ appropriately we may assume that $D'+\sum a_iS_i$ is numerically trivial over $Y$.
Since $f$ is small we see that $(D'+\sum a_iS_i)-K_Z$ is nef and big over $Y$ and therefore by the relative basepoint free theorem,
\cite[Theorem 3.24]{KM98}
for $n >0$ sufficiently divisible we have that $n(D'+\sum a_iS_i) \sim_f 0$.  In particular, if we 
let $T_i = f_*S_i$ we see that 
$D+\sum a_iT_i$ is $\bb Q$-Cartier near $Q$.

Since $T_i \cap \mu^{-1}(P)$ is a point, by our observation at the beginning of this proof we may extend $T_i$ to a divisor
on all of $Y$, in particular, it follows that $T_i$ is $\bb Q$-Cartier.
This in turn implies that $D$ is in fact $\bb Q$-Cartier, proving our claim.
\end{proof}

\end{proof}

We can now provide a classification of terminal foliation singularities.

\begin{prop}
\label{prop_cart_case}
Let $(P \in X)$ be a normal threefold germ and let $\cal F$ be a co-rank one foliation on $(P \in X)$.
Suppose $K_X$ and $K_{\cal F}$ are Cartier and suppose that $\cal F$ is terminal.
Then $\cal F$ is given by the smoothing of a Du Val surface singularity, i.e., $\cal F$
admits a first integral $\phi\colon (P\in X) \rightarrow (0 \in \bb C)$ where $\phi^{-1}(0)$ is a Du Val surface singularity
and $\phi^{-1}(t)$ is smooth for $t \neq 0$.  In particular, $X$ is terminal.

Moreover, it is possible to write down a list of all such smoothings.
In an appropriate choice of coordinates 
we have that \[X = \{\psi(x, y, z)+tg(x, y, z, t)=0\}\]
and that $\cal F$ is defined by the 1-form $dt$, i.e., our first integral is just $(x, y, z, t)\mapsto t$
and where $\psi(x, y, z)$ is one of the following, see \cite[Theorem 4.20]{KM98}:
\begin{enumerate}

\item \label{i_an} $x^2+y^2+z^{n+1}$ with $n \geq 0$;

\item $x^2 +zy^2+z^{n-1}$ with $n \geq 4$;

\item $x^2+y^3+z^4$;

\item $x^2+y^3+yz^3$;

\item \label{i_e8}  $x^2+y^3+z^5$;

\item \label{i_smooth} $x$.

\end{enumerate}
Conversely, if
$g(x, y, z, t)$ is such that $X$ has at worst an isolated singularity at $P$ and $\cal F$ is defined by $dt$
then $\cal F$ has a terminal singularity at $P$.
\end{prop}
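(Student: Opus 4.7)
The plan is to use the first integral furnished by Corollary \ref{c_term_malg} to realize $\cal F$ as a one-parameter family of surfaces, show via a discrepancy calculation that the central fiber is Du Val, and then invoke Reid's classification of compound Du Val (cDV) singularities to obtain both the terminality of $X$ and the explicit form of $(X, \cal F)$. To begin, Corollary \ref{c_term_malg} provides a holomorphic first integral $\phi\colon (P \in X) \to (0 \in \bb C)$, which I may replace by its primitive root so that the central fiber $F := \phi^{-1}(0)$ is reduced. Since $K_{\cal F}$ is Cartier and $\cal F$ is terminal, the singular locus of $\cal F$ must be isolated at $P$, as a one-dimensional singular locus through $P$ would produce an exceptional divisor of vanishing discrepancy upon blowing up. In particular, $\phi$ is submersive away from $P$, so $\phi^{-1}(t)$ is smooth for all $t \ne 0$ and $F$ is smooth away from $P$.

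The heart of the argument is a discrepancy bookkeeping. Given a resolution $\mu\colon Y \to X$, write $\mu^{\ast} F = F_Y + \sum m_i E_i$, where $m_i \ge 1$ precisely when $E_i$ is $\cal F_Y$-invariant (equivalently, contained in a fiber of $\phi \circ \mu$), and $m_i = 0$ when $E_i$ is horizontal. Since $d\phi$ generates $N^{\ast}_{\cal F}$ with zero locus of codimension $\ge 2$, one has $c_1(N^{\ast}_{\cal F}) = 0$ on the germ, while $d(\phi \circ \mu)$ vanishes to order $m_i - 1$ along each invariant $E_i$ before saturation; this yields
\[
K_{\cal F_Y} = K_Y - \sum_i (m_i - 1) E_i, \qquad a(E_i, \cal F) = a(E_i, X) - (m_i - 1).
\]
Combining this with the adjunction identity
\[
K_{F_Y} = \mu|_{F_Y}^{\ast} K_F + \sum_i (a(E_i, X) - m_i) [E_i \cap F_Y],
\]
the discrepancy of each exceptional curve of $\mu|_{F_Y}\colon F_Y \to F$ equals $a(E_i, X) - m_i = a(E_i, \cal F) - 1 \ge 0$ by terminality of $\cal F$ (noting that $K_{\cal F}$ Cartier forces $a(E_i, \cal F) \ge 1$). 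Therefore $F$ is a canonical Gorenstein surface singularity, i.e.\ Du Val.

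Finally, I would invoke Reid's structure theorem for cDV singularities \cite{Reid85}: a threefold germ admitting a one-parameter smoothing with Du Val central fiber is cDV, hence Gorenstein terminal, and Reid's classification writes such an $X$ as $\{\psi(x,y,z) + t\,g(x,y,z,t) = 0\} \subset \bb C^4$ with $\psi$ drawn from the ADE list in the statement. The foliation is then the one defined by $dt$, since $t$ is identified with our first integral. The converse runs the same identity in reverse, combining Reid's terminality of cDV (yielding $a(E_i, X) \ge 1$ for horizontal $E_i$) with the Du Val property of $F$ (yielding $a(E_i, X) \ge m_i$ for invariant $E_i$) to get $a(E_i, \cal F) > 0$ in every case. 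The main obstacle is the middle step: correctly tracking which exceptional divisors are $\cal F_Y$-invariant versus horizontal, and how the saturation of $d(\phi \circ \mu)$ reshapes $K_{\cal F_Y}$ relative to $K_Y$; once the relation $a(E_i, \cal F) = a(E_i, X) - (m_i - 1)$ is in hand, the classification is a quotation of Reid.
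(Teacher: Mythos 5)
The paper states Proposition \ref{prop_cart_case} without proof (it sits between Corollary \ref{c_term_malg} and Theorem \ref{t_3fold_term}, whose proof merely cites it), so there is no argument to compare against line by line. Your strategy --- extract the first integral from Corollary \ref{c_term_malg}, show the central fibre is Du Val by a discrepancy computation, and quote Reid's cDV classification together with the unobstructedness of embedded deformations of hypersurface singularities --- is certainly the intended one, and the identity $a(E_i,\cal F)=a(E_i,X)-(m_i-1)$ combined with adjunction to $F_Y$ is the right computation for both directions.

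Two points need repair. First, your discrepancy formula is only valid for \emph{invariant} (vertical) exceptional divisors: for a horizontal $E_i$ the form $d(\phi\circ\mu)$ does not vanish along $E_i$, so saturation removes nothing and $a(E_i,\cal F)=a(E_i,X)$, not $a(E_i,X)+1$ as your convention $m_i=0$ would give. This does not damage either implication (in the forward direction non-dicriticality of terminal foliations excludes horizontal exceptional divisors over $P$; in the converse $a(E_i,X)\ge 1$ still gives positivity), but the formula as written is off by one there. Second, the reduction to a reduced central fibre is not free: $\phi^{\ast}0=(m+1)F$ only shows that $(m+1)F$ is principal, and extracting an $(m+1)$-th root of $\phi$ requires $F$ itself to be principal on the germ; the paper handles the analogous difficulty in Corollary \ref{c_term_malg} via an index-one cover, and something of that sort is needed here. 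Relatedly, for ``$F$ is Du Val'' you need $F$ normal: irreducibility follows because two components of the fibre would meet along a curve of foliation singularities, contradicting the isolatedness you established, while the $S_2$ condition requires $X$ to be Cohen--Macaulay, which you get only after invoking Lemma \ref{l_canimpliesklt} to conclude $X$ is klt. That same input is also what upgrades ``$\mathrm{sing}(\cal F)$ is isolated'' to ``$\mathrm{sing}(X)$ is isolated,'' without which ``$\phi^{-1}(t)$ is smooth for $t\ne 0$'' and the conclusion that $X$ is terminal (rather than merely cDV along a one-dimensional singular locus) are not yet justified.
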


\begin{thm}
\label{t_3fold_term}
Let $P \in X$ be a threefold germ and let $\cal F$ be a co-rank one foliation on $X$ and suppose that $\cal F$ is terminal.
Then $P \in X$ is a quotient of one of the foliations \ref{i_an}-\ref{i_smooth} in the above list by $G = \bb Z/m \times \bb Z/n$.  
\end{thm}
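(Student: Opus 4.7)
The strategy is to reduce to Proposition \ref{prop_cart_case} via a pair of quasi-\'etale cyclic covers that successively trivialise $K_{\cal F}$ and $K_X$. Since $\cal F$ is terminal, $K_{\cal F}$ is $\bb Q$-Cartier of some index $m$ by assumption, and Corollary \ref{c_term_malg} yields that $K_X$ is likewise $\bb Q$-Cartier, of some index $n$.

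I would first take the index-one cyclic cover $\pi_1\colon X_1 \to X$ associated to $K_{\cal F}$; this is quasi-\'etale and cyclic of degree $m$. Setting $\cal F_1 := \pi_1^{-1}\cal F$, the absence of codimension-one ramification gives $K_{\cal F_1} = \pi_1^{\ast}K_{\cal F}$ as Weil divisors, which is therefore Cartier. A direct computation on a common foliated log resolution of $X_1$ and $X$ shows that, for any exceptional divisor $E'$ over $X_1$ mapping to a divisor $E$ over $X$, the discrepancies satisfy $a(E', \cal F_1) = r_{E'}\cdot a(E, \cal F)$, with $r_{E'}$ the ramification index of $\pi_1$ along $E'$; in particular terminality transfers to $(X_1, \cal F_1)$. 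Applying Corollary \ref{c_term_malg} to $(X_1, \cal F_1)$ yields that $K_{X_1}$ is $\bb Q$-Cartier with some index $n'\mid n$, and I would then take the index-one cover $\pi_2\colon X_2 \to X_1$ of $K_{X_1}$, again quasi-\'etale and cyclic of order $n'$. On $X_2$ both $K_{X_2}$ and $K_{\cal F_2}$ are Cartier and $\cal F_2$ remains terminal, so Proposition \ref{prop_cart_case} identifies $(X_2, \cal F_2)$ as one of the model foliations \ref{i_an}--\ref{i_smooth}.

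The Galois group $G$ of the composite $\pi_1\circ\pi_2\colon X_2 \to X$ is then analysed by noting that $[K_{X_1}]$ is $\bb Z/m$-invariant (being a pullback from $X$), so the $\bb Z/m$-action on $X_1$ lifts canonically to an action on $X_2$ which, together with the deck transformations of $\pi_2$, generates an abelian subgroup sitting inside the finite local class group $\mathrm{Cl}(P\in X)$ of the klt germ as the subgroup generated by $[K_X]$ and $[K_{\cal F}]$. Dominating, if needed, by the full $\bb Z/m\times\bb Z/n$-cover (which remains a smoothing of a Du Val singularity defined by $dt$, hence stays in the list of Proposition \ref{prop_cart_case} by its converse assertion) gives the stated conclusion.

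The principal obstacle I expect is the group-theoretic bookkeeping in the last step: one must verify that the lift of the $\bb Z/m$-action truly commutes with the deck transformations of $\pi_2$, which reduces to the equivariance of the index-one cover construction combined with the abelianness of $\mathrm{Cl}(P \in X)$ and the fact that the twist by a character of $\bb Z/n'$ can be absorbed into the choice of lift. The preservation of terminality under quasi-\'etale cyclic covers is entirely routine, modelled on \cite[Proposition 5.20]{KM98}, and the final identification with items \ref{i_an}--\ref{i_smooth} comes directly from Proposition \ref{prop_cart_case} applied to $(X_2, \cal F_2)$.
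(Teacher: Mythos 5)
Your proof is correct and follows essentially the same route as the paper: Corollary \ref{c_term_malg} supplies the $\bb Q$-Cartierness of $K_X$, a quasi-\'etale abelian cover trivialises both $K_{\cal F}$ and $K_X$, and Proposition \ref{prop_cart_case} identifies the cover. The paper compresses all of this into the one-line assertion that a Galois cover with group $\bb Z/n \times \bb Z/m$ making both canonical divisors Cartier exists; your expanded bookkeeping of the two successive index-one covers, the preservation of terminality, and the (possibly non-faithful) $\bb Z/m\times\bb Z/n$-action is just a more detailed rendering of the same argument.
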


\begin{proof}
By Corollary \ref{c_term_malg} we see that $K_{\cal F}$ and $K_X$ are both $\bb Q$-Cartier so we may find a Galois cover 
$\pi\colon (X', \cal F') \rightarrow (X, \cal F)$ with Galois group $\bb Z/n \times \bb Z/m$ so that $K_{\cal F'}$ and $K_{X'}$ are both Cartier.
Indeed, Let $X_1 \rightarrow X$ and $X_2 \rightarrow X$ be the index one coves associated to 
$K_{\cal F}$ and $K_X$, with Galois groups $\bb Z/m_1$ and $\bb Z/m_2$ respectively.
Then if $X'$ is the normalization of a component of $X_1\times_X X_2$ dominating $X$ then $X' \rightarrow X$
is Galois and its Galois group is a subgroup of $\bb Z/m_1 \times \bb Z/m_2$ as required.
\newline
By Proposition \ref{prop_cart_case} we see that $(X', \cal F')$ is one of the foliations \ref{i_an}-\ref{i_smooth}
and we can conclude.
\end{proof}

\begin{cor}
Let $p \in X$ be a germ of a normal threefold and let $\cal F$ be a co-rank one foliation on $X$ and suppose that $\cal F$ is terminal.  Then $X$ and $\cal F$ admit a $\bb Q$-smoothing, i.e.,  there exists a family of foliated threefold germs $X_t$ and $\cal F_t$ such that $(X_0, \cal F_0) = (X, \cal F)$ and such that for $t \neq 0$ we have that $(X_t, \cal F_t)$ is a quotient of a smooth foliation on a smooth variety.
\end{cor}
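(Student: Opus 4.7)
By Theorem \ref{t_3fold_term}, $(X, \cal F)$ is the quotient of some $(X', \cal F')$ from Proposition \ref{prop_cart_case} by a finite abelian group $G \cong \bb Z/m \times \bb Z/n$, so it suffices to exhibit a $\bb Q$-smoothing of $(X', \cal F')$ that is equivariant under $G$ and then pass to quotients. Fix a $G$-equivariant embedding $X' = \{F = 0\} \hookrightarrow \bb C^4$ with $F := \psi(x,y,z) + tg(x,y,z,t)$ as in the proposition and $\cal F'$ defined by $dt$; after linearizing the action (which is possible as $G$ is finite abelian and fixes the origin), the $G$-action on $\bb C^4$ is linear, acts on $t$ by a character $\chi$ (as $\cal F'$ is preserved and $g\cdot(0)=0$), and acts on $F$ by a character $\eta$ (as $X'$ is preserved as a subvariety).

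The idea is to deform the hypersurface equation. Set $\mathcal{X}' := \{F = s\} \subset \bb C^4 \times \bb C_s$ with projection $\pi' \colon \mathcal{X}' \to \bb C_s$ and fibers $\mathcal{X}'_s$; by construction $\mathcal{X}'_0 = X'$. Since each $\psi$ appearing in Proposition \ref{prop_cart_case} defines an isolated (Du Val or smooth) surface singularity, the partial gradient $\nabla_{xyz}\psi$ vanishes only at the origin of $\bb C^3$, and for small values of $t$ this propagates to the statement that the full gradient of $F$ vanishes only at the origin of $\bb C^4$. Hence for all sufficiently small $s \neq 0$, $\mathcal{X}'_s$ is smooth near the origin, and moreover $dt$ and $dF$ remain linearly independent at every point of $\mathcal{X}'_s$, so $dt|_{\mathcal{X}'_s}$ cuts out a smooth co-rank one foliation $\cal F'_s$ on the smooth threefold germ $\mathcal{X}'_s$, whose leaves are simply the fibers of the first integral $t$.

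Finally, extend the $G$-action to $\bb C^4 \times \bb C_s$ by $g \cdot s := \eta(g)\,s$; then $\mathcal{X}'$ is $G$-invariant and $\pi'$ is $G$-equivariant. Setting $\mathcal{X} := \mathcal{X}'/G$ and letting $t$ be a coordinate on the quotient $\bb C_s/\eta(G)$, the fibers $X_t$ together with the descended foliation $\cal F_t$ give a family with $(X_0, \cal F_0) = (X'/G, \cal F'/G) = (X, \cal F)$, and for any $t \neq 0$, choosing a lift $s \neq 0$ above $t$, one has $(X_t, \cal F_t) = (\mathcal{X}'_s, \cal F'_s)/\ker(\eta)$, which is the quotient of a smooth foliation on a smooth variety by a finite group, as required. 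The main technical step is the smoothness of $\mathcal{X}'_s$ for $s \neq 0$, which reduces to the isolated critical point property of the Du Val $\psi$'s; the linearization of $G$ and the subsequent equivariant descent are routine.
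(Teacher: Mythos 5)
Your overall strategy --- reduce to the Gorenstein models of Proposition \ref{prop_cart_case} via Theorem \ref{t_3fold_term}, perturb the defining equation equivariantly, and descend to the quotient --- is exactly what the paper's one-sentence proof indicates, and the equivariance bookkeeping is routine as you say. The smoothness of $\mathcal{X}'_s$ for $s\neq 0$ is also correct, though the cleaner justification is that the critical locus of $F$ through the origin is contained in $\Sing(\{F=0\})=\{0\}$ (since $F$ is constant along any arc in its critical locus), so $0$ is an isolated critical point of $F$ and the nearby Milnor fibres are smooth; your ``partial gradient'' argument does not by itself exclude critical points with $t\neq 0$.

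The genuine gap is the claim that $dt$ and $dF$ are linearly independent at every point of $\mathcal{X}'_s$. One has $dt\wedge dF = dt\wedge\bigl((\partial_x F)\,dx+(\partial_y F)\,dy+(\partial_z F)\,dz\bigr)$, so independence fails exactly along $\Gamma:=\{\partial_xF=\partial_yF=\partial_zF=0\}$, which in cases \ref{i_an}--\ref{i_e8} is a positive-dimensional germ through the origin (three equations in $\bb C^4$, containing $0$ because $\nabla\psi(0)=0$). No branch of $\Gamma$ through $0$ can lie in $\{F=0\}$, since it would then lie in $\Sing(\cal F')=\{0\}$; hence $F|_\Gamma$ is finite onto a neighbourhood of $0\in\bb C$ and \emph{every} fibre $\mathcal{X}'_s$ meets $\Gamma$. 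Concretely, for $F=x^2+y^2+z^2+t^2$ the points $(0,0,0,\pm\sqrt{s})$ lie on $\mathcal{X}'_s$ and $dt|_{\mathcal{X}'_s}$ vanishes there; near such a point $\cal F'_s$ is the fibration by $x^2+y^2+z^2$ on a smooth threefold germ, i.e.\ again a nontrivial terminal singularity of type \ref{i_an}. Such a germ is not a quotient of a smooth foliation on a smooth variety: if $Y/G$ is smooth then $G$ is generated by pseudo-reflections, and a pseudo-reflection preserving a smooth co-rank one foliation either acts only on the transversal direction or restricts to a pseudo-reflection of the leaf direction, so the quotient foliation is again smooth. Thus your family does not verify the required conclusion, and the defect cannot be repaired by further perturbing the first integral alone, since the zero of the defining $1$-form at such a point is transverse and therefore persists under any small deformation. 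A genuinely different deformation of the \emph{foliation} (not merely of the equation of $X$) is needed here; the paper's own proof elides exactly this point.
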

\begin{proof}
This is a direct consequence of the classification in Proposition \ref{prop_cart_case}. Indeed, in each case we may explicitly
construct a smoothing of $X$ and $\cal F$ by perturbing the defining equations of $X$ and $\cal F$.
\end{proof}

\subsection{Structure of terminal flips}
\label{term.flip.sect}
We finish by providing a rough structural statement for terminal foliated flips.

\begin{thm}
\label{t_term_flip_struct}
Let $X$ be a $\bb Q$-factorial threefold and let $\cal F$ be a co-rank 1 foliation on $X$ with terminal singularities.
Let $\phi\colon X \rightarrow Z$ be a $K_{\cal F}$-flipping contraction and let $C = {\rm Exc}(\phi)$.
\newline
Then there exists an analytic open neighborhood $C \subset U$ and a holomorphic first integral $F\colon U \rightarrow \bb C$ of $\cal F$.  
\end{thm}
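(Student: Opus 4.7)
The plan is to apply Corollary \ref{c_term_malg} pointwise along $C$ to produce local holomorphic first integrals, and then glue them via a monodromy argument that exploits the topology of the flipping locus. First I would note that, by Corollary \ref{c_term_malg}, $K_X$ is automatically $\bb Q$-Cartier in a neighborhood of $C$, so Theorem \ref{t_canimpliesnondicritical} guarantees that $X$ is klt there. After passing to the index-one cyclic cover associated to $K_{\cal F}$, which is quasi-\'etale and preserves both the terminality of $\cal F$ and the klt-ness of $X$, Lemma \ref{l:replace_cover} allows us to reduce to the case where $K_{\cal F}$ is Cartier.

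Next, for each $p \in C$ I would invoke Corollary \ref{c_term_malg} on the germ $(p\in X)$ to obtain an analytic neighborhood $U_p$ together with a holomorphic first integral $F_p\colon U_p \to \bb C$ of $\cal F\vert_{U_p}$, normalized so that $F_p(p) = 0$ and $F_p$ is not a power. For two such indices $p,q$ with $U_p \cap U_q \neq \emptyset$, picking any point where $\cal F$ is smooth on the overlap and applying the factorization theorem of Mattei-Moussu \cite{MM80} produces a unique germ $\phi_{pq} \in \mathrm{Aut}(0\in\bb C)$ such that $F_p = \phi_{pq}\circ F_q$ there; this identity then propagates by analytic continuation to all of $U_p\cap U_q$. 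As in the argument of Lemma \ref{l:formal_holonomy} and in the proof of Corollary \ref{c_term_malg}, the cocycle $\{\phi_{pq}\}$ is equivalent to a holonomy representation $\rho\colon \pi_1(U) \to \mathrm{Aut}(0\in \bb C)$ on a suitable neighborhood $U$ of $C$, whose triviality is exactly what is needed to patch the $F_p$ into a single global first integral $F\colon U \to \bb C$.

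The main obstacle will therefore be establishing that $\pi_1(U) = 1$, possibly after one further finite quasi-\'etale cover of $X$. To this end, let $z = \phi(C)\in Z$; since $\phi$ is small and $X$ is klt, $Z$ is klt as well, so by \cite[Corollary 1.4]{TX17} the link of $z$ in $Z$ has finite fundamental group. I would then use \cite[Proposition 3.13]{GKP16} to extend the corresponding universal cover of $Z\setminus\{z\}$ to a quasi-\'etale cover of $Z$, pull it back to $X$, and reduce via Lemma \ref{l:replace_cover} to the case where $\pi_1(U\setminus C) = 1$, using that $\phi$ is an isomorphism away from $C$. Finally, the exceptional locus $C$ of a threefold $K_{\cal F}$-flipping contraction is a tree of rational curves, so $\pi_1(C) = 1$; combining this with $\pi_1(U\setminus C) = 1$ via a Van Kampen argument on a tubular neighborhood of $C$ which deformation retracts onto $C$ yields $\pi_1(U) = 1$, completing the gluing and the proof.
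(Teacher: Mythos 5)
Your overall strategy --- local holomorphic first integrals along $C$ obtained from Theorem \ref{t_Malgrange}/Corollary \ref{c_term_malg}, glued by showing the Mattei--Moussu holonomy representation is trivial because a neighborhood $U$ of $C$ deformation retracts onto a simply connected $C$ --- is the same as the paper's. But there is one genuine gap: you assert without justification that ``the exceptional locus $C$ of a threefold $K_{\cal F}$-flipping contraction is a tree of rational curves, so $\pi_1(C)=1$.'' That the irreducible components of $C$ are rational is easy, but ruling out a \emph{cycle} of rational curves is precisely the nontrivial point on which the whole gluing hinges: a cycle has $\pi_1(C)\cong \bb Z$, so the holonomy representation could be nontrivial and the local first integrals need not patch. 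In the classical MMP the tree structure follows from relative Kawamata--Viehweg vanishing, but the analogous vanishing fails for foliations in general; the paper obtains it from its relative vanishing theorem (Theorem \ref{t_vanishing}), which gives $R^1\phi_*\cal O_X=0$ and hence that $C$ is a tree. Without invoking that result (or a substitute) your argument does not close.

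Two further remarks. First, your detour through quasi-\'etale covers of $Z$, \cite{TX17}, \cite{GKP16} and van Kampen is unnecessary: once one knows $U$ deformation retracts onto $C$ and that $C$ is simply connected, $\pi_1(U)=1$ follows immediately, which is all the paper uses (and all that is needed to trivialize the representation $\rho\colon \pi_1(U)\to \mathrm{Aut}(0\in\bb C)$ as in the proof of Corollary \ref{c_term_malg}). Second, the reduction to $K_{\cal F}$ Cartier via an index-one cover and Lemma \ref{l:replace_cover}, and the normalization of the $F_p$ so that they are not powers before applying \cite{MM80}, are both consistent with how the paper handles the analogous steps in Corollary \ref{c_term_malg}, so that part of your argument is fine.
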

\begin{proof}
%Observe that all irreducible components of $C$ are rational curves.  
By Theorem \ref{t_vanishing} we have that $R^1f_*\cal O_X = 0$, and so $C$ is in fact a tree of rational curves, in particular it is simply connected.
For all $p \in C$ by Theorem \ref{t_Malgrange}, we may find a holomorphic first integral of $\cal F$ near $p$.  
However, since $C$ is simply connected by arguing as in the proof of Corollary \ref{c_term_malg} we may produce a first integral in a neighborhood of $C$.
\end{proof}

\section{Existence of separatrices for log canonical foliation singularities}

The goal of this section is to prove the following.

\begin{thm}
\label{t_lc_sep}
Let $P \in X$ be an isolated klt singularity.
Let $\cal F$ be a germ of a log canonical co-rank 1 foliation singularity on $P \in X$.  Then $\cal F$ admits a separatrix.
\end{thm}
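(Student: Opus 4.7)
The strategy, following the introduction, is to construct a well-chosen partial resolution $\pi: Y \to X$ and use adjunction to reduce the problem to exhibiting an algebraic leaf of a foliated log Calabi--Yau pair on a projective surface.

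If $\cal F$ is non-dicritical at $P$, then after a $\bb Q$-factorial resolution of $X$ the classical theorem of Cano--Cerveau \cite{CC92} produces a separatrix, so I may assume $\cal F$ is dicritical. Since $(\cal F, 0)$ is log canonical, dicriticality guarantees the existence of a prime exceptional divisor $E$ over $P$ with $\epsilon(E) = 1$ and discrepancy $-1$. I would construct a birational morphism $\pi: Y \to X$ having $E$ as its only non-invariant $\pi$-exceptional divisor by starting from the F-dlt modification of $(\cal F, 0)$ provided by Theorem \ref{t_existencefdlt}---which extracts every lc place---and, if necessary, contracting every non-invariant exceptional divisor other than $E$ via an MMP over $X$ (Corollary \ref{cor_Rel_MMP}). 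Then $K_{\cal F_Y} + E = \pi^\ast K_{\cal F}$ (the surviving invariant exceptional divisors enter with coefficient $0$ in the formula), and the adjunction formula of Lemma \ref{adjunction_new} on the normalization $E^\nu$ gives
\[
K_{\cal G} + \Theta \equiv 0,
\]
where $\cal G := \cal F_Y|_{E^\nu}$ and $\Theta \geq 0$ is the different. By inversion of adjunction (Theorem \ref{t_inversion_of_adjunction}), $(\cal G, \Theta)$ is log canonical, so $(E^\nu, \cal G, \Theta)$ is the anticipated foliated log Calabi--Yau pair on a projective surface.

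The core of the argument is to produce a $\cal G$-invariant algebraic divisor $D \subset E^\nu$: because $E$ is not $\cal F_Y$-invariant, saturating the leaves of $\cal F_Y$ through generic smooth points of $D$ produces an $\cal F_Y$-invariant hypersurface germ $\widetilde S \neq E$ through $D$, and $\pi(\widetilde S)$ is the required separatrix of $\cal F$ at $P$. If $\Theta \neq 0$, then every component of $\Theta$ is automatically $\cal G$-invariant (the different is supported on the trace on $E$ of the codimension-one invariant subvarieties of $\cal F_Y$, in particular of the invariant lc places of $(\cal F, 0)$ meeting $E$), and we are done. If $\Theta = 0$, I would carry out the argument sketched in the introduction: either $\cal G$ is uniruled and has many algebraic leaves, or, after passing to a smooth minimal model, $\cal G$ has canonical singularities and is either a fibration---whose fibres furnish $D$---or is defined by a globally defined vector field; in the latter case, blowing up a suitable singularity of the vector field places $\cal G$ into Riccati form with respect to a projection to $\bb P^1$, and $K_{\cal G} \equiv 0$ then forces the presence of an invariant fibre.

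The main obstacle lies in this last subcase $\Theta = 0$: one must rule out the Brunella--Mendes--McQuillan ``transcendental'' examples of rank-one foliations with numerically trivial canonical class (Kronecker foliations on abelian surfaces, Hilbert modular foliations), which admit no algebraic leaves. Here I would exploit that $E^\nu$ is projective and contracts to the klt $3$-fold germ $P \in X$, which severely restricts its topological and birational type---via, for example, the topological constraints on links of klt singularities used in the proof of Theorem \ref{t_Malgrange}---to exclude surfaces birational to abelian or Hilbert modular surfaces, leaving only the surface types on which the explicit vector-field argument of the introduction applies.
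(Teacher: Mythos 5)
Your overall strategy coincides with the paper's: extract a single non-invariant lc place $E$ via an F-dlt modification followed by a relative MMP (this is Lemma \ref{l_lc_modification}), restrict to get a numerically trivial foliated pair $(\cal G,\Theta)$ on $E^\nu$, produce a $\cal G$-invariant algebraic curve using McQuillan's classification, and push an invariant hypersurface back down. However, there are two genuine gaps. First, you implicitly assume that $E$ is contracted to the point $P$, so that $E^\nu$ is a \emph{projective} surface and $K_{\cal G}+\Theta\equiv 0$ is a global log Calabi--Yau condition. But the non-canonical locus of $\cal F$ may be a curve through $P$, in which case every non-invariant lc place dominates a curve, $E^\nu$ is only a surface germ over a curve germ, and the entire classification argument (Theorem \ref{t_triv_surface_foliation}, Lemma \ref{l_sep_kod_zero}) is unavailable. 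The paper splits into the cases $\dim\pi^{-1}(P)=2$ and $\dim\pi^{-1}(P)=1$ and handles the latter by a separate local argument (the last paragraphs of the proof of Theorem \ref{t_lc_sep}, using Lemma \ref{l_extension} with $D=C_0$ or $D=E_0$). Your proposal does not address this case.

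Second, the step ``saturating the leaves of $\cal F_Y$ through generic smooth points of $D$ produces an invariant hypersurface germ'' only yields local invariant surfaces near points of $D$; a separatrix at $P$ must be an invariant analytic hypersurface on a full neighborhood of $P$, equivalently on a neighborhood of the whole fibre $\pi^{-1}(P)=E$. Gluing and extending these local pieces is the content of Lemma \ref{l_extension}, whose hypothesis $(\star)$ --- every separatrix of $\cal F_Y$ at a point of $V$ meets $E$ only inside $V$ --- must be verified for the curve you choose, and not every invariant curve satisfies it (a component of $\Theta$ lying in $\mathrm{sing}(\cal F_Y)$, for instance, may have further separatrices branching off it). This is where the paper does real work: when $\Theta\neq 0$ it proves $n^{-1}(n(V))=V$ for a general leaf closure $V$ via a bigness computation, and when $\Theta=0$ it invokes Lemma \ref{l_sep_kod_zero}(2), which is engineered precisely to produce a $V$ absorbing all separatrices through its points. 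Finally, a remark rather than a gap: the paper excludes the Kronecker (abelian surface) case not through the topology of the link of $P\in X$ but by observing that klt-ness of $X$ together with the $\pi$-negativity of $E$ makes $-(K_{E}+\Theta_0)$ big, which is incompatible with a quasi-\'etale abelian cover unless $\cal G$ is algebraically integrable; your proposed route via link topology is plausible but would need to be made precise. (Minor point: the lc-ness of $(\cal G,\Theta)$ follows from the direct adjunction Lemma \ref{adjunction} for non-dicritical F-dlt pairs, not from inversion of adjunction, which goes in the opposite direction.)
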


Recall that log canonical foliation singularities which are not canonical are always dicritical and in general  dicritical singularities do not admit separatrices as the following classical example due to Jouanolou shows.

\begin{example}
The foliation on $0 \in \bb C^3$ defined by \[(x^mz-y^{m+1})dx+(y^mx-z^{m+1})dy+(z^my-x^{m+1})dz\] 
has no separatrices at the origin for $m \geq 2$.  
The blow up of this foliation at $0$ has discrepancy $=-m$, and therefore is not log canonical
for $m \geq 2$.
\end{example} 

As the next example shows a log canonical singularity may not admit a separatrix if no assumption is made on the base space.

\begin{example}
Let $A$ be an abelian surface that admits an automorphism $\tau$ so that $X := A/\langle \tau \rangle$ is a rational surface
and $A \rightarrow X$ is \'etale in codimension 1.
We may find a linear foliation on $A$ which admits no algebraic leaves and is $\tau$-invariant
and so descends to a foliation $\cal F$ without algebraic leaves on $X$.

Let $P \in Y$ be the cone over $X$ with vertex $P$ and let $\cal G$ be the cone over $\cal F$.  It is easy to check
that $\cal G$ is log canonical and admits no separatrices at $P$.  However, observe that $P \in Y$ is log canonical and not klt.
\end{example}

We also have the following interesting corollary.
%mentioned in the introduction.
\begin{cor}
\label{transversal_sep}
Let $\cal F$ be a germ of a foliation $0 \in \bb C^3$ and let $i\colon (0 \in S) \rightarrow (0 \in \bb C^3)$ be a germ of a surface
transverse to $\cal F$ such that $i^{-1}\cal F$ is log canonical, e.g., is a radial singularity.
Then $\cal F$ admits a separatrix.
\end{cor}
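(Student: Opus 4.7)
The plan is to apply Theorem~\ref{t_lc_sep} to $\cal F$ on $\bb C^3$ itself, after first upgrading the log canonical hypothesis from the transverse surface $S$ to the ambient space by way of inversion of adjunction (Theorem~\ref{t_inversion_of_adjunction}). Setting $X = \bb C^3$ and $\Delta = 0$, I would first observe that since $S$ is transverse to $\cal F$, it is not invariant, so $\epsilon(S) = 1$. Lemma~\ref{adjunction_new} then yields the adjunction formula
\[
(K_{\cal F} + S)|_{S^\nu} = K_{i^{-1}\cal F} + \Theta, \qquad \Theta \geq 0,
\]
where $S^\nu = S$ since $S$ is smooth, and $\Theta$ is the different.

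Next, I would argue that transversality forces $\Theta = 0$, so that the hypothesis of Theorem~\ref{t_inversion_of_adjunction} reduces to the assumption that $i^{-1}\cal F$ is lc. Indeed, $i^{-1}\cal F$ is a saturated rank one subsheaf of $T_S$ on the smooth surface $S$, hence its singular locus is zero-dimensional, and by transversality the same holds for $\text{sing}(\cal F) \cap S$. As $\Theta$ is a Weil divisor on $S$, only codimension-one subvarieties of $S$ can contribute to it, and no such contributions can arise from the isolated intersection $\text{sing}(\cal F) \cap S$ or from invariant divisors (since $\Delta = 0$). Thus $\Theta = 0$, Theorem~\ref{t_inversion_of_adjunction} applies, and $(\cal F, S)$ is lc in a neighborhood of $S$; since adding the effective divisor $S$ can only lower discrepancies, $\cal F$ by itself is lc at $0$. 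Finally, since $0 \in \bb C^3$ is trivially an isolated klt (in fact smooth) point, Theorem~\ref{t_lc_sep} applies to the log canonical co-rank one foliation germ $\cal F$ and produces a separatrix at $0$.

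The main obstacle I anticipate is verifying that the different $\Theta$ vanishes under the transversality assumption: this requires ruling out hidden codimension-one contributions, which should be automatic from the intended meaning of ``transverse'' since any $1$-dimensional component of $\text{sing}(\cal F)$ contained in $S$ would produce a codimension-one non-transverse locus in $S$, contradicting the hypothesis. Should one prefer to avoid this computation, an alternative route is to produce a separatrix $\Sigma \subset S$ of the surface foliation $i^{-1}\cal F$ directly, saturate it by leaves of $\cal F$ using flow-box coordinates on $\bb C^3 \setminus \{0\}$ (available by transversality at smooth points of $\cal F$), and then extend the resulting two-dimensional $\cal F$-invariant analytic subset across $\{0\}$ by the Remmert--Stein extension theorem.
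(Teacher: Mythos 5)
Your proposal follows exactly the paper's (one-line) proof: apply inversion of adjunction (Theorem \ref{t_inversion_of_adjunction}) to upgrade the log canonicity of $i^{-1}\cal F$ on $S$ to log canonicity of $(\cal F, S)$, hence of $\cal F$ itself, and then invoke Theorem \ref{t_lc_sep} at the smooth (hence isolated klt) point $0 \in \bb C^3$. The one step you elaborate beyond the paper --- that the different $\Theta$ vanishes because transversality rules out a one-dimensional tangency divisor between $S$ and $\cal F$, so that the hypothesis of Theorem \ref{t_inversion_of_adjunction} reduces to the stated assumption that $i^{-1}\cal F$ is lc --- is correctly identified as the only point needing care and is precisely the implicit content of the paper's argument.
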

\begin{proof}
This follows by combining Theorems \ref{t_lc_sep}
and \ref{t_inversion_of_adjunction} 
\end{proof}
We now proceed with the proof of Theorem \ref{t_lc_sep}.
We will first need the following generalization of Lemma \ref{l_formalseparatrix}.

\begin{lem}
\label{l_extension}
Let $X$ be a complex threefold with a co-rank 1 foliation $\cal F$ with non-dicritical singularities.
Let $D \subset X$ be a compact subvariety.
and let $V \subset D$ be a closed proper subvariety of $D$ tangent to $\cal F$ with the following property:
\[(\star) \text{  For all } p \in V \text{ if } S_p \text{ is a separatrix of } \cal F \text{ at } p  
\text{ then } S_p \cap D \subset V\]
Let $q \in V$ be any point, let $U_q$ be a neighborhood of $q$ 
and let $S_q \subset U_q$ be a separatrix at $q$.  
Then
there exists an analytic open neighborhood $U$ of $D$
and an invariant subvariety $S \subset U$ such that $S\cap U_q = S_q$.

\end{lem}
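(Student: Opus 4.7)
The plan is to prove the statement by an open-closed argument on $D$ combined with the finite-separatrix property that follows from non-dicriticality. Define $E\subset D$ to consist of those points $p$ for which there exist an open neighborhood $W_p$ of $p$ in $X$ and an $\cal F$-invariant analytic subvariety $S_p\subset W_p$ which agrees with $S_q$ on a connected component of $W_p\cap U_q$ whenever that intersection is non-empty, and which otherwise is compatible (in the same sense) with the extensions already constructed on previously treated points. By hypothesis $q\in E$, and $E$ is open in $D$ by construction. After replacing $D$ by the connected component of $D$ containing $q$ and iterating the argument on the other components of $D$ that meet $V$, it suffices to show that $E$ is closed in $D$.

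To prove closedness I would take $p\in\overline{E}$ and a sequence $p_n\to p$ in $E$ with local extensions $S_{p_n}\subset W_{p_n}$. Since $\cal F$ is non-dicritical, there are only finitely many germs of separatrices $T_1,\dots,T_r$ of $\cal F$ at $p$. Restricting to a small common neighborhood of $p$ and passing to a subsequence, I expect the $S_{p_n}$ to converge in the Hausdorff topology to one of the $T_j$; this $T_j$ is then the candidate extension at $p$. If $p\in V$, property $(\star)$ gives $S_{p_n}\cap D\subset V$ for $n\gg 0$, and after passing to the limit $T_j\cap D\subset V$; combined with the uniqueness of analytic continuation of the $S_{p_n}$, this forces $T_j$ to coincide with $S_{p_n}$ on a neighborhood of $p_n$ and thus to extend $S_q$ compatibly. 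If $p\in D\setminus V$, the argument is simpler because there is no competition between different separatrix branches meeting $D$ outside $V$, and the finiteness of the $T_j$ again forces $T_j=S_{p_n}$ near $p_n$ for $n$ large.

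The key obstacle is ruling out ``multi-valuedness'' of the continuation: different sequences $p_n\to p$ approaching $p$ from different directions in $D$ could, a priori, select different limits among the $T_j$, giving incompatible extensions. This is precisely where property $(\star)$ does real work, since any such branching would require a separatrix germ at a point of $V$ whose intersection with $D$ is not contained in $V$, in contradiction with $(\star)$; together with non-dicriticality, which prevents infinite-to-one behavior at $p$, this single-valuedness is what makes the gluing possible. Once $E=D$, compactness of $D$ produces a finite subcover $D\subset\bigcup_{i=1}^N W_{p_i}$, the pairwise compatibility of the $S_{p_i}$ on overlaps lets me glue them into an $\cal F$-invariant analytic subvariety $S\subset U:=\bigcup_i W_{p_i}$, and by construction $S\cap U_q=S_q$, as required.
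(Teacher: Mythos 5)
Your proposal has a genuine gap at its core step. The hard content of this lemma is the extension of the separatrix germ $S_q$ to an invariant subvariety on a full neighborhood of $V$, and your open--closed argument does not actually supply it. The closedness step rests on the assertion that the local extensions $S_{p_n}$ converge in the Hausdorff sense to one of the finitely many separatrix germs $T_j$ at the limit point $p$; this is precisely what needs to be proved and does not follow from non-dicriticality alone. Near a point of $V\cap\Sing(\cal F)$ the continuation of $S_q$ is governed by the local normal form of the singularity and by the holonomy of $\cal F$ along $V$, and a priori a leaf accumulating on $V$ need not close up into an analytic germ at $p$, nor need the continuation be single-valued around loops in $V$. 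The paper handles this by first passing to a foliated resolution so that $X$ is smooth, $\cal F$ has only \emph{simple} singularities, and $V$ becomes an invariant \emph{divisor} (noting that $(\star)$ is preserved and that extensions push forward), and then invoking the known extension theorem for separatrices along invariant divisors with simple singularities (\cite[\S 5.1]{Spicer17}, \cite[Lemma 5.1]{CS18}, \cite[\S IV]{CC92}) to produce an invariant divisor $S'$ on a neighborhood $U'$ of $V$ agreeing with $S_q$ near $q$. Your proof never performs this reduction and never cites (or reproves) such an extension result, so the key analytic input is missing.

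Your use of $(\star)$ is also misdirected. You invoke it to rule out ``multi-valuedness'' of the continuation, but branching would mean that two continuations select distinct separatrix germs $T_i\neq T_j$ at a point $p\in V$, and both of these can perfectly well satisfy $T_i\cap D\subset V$ and $T_j\cap D\subset V$; $(\star)$ says nothing about which germ a continuation picks. In the paper, $(\star)$ plays a different and much more modest role: once $S'$ has been built on a neighborhood $U'$ of $V$, $(\star)$ guarantees $S'\cap(D\cap U'\setminus V)=\emptyset$, so that for every $p\in D\setminus V$ one can choose a neighborhood $U_p$ disjoint from $S'$ and take $U=U'\cup\bigcup_p U_p$; the ``extension'' over $D\setminus V$ is the empty set, not a further continuation of the separatrix. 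Relatedly, your treatment of points of $D\setminus V$ as places where the separatrix continues ``without competition'' is the wrong picture: the sought-after $S$ must avoid $D$ away from $V$ entirely.
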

\begin{proof}
Let $\pi:\overline{X} \rightarrow X$ be a resolution of singularities of $X$ and $\cal F$
and so that $\pi^{-1}(V)$ is an invariant divisor.

Observe that Condition $(\star)$ still holds for $\pi^{-1}(V)$ and $\pi^{-1}(D)$.  
Moreover, if $q \in V$ is some point and $\pi^{-1}(S_q)$ admits an extension, $\overline{S}$,
to a neighborhood $\overline{U}$ of $\pi^{-1}(D)$ then since $\pi$ is proper,  $\pi(\overline{S}) \subset \pi(\overline{U})$
is an extension of $S_q$ to a neighborhood of $D$.

Thus, without
loss of generality we may assume that $X$ is smooth, $\cal F$ has simple singularities and
that $V$ is a divisor.

Let $q \in V$ be a point and let $S_q$ be any separatrix at $q$.  By Lemma \ref{l_formalseparatrix}
we may find a neighborhood $U'$ of $V$ and an invariant divisor $S'$ which agrees with $S_q$ near $q$.
Let $D' = D \cap U'$.
By $(\star)$ we see that $S' \cap (D' - V) = \emptyset$.  Thus, perhaps shrinking $U'$ if necessary, for all $p \in D - V$
there exists a neighborhood $U_p$ of $p$ such that $U_p \cap S' = \emptyset$. 

Taking $U = U' \cup \bigcup_{p \in D-V} U_p$ we see that $S'$ extends to a subvariety of $U$ and we are done.
\end{proof}

We recall the following classification result due to \cite{McQuillan08}.

\begin{thm}
\label{t_triv_surface_foliation} 
Let $X$ be a normal projective surface and let $\cal L$ be a rank one foliation on $X$  with canonical foliation singularities.  
Suppose $c_1(K_{\cal L}) = 0$.

Then there exists a birational morphism $\mu\colon X \rightarrow X'$ contracting only rational curves
tangent to $\cal L$ and a cyclic cover, $\tau \colon Y \rightarrow X'$, \'etale in codimension one such that
one of the following holds where $\cal G = \tau^{-1}\mu_*\cal L$:

\begin{enumerate}
\item \label{i_sesqui} $\mu$ is an isomorphism, $X = C \times E/G$ where $g(E) =1$, $C$ is a smooth projective curve, $G$ is a finite group
acting on $C \times E$ and $\cal G$ is the foliation induced by the $\GG$-invariant fibration $C\times E \rightarrow C$;

\item \label{i_kron} $\mu$ is an isomorphism and $\cal G$ is a linear foliation on the abelian surface $Y$;

\item \label{i_ricatti} $\mu$ is an isomorphism, $Y$ is a $\bb P^1$-bundle over an elliptic curve and $\cal G$ is transverse
to the bundle structure and leaves at least one section invariant; or

\item \label{i_qtoric} Up to blowing up $Y$ at $P \in \text{sing}(\cal L)$
we have $Y$ is a compactification of $\bb G_m \times \bb G_a$ and $\cal L$ restricted to this open subset 
is generated by a $\bb G_m \times \bb G_a$ invariant vector field; or

\item \label{i_toric} Up to blowing up $Y$ at $P \in \text{sing}(\cal L)$ we have that $Y$ is a compactification
of $\bb G_m \times \bb G_m$ and $\cal L$ restricted to this open subset is generated by a $\bb G_m \times \bb G_m$
invariant vector field.
\end{enumerate}
\end{thm}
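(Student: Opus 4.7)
The plan is to combine the foliated MMP in dimension two with a foliated abundance theorem, and to conclude by classifying smooth minimal surfaces equipped with a rank one foliation of trivial foliated canonical class.

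First, I would take a suitable foliated resolution and run the $K_{\cal L}$-MMP on $X$. By the cone theorem for rank one foliations on surfaces, every $K_{\cal L}$-negative extremal ray is spanned by a rational curve tangent to $\cal L$, and contraction of such a curve preserves the property of having canonical foliation singularities. Iterating produces a birational morphism $\mu\colon X\to X'$ whose exceptional locus is a union of $\cal L$-tangent rational curves. Since $c_1(K_{\cal L})=0$ and $\mu$ is $K_{\cal L}$-trivial on every contracted curve, the pushforward $K_{\mu_*\cal L}$ is again numerically trivial and nef. So after replacing $X$ by $X'$ I may assume $\mu$ is the identity and $K_{\cal L}$ is nef with $c_1(K_{\cal L})=0$ on a smooth minimal model.

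The second, central, step is to upgrade $K_{\cal L}\equiv 0$ to $K_{\cal L}\sim_{\bb Q} 0$. Granting this, one chooses a cyclic cover $\tau\colon Y\to X'$ of order equal to the torsion index of $K_{\cal L}$; because $\tau$ trivialises $K_{\cal L}$ it is \'etale in codimension one. Pulling back, $\cal G := \tau^{-1}\cal L$ satisfies $K_{\cal G}\sim 0$, so $\cal G$ is generated by a global section of $T_Y$ whose zero locus is contained in $\text{sing}(\cal G)$. With this in hand, I would split into the algebraic and transcendental regimes. If $\cal G$ has at least one algebraic leaf then all leaves are algebraic and $\cal G$ comes from a fibration $f\colon Y\to C$; triviality of $K_{\cal G}$ combined with canonicity of singularities forces the generic fibre to be an elliptic curve, which identifies $Y$ with a quotient $C\times E/G$ via the classical analysis of isotrivial elliptic fibrations, giving case (1), or else the presence of a dicritical singularity of $\cal L$ must be blown up to reveal the underlying toric group-invariant structure, giving cases (4) or (5). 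If no leaf of $\cal G$ is algebraic, then up to a further cover \'etale in codimension one $\cal G$ is defined by a closed holomorphic $1$-form, and a study of the Albanese map $Y\to \Alb(Y)$ shows that either $Y$ is an abelian surface and $\cal G$ is linear (case (2)), or $\Alb(Y)$ is an elliptic curve and $Y$ is a $\bb P^1$-bundle over it with $\cal G$ transverse to the projection and carrying an invariant section (case (3)).

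The main obstacle is the abundance step $K_{\cal L}\equiv 0 \Rightarrow K_{\cal L}\sim_{\bb Q} 0$ for canonical rank one foliations on surfaces. Classical Kawamata--Viehweg vanishing is unavailable in the foliated category, so the argument proceeds by a case-by-case analysis across the Enriques--Kodaira type of the minimal resolution of $X'$, coupled with foliated Riemann--Roch and the Camacho--Sad formula of Lemma \ref{FdltCS}, used to constrain how the residues along $\cal L$-invariant curves are distributed once one assumes canonicity. This is precisely the content of McQuillan's foliated abundance theorem in dimension two and concentrates essentially all of the hard analytic input needed for the classification.
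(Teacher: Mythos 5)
The paper's own proof of this theorem is essentially a citation: it invokes McQuillan's classification \cite[Theorem IV.3.6]{McQuillan08} wholesale, and the only thing it actually proves is the added refinement that $\mu$ is an isomorphism in cases (1)--(3) (there $\cal G$ is smooth, so $\mu_*\cal L$ is terminal, and terminality plus $c_1(K_{\cal L})=0$ plus the negativity lemma forces every exceptional discrepancy to vanish, hence $\mu=\mathrm{id}$). Your proposal attempts to re-derive McQuillan's theorem instead, but you concede that the abundance step is ``precisely the content of McQuillan's foliated abundance theorem,'' so you are not actually replacing the external input -- you are expanding the same citation into a sketch, and the sketch has gaps. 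First, your construction of $\mu$ via the $K_{\cal L}$-MMP is vacuous: since $c_1(K_{\cal L})=0$ there are no $K_{\cal L}$-negative extremal rays on $X$, and resolving first and running the MMP only contracts back down towards $X$; it never produces a contraction strictly below $X$. The $\mu$ of the statement contracts $K_{\cal L}$-\emph{trivial} tangent rational curves (passage to the canonical model of the foliation, genuinely needed in cases (4) and (5)), and its existence must be argued differently. Second, by ``replacing $X$ by $X'$'' you discard exactly the claim the paper adds, namely that $\mu$ is an isomorphism in cases (1)--(3); nothing in your argument recovers it.

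Third, the dichotomy organising your second half -- ``if $\cal G$ has at least one algebraic leaf then all leaves are algebraic and $\cal G$ comes from a fibration'' -- is false, and cases (3)--(5) of the very statement you are proving are counterexamples: a Riccati foliation transverse to a $\bb P^1$-bundle over an elliptic curve has an invariant section (an algebraic leaf) while its generic leaf is non-algebraic whenever the monodromy is non-torsion, and the degree-one foliation of $\bb P^2$ generated by $x\partial_x+\lambda y\partial_y$ with $\lambda$ irrational is canonical with $c_1(K)=0$, has the three coordinate lines as algebraic leaves, and has no other algebraic leaf. So the case division driving your classification does not close up. The viable options are either to follow McQuillan's actual route in full (abundance, then the index-one cover, then the analysis of the resulting global vector field or transverse structure), or to do what the paper does: cite \cite[Theorem IV.3.6]{McQuillan08} and supply only the short terminality argument for the isomorphy of $\mu$.
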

\begin{proof}
This follows directly from \cite[Theorem IV.3.6]{McQuillan08} except for the claim in items \ref{i_sesqui} - \ref{i_ricatti}
that $\mu$ is an isomorphism.  
In each of these cases we claim that $\mu_*\cal L$ is terminal.  This follows because for all $P \in X'$ there exists
a cyclic cover (namely $\tau$) 
such that $\tau^{-1}\mu_*\cal L$ is smooth in a neighborhood of $\tau^{-1}(P)$ and so we may apply
Proposition \ref{prop_term_surf_class} to conclude.

Since $\mu_*\cal L$ is terminal and $c_1(K_{\cal L}) = 0$ this implies that $\mu$ is an isomorphism.
\end{proof}

\begin{lem}
\label{l_sep_kod_zero}
Let $S$ be a surface and let $\cal L$ be a co-rank 1 foliation on $S$.
Suppose that $c_1(K_{\cal L}) = 0$ and that $\cal L$ has canonical singularities. 

Then the following hold.
\begin{enumerate}
\item \label{i_each_sep_alg} For all $p \in \text{sing}(\cal L)$ each
separatrix at $p$ is algebraic.  In particular, the union of all such separatrices is an algebraic
subvariety of $S$.

\item \label{i_big_item} Either there exists a quasi-\'etale cover $\tau:A \rightarrow S$
where $A$ is an abelian variety, or there exists an algebraic curve $V \subset S$ such that each component
of $V$ is $\cal L$ invariant and if $p \in \text{sing}(\cal L)\cap V$ then each separatrix at $p$
is contained in $V$.
\end{enumerate}
\end{lem}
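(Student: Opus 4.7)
The plan is to apply the classification in Theorem \ref{t_triv_surface_foliation} to $\cal L$, producing a birational contraction $\mu\colon S \to S'$ (which contracts only rational curves tangent to $\cal L$) together with a quasi-\'etale cover $\tau\colon Y \to S'$ such that $\cal G := \tau^{-1}\mu_*\cal L$ belongs to one of the five listed families. I would then argue case-by-case, in each instance singling out a $\cal G$-invariant algebraic curve $V_Y \subset Y$ that contains every separatrix of $\cal G$ at every singular point.

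For item \ref{i_big_item} first, case \ref{i_kron} is immediate: $Y$ is an abelian surface, $\mu$ is an isomorphism, and $\tau\colon Y \to S' \cong S$ is the desired abelian cover. In the remaining four cases I would take $V_Y$ as follows: in case \ref{i_sesqui}, the union of the fibers of $C \times E \to C$ descending to fibers of $C/G \to$ through the singular points of $\mu_*\cal L$; in case \ref{i_ricatti}, the union of all invariant sections of the $\mathbb{P}^1$-bundle together with the finitely many $\cal G$-invariant fibers; in cases \ref{i_qtoric} and \ref{i_toric}, the boundary divisor of the toric compactification. Setting $V := \mu^{-1}(\tau(V_Y)) \cup \mathrm{Exc}(\mu)$ produces the required algebraic curve on $S$: its components are $\cal L$-invariant because the exceptional locus of $\mu$ consists of rational curves tangent to $\cal L$, $\tau$ is quasi-\'etale, and $V_Y$ is $\cal G$-invariant.

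For item \ref{i_each_sep_alg}, once the components of $V_Y$ are identified as above, one verifies on each concrete model that every separatrix at every $\cal G$-singularity on $Y$ is a branch of $V_Y$. Since $\tau$ is \'etale in codimension one and $\mu$ contracts only $\cal L$-tangent curves, separatrices transfer in both directions through $\tau$ and $\mu$, and so algebraicity of separatrices transfers back to $\cal L$ on $S$. In particular item \ref{i_each_sep_alg} will be a corollary of the description of $V_Y$ in each case.

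The main obstacle is verifying that $V_Y$ genuinely exhausts the separatrices in the Ricatti case \ref{i_ricatti} and in the partial-torus case \ref{i_qtoric}, where generic leaves of $\cal G$ are transcendental and, a priori, additional formal separatrices could occur at singular points without being branches of $V_Y$. To rule this out I would use that the hypothesis $c_1(K_{\cal L}) = 0$ together with McQuillan's classification forces the singularities of $\cal G$ to be simple and located at crossings of components of $V_Y$; the eigenvalues of the linear part can be computed explicitly from the toric or Ricatti structure, and a Camacho--Sad residue computation on the components of $V_Y$ (as in Lemma \ref{FdltCS}) then obstructs the presence of further separatrices. A final small check, via Lemma \ref{l_extension} applied to $V_Y$ on $Y$, ensures that the local separatrices at singularities lying on $\mathrm{Exc}(\mu)$ are also contained in $V$.
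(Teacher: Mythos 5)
Your overall strategy --- reduce to the five families of Theorem \ref{t_triv_surface_foliation} via $\mu$ and $\tau$, then exhibit $V$ case by case, with case \ref{i_kron} supplying the abelian cover in the first alternative of item \ref{i_big_item} --- is the same as the paper's, and your explicit descent $V=\mu^{-1}(\tau(V_Y))\cup \mathrm{Exc}(\mu)$ is a slightly more careful version of the paper's ``we may freely contract curves tangent to $\cal L$ and replace by a finite cover.'' The divergence is precisely where you place your ``main obstacle,'' and there the paper's route is both different and decisively simpler. In cases \ref{i_sesqui}--\ref{i_ricatti} the foliation $\cal G$ is smooth (this is exactly why $\mu$ is an isomorphism there in the proof of Theorem \ref{t_triv_surface_foliation}), so on these models there are no singular points and nothing for $V_Y$ to exhaust; in the Riccati case \ref{i_ricatti} the paper takes $V=\Sigma$ the invariant section alone and checks $\text{sing}(\cal L)\cap\Sigma=\emptyset$ by the degree computation $0=K_{\cal L}\cdot\Sigma=K_\Sigma+\Delta$ with $K_\Sigma=0$ and $\Delta\geq 0$ supported on $\text{sing}(\cal L)\cap\Sigma$, and in case \ref{i_sesqui} it takes the closure of a general leaf. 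This makes your worry about additional formal separatrices in the Riccati case moot. For the toric cases \ref{i_qtoric}, \ref{i_toric}, the paper's argument that the boundary exhausts all separatrices is the invariance of the generating vector field under the $\bb G_m\times\bb G_a$ (resp.\ $\bb G_m\times\bb G_m$) action, not a residue computation. I would flag your Camacho--Sad step as the one genuinely misdirected ingredient: Lemma \ref{FdltCS} computes the self-intersection of an invariant curve from the indices along it, and does not by itself bound the number of separatrices at a given singular point. What actually does that work is your other remark --- that a group-invariant vector field is singular only at the fixed points of the torus, i.e.\ at crossings of boundary components, where a simple canonical singularity has at most the two boundary branches as separatrices --- or, more directly, the group-invariance argument the paper uses. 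With the Camacho--Sad step replaced accordingly, your proof closes.
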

\begin{proof}
To prove item \ref{i_each_sep_alg} observe that
in order to check if each separatrix at a singular point is algebraic we may freely contract curves tangent to the foliation,
as well as replacing by a finite cover.  Thus, it suffices to check
the claim for each of the 5 types of foliation listed in the statement of Theorem \ref{t_triv_surface_foliation}.

In cases \ref{i_sesqui} - \ref{i_ricatti} the foliation is smooth and so there is nothing to prove. Thus it remains
to consider cases \ref{i_qtoric} and \ref{i_toric}.

In this case, we see that the vector field generating $\cal L$ on $\bb G_m \times \bb G_a$
or $\bb G_m \times \bb G_m$, respectively, is smooth.  Hence $\text{sing}(\cal L)$ is contained in the boundary
of the compactification.  Moreover, since $\cal L$ is invariant under the action of $\bb G_m \times \bb G_a$ or
$\bb G_m \times \bb G_m$ we see that every separatrix of $p \in \text{sing}(\cal L)$ must be contained 
in the boundary.

To prove item \ref{i_big_item} again we may freely contract curves tangent to $\cal L$ and replace by a finite cover.
Thus we may assume that $(S, \cal L)$ is one of the foliations listed in Theorem \ref{t_triv_surface_foliation}.
We argue based on the case.

If we are in case \ref{i_toric} or \ref{i_qtoric} then $\text{sing}(\cal L)$ is non-empty and so by item \ref{i_each_sep_alg} as proven 
above we may take $V$
to be the union of all separatrices at $\text{sing}(\cal L)$. 

If we are in case \ref{i_sesqui} then $\cal L$ is algebraically integrable and we may take $V$ to be the closure of a general leaf.

If we are in case \ref{i_ricatti} let $\Sigma$ be the invariant section.  We claim that $\cal L$ is smooth along $\Sigma$.  Indeed,
on one hand $K_{\cal L}\cdot \Sigma = K_\Sigma+\Delta$ where $\Delta \geq 0$ is supported on $\text{sing}(\cal L)\cap \Sigma$.
On the other hand by assumption $K_{\cal L}\cdot \Sigma = 0$ and since $\Sigma$ is an elliptic curve we have 
$K_\Sigma = 0$ and so $\Delta =0$. This gives us
$\text{sing}(\cal L)\cap \Sigma = \emptyset$ and so we may take $V = \Sigma$.

Otherwise $S$ is an abelian variety and there is nothing more to prove.
\end{proof}

\begin{lem}
\label{l_lc_modification}
Let $P \in X$ be a germ of a normal threefold and let $\cal F$ be a co-rank one foliation on $X$.
Suppose that $\cal F$ is log canonical but not canonical.
Then there exists a birational morphism $\pi\colon Y \rightarrow X$ 
and an irreducible $\pi$-exceptional divisor $E_0$ such that
\begin{enumerate}
\item \label{a_5} $E_0$ is a $\pi$-exceptional divisor transverse to $\cal G := \pi^{-1}\cal F$;

\item \label{a_6} $\pi^{-1}(P) \subset E_0$;

\item \label{a_4} $\cal G$ has non-dicritical singularities;

\item \label{a_1} $K_{\cal G}+E = \pi^*K_{\cal F}$ where $E = \sum_i \epsilon(E_i)E_i$ where we sum over all $\pi$-exceptional divisors ;

\item \label{a_2} $(\cal G, E)$ is log canonical and $(\cal G, (1-\epsilon)E)$ is F-dlt for all $1>\epsilon>0$; and

\item \label{a_3} $Y$ is $\bb Q$-factorial and klt.

\end{enumerate}
\end{lem}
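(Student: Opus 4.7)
My plan is to construct the desired $\pi\colon Y \to X$ in two stages: first produce an F-dlt modification of $\cal F$, then isolate a single transverse exceptional divisor via the MMP.

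Stage one uses Theorem \ref{t_existencefdlt} applied to $(\cal F, 0)$: this produces a birational morphism $\pi_1\colon Y_1 \to X$ with $\cal G_1 := \pi_1^{-1}\cal F$ satisfying $K_{\cal G_1}+E_1 = \pi_1^*K_{\cal F}$, where $E_1 = \sum_i\epsilon(E_i)E_i$ runs over all $\pi_1$-exceptional divisors. The ``error term'' $F$ in Theorem \ref{t_existencefdlt} vanishes because $\cal F$ is log canonical, so the discrepancies at extracted divisors equal $-\epsilon(E_i)$ exactly. The theorem further guarantees that $(\cal G_1, E_1)$ is F-dlt, that $Y_1$ is $\bb Q$-factorial and klt, and that every lc centre is contained in a codimension one lc centre. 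By Theorem \ref{t_canimpliesnondicritical}, $\cal G_1$ is non-dicritical. This immediately yields properties (\ref{a_4}), (\ref{a_1}), (\ref{a_2}), and (\ref{a_3}) for $\pi_1$ in place of $\pi$.

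Stage two concerns properties (\ref{a_5}) and (\ref{a_6}). Since $X$ is a germ at $P$, every exceptional divisor of any birational morphism $Y\to X$ maps into $\{P\}$; thus, combined with property (\ref{a_6}), the condition $\pi^{-1}(P) \subset E_0$ forces $E_0$ to be the unique exceptional divisor of $\pi$. I would first exhibit a transverse exceptional divisor $E_0$ of discrepancy $-1$: non-canonicity of $\cal F$ provides a valuation $\nu$ with $a(\nu, \cal F) < 0$, while log-canonicity forces all invariant valuations to satisfy $a \geq 0$, so $\nu$ is transverse with $a(\nu) \in [-1, 0)$. By a foliated analogue of the classical existence of lc places for lc-but-not-klt pairs -- achieved by further weighted blowups of centres inside a transverse exceptional divisor of negative discrepancy, followed by a reapplication of Theorem \ref{t_existencefdlt} -- I may assume the F-dlt modification extracts at least one transverse divisor $E_0$ with $a(E_0) = -1$. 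Next I would run an MMP over $X$ scaling against $E_0$: concretely, a $(K_{\cal G_1}+E_1-\delta E_0)$-MMP over $X$ for sufficiently small $\delta>0$, which contracts, via a sequence of divisorial contractions and flips, each exceptional divisor distinct from $E_0$. This MMP terminates by Theorem \ref{t_term}, and properties (\ref{a_4})--(\ref{a_3}) are preserved step by step because each step is $K_{\cal G_1}+E_1$-trivial over $X$ and hence respects the crepant pullback identity. The resulting $\pi\colon Y\to X$ then satisfies all six required properties.

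The principal obstacle is the second stage: establishing the foliated analogue of lc-place existence, namely that non-canonicity of a log canonical foliation $\cal F$ forces the existence of a transverse valuation realising discrepancy exactly $-1$. In the classical variety setting this is standard, but in the foliated context it will require combining Cano's resolution of simple singularities (cf.\ Definition \ref{definitionlogsmooth}) with a careful analysis of how discrepancies propagate under blowups of centres lying inside a transverse divisor, ruling out the pathology of the infimum discrepancy being attained only in the limit. A secondary concern is verifying that the MMP in stage two contracts exactly the unwanted exceptional divisors while preserving the crepant pullback relation for $E_0$ and the remaining structural properties inherited from the initial F-dlt modification.
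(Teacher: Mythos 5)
Your two-stage skeleton (F-dlt modification, then an MMP over $X$ against a small multiple of a chosen transverse exceptional divisor) is the same as the paper's, but stage two contains a genuine error and misses the mechanism that actually proves item (\ref{a_6}). First, your claim that ``$\pi^{-1}(P)\subset E_0$ forces $E_0$ to be the unique exceptional divisor'' rests on a misreading of the germ hypothesis: a germ at $P$ is a small neighbourhood, and exceptional divisors may be centred over curves through $P$ (or be $\FF$-invariant divisors of discrepancy $0$), so they need not map to $\{P\}$. Indeed the paper's later arguments (Lemma \ref{l_lc_sep} and the proof of Theorem \ref{t_lc_sep}) explicitly allow several exceptional divisors, including other transverse ones centred over curves. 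Second, even granting that goal, the $(K_{\cal G_1}+E_1-\delta E_0)$-MMP over $X$ does not contract every exceptional divisor other than $E_0$: since the MMP divisor is $\equiv_\pi -\delta E_0$, each contracted ray has positive intersection with (the strict transform of) $E_0$, and an exceptional divisor disjoint from $E_0$ is simply untouched. What the MMP does achieve — and what you omit — is that at its end $-\delta E_0$ is nef over $X$; the Negativity Lemma (Lemma \ref{NegLemma}) then says each fibre of $\pi$ is either contained in or disjoint from $E_0$, and since $E_0$ was chosen to meet $\mu^{-1}(P)$ one gets $\pi^{-1}(P)\subset E_0$. This is the whole point of running that particular MMP, and without it item (\ref{a_6}) is unproved.

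Your ``principal obstacle'' is also a red herring: Theorem \ref{t_existencefdlt} by construction only extracts divisors of discrepancy exactly $-\epsilon(E)$, so any transverse divisor it extracts already has discrepancy $-1$; non-canonicity plus log canonicity gives a transverse valuation of negative discrepancy, which forces the modification to extract at least one transverse divisor. No auxiliary weighted blowups or a separate ``lc place existence'' statement are needed. The remaining items (\ref{a_4})--(\ref{a_3}) are handled in stage one essentially as you say, and their preservation under the ($K_{\cal G_1}+E_1$-trivial) MMP is as in the paper.
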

\begin{proof}

Let $\mu\colon (\overline{X}, \overline{\cal F}) \rightarrow (X, \cal F)$ be an F-dlt modification of $(X, \cal F)$
and write $K_{\overline{\cal F}}+\sum \epsilon(E'_i)E'_i= \mu^*K_{\cal F}$
where the $E'_i$ are the $\mu$-exceptional divisors.
Observe that $\overline{X}$ is $\bb Q$-factorial and klt and $\overline{\cal F}$ has non-dicritical singularities.

Since $\cal F$ is not canonical it must be the case that $\mu$ extracts some divisor
transverse to the foliation. We may therefore assume, after relabeling, that $\epsilon(E'_0) = 1$
and $E'_0 \cap \mu^{-1}(P) \neq \emptyset$.
For $0 < \delta \ll 1$ we know that $(\overline{\cal F}, \sum \epsilon(E'_i)E'_i - \delta E'_0:=\Theta)$ is F-dlt
and so by Corollary \ref{cor_Rel_MMP} we may run a $K_{\overline{\cal F}}+ \sum \epsilon(E'_i)E'_i - \delta E'_0$-MMP over $X$, call this MMP 
$\phi \colon \overline{X} \dashrightarrow Y$ and 
let $\pi\colon (Y, \cal G) \rightarrow (X, \cal F)$ be the induced map.

Since the MMP preserves $\bb Q$-factoriality and klt singularities and the output of the MMP
has non-dicritical singularities we see that items \ref{a_3} and \ref{a_4} are satisfied.
Item \ref{a_1} follows by construction and item \ref{a_2} follows since the MMP preserves F-dlt singularities.

Since 
\[K_{\overline{\cal F}}+ \sum \epsilon(E'_i)E'_i - \delta E'_0 \equiv_\mu -\delta E'_0\]
we see that each ray $R$ contracted by this MMP has positive intersection
with the strict transform of $E'_0$, in particular $E'_0$ is not contracted by this MMP.
Set $E_0 = \phi_*E'_0$.  Since $E'_0$
is transverse to the foliation $E_0$ is as well proving item \ref{a_5}.  Moreover we have that  
\[(K_{\cal G}+\phi_*\Theta) - (K_{\cal G}+\phi_* \sum \epsilon(E'_i)E'_i) = -\delta\phi_*E'_0 = -\delta E_0\]
is nef over $X$.  
By the Negativity Lemma,~\cite[Lemma~1.3]{1907.06705}, for all $x \in X$ either $\pi^{-1}(x)$ is disjoint from $E_0$ or $\pi^{-1}(x)$ is contained in $E_0$.  
By our choice of $E_0$ we have $E_0 \cap \pi^{-1}(P) \neq \emptyset$ which proves item \ref{a_6}.
\end{proof}

\begin{lem}
\label{l_lc_sep}
Let $P \in X$ be a germ of a klt singularity 
with a co-rank one foliation $\cal F$ with log canonical but not canonical singularities.  
Let $\pi:(Y, \cal G) \rightarrow (X, \cal F)$ be a birational morphism as in Lemma \ref{l_lc_modification} above.

Suppose that $\text{dim}(\pi^{-1}(P)) = 2$ and that $\pi^{-1}(P)$ is the only $\pi$-exceptional divisor transverse to 
$\cal G:=\pi^{-1}\cal F$.
Then there is a separatrix at $P$.
\end{lem}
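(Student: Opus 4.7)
The plan is to transfer the problem to a surface foliation on the exceptional divisor $E_0$ via adjunction, and then to exploit the classification of rank-one foliations of numerical Kodaira dimension zero.

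Since $E_0$ is the only $\pi$-exceptional divisor transverse to $\cal G$, Lemma \ref{l_lc_modification}(4) reduces to $K_{\cal G}+E_0=\pi^{*}K_{\cal F}$, because each other $E_i$ has $\epsilon(E_i)=0$. Applying the adjunction formula (Lemma \ref{adjunction}) to the F-dlt pair $(\cal G,E_0)$ and restricting to the normalisation $\nu\colon E_0^{\nu}\to E_0$, I obtain
\[
(K_{\cal G}+E_0)|_{E_0^{\nu}} \;=\; K_{\cal L}+\Theta, \qquad \Theta\geq 0,\ (\cal L,\Theta)\ \text{log canonical},
\]
where $\cal L$ is the restricted rank-one foliation on $E_0^{\nu}$. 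Since $\pi$ contracts $E_0$ to the point $P$, the left-hand side is numerically trivial, so $K_{\cal L}+\Theta\equiv 0$ and in particular $K_{\cal L}$ is anti-pseudoeffective.

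Next I would run the surface MMP for $\cal L$: either $\kappa(\cal L)=-\infty$, in which case $\cal L$ is uniruled and $E_0^\nu$ is covered by algebraic $\cal L$-invariant rational curves, or one reaches a model on which $\cal L$ is canonical with $c_1(K_{\cal L})=0$, and Theorem \ref{t_triv_surface_foliation} together with Lemma \ref{l_sep_kod_zero} apply. The abelian-surface case of Lemma \ref{l_sep_kod_zero}(ii) is excluded because $P\in X$ is klt and hence rational: the vanishing $R^{i}\pi_{*}\cal O_{Y}=0$ for $i>0$ propagates through formal functions to force $H^{i}(E_0^{\nu},\cal O_{E_0^{\nu}})=0$ for $i>0$, contradicting the cohomology of a quasi-\'etale quotient of an abelian surface. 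Thus in every case one obtains an algebraic $\cal L$-invariant curve $V\subset E_0^{\nu}$ with the property that each separatrix of $\cal L$ at each singular point on $V$ is contained in $V$.

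I then pick an irreducible component $C\subset V$ which is not contained in $E_0\cap E_i$ for any $i\geq 1$; this can always be arranged since the finitely many curves $E_0\cap E_i$ form an $\cal L$-invariant algebraic subvariety of $E_0^{\nu}$, while the classification supplies components of $V$ outside this finite union (a general fibre in the fibred case, the invariant section in the Ricatti case, a generic toric orbit closure in the toric cases, etc.). At a smooth point $p$ of $\cal G$ lying on $C$ at which $E_0$ is smooth and transverse to $\cal G$, the local leaf of $\cal G$ through $p$ defines a separatrix germ $S_p$ of $\cal G$ containing the germ of $C$ at $p$. Finally, I invoke the extension Lemma \ref{l_extension} with $D=E_0$ and $V$ as chosen, verifying condition $(\star)$ using the non-dicriticality of $\cal G$ from Lemma \ref{l_lc_modification}(3); this extends $S_p$ to a global $\cal G$-invariant hypersurface $S$ in an analytic neighborhood of $E_0$. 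Since $S\neq E_0$ (invariant versus transverse) and $S$ is not any $E_i$ with $i\geq 1$ (by the choice of $C$), the image $\pi_{*}S\subset X$ is a $\cal F$-invariant hypersurface germ through $P$, i.e.\ a separatrix of $\cal F$ at $P$. The main obstacle I expect is the second paragraph: controlling the MMP and classification for $\cal L$ in the presence of the effective boundary $\Theta$ and rigorously ruling out the abelian-surface case via the rationality of the klt singularity $P\in X$.
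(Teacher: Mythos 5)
Your overall architecture (adjunction to $E_0$, numerically trivial restricted foliation, classification via Theorem \ref{t_triv_surface_foliation} and Lemma \ref{l_sep_kod_zero}, extension via Lemma \ref{l_extension}, pushforward) matches the paper's. But the step where you exclude the abelian-surface alternative of Lemma \ref{l_sep_kod_zero}(\ref{i_big_item}) is wrong. A quasi-\'etale quotient of an abelian surface can be a \emph{rational} surface -- the paper's own example immediately after Theorem \ref{t_lc_sep} exhibits $X=A/\langle\tau\rangle$ rational with $A\to X$ \'etale in codimension one -- so $H^i(E_0^\nu,\mathcal O)=H^i(A,\mathcal O_A)^G$ can vanish for $i>0$ even in the abelian-cover case, and your cohomological argument via rationality of the klt singularity yields no contradiction. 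Indeed, that example is exactly the mechanism producing log canonical singularities with no separatrix when $X$ is not klt, and your proof as written would ``rule it out'' there too. The correct place where klt-ness of $X$ enters is the discrepancy computation $K_Y+E_0+B=\pi^*K_X+aE_0$ with $a>0$, which makes $-(K_{E_0^\nu}+\Theta_0)$ big; the paper then argues that if $r\colon Y\to E_0$ were quasi-\'etale with $Y$ abelian, either $\Theta_0^-\neq 0$ forces an invariant rational curve (impossible for a non--algebraically-integrable linear foliation on an abelian surface), or $\Theta_0^-=0$ makes $-K_Y=-r^*K_{E_0}$ big, contradicting $K_Y=0$.

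A second, smaller but real gap: you never verify condition $(\star)$ of Lemma \ref{l_extension}, i.e.\ that every separatrix of the \emph{threefold} foliation $\mathcal G$ at a point of $V$ meets $E_0$ only inside $V$. Your $V$ lives on the normalization $E_0^\nu$, and when $\Delta_0\neq 0$ the divisor $E_0$ need not be normal along $n(V)$: two branches of $E_0$ could meet along $n(V)$ and carry different leaves, violating $(\star)$. The paper handles this by showing $n^{-1}(n(V))=V$ for a general leaf closure $V$, via an intersection computation against the big divisor $-(K_{E_0^\nu}+\Theta_0)$ (again using klt-ness of $X$). In the $\Delta_0=0$ case one first observes $E_0$ is normal and that $\mathcal G_0$, being lc and non-dicritical, is in fact canonical -- a hypothesis of the classification theorem you invoke but do not check. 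These points need to be supplied before the extension lemma can be applied.
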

\begin{proof}
Let $E_0$ be a divisor as in Lemma \ref{l_lc_modification}
containing $\pi^{-1}(P)$.  Since $E_0$ is irreducible this implies that $\pi^{-1}(P) = E_0$.

We will find a closed subset $V \subset E_0$ satisfying
the hypotheses of Lemma \ref{l_extension} in order to produce a $\cal G$-invariant divisor 
in a neighborhood of $E_0$ whose pushforward will be our desired separatrix.

Let $\{E_i\}$ denote the collection of $\pi$-exceptional divisors
so that we have $K_{\cal G} +E_0 = \pi^*K_{\cal F}$ and $(\cal G, E_0)$ is log canonical and where
$E_i$ is $\cal G$-invariant for $i \neq 0$.
Note that since $E_i$ is invariant we see that $C_i := \pi(E_i)$ 
is a curve tangent to $\cal F$ passing through $P$.

Since $\cal G$ is non-dicritical and $E_0$ is the only $\pi$-exceptional divisor which is not $\cal G$-invariant
it follows that $\cal F$ restricted to $X \setminus P$ is non-dicritical.

By foliation adjunction, Lemma \ref{adjunction}, we know
that 
\[0 \sim_{\bb Q} n^*(K_{\cal G}+E_0) = K_{\cal G_0}+\Delta_0\]
where $\Delta_0 \geq 0$ and where $n\colon E^n_0 \rightarrow E_0$ is the normalization.

Next, since $X$ is klt 
we may write $K_{Y}+E_0+B = \pi^*K_X+aE_0$ where $a> 0$ and $B$ is not necessarily effective, but is supported
on the $\cal G$-invariant $\pi$-exceptional divisors.  Write $n^*(K_Y+E_0+B) = K_{E^n_0}+\Theta_0$.

We claim that $-E_0\vert_{E_0}$ is big.  
Let $A$ be an ample divisor on $Y$.  We may find a divisor $D \geq 0$ on $X$ so that $D+\pi_*A$ is $\bb Q$-Cartier.
We may then write $\pi^*(D+\pi_*A) = tE_0+A+D'$ where $t>0$, $D' \geq 0$, $\pi_*D' = D$ and $E_0$
is not contained in the support of $D'$.  Since $\pi^*(D+\pi_*A)\vert_{E_0} \sim 0$ we have
that $-E_0\vert_{E_0}\sim \frac{1}{t}(A+D')\vert_{E_0}$ which is big, as required.
It then follows that $-(K_{E_0}+\Theta_0)$ is big.  
Observe that $\Theta_0$ is not necessarily effective, but if we write $\Theta_0 = \Theta^+_0-\Theta^{-}_0$ where $\Theta^+_0, \Theta^-_0 \geq 0$ then
$\Theta^-_0$ is $\cal G_0$-invariant since it is supported on $n^{-1}(B \cap E_0)$.

First we handle the case $\Delta_0 \neq 0$.  In this case $K_{\cal G_0}$ is not psef, hence $\cal G_0$ is
algebraically integrable, by \cite[Main Theorem]{BMc01}.
Take $V$ to be the closure of general leaf of $\cal G_0$.  Observe
that $\cal G_0$ is non-dicritical since $\pi^{-1}\cal F$ is, and so $V$ is disjoint from the closure
of any other leaf of $\cal G_0$.  Moreover, in this case we see that $E^n_0$ is a $\bb P^1$-fibration
over a curve and that $V$ is a general fibre in this fibration.  In particular, notice that $K_{E^n_0}\cdot V = -2$.

We claim that $n^{-1}(n(V)) = V$.  Indeed, if not then $E_0$ would not be normal in a neighborhood of some point
of $n(V)$.  Let $W \subset \text{sing}(E_0)$ be a one dimensional component meeting $n(V)$.  Observe that since $V$
is general that $W$ is transverse to the foliation.
Since $(\cal G, (1-\epsilon)E_0)$ is F-dlt for all $1>\epsilon>0$ it follows from 
\cite[Lemma 3.11]{Spicer17} that $(Y, (1-\epsilon)E_0)$ is dlt at the generic point of $W$.
It follows by \cite[Corollary 5.55]{KM98}
in a neighborhood of a general point of $W$ we have that $E_0$ consists of two smooth components
meeting transversely.

Since $V$ is general, it follows that in an (analytic) neighborhood of $V$ that $n^{-1}(W)$ consists of two components
transverse to $\cal G_0$.
A straightforward calculation shows that the coefficient of each of these components in $\Delta_0$ and $\Theta_0$ is $=1$.
Notice that $V\cdot \Theta^-_0 = 0$ and so $(K_{E^n_0}+\Theta_0)\cdot V \geq 0$.  
However, $V$ is a movable curve and this contradicts
the fact that $-(K_{E^n_0}+\Theta_0)$ is big. 

Thus for all $q \in n(V)$ if $S_q$ is a separatrix of $\pi^{-1}\cal F$ at $q$ we see that $S_q \cap E_0 \subset n(V)$
and so we may apply Lemma \ref{l_extension} to produce an extension $T$ of $S_q$ to a neighborhood $U$ of $E_0$.  Perhaps
shrinking $U$ we may assume that $U = \pi^{-1}(W)$ for some neighborhood $W$ of $P$.  Notice also that since $V$
was chosen to be general we may assume that $T$ is not contained in the union of the $\pi$-exceptional divisors.  
Since $U \rightarrow W$
is proper we see that $S = \pi_*T \subset V$ is a divisor and is invariant
under $\cal F$, and hence is our desired separatrix.

Now we handle the case $\Delta_0 = 0$.  First observe that $\Delta_0 = 0$ implies that $E_0$ is normal.
By foliation adjunction, Lemma \ref{adjunction}, $\cal G_0$ is log canonical and since $\cal G_0$ is non-dicritical we see
that $\cal G_0$ is in fact canonical.

First, suppose that there exists a quasi-\'etale cover
$r:Y \rightarrow E_0$ be a quasi-\'etale cover of $E_0$ such that $Y$ is an abelian variety.
We claim that $\cal G_0$ is algebraically integrable in this case (in which case we are done by arguing as above).
If $\Theta_0^- \neq 0$ then $\cal G_0$, and hence $r^{-1}\cal G_0$, admits an invariant algebraic curve
and by Theorem \ref{t_triv_surface_foliation} we see
that $\cal G_0$ is algebraically integrable.
So suppose for sake of contradiction that $\Theta_0^- = 0$.  In this case,$-K_{E_0}$ is big and so we have that $-K_Y = -r^*K_{E_0}$ is big,
contradicting $K_Y \sim 0$.

Next, suppose that there is no such cover.  We 
may apply Lemma \ref{l_sep_kod_zero} to produce $V \subset E_0$ such that each component
of $V$ is tangent to $\cal G_0$ and each separatrix of $\cal G_0$ meeting $V$ is contained in $V$.
Thus, we may apply Lemma \ref{l_extension} to produce an invariant divisor $T$ in a neighborhood
of $E_0$ and which contains $V$.  We claim that $T$ is not contained in the union of the $\pi$-exceptional divisors.
Supposing the claim we see that $S = \pi_*T$ is our desired separatrix.

We now prove the claim.  First if $E_0$ is the only exceptional divisor there is nothing to show.  So suppose that there is some other $\pi$-exceptional divisor $E_i$.  Notice that if $Q \in C_i \setminus P$ is a general point then there exists
a separatrix of $\cal F$ at $Q$, call it $S_Q$ (recall $C_i = \pi(E_i)$ is tangent to $\cal F$).  This follows because in a neighborhood of $Q$ we know that
$X$ has quotient singularities, since $X$ is klt and klt singularities are quotient singularities outside a subset of codimension $\geq 3$.
Thus (up to replacing $X$ by a cover) we may assume that $X$ is smooth at $Q$.  Next, we know that $\cal F$ is non-dicritical in a 
neighborhood of $X$ and so we may apply \cite[Existence of Separatrix Theorem]{CC92} to produce $S_Q$.

 Let $S'_Q := \pi_*^{-1}S_Q$.    By Lemma \ref{l_formalseparatrix} we may extend $S'_Q$ to an invariant divisor 
 in an (analytic) open neighborhood of $\sum_{i =1}^mE_i$.  Call this extension $H$ and by construction
 $H$ is not contained in $\sum_{i = 0}^m E_i$.  Let $\Sigma = H \cap \sum_{i = 1}^m E_i$
 and notice that $\Sigma$ is a closed analytic subset of $\sum E_i$.  Let $\Sigma_0= \Sigma\cap E_0$ and let $x \in \Sigma_0$ 
 be a point.
 We know that $\sum_{i = 1}^m E_i \cap E_0 \subset V$ by construction.  
However, $H \cap E_0$ is a separatrix of $\cal G_0$ at $x$ intersecting $V$: in fact, $H\cap E_0 \subset V$.  
Then, in a neighborhood of 
 $E_0$, we have $H \subset T$, in particular, $T$ is not contained in the union of the $\pi$-exceptional divisors.
\iffalse

Observe that either $V \subset E_0$ is not contractible to a point, i.e., we are in cases 
\ref{i_toric}, \ref{i_qtoric} or \ref{i_sesqui}, or $V$ is not a union of rational curves, i.e., we are in case
\ref{i_ricatti}.  Recalling that $X$ is klt we see for $i \neq 0$ if $B = (\sum E_i) \cap E_0$ then 
$B$ is a contractible tree of rational curves in $E_0$.  So
we have that if $q \in V$ is a general point and $S_q$ is a separatrix at $q$
then $S_q$ cannot be contained in the union of the $\pi$-exceptional divisors.
Observe that $S_q \cap E_0$ is a separatrix of $\cal G_0$ at $q$, and so by construction is contained in $V$.

Thus we may apply Lemma \ref{l_extension} to produce a sepatrix $T$ extending $S_q$ in a neighborhood
of $E_0$ and such that $T$ is not contained in the union
of the $\pi$-exceptional divisors.
Take $S = \pi_*T$ and again we see that $S$ is our desired separatrix.
\fi
\end{proof}

We are now ready to prove the main theorem of this section.

\begin{proof}[Proof of Theorem \ref{t_lc_sep}]
Suppose first that $\cal F$ has canonical singularities. 
If $X$ is $\bb Q$-factorial then we may apply Corollary \ref{c_sep_can} to produce a separatrix.  
Otherwise, since $X$ is klt, it admits a small $\bb Q$-factorialization $\mu\colon X' \rightarrow X$.  
Since $\cal F$ is non-dicritical we know that $\mu^{-1}(P)$ is tangent to the foliation and is therefore contained in a germ of an invariant surface, $S$.  
We may then take $\mu_*S$ as our desired separatrix.

So we may assume that $\cal F$ is not canonical and let $\pi\colon (Y, \cal G) \rightarrow (X, \cal F)$ be a modification as in Lemma \ref{l_lc_modification} and let $E_0$ be a divisor as in the statement of the Lemma.

There are two cases, either $\pi^{-1}(P)$ is of dimension 2 or it is of dimension 1.
Notice moreover, that if there exists some $\pi$-exceptional divisor $E$ transverse to $\cal G$  such that $E$ is centred over a curve in $X$ then by choosing $E = E_0$ in the proof of Lemma \ref{l_lc_modification} we have that $\pi^{-1}(P)$ is of dimension 1.

If $\pi^{-1}(P)$ is of dimension 2 we may therefore freely assume that the only $\pi$-exceptional divisor transverse to $\cal G$ is $\pi^{-1}(P)$.  We may apply Lemma \ref{l_lc_sep} to conclude.

Otherwise $C:=\pi^{-1}(P) \subset E_0$ is a curve. Let $\cal G_0$ be the induced foliation on $E_0$.
Suppose first that some component $C_0 \subset C$ is transverse to $\cal G_0$.  
Then we may apply Lemma \ref{l_extension} with $D = \pi^{-1}(P)$ and $V$ a general point in $C_0$ to produce an invariant divisor $S$
in a neighborhood of $\pi^{-1}(P)$. 
In this case $\pi_*S$ will be our desired separatrix.

Now suppose that each component of $C$ is invariant by $\cal G_0$.
In this case, perhaps shrinking $X$, we may assume that the union of all convergent separatrices meeting $C$ is an analytic subset of $E_0$, call it $\widetilde{C}$.  
In this case we may apply Lemma \ref{l_extension} with $D = E_0$ and $V = \widetilde{C}$ to produce a separatrix $S$ in a neighborhood of $E_0$.   
Again, $\pi_*S$ is our desired separatrix.
\end{proof}

\begin{remark}
In fact, the arguments above prove a slightly stronger claim which may be of interest.
In the set up as above, if we let $C \subset \text{sing}(\cal F)$ be a curve of singularities passing through $P$ then $C$ is contained in a separatrix.
\end{remark}

\section{Foliations and hyperbolicity}
\label{hyperb.sect}

The goal of this section is to prove the following foliated version of \cite[Theorem 1.1]{Svaldi14}.
Given a foliated pair $(\FF, \Delta)$ and an lc center $S$ we will denote by $\bar{S} \subset S$ the locally closed subvariety obtained by removing from $S$ the lc centers of $(\FF, \Delta)$ strictly contained in $S$.

\begin{thm}\label{hyperb.thm}
Let $(\FF, \Delta)$ be a foliated log canonical pair on a normal projective variety $X$. 
Assume that 
  \begin{itemize}
   \item $X$ is potentially klt,
   \item there is no non-constant morphism $f : \mathbb{A}^1 \to X \setminus \nklt(\FF, \Delta)$ tangent to $\cal F$, and
   \item for any stratum $S$ of $\nklt(\FF, \Delta)$ there is no non-constant morphism $f \colon \mathbb{A}^1 \to \bar{S}$ which is tangent to $\FF$.
  \end{itemize}
Then $K_\FF + \Delta$ is nef.
\end{thm}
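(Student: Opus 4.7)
The plan is to argue by contradiction on the nefness of $K_\FF+\Delta$, combining the foliated cone and contraction theorem with the connectedness result of Theorem \ref{conn.f-lc.thm} and an induction on the dimension of the lc centres. First I would reduce to the F-dlt case via Theorem \ref{t_existencefdlt}: the resulting modification $\pi\colon Y\to X$ is crepant, so $K_\FF+\Delta$ is nef precisely when its pullback is, the non-klt locus and its stratification are preserved, and all three hypotheses of the theorem transfer unchanged to $(\GG,\Delta_Y)$; moreover the output of Theorem \ref{t_existencefdlt} already gives $Y$ klt, which is what we need downstream. Next I would set up an induction on $\dim X$: assuming the theorem for foliated pairs of smaller dimension, the adjunction formulae of Lemma \ref{adjunction} applied to each lc centre $S$ produce a pair $(\FF_S,\Delta_S)$ on $S^\nu$ whose strata and non-klt locus match the lc centres of $(\FF,\Delta)$ strictly contained in $S$, and whose induced hypotheses come directly from hypothesis (3) for the original pair. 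Inductively $K_{\FF_S}+\Delta_S$ is nef, and hence $(K_\FF+\Delta)|_{\nklt(\FF,\Delta)}$ is nef.

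Now suppose for contradiction that $K_\FF+\Delta$ is not nef. The foliated cone and contraction theorem, cf. \cite{Spicer17, CS18}, produces a $(K_\FF+\Delta)$-negative extremal ray $R$ with an associated contraction $f\colon X\to Z$ whose contracted curves are all tangent to $\FF$ and whose positive-dimensional fibres are covered by rational curves tangent to $\FF$ with class in $R$. Fix a positive-dimensional fibre $F$ of $f$. By Theorem \ref{conn.f-lc.thm} the locus $\nklt(\FF,\Delta)$ is connected in a neighbourhood of $F$, hence $F\cap \nklt(\FF,\Delta)$ itself is connected. If this intersection contained a curve $C$, then $C$ would be contracted by $f$, forcing $(K_\FF+\Delta)\cdot C<0$ and $C$ tangent to $\FF$; but since $C$ sits inside some lc centre $S$, adjunction together with the inductive nefness of $K_{\FF_S}+\Delta_S$ would give $(K_\FF+\Delta)\cdot C\geq 0$, a contradiction. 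Thus $F\cap \nklt(\FF,\Delta)$ is at most a single reduced point.

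To close the argument, I would pick a rational curve $C_0\subset F$ tangent to $\FF$ with $[C_0]\in R$. Since $C_0\cap \nklt(\FF,\Delta)\subset F\cap \nklt(\FF,\Delta)$, the curve $C_0$ meets $\nklt(\FF,\Delta)$ in at most one point. Normalising $C_0$ to $\tilde{C}_0\cong \bb P^1$ and removing the finitely many preimages of this point (or simply one arbitrary point if the intersection is empty), I obtain a non-constant morphism $\bb A^1\to X\setminus \nklt(\FF,\Delta)$ tangent to $\FF$, contradicting the second hypothesis of the theorem.

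The main obstacle I anticipate is making the induction rigorous across the two flavours of lc centre distinguished by $\epsilon(S)$: when $\epsilon(S)=1$ the restricted pair $(\FF_S,\Delta_S)$ is a foliated pair on $S^\nu$ satisfying the analogous hypotheses and the induction carries over directly, but when $S$ is $\FF$-invariant ($\epsilon(S)=0$) Lemma \ref{adjunction} produces a classical log pair on $S^\nu$ and one must instead invoke Svaldi's Mori hyperbolicity \cite{Svaldi14} to close the induction. Matching the strata of the restricted pair with the lc centres of $(\FF,\Delta)$ strictly contained in $S$, and verifying that the pair on $S^\nu$ lies in the right class of singularities to run both inductions, are the technical bookkeeping points that will require the most care.
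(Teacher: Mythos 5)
Your overall strategy is the same as the paper's (reduce to the F-dlt case, use adjunction on the lc centres to get nefness of $K_\FF+\Delta$ on $\nklt(\FF,\Delta)$, then combine the cone and contraction theorem with connectedness to manufacture a forbidden $\mathbb{A}^1$), but two steps as written have genuine gaps.

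First, the final step does not produce an $\mathbb{A}^1$. You take a rational curve $C_0$ in a positive-dimensional fibre $F$, note that $C_0$ meets $\nklt(\FF,\Delta)$ in at most one point, and then remove ``the finitely many preimages of this point'' from $\widetilde{C}_0\cong\mathbb{P}^1$. But $\mathbb{P}^1$ minus two or more points is $\mathbb{G}_m$ (or smaller), which admits no non-constant map from $\mathbb{A}^1$, so you get no contradiction unless the intersection is supported at exactly one point \emph{on the normalization of some component of $F$}. This is precisely where the paper needs the relative vanishing $R^1\phi_*\mathcal{O}_X=0$ (Theorem \ref{t_vanishing}) to show that the positive-dimensional fibres are trees of \emph{smooth} rational curves; connectedness then forces some component to meet $\nklt(\FF,\Delta)$ at most once. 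Without that input (which itself is nontrivial, since Kodaira-type vanishing fails for foliations) your argument stalls. This is the content of Corollary \ref{fund.cor}.

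Second, the reduction to the F-dlt case is not the routine transfer you describe. The hypotheses do not ``transfer unchanged'': an lc centre $Z'$ of $(\FF,\Delta)$ contained in an lc centre $W$ need not have preimage under the modification equal to a union of lc centres of the modified pair, so an $\mathbb{A}^1$ lying in a stratum $\bar{S}_Y$ upstairs can map into $W$ while meeting $Z$ downstairs, and hence fails to violate hypothesis (3) for $(\FF,\Delta)$. This is why the paper proves the refined modification Theorem \ref{t_sp_dlt_mod} (controlling $\pi^{-1}\nklt(\FF,\Delta)=\nklt(\GG,\Gamma)$) and still has to spend most of Case 2 running auxiliary MMPs over the lc centres of $X$ to rearrange the boundary so that preimages of lc centres become unions of lc centres before applying the surface case. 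Relatedly, your induction via adjunction needs care for one-dimensional lc centres contained in $\mathrm{sing}(\FF)$, where one cannot restrict the foliation to the centre and the paper instead passes through the strong separatrix. These are not merely bookkeeping points; they are where most of the actual work in the paper's proof lives.
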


The notions of potentially kltness and potentially lcness have been defined in Definition~\ref{def:potent.klt}

\subsection{A special version of dlt modifications.}
We prove a refinement of Theorem \ref{t_existencefdlt}, which will be useful in the proof of the main result of this section.

\begin{thm}[Existence of special F-dlt modifications]
\label{t_sp_dlt_mod}
Let $\mathcal F$ be a co-rank one foliation on a 
normal projective variety $X$ of dimension at most $3$.
Let $(\mathcal F, \Delta= \sum a_i D_i)$ be a foliated pair.
We will denote by $\Delta':= \sum_{a_i < \epsilon(D_i)} a_i D_i + \sum_{a_j \geq \epsilon(D_j)} \epsilon(D_j)D_j$.
\newline
Then there exists a birational morphism $\pi\colon Y \rightarrow X$
which extracts divisors $E$ 
of foliation discrepancy $\leq -\epsilon(E)$
such that if we write $K_{\mathcal G}+\Gamma = \pi^*(K_{\mathcal F}+\Delta)$ then $(\mathcal G, \Gamma' :=\pi^{-1}_\ast \Delta' + \sum_{E_i \;  \pi-{\rm exc.}} \epsilon(E_i)E_i)$ is F-dlt.
\newline
Furthermore, we may choose $(Y, \mathcal G)$ so that 
\begin{enumerate}

\item if $W$ is a non-klt centre of 
$(\mathcal G, \Gamma)$ then $W$ is contained in a codimension one lc centre of $(\mathcal G, \Gamma')$,

\item $Y$ is $\mathbb Q$-factorial,

\item $Y$ is klt, and

\item $\pi^{-1}\nklt(\FF, \Delta)= \nklt(\GG, \Gamma) = \nklt(\GG, \Gamma')$.
\end{enumerate}
\end{thm}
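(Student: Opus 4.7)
The plan is to adapt the proof of Theorem \ref{t_existencefdlt} (i.e., \cite[Theorem 8.1]{CS18}) to accommodate components of $\Delta$ with coefficients strictly exceeding $\epsilon(D_i)$. Decomposing $\Delta = \Delta' + \Delta''$ with $\Delta'' := \Delta - \Delta' \geq 0$, the pair $(\mathcal{F},\Delta')$ need not be log canonical, so we must also be prepared to extract some divisors with discrepancy strictly less than $-\epsilon(E)$; this is the main novelty with respect to the version in \cite{CS18}.

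First, I would take a foliated log resolution $\mu \colon W \to X$ of $(\mathcal{F},\Delta)$ (using \cite{Cano} in the threefold case), chosen high enough so that every non-klt center of $(\mathcal{F},\Delta)$ is dominated by a codimension one non-klt valuation of $(\mathcal{F},\Delta')$ on $W$. On $W$, define
\[
\Phi := \mu_\ast^{-1}\Delta' + \sum_{E_i \; \mu-{\rm exc.}} \epsilon(E_i) E_i,
\]
so that $(\mathcal{F}_W, \Phi)$ is F-dlt by foliated log smoothness, and
\[
K_{\mathcal{F}_W} + \Phi \;=\; \mu^\ast(K_{\mathcal{F}} + \Delta') + \sum_i c_i E_i, \qquad c_i := a(E_i,\mathcal{F},\Delta') + \epsilon(E_i).
\]
The divisors with $c_i > 0$ are precisely those we want to contract, while those with $c_i \leq 0$ are the non-lc ones the statement requires us to preserve.

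Next I would run a $(K_{\mathcal{F}_W} + \Phi)$-MMP over $X$, whose existence and termination are guaranteed by \cite[Theorem 1.1]{CS18} and by Theorem \ref{t_term}, in the relative form of Corollary \ref{cor_Rel_MMP}. Since $(\mathcal{F}_W, \Phi)$ is F-dlt and F-dltness is preserved by the steps of such an MMP, the output $\pi\colon Y \to X$ (with $\mathcal{G} := \pi^{-1}\mathcal{F}$ and $\Gamma' := \pi_\ast^{-1}\Delta' + \sum_j \epsilon(E_j) E_j$ summing over $\pi$-exceptional divisors) satisfies that $(\mathcal{G}, \Gamma')$ is F-dlt. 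By the Negativity Lemma (Lemma \ref{NegLemma}), at each step the contracted extremal ray over $X$ must meet some $E_i$ with $c_i > 0$, so only divisors with $c_i > 0$ can be contracted, and the surviving exceptional divisors are precisely those with $c_i \leq 0$. Defining $\Gamma$ by $K_{\mathcal{G}} + \Gamma = \pi^\ast(K_{\mathcal{F}} + \Delta)$, the identity
\[
\Gamma - \Gamma' \;=\; \pi_\ast^{-1}\Delta'' + \sum_j \bigl(-a(E_j,\mathcal{F},\Delta) - \epsilon(E_j)\bigr) E_j
\]
defines an effective divisor whose support lies inside $\nklt(\mathcal{G}, \Gamma')$, using $\Delta \geq \Delta'$ and $c_j \leq 0$ for the surviving exceptional divisors.

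Properties (2) and (3) follow as in \cite[Theorem 8.1]{CS18}: $\mathbb{Q}$-factoriality is preserved throughout the MMP, while kltness of $Y$ follows from Theorem \ref{t_canimpliesnondicritical} applied to the F-dlt pair $(\mathcal{G}, \Gamma')$. Property (1) is ensured by the choice of a sufficiently high foliated log resolution at the start. Property (4) follows by combining the crepancy $K_{\mathcal{G}} + \Gamma = \pi^\ast(K_{\mathcal{F}} + \Delta)$ with property (1) and the inclusion $\Supp(\Gamma - \Gamma') \subseteq \nklt(\mathcal{G}, \Gamma')$ just computed. The main obstacle will be verifying that the MMP contracts exactly the divisors with $c_i > 0$ without collapsing any of those with $c_i \leq 0$; this should follow from a careful analysis of the extremal rays contracted at each intermediate step, realised by running the MMP with scaling of a suitable ample divisor as in \S \ref{scaling.ssect}, while tracking the sign conditions on the $c_i$.
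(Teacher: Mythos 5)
Your overall strategy (foliated log resolution, then a relative MMP over $X$, then sort the exceptional divisors by the sign of their log discrepancy) is the natural generalisation of the classical dlt--blow-up argument, and it is genuinely different in structure from the paper's proof, which instead starts from the already-established F-dlt modification of \cite[Theorem 8.1]{CS18} on a $\mathbb{Q}$-factorial model $Z$ and then performs a delicate perturbation. However, your write-up has a genuine gap, and it sits exactly at the point you defer to ``a careful analysis of the extremal rays'': showing that the MMP does not contract the divisors with $c_i \leq 0$, which is what properties (1) and (4) require. Two concrete problems. First, $K_{\mathcal F}+\Delta'$ need not be $\mathbb{Q}$-Cartier ($X$ is not assumed $\mathbb{Q}$-factorial and only $K_{\mathcal F}+\Delta$ is $\mathbb{Q}$-Cartier by hypothesis), so the quantities $c_i = a(E_i,\mathcal F,\Delta')+\epsilon(E_i)$ and the identity $K_{\mathcal F_W}+\Phi = \mu^*(K_{\mathcal F}+\Delta')+\sum c_iE_i$ are not defined; this is why the paper first passes to the $\mathbb{Q}$-factorial model $Z$ before splitting off $\Theta''$. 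Second, and more seriously, even after repairing this one only has
\[
K_{\mathcal F_W}+\Phi \;\sim_{\mathbb{R},X}\; \sum_i \bigl(a(E_i,\mathcal F,\Delta)+\epsilon(E_i)\bigr)E_i \;-\; \mu_*^{-1}\Delta'',
\]
whose negative part $\mu_*^{-1}\Delta''$ is \emph{not} $\mu$-exceptional. The Negativity Lemma applied to the final relatively nef model does give you one direction (every surviving exceptional divisor has discrepancy $\leq -\epsilon$, since the pushforward of $\Gamma-\Gamma'$ is the effective divisor $\Delta''$), but your claim that ``at each step the contracted extremal ray must meet some $E_i$ with $c_i>0$, so only divisors with $c_i>0$ can be contracted'' is not a valid deduction: a contracted ray $R$ can satisfy $R\cdot(K_{\mathcal F_W}+\Phi)<0$ simply because $R\cdot \mu_*^{-1}\Delta''>0$ or because $R\cdot E_j>0$ for some $E_j$ with $c_j<0$, and such a step can perfectly well contract a divisor with $c_i\leq 0$, destroying (4) (and potentially (1)).

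This missing step is precisely what the paper's proof is built around. The bigness of $K_{\mathcal H}+\Theta'^{<1}$ over $X$, the decomposition $G=G_1+G_2+G_3$, the subtraction of the exceptional divisor $L$ from the relatively ample $A$, and Claim~1 are all there to replace the pair by a perturbed pair $(\mathcal H,\Theta'^{<1}+\epsilon'G')$ whose relative anti-nef part $\Xi_{\epsilon'}$ has support equal to the \emph{entire} divisorial exceptional locus (plus invariant components); the Negativity Lemma then forces whole fibres over $\nklt(\mathcal F,\Delta)$ into the non-klt locus, which is how (4) is obtained, and Claim~1 ensures no unwanted divisors of discrepancy $>-\epsilon$ survive. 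To make your argument work you would need to supply an analogous mechanism — e.g.\ run the MMP for a pair perturbed by a relatively ample divisor supported appropriately, and verify via Lemma \ref{l_flipdiscrep} that no non-klt place needed for (1) and (4) is contracted — rather than asserting it.
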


%In \cite[]...

\begin{proof}
For the proof of (1), (2), and (3) one can refer to \cite[Theorem 8.1]{CS18}.
Let $\pi_Z \colon Z \to X$ be a modification of $(\FF, \Delta)$ satisfying these three properties.
Let us denote by $(\mathcal{H}, \Theta, \Theta')$ the triple given by the birational transform of $\mathcal{F}$ on $Z$, 
\begin{eqnarray}
\nonumber 
K_\HH+\Theta= \pi^\ast_Z(K_\FF+\Delta), \;  \Theta'= \pi_{Z\ast}^{-1} \Delta'+ \sum_{F_i \; \pi_Z-{\rm exc.}} \epsilon(F_i)F_i.
\end{eqnarray}
By these inequalities $K_\HH +\Theta'^{<1} \sim_{\bb R, X} -\Theta''$, where $\Theta'':= \Theta - \Theta'^{< 1}$.
As $K_\HH +\Theta'^{<1}$ is big/$X$,
there exists $A$ ample/$X$ and an 
effective divisor $G$ such that
\[
 K_\HH +\Theta'^{<1} \sim_{\bb R, X} A+G.
\]
We can decompose $G$ as 
\[
 G=G_1+G_2+G_3,
\]
where $G_1$ is the part of $F$ supported on $\pi_Z$-exceptional divisors or $\HH$-invariant divisors, $G_2$ is the part of $G$ whose components are not $\HH$-invariant but contain an $\HH$-invariant lc center for $(\HH, \Theta)$, and $G_3:= G - G_1 - G_2$.
For any $0< \epsilon \ll 1$ we can write 
\begin{eqnarray}
\nonumber
& -\Theta'' \sim_{\mathbb{R}, X} K_\HH+\Theta'^{<1} = (1-\epsilon)(K_\HH+\Theta'^{<1}) + \epsilon(K_\HH+\Theta'^{<1}) \sim_{\bb R, X}
\\
\nonumber 
& \sim_{\bb R, X} (1-\epsilon)(K_\HH+\Theta'^{<1}+ \frac{\epsilon}{1-\epsilon} (A+G)) \sim_{\bb R, X} 
\\
\nonumber
& \sim_{\bb R, X}
(1-\epsilon)(K_\HH+\Theta'^{<1}+ \frac{\epsilon}{1-\epsilon} (A+G_1+G_2+G_3),
\end{eqnarray}
so that 
\begin{eqnarray}
\nonumber
 & K_\HH+\Theta'^{<1}  + \frac{\epsilon}{1-\epsilon} (A+G_2+G_3) 
 \sim_{\bb R, X} -\frac{1}{1-\epsilon}\Theta'' - \frac{\epsilon}{1-\epsilon}G_1
\end{eqnarray}
Choosing an effective divisor $L$ whose support coincides with the divisorial part of $\mathrm{exc}(\pi_Z)$ such that $A-L$ is ample, then
\[
 K_\HH+\Theta'^{<1}+ \epsilon' (G_2+G_3 +A - L) 
 \sim_{\bb R, X} -\frac{1}{1-\epsilon}\Theta'' - \frac{\epsilon}{1-\epsilon}(G_1 + L).
\]
Let us choose a sufficiently general effective $A' \sim_\mathbb{R} A-L$ and define $G' := G_2+ G_3 +A'$, $\epsilon':=\frac{\epsilon}{1-\epsilon}$, and $\Xi_{\epsilon'}:=\frac{1}{1-\epsilon}\Theta'' + \frac{\epsilon}{1-\epsilon}(G_1+L)$.
Hence, $K_\HH+\Theta'^{<1}+ \epsilon' G' \sim_{\bb R, X} -\Xi_{\epsilon'}$. 

\begin{claim} 
\label{claim.dlt.mod.proof}
For $\epsilon' \ll 1$, there exists an F-dlt modification $\bar r \colon \bar Z \to Z$ of $(\HH, \Theta'^{<1}+ \epsilon' G')$ such that for any $\bar r$-exceptional prime divisor $E$, $a(E; \HH , \Theta')= -\epsilon(E)$.
\end{claim} 

\begin{proof}[Proof of Claim~\ref{claim.dlt.mod.proof}]
Fix $0 < \epsilon' \ll 1$. Let $\tilde{r} \colon \tilde{Z} \to Z$ be an  F-dlt modification in the sense of Theorem \ref{t_existencefdlt} for $(\HH, \Theta'^{<1}+ \epsilon' G')$.
Let us define $\tilde{\HH}$ to be the birational transform of $\HH$ on $\tilde{Z}$.
Write 
\begin{eqnarray}
\nonumber
K_{\tilde{\HH}}+ \tilde{r}_\ast^{-1}(\Theta'^{<1}+ \epsilon' G')+ \sum a_i E_i = \tilde{r}^\ast(K_\HH+ \Theta'^{<1}+ \epsilon' G'), \; a_i \geq \epsilon(E_i).
\end{eqnarray}
As it is a F-dlt modification, it follows that $(\tilde{\HH}, \tilde{r}_\ast^{-1}(\Theta'^{<1}+ \epsilon' G')+ \sum \epsilon(E_i)E_i)$ is F-dlt.
Let $E_i$ be a $\tilde{r}$-exceptional prime divisor such that $a(E_i; \HH, \Theta') >- \epsilon(E_i)$; as $\Theta' \geq \Theta'^{<1}$%\footnote{C: Is this right? Maybe I'm confused but what is the definition of $\Theta'^{<1}$? R: this was clearly wrong, the inequality is the other way around, I fixed this. The rest now should make sense again.} 
then also $a(E_i; \HH, \Theta'^{<1}) >- \epsilon(E_i)$.
Since the discrepancy $a(E_i; \HH, \Theta'^{<1}+ \epsilon ' G')$ is a linear function of $\epsilon'$, we can choose $0 <\epsilon'' \ll \epsilon'$ such that 
\begin{eqnarray}
\nonumber
K_{\tilde{\HH}}+ \tilde{r}_\ast^{-1}(\Theta'^{<1}+ \epsilon'' G')+ \sum b_i E_i = \tilde{r}^\ast(K_\HH+ \Theta'^{<1}+ \epsilon'' G'),
\end{eqnarray}
and $b_i <\epsilon(E_i)$ whenever $a(E_i; \HH, \Theta') >- \epsilon(E_i)$.
Hence, 
\begin{eqnarray}
\nonumber
K_{\tilde{\HH}}+ \tilde{r}_\ast^{-1}(\Theta'^{<1}+ \epsilon'' G')+ \sum \epsilon(E_i)E_i \sim_{\mathbb{R}, Z} P - N,
\end{eqnarray}
where $P, N$ are effective $\tilde{r}$-exceptional divisors with disjoint supports, the support of $P$ contains all the $E_i$ with $a(E_i; \HH, \Theta') > -\epsilon(E_i)$.
The pair $(\tilde{\HH}, \tilde{r}_\ast^{-1}(\Theta'^{<1}+ \epsilon'' G')+ \sum \epsilon(E_i)E_i)$ is dlt. 
By Corollary \ref{cor_Rel_MMP} we may run the $(K_{\tilde{\HH}}+ \tilde{r}_\ast^{-1}(\Theta'^{<1}+ \epsilon'' G')+ \sum \epsilon(E_i) E_i)$-MMP over $Z$, to obtain a model
\[
\xymatrix{\tilde{Z} \ar@{-->}[rr] \ar[dr]^{\tilde{r}}& &\bar{Z} \ar[dl]_{\bar{r}}\\
& Z &}
\]
where, denoting with $\bar{\HH}$ the birational transform of $\HH$, $K_{\bar{\HH}}+ \bar{r}_\ast^{-1}(\Theta'^{<1}+ \epsilon'' G')+ \sum \epsilon(F_i)F_i$ is relatively nef,
where the $F_i$ are the strict transforms of the $\bar{r}$-exceptional divisors of .
The Negativity Lemma implies that 
\[
K_{\bar{\HH}}+ \bar{r}_\ast^{-1}(\Theta'^{<1}+ \epsilon'' G')+ \sum \epsilon(F_i) F_i \sim_{\mathbb{R}, Z} - \bar{N},
\]
where $\bar{N}$ is the strict transform of $N$ on $\bar{Z}$.
Thus, by construction, $\bar{Z}$ is the model that satisfies the statement of the claim for the chosen value of $\epsilon''$.
\end{proof}
Let us recall that on $Z$, $K_\HH+\Theta'^{<1}+ \epsilon' G' \sim_{\bb R, X} -\Xi_{\epsilon'}$.
Thus, on $\bar{Z}$ there exists an effective divisor $\bar F$ supported on the $F_i$ such that $\bar F \geq \sum \epsilon(F_i)F_i$ and
\[
\bar{r}^\ast(K_\HH+\Theta'^{<1}+ \epsilon' G')
= K_{\bar{\HH}}+ \bar{r}_\ast^{-1}(\Theta'^{<1}+ \epsilon' G') + \bar{F}
\sim_{\bb R, X} -\bar{r}^\ast \Xi_{\epsilon'}.
\]
Moreover, the support of $\bar{F} +\bar{r}^\ast \Xi_{\epsilon'}$ is the union of the divisorial part of the exceptional locus of the morphism $\bar{Z} \to X$ together with some $\bar{\cal H}$-invariant components and
\[
K_{\bar{\HH}} + \bar{\Theta}' = \bar{r}^\ast(K_\FF + \Theta'), \; \bar{\Theta}':=r_\ast^{-1}(\Theta') + \sum \epsilon(F_i)F_i.
\]
Running the $(K_{\bar{\HH}}+ \bar{r}_\ast^{-1}(\Theta'^{<1}+ \epsilon' G'))$-MMP over $X$ 
\[
\xymatrix{\bar{Z} \ar@{-->}[rr] \ar[dr]^{\bar{r}}& & Y_1 \ar[dl]_{\pi}\\
& X &
}
\] 
terminates with a model $\pi_1 \colon Y_1 \to X$ on which $-(\bar{F} +\bar{r}^\ast \Xi_{\epsilon'})$ is nef.
We denote by $\HH_{Y_1}, \Theta'_{Y_1}$ the strict transforms of $\bar{\HH}, \bar{\Theta}'$ on $Y_1$. 
To conclude the proof, we take an F-dlt modification $r_Y \colon Y \to Y_1$ of the pair $(\HH_{Y_1}, \Theta'_{Y_1})$.
The Negativity Lemma,~\cite[Lemma~1.3]{1907.06705}, and Claim~\ref{claim.dlt.mod.proof} imply that $Y$ is the desired model whose existence we claimed in the statement of the theorem.
\end{proof}

\subsection{Mori hyperbolicity and non-klt locus}

We recall the following hyperbolicity result for standard log pairs with dlt support which will be used throughout this section.
\begin{prop}
\label{mh.nonlc.dlt.prop}
\cite[Prop. 5.2]{Svaldi14}
Let $(X, \Delta = \sum_i b_i D_i \geq 0)$ be a normal, projective, $\mathbb{Q}$-factorial 
log pair s.t. $(X, \Delta' =\sum_{b_i < 1} b_i D_i + \sum_{b_i \geq 1} D_i)$ is 
dlt. Suppose that $K_X + \Delta$ is nef when restricted to 
${\rm Supp}(\sum_{b_i \geq 1} b_i D_i) = \nklt(\Delta')= \nklt(\Delta)$. Then, exactly one of the following two possibilities holds:
\begin{itemize}
    \item $K_X+ \Delta$ is nef, or 
    \item $X \setminus \nklt(\Delta)$ contains an algebraic curve whose normalization is $\mathbb{A}^1$.
\end{itemize}
\end{prop}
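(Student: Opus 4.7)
The plan is to argue by contradiction: assume $K_X+\Delta$ is not nef and produce a non-constant $\mathbb{A}^1 \to X \setminus \nklt(\Delta)$. Writing $\Delta = \Delta' + N$ with $N := \Delta - \Delta' \geq 0$, the support of $N$ is contained in $\mathrm{Supp}(\lfloor\Delta'\rfloor) = \nklt(\Delta') = \nklt(\Delta)$, which is a reduced divisor by the dlt-ness of $(X,\Delta')$. The hypothesis that $K_X+\Delta$ is nef on $\nklt(\Delta)$ immediately implies that any curve $C$ with $(K_X+\Delta)\cdot C < 0$ is \emph{not} contained in $\nklt(\Delta)$; hence $N \cdot C \geq 0$, and so such a $C$ is also $(K_X+\Delta')$-negative.

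Next I would produce a $(K_X+\Delta)$-negative extremal ray $R \subset \overline{NE}(X)$ via MMP with scaling: let $H$ be ample and set $\lambda_0 := \inf\{t \geq 0 : K_X+\Delta+tH \text{ is nef}\}$. Since $(X,\Delta')$ is dlt, the cone-and-contraction theorem is available for the pair $(X, \Delta')$, and by the observation above, any extremal ray $R$ realizing the scaling threshold satisfies both $(K_X+\Delta+\lambda_0 H)\cdot R = 0$ and $(K_X+\Delta)\cdot R < 0$, hence is also $(K_X+\Delta')$-negative. Applying the contraction theorem and the length bound for dlt pairs to $R$, I obtain a contraction $\phi \colon X \to Y$ of $R$ together with a rational curve $\ell$ generating $R$ in a positive-dimensional fiber of $\phi$ (or in the exceptional locus of the flip, in the small case) with $0 < -(K_X+\Delta')\cdot \ell \leq 2\dim X$; in particular $(K_X+\Delta)\cdot \ell < 0$.

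To conclude, let $n \colon \mathbb{P}^1 \to \ell$ be the normalization. As $\ell$ is $(K_X+\Delta)$-negative, $\ell \not\subset \nklt(\Delta)$, so $T := \{q \in \mathbb{P}^1 : n(q) \in \nklt(\Delta)\}$ is finite. Each $q \in T$ maps to some component of $\Delta$ with coefficient $\geq 1$, so the local contribution of $q$ to $\deg n^\ast \Delta$ is at least $1$. Combining this with $\deg K_{\mathbb{P}^1} = -2$:
\[
0 > \deg n^\ast (K_X+\Delta) = -2 + \deg n^\ast \Delta \geq -2 + \#T,
\]
which forces $\#T \leq 1$. Consequently $n$ restricted to $\mathbb{P}^1 \setminus T$ factors through $X \setminus \nklt(\Delta)$ and gives the desired algebraic curve whose normalization is $\mathbb{A}^1$.

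The main obstacle is the middle step: even though $(X,\Delta)$ itself is not log canonical (coefficients $\geq 1$ appear), one must still extract a $(K_X+\Delta)$-negative extremal ray of $\overline{NE}(X)$ generated by a rational curve of bounded length. This is made possible by exploiting the dlt-ness of the auxiliary pair $(X, \Delta')$ together with the hypothesis that $K_X+\Delta$ is nef on $\nklt(\Delta)$, which together force every $(K_X+\Delta)$-negative curve to also be $(K_X+\Delta')$-negative, so that the standard cone machinery applies.
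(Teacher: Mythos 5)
The paper does not actually prove this proposition --- it is quoted from \cite{Svaldi14} --- so the closest in-paper comparison is the proof of the foliated analogue, Corollary \ref{fund.cor}, and your first two steps match that argument closely: the decomposition $\Delta=\Delta'+N$, the observation that a $(K_X+\Delta)$-negative curve cannot lie in $\nklt(\Delta)$ and is therefore also $(K_X+\Delta')$-negative, and the appeal to the cone and contraction theorem for the dlt pair $(X,\Delta')$ are exactly the intended mechanism. (The passage from ``every $(K_X+\Delta)$-negative \emph{curve} is $(K_X+\Delta')$-negative'' to the same statement for an \emph{extremal ray} $R$ deserves a sentence: extremal rays of $\overline{NE}(X)$ are limits of rays spanned by irreducible curves, so the inequality $N\cdot R\geq 0$ passes to the limit. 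This is standard, and the paper elides it in Corollary \ref{fund.cor} as well.)

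The genuine gap is in your final counting step. The identity $\deg n^{\ast}(K_X+\Delta)=-2+\deg n^{\ast}\Delta$ asserts $\deg n^{\ast}K_X=\deg K_{\mathbb{P}^1}=-2$, which is false for a rational curve of codimension at least $2$ in $X$: there is no adjunction formula identifying $n^{\ast}K_X$ with $K_{\mathbb{P}^1}$ (a line in $\mathbb{P}^3$ has $K_X\cdot \ell=-4$; a flopping curve on a threefold has $K_X\cdot\ell=0$). The length bound $-(K_X+\Delta')\cdot\ell\leq 2\dim X$ does not repair this: combining it with $(K_X+\Delta)\cdot\ell<0$ only gives $\#T\leq \deg n^{\ast}\Delta < -K_X\cdot\ell$, which is not bounded by $1$. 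The correct way to finish --- and what both \cite{Svaldi14} and the paper's Corollary \ref{fund.cor} do --- is to use the contraction $\phi$ of the ray globally rather than a single curve: $-(K_X+\Delta)$ is $\phi$-ample, so relative Kawamata--Viehweg vanishing gives $R^1\phi_{\ast}\mathcal{O}_X=0$, whence the one-dimensional fibres are trees of rational curves (while two-dimensional fibres must be disjoint from the divisor $\nklt(\Delta)$, since any curve they contain is $(K_X+\Delta)$-negative and hence not in $\nklt(\Delta)$, and are covered by rational curves). The Koll\'ar--Shokurov connectedness theorem applied to $\phi$ then forces $\nklt(\Delta)$ to meet each positive-dimensional fibre in at most one point, and a suitable component $\ell$ of such a fibre has normalization meeting $n^{-1}(\nklt(\Delta))$ in at most one point --- which is exactly the conclusion your degree count was meant to deliver. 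In the surface case one can instead rescue a degree count via genuine adjunction, $(K_X+\ell)\cdot\ell=\deg(K_{\ell^{\nu}}+\mathrm{Diff})$ together with $\ell^2\leq 0$, but as written your inequality does not hold in the generality of the statement.
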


In the case of a general foliated log pair, using dlt modifications we get the following criterion, which will be fundamental in the proof of Theorem \ref{hyperb.thm}.

\begin{cor}
\label{fund.cor}
Let $X$ be a normal, projective, $\mathbb{Q}$-factorial threefold.
Let $(\FF, \Delta = \sum_i b_i D_i \geq 0)$ be a foliated log pair such that $(\FF, \Delta' =\sum_{i | b_i < \epsilon(D_i)} b_i D_i + \sum_{i | b_i \geq \epsilon(D_i)} \epsilon(D_i)D_i)$ is F-dlt. 
Assume that $X \setminus \nklt(\FF, \Delta)$ does not contain algebraic curves tangent to $\FF$ whose normalization is $\mathbb{A}^1$.
Then $K_\FF + \Delta$ is nef if and only if $K_\FF + \Delta$ is nef when restricted to $\nklt(\FF, \Delta)$.
\end{cor}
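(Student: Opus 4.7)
The forward implication is trivial, so the plan is to establish the converse: assuming $K_\FF+\Delta$ is nef on $\nklt(\FF,\Delta)$ and that $X\setminus\nklt(\FF,\Delta)$ contains no algebraic curve with $\bb A^1$-normalization, I will show $K_\FF+\Delta$ is globally nef. The strategy is to argue by contradiction, producing a non-constant morphism $\bb A^1\to X\setminus\nklt(\FF,\Delta)$ from an alleged non-nef ray, by running a single step of the MMP and then carefully analyzing its fibers via the foliated connectedness theorem.

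Assuming $K_\FF+\Delta$ is not nef, the first step is to invoke the Cone and Contraction theorem for F-dlt foliated pairs on $\mathbb Q$-factorial threefolds (Spicer; Cascini--Svaldi) to extract a $(K_\FF+\Delta)$-negative extremal ray $R$, spanned by a rational curve tangent to $\FF$, together with its contraction $f\colon X\to Z$. Since $(K_\FF+\Delta)\cdot R<0$ and $K_\FF+\Delta$ is nef on $\nklt(\FF,\Delta)$ by assumption, no curve in $R$ can be contained in $\nklt(\FF,\Delta)$. The same sign consideration forces any positive-dimensional fiber $F$ of $f$ to either be entirely contained in $X\setminus\nklt(\FF,\Delta)$, or to be at most $1$-dimensional: indeed, a $2$-dimensional fiber meeting $\nklt(\FF,\Delta)$ would, by the Mori structure of $f$, contain a curve in the class of $R$ lying inside $\nklt(\FF,\Delta)$, which is impossible.

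Next, I plan to split into two cases according to the dimension of $F$. If $\dim F = 2$ and $F\subset X\setminus\nklt(\FF,\Delta)$, the rational chain connectedness of Mori fibers immediately supplies an embedded $\bb P^1\subset F\subset X\setminus\nklt(\FF,\Delta)$, whence an $\bb A^1$, contradicting the hypothesis. If $\dim F = 1$, then $F$ is a tree of rational curves by standard Mori theory, and the relative version of the foliated Connectedness Theorem (Theorem \ref{conn.f-dlt.thm}) applied to $f$ asserts that $\nklt(\FF,\Delta)$ is connected in an analytic neighborhood of $F$. The aim is to combine this with foliated adjunction along each component $F_i\subset F$ to conclude that the intersection $F\cap\nklt(\FF,\Delta)$ is supported on a single point of the normalization $F_i^\nu\cong\bb P^1$ of some component. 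Indeed, if two components of the tree met $\nklt(\FF,\Delta)$ in distinct points, a standard intersection-theoretic computation along the dual graph (using $(K_\FF+\Delta)\cdot F_j \geq K_{F_j^\nu}\cdot F_j^\nu + (\text{intersection with }\nklt) + (\text{intersections with other components of }F)$) would force the sum of the negative intersections along the path to be non-negative, contradicting the $R$-negativity of some component.

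Once the intersection has been localized to a single point $p\in F_i^\nu$, the composition $F_i^\nu\setminus\{p\}\cong \bb A^1\to F_i\hookrightarrow X$ has image in $X\setminus\nklt(\FF,\Delta)$, giving the required $\bb A^1$ and hence the sought contradiction. The main obstacle is the second case: extracting from the locally stated conclusion of \ref{conn.f-dlt.thm} (connectedness of $\nklt$ in a neighborhood of $F$) the much sharper conclusion that, on some component's normalization, the preimage of $\nklt(\FF,\Delta)$ is a single point. This requires combining connectedness with the adjunction inequalities and a careful induction on the dual graph of the tree $F$; ruling out the subtle configurations where multiple components meet $\nklt(\FF,\Delta)$ but where the connecting curves all have $(K_\FF+\Delta)$-degree arranged to cancel out is the delicate part of the argument.
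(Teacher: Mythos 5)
Your overall architecture matches the paper's: argue by contradiction, extract a negative extremal ray and its contraction, show the relevant fibers are at most one-dimensional, and use the foliated Connectedness Theorem \ref{conn.f-dlt.thm} to localize the intersection of a fiber with $\nklt(\FF,\Delta)$ to a single point, producing a forbidden $\bb A^1$. However, there are genuine gaps at the points where you appeal to ``standard Mori theory.'' First, a small but real issue: the Cone and Contraction theorem applies to the F-dlt pair $(\FF,\Delta')$, not to $(\FF,\Delta)$ (whose coefficients may exceed $1$). One must first observe that, since $\Delta-\Delta'\ge 0$ is supported on $\nklt(\FF,\Delta)$ and $K_\FF+\Delta$ is nef there, any $(K_\FF+\Delta)$-negative ray $R$ satisfies $R\cdot(\Delta-\Delta')\ge 0$ and is therefore also $(K_\FF+\Delta')$-negative; only then does the contraction $\phi$ exist.

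The more serious gaps concern the structure of the fibers. The contraction $\phi$ is $(K_\FF+\Delta')$-negative, not a priori $K_X$-negative, so neither the rationality of every contracted curve nor the tree structure of the one-dimensional fibers is ``standard.'' The paper proves rationality of each contracted curve $\Sigma$ by exhibiting a germ of an $\FF$-invariant surface $S\supset\Sigma$, restricting via foliated adjunction to get a $(K_S+\Delta_S)$-negative contracted curve with $\mu_\Sigma\Delta_S\le 1$, and then using classical surface adjunction. The tree structure is obtained from the vanishing $R^1\phi_*\OO_X=0$: in the fiber-type case this follows because (having already shown all fibers are rational curves) $\phi$ is in fact $K_X$-negative, while in the birational case it requires the paper's foliated relative vanishing theorem, Theorem \ref{t_vanishing} --- a nontrivial input precisely because Kawamata--Viehweg does not hold for foliated pairs. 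Finally, for the last step the paper deduces directly from Theorem \ref{conn.f-dlt.thm} that the (tree of rational curves) fiber meets $\nklt(\FF,\Delta)$ in at most one point, rather than running the dual-graph intersection computation you sketch; your alternative might be made to work, but as written it is the vaguest part of your argument, whereas the connectedness statement already does the job once the tree structure and the finiteness of $F\cap\nklt(\FF,\Delta)$ are in place.
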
 

\begin{proof}
If $K_\FF + \Delta$ is nef, then, a fortiori, it is nef when restricted to any subvariety of $X$.
\newline
We now assume that $K_\FF + \Delta$ is nef when restricted to $\nklt(\Delta)$.
As $(\FF, \Delta')$ is F-dlt, it follows that 
\[
\nklt(\FF, \Delta) = \bigcup_{\mu_{D_i}\Delta \geq \epsilon(D_i)} D_i =\nklt(\FF, \Delta'),
\]
by definition of $\Delta'$.
Now, let us suppose that $K_\FF+\Delta$ is not nef.
Then there exists a negative extremal ray $R \subset \overline{NE}(X)$.
Since $K_\FF+\Delta$ is nef when restricted along $\nklt(\FF, \Delta)$, it follows that $R \cdot D_i \geq 0$ for any $D_i$ with $\mu_{D_i}\Delta \geq \epsilon(D_i)$. 
Hence, $R$ is a negative extremal ray also for $K_\FF+\Delta'$.
As $(\FF, \Delta')$ is an F-dlt pair, it is non-dicritical by Theorem~\ref{t_canimpliesnondicritical}; in particular,~\cite[Lemma~3.30]{CS18} implies that any curve $C \subset X$ satisfying $[C] \in R$ is tangent to $\FF$. 
Moreover,~\cite[Theorem~6.7]{CS18} implies that there exists a contraction $\phi \colon X \to Y$ within the category of projective varieties which only contracts curves in $X$ whose numerical class belongs to $R$. 
In particular as $K_\FF+\Delta$ is nef along $\nklt(\FF, \Delta)$, it follows that each fiber of $\phi$ intersects $\nklt(\FF, \Delta)$ in at most finitely many points.
As $X$ is $\bQ$-factorial, it follows that each fiber of $\phi$ intersecting $\Nklt(\FF, \Delta)$ must have dimension at most $1$; 
otherwise, if $X_y$, $y \in Y$, were a 2-dimensional fiber, no component of $\Delta'$ could intersect $X_y$, as this intersection would contain a $(K_\FF+\Delta)$-negative curve contained in $\Nklt(\FF, \Delta)$, hence there would be a rational curve $C \subset X \setminus \Nklt(\FF, \Delta)$, thus leading to a contradiction.
\newline
Let $\Sigma \subset X$ be an irreducible curve contracted by $\phi$.  
We claim that $\Sigma$ is a rational curve.  
Indeed, $\Sigma$ is tangent to $\cal F$, thus we may find a germ of an invariant surface, call it $S$, containing $\Sigma$.
If $\Sigma \not \subset \text{sing}(\cal F)$, then $S$ is simply a leaf containing $\Sigma$, while if $\Sigma \subset \text{sing}(\cal F)$, then we may take $S$ to be a strong separatrix at a general point of $\Sigma$.
As $(\FF, \Delta')$ is F-dlt, then we can apply Lemma~\ref{adjunction} to write $\nu^*(K_{\cal F}+\Delta') = K_{S^{\nu}}+\Delta'_{S^{\nu}}$, where 
$\nu\colon S^\nu \rightarrow S$ is the normalization of $S$, and $(S^{\nu}, \Delta'_{S^\nu})$ is lc.  
Taking $T$ to be the normalization of  $\phi(S)$ then the strict transform of $\Sigma$ on $S^\nu$ is a $(K_{S^{\nu}}+\Delta'_{S^{\nu}})$-negative curve contracted by the morphism $S^\nu \rightarrow T$ and is therefore (by classical adjunction) necessarily a rational curve.%\footnote{C:I just added an explanation of why all $\Sigma$ exceptional curves are rational}
\newline
The $\mathbb{Q}$-factoriality of $X$ implies that we are in either of the following two cases:
\begin{enumerate}
\item[1)] $\phi$ is a Mori fibre space and all the fibres are one dimensional;

\item[2)] $\phi$ is birational and the exceptional locus intersects $\nklt(\Delta)$.
\end{enumerate}
We claim that in both cases $R^1\phi_\ast \mathcal{O}_X=0$. 
In fact, in case 1) as all fibers are rational curves, we have that $\phi$ must be a $K_X$-negative contraction, while in case 2) the conclusion can be reached by direct application of Theorem \ref{t_vanishing}.
At this point, Theorem \ref{conn.f-dlt.thm} implies that
$\nklt(\FF, \Delta)$ is connected in a neighborhood of every fibre of $\phi$.
In case 1), the generic fibre of $\mu$ is a smooth projective rational curve. Theorem \ref{conn.f-dlt.thm} implies that the generic fibre intersects $\nklt(\Delta)$ 
in at most one point. This concludes the proof in case 1). 
In case 2), the positive dimensional fibres are chains of rational curves and by the vanishing $R^1\phi_\ast \mathcal{O}_X=0$, the generic fibre has to be a tree of smooth rational curves. By Theorem \ref{conn.f-dlt.thm}, $\nklt(\FF, \Delta)$ intersects this chain in at most one point. 
In particular, there exists a complete rational curve $C$ such that 
$C \cap (X \setminus \nklt(\FF, \Delta)) = f(\mathbb{A}^1)$, 
where $f$ is a non-constant morphism, which provides the sought contradiction.
\end{proof}

\begin{proof}[Proof of Theorem \ref{hyperb.thm}]
We divide the proof into two distinct cases.

{\bf Case 1}: {\it We first prove the theorem under the assumption that $(\FF, \Delta)$ is F-dlt}.
\newline
If $K_\FF+\Delta$ is nef along $\nklt(\FF, \Delta)$ the conclusion follows from Corollary~\ref{fund.cor}.
Hence, we can assume that there exists a positive dimensional lc center $W$ for $(\FF, \Delta)$ and $K_\FF+\Delta$ is not nef along $W$.
By induction on the dimension, we can consider $W$ to be a minimal (with respect to inclusion) lc center satisfying such property, so that $(K_\FF+\Delta)\vert_W$ is nef when restricted to the lc centers of $(\FF, \Delta)$ strictly contained in $W$.
Clearly, $\dim W >0$ and~\cite[Theorem~4.5]{Spicer17} implies that if $\dim W =1$, then $W$ is tangent to $\FF$.
As $(\FF, \Delta)$ is F-dlt, it follows that either one of the following conditions hold:
\begin{itemize}
    \item[a)] $W$ is a component of $\Delta$ of coefficient $1$;
    \item[b)] $W$ is an invariant divisor; 
%\footnote{C:Sorry this might be really dumb but in this case aren't we done immediately since $W$ is an lc centre?}
    \item[c)] $W \subset \Sing(\FF)$, $\dim W=1$, and $\FF$ is canonical along $W$ by \cite[Lemma 3.12]{CS18}; or
   \item[d)] $\dim W=1$ and it is tangent to $\cal F$, $W \not \subset \Sing(\FF)$, but $W \subset D$, where $D$ is a component of $\Delta$ with $\mu_D\Delta = 1$.
\end{itemize}

{\bf Case 1.a}.
If $W$ is a component of $\Delta$ of coefficient $1$, then we can apply the adjunction formula along the normalization $\nu \colon W^\nu \to W$:
\[
\nu^*(K_\FF+\Delta) = K_\GG + \Theta,
\]
where $\GG$ is the restriction of $\FF$ to $W^\nu$ and $\Theta$ is the different as defined in Lemma \ref{adjunction}.
The adjunction formula guarantees that $(\GG, \Theta)$ is F-dlt, see Lemma \ref{adjunction}, 
that $\nu^{-1}(Z) = \nklt(\GG, \Theta)$, where $Z$ is the union of all lc centers of $(\FF, \Delta)$ strictly contained in $W$.
This follows from \cite[Lemma 3.8]{CS18} as $(X, \Delta)$ is log smooth in a neighborhood of any lc center;
in particular, $W$ is normal at the general point of any codimension $2$ lc centers contained in it, thus,
\begin{equation}
\label{nklt.norm.eq}
    \nklt(\GG, \Theta) = \nu^{-1}(Z).
\end{equation}
Hence, the conclusion follows from the 2-dimensional case, that is, from  Proposition \ref{hyperb.surf.lc.prop}.
In fact, the proposition implies that there is a non-constant map $f \colon \mathbb{A}^1 \to W^\nu$ and $f(\mathbb{A}^1) \subset W^\nu \setminus \nklt(\GG',\Theta)$ and by \eqref{nklt.norm.eq} composing with $\nu$ we obtain a map $f' \colon \mathbb{A}^1 \to (W \setminus Z)$.

{\bf Case 1.b}.
If $W$ is an invariant divisor, then we can apply the adjunction formula along the normalization $\nu \colon W^\nu \to W$:
\begin{equation}
\label{fol.adj.eq.case1b}
\nu^*(K_\FF+\Delta) = K_{W^\nu} + \Theta,
\end{equation}
where $\Theta$ is the foliation different.
Moreover, we know that $\nu^{-1}(Z) = \nklt(W^\nu, \Theta)$, where $Z$ is the union of all lc centers of $(\FF, \Delta)$ strictly contained in $W$, see \cite[Lemma 3.16]{CS18}.
Hence, by Proposition \ref{mh.nonlc.dlt.prop}, it follows that there exists a non-constant map $f \colon \mathbb{A}^1 \to (W^\nu \setminus \nklt(W^\nu, \Theta))$.
As $\nu^{-1}(Z)= \nklt(W^\nu)$, then $\nu \circ f$ produces the desired curve in $W \setminus Z$.

{\bf Case 1.c}.
If $W$ is a curve contained in $\Sing(\FF)$ and $\FF$ is canonical along $W$, then by \cite[Lemma 3.14]{CS18} there exist two possibly formal separatrices of $\FF$ through $W$ and we can choose one of them, say $S$, to be the (convergent) strong separatrix, see~\cite[Corollary~5.6]{Spicer17}. 
Hence, applying adjunction along $S$, it follows that
\[
\nu^*(K_\FF+\Delta)= K_{S^\nu} + W + \Theta,
\] 
where $\nu \colon S^\nu \to S$ is the normalization of $S$ and $W+\Theta$ is the different of $(\FF, \Delta)$ along $S^\nu$.

So
by Lemma \ref{adjunction} we see that if $P$ is a non-klt centre of $(S^\nu, W+\Theta)$ then $\nu(P)$ is an lc centre
of $(\cal F, \Delta)$.  Let $n\colon V \rightarrow W$ the normalization of $W$ and by (classical) adjunction
we may write $n^*(K_{S^\nu}+W+\Theta) = K_V+\Theta_V$ where $\lfloor \Theta_V \rfloor$ is supported on the pre-images
of the non-klt centres of $(S^\nu, W+\Theta)$ contained in $W$.  Since $(K_{S^\nu}+W+\Theta)\cdot W <0$ we have
 $V \cong \bb P^1$ and $\lfloor \Theta_V \rfloor$ contains at most one point.
Thus we see that the normalization $W-Z'$ is $\bb P^1$ or $\bb A^1$
where $Z'$ are the strata of $\nklt(\cal F, \Delta)$ contained in $W$.

{\bf Case 1.d} 
Let $\cal F_D$ be the foliation restricted to $D$ and write $(K_{\cal F}+\Delta)\vert_D = K_{\cal F_D}+\Delta_D$.
Again, the result follows directly from Proposition \ref{hyperb.surf.lc.prop} and Lemma \ref{adjunction} (as in Case 1.a) which imply
that the normalization of $W-Z'$ contains a copy of $\bb A^1$.

{\bf Case 2}: {\it We prove the theorem when $(\FF, \Delta)$ is lc by reducing to Case 1}. 
\newline
By Theorem \ref{t_sp_dlt_mod}, we can take a dlt modification $\pi \colon Y \to X$ with
\[
K_{\FF_Y}+\Delta_Y = \pi^\ast(K_\FF+\Delta)
\]
and $\pi^{-1}(\nklt(\FF, \Delta)) = \nklt(\FF_Y, \Delta_Y)$.
If $K_\FF+\Delta$ is not nef, then the same must hold for $K_{\FF_Y}+\Delta_Y$.
\newline
We first discuss the case where $K_\FF+\Delta$ is nef along $\nklt(\FF, \Delta)$.
Under this assumption, Corollary \ref{fund.cor} implies the existence of a non-constant map $f \colon \mathbb{A}^1 \to (Y \setminus \nklt(\FF_Y, \Delta_Y))$ whose image is tangent to $\mathcal F$.
This produces the desired contradiction.
Hence, we may assume that there is an lc center $W_Y \subset Y$ of $(\FF_Y, \Delta_Y)$ and a non-constant algebraic morphism $f \colon \mathbb{A}^1 \to W_Y$ such that
\begin{itemize}
    \item $f(\mathbb{A}^1) \subset (W_Y \setminus Z_Y)$ where $Z_Y$ is the union of all lc centers strictly contained in $W_Y$, and
    \item $(K_{\FF_Y}+\Delta_Y) \cdot C <0$, where $C$ is the Zariski closure of $f(\mathbb{A}^1)$.
\end{itemize}
We define $W := \pi(W_Y)$: this is an lc center of $(\FF, \Delta)$.
We wish to show the existence of a non constant morphism $g \colon \mathbb{A}^1 \to (W \setminus Z)$, where $Z$ is the union of all lc centers in $W$.

Let $Z^0 \subset Z$ be the union of all those lc centres $Z'$ in $W$ such that $\pi^{-1}(Z')$ is a union of lc centres.
By our above work we see
that $(\pi \circ f)(\bb A^1) \subset W - Z^0$.  

Notice moreover that if $Z'$ is an lc centre such that $\pi^{-1}(Z')$
%\footnote{R: is this $\pi^{-1}(Z')$?C: Yeah, fixed it} 
is pure codimension $1$ then $Z' \subset Z^0$.

We argue in cases based on the dimension of $W$.  If $\text{dim}(W) = 0$ there is nothing to show,
so suppose for the moment that $\text{dim}(W) = 1$

Let $T$ be a codimension 1 lc centre of $(\cal F_Y, \Delta_Y)$ dominating $W$ and which contains $f(\bb A^1)$
and denote by $\sigma\colon T \rightarrow W$ the projection.

Suppose first that $T$ is transverse to $\cal F_Y$ and write by adjunction 
$(K_{\cal F_Y}+\Delta_Y)\vert_T = K_{\cal G}+\Theta$ and let $C$ be as above,
notice that $(\cal G, \Theta)$ is F-dlt.
Set $\Theta_0$ to be the part of $\lfloor \Theta \rfloor$ supported on $\sigma^{-1}(Z)$
and set $\Theta_1$ to be those divisors $D$ contained in $\sigma^{-1}(Z)$ with $\epsilon(D) = 1$ and $D$
is not contained in the support of $\lfloor \Theta \rfloor$.

Fix $0<\epsilon , \delta \ll 1$ and run the $(K_{\cal G}+\Theta-\epsilon \Theta_0+\delta \Theta_1)$-MMP 
over $W$ and denote it by $\phi\colon T \rightarrow S$.
Let $\cal H$ be the pushforward of $\cal G$, let $D = \phi_*C$, let $\Gamma =\phi_*\Theta$ and let $\tau\colon S \rightarrow W$
denote the induced map.
We have that $(\cal H, \Gamma)$ is log canonical and by the Negativity Lemma,~\cite[Lemma~1.3]{1907.06705}, we see
that $\tau^{-1}(Z) \subset \Supp(\Gamma)$, and so the pre-image of an lc centre is a union of lc centres.  
Since $(K_{\cal H}+\Gamma)\cdot D<0$ it follows from 
Proposition \ref{hyperb.surf.lc.prop} that there exists a map $\bb A^1 \rightarrow S - \nklt(\cal H, \Gamma)$
and we may pushforward this map along $\tau$ to give a map $\bb A^1 \rightarrow W-Z$.

The case where $T$ is invariant can be proven in a similar manner.

Now suppose that $\text{dim}(W) = 2$.  Let $W_Y$ denote the strict transform of $W$
and let $\sigma\colon W_Y \rightarrow W$ be the induced map. 

Suppose first that $W$ is transverse to the foliation and write 
$(K_{\cal F_Y}+\Delta_Y)\vert_{W_Y} = K_{\cal G}+\Theta$. Let $C \subset W_Y$ be a $1$ dimensional lc centre
and observe by foliated Riemann-Hurwitz that if $B \subset W_Y$ is a divisor such that $\sigma(B) = C$
then $B \subset W_Y$ is an lc centre of $(\cal G, \Theta)$.
Let $Q \subset W$ be the union of the zero dimensional lc centres contained in $W$,
set $\Theta_0$ to be the part of $\lfloor \Theta \rfloor$ supported on $\sigma^{-1}(Q)$
and set $\Theta_1$ to be those divisors $D$ contained in $\sigma^{-1}(Z)$ with $\epsilon(D) = 1$ and $D$
is not contained in the support of $\lfloor \Theta \rfloor$.

Again, we run a $K_{\cal G}+\Theta-\epsilon \Theta_0+\delta \Theta_1$
over $W$
for $0<\epsilon, \delta \ll 1$.  Let $\phi\colon W_Y \rightarrow S$ denote this MMP and let $\tau\colon S \rightarrow W$
denote the induced map.  Again, notice that the pre-image of an lc centre under $\tau$ is a union of lc centres
and by applying Proposition \ref{hyperb.surf.lc.prop} we may produce a map $\bb A^1 \rightarrow S$
whose pushforward along $\tau$ gives a map $\bb A^1 \rightarrow W-Z$.
Again, the case where $W_Y$ is invariant can be handled in a similar manner.

In all cases, if $K_{\cal F}+\Delta$ is not nef we have produced a map $\bb A^1 \rightarrow W-Z$.
\end{proof}

\begin{prop}
\label{hyperb.surf.lc.prop}
Let $(\FF, \Delta)$ be a log canonical foliated pair on a normal projective surface $X$. 
Assume that 
  \begin{itemize}
   \item $X$ is potentially lc,
   \item there is no non-constant morphism $f : \mathbb{A}^1 \to X \setminus \nklt(\FF, \Delta)$, and
   \item for any stratum $S$ of $\nklt(\FF, \Delta)$ there is no non-constant morphism $f : \mathbb{A}^1 \to \bar{S}$.
  \end{itemize}
Then $K_\FF + \Delta$ is nef.
\end{prop}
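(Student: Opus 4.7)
The plan is to mirror the strategy of the three\-dimensional proof of Theorem \ref{hyperb.thm}, with the simplifications allowed by working on a surface, combined with the classical one\-dimensional adjunction.

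First I would reduce to the case where $(\FF, \Delta)$ is F\-dlt. Theorem \ref{t_sp_dlt_mod} is stated for normal projective varieties of dimension at most three, so it applies to surfaces: taking an F\-dlt modification $\pi \colon Y \to X$ we obtain $K_{\FF_Y} + \Delta_Y = \pi^{*}(K_\FF + \Delta)$ with $\pi^{-1}(\nklt(\FF, \Delta)) = \nklt(\FF_Y, \Delta_Y)$. Since $\pi$ is an isomorphism over a dense open subset of each stratum and nefness is preserved under crepant pullback, both the hypotheses and the conclusion transfer between $(X, \FF, \Delta)$ and $(Y, \FF_Y, \Delta_Y)$. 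So we may assume $(\FF, \Delta)$ itself is F\-dlt.

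Assuming F\-dlt, suppose first that $K_\FF + \Delta$ is nef along $\nklt(\FF, \Delta)$. I would run the analogue of the argument in Corollary \ref{fund.cor} on surfaces, using the foliated cone and contraction theorem for F\-dlt surface pairs. Any $(K_\FF + \Delta)$\-negative extremal ray $R$ yields a contraction $\phi \colon X \to Z$ whose contracted curves are tangent to $\FF$ and rational (by restricted adjunction along a separatrix, as in the threefold argument). The surface version of the connectedness statement (Theorem \ref{conn.f-dlt.thm} in dimension $2$) implies that every such contracted rational curve meets $\nklt(\FF, \Delta)$ in at most one point, which already produces a nonconstant map $\mathbb{A}^1 \to X \setminus \nklt(\FF, \Delta)$, contradicting the first hypothesis.

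Otherwise $K_\FF + \Delta$ fails to be nef along some lc center, and I would pick a minimal (for inclusion) positive\-dimensional lc center $W$ on which $K_\FF + \Delta$ is not nef. Since we are on a surface, $W$ is a curve and is, by F\-dltness, either transverse to $\FF$ (appearing in $\Delta$ with coefficient one) or $\FF$\-invariant. Apply the adjunction of Lemma \ref{adjunction}: the restriction $(K_\FF + \epsilon(W)W + \Delta - \mu_W \Delta \cdot W)|_{W^\nu} = K_{W^\nu} + \Theta$ collapses to a classical log pair on the normalization (the foliated canonical on a curve reduces to the usual canonical). The negativity $(K_\FF + \Delta) \cdot W < 0$ then forces $\deg(K_{W^\nu} + \Theta) < 0$ with $(W^\nu, \Theta)$ log canonical, so $W^\nu \cong \mathbb{P}^1$ and $\lfloor \Theta \rfloor$ consists of at most one point. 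By Lemma \ref{adjunction} the support of $\lfloor \Theta \rfloor$ corresponds to the lc centers of $(\FF, \Delta)$ strictly contained in $W$, hence $\bar{W}$ contains the image of a nonconstant $\mathbb{A}^1$, contradicting the third hypothesis.

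The main obstacle is making the correspondence between $\nklt(W^\nu, \Theta)$ and the strata of $\nklt(\FF, \Delta) \cap W$ airtight in both the transverse and the invariant case; in the latter, the different includes contributions from Camacho--Sad type indices at singular points of $\FF$ lying on $W$, and one must verify that these contributions either coincide with pullbacks of honest lower\-dimensional lc centers of $(\FF, \Delta)$ or already give coefficient $\geq 1$ on $W^\nu$ so that the corresponding points lie in $\nklt(W^\nu, \Theta)$. Once this bookkeeping is in place, the surface case reduces to a one\-variable degree computation.
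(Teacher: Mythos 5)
Your overall strategy transplants the threefold proof of Theorem \ref{hyperb.thm} to surfaces, whereas the paper's own proof is far more direct: by the surface cone theorem a failure of nefness produces a rational curve $C$ tangent to $\FF$ with $(K_\FF+\Delta)\cdot C<0$; since $\FF$ has rank one, tangent means invariant, so $C$ is automatically an lc centre, is not contained in $\mathrm{supp}(\Delta)$, and one concludes in one stroke by adjunction along $C^\nu$ together with the observation that every stratum of $\nklt(\FF,\Delta)$ is supported on $\mathrm{sing}(\FF)\cup\lfloor\Delta\rfloor$, whose preimage sits inside $\lfloor\Delta_{C^\nu}\rfloor$. This last containment is exactly the ``bookkeeping'' you flag as the main obstacle in your sub-case (b); it does close, and your sub-case (b) is essentially the paper's argument. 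However, your sub-case (a) contains a false step: on a surface every curve contracted by a $(K_\FF+\Delta)$-negative extremal contraction is $\FF$-invariant, hence is itself contained in $\nklt(\FF,\Delta)$ (invariant divisors are always lc centres). Such a curve therefore cannot ``meet $\nklt(\FF,\Delta)$ in at most one point'' and does not yield a map $\mathbb{A}^1\to X\setminus\nklt(\FF,\Delta)$; the contradiction in that sub-case comes instead from the third hypothesis, or, more simply, the sub-case is vacuous because nefness along $\nklt(\FF,\Delta)$ already forces nefness via the cone theorem. The dichotomy you import from dimension three collapses in dimension two.

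The more serious gap is the reduction to the F-dlt case. You assert that the hypotheses transfer from $X$ to the F-dlt modification $Y$, but Theorem \ref{t_sp_dlt_mod} only extracts divisors $E$ of discrepancy $\leq -\epsilon(E)$, so every $\pi$-exceptional divisor is a new lc centre of $(\FF_Y,\Delta_Y)$. If such an $E$ is a rational curve meeting the remaining strata in at most one point, then hypothesis (3) fails on $Y$ even though all three hypotheses hold on $X$; hence ``we may assume $(\FF,\Delta)$ is F-dlt'' is not justified, and deriving a contradiction with the hypotheses on $Y$ proves nothing. The paper's Case 2 avoids this: it never claims the hypotheses hold on $Y$, but instead produces a $(K_{\FF_Y}+\Gamma)$-negative rational curve $C$ on $Y$, observes that $C$ cannot be $\pi$-exceptional (it would be $\pi^\ast(K_\FF+\Delta)$-trivial), and only then pushes the resulting copy of $\mathbb{A}^1$ forward to $X$, using $\pi^{-1}(\nklt(\FF,\Delta))=\nklt(\FF_Y,\Gamma)$ to see that the image lands either in $X\setminus\nklt(\FF,\Delta)$ or in $\overline{\pi(C)}$. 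You should restructure your reduction along these lines. A last minor point: Lemma \ref{adjunction} is stated for threefolds, so for the restriction to an invariant curve you should invoke the classical surface foliation adjunction (via the tangency/index formulas) rather than that lemma.
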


\begin{proof}
Assume for sake of contradiction 
that $K_\FF+\Delta$ is not nef and $(\FF, \Delta)$ satisfies all the hypotheses in the statement of the proposition.
\newline
We divide the proof into two distinct cases.
 
{\bf Case 1}: {\it We assume that $(\FF, \Delta)$ is F-dlt and we show that the above hypothesis leads to a contradiction}.
\newline
By~\cite[Theorem~3.31]{CS18}, since $K_\FF+\Delta$ is not nef, there exists a rational curve $C \subset X$ with $(K_\FF +\Delta) \cdot C < 0$ and $C$ is tangent to $\FF$.
\newline
As $C$ is $\FF$-invariant we see that $C$ cannot be contained in $\Supp(\Delta)$. 
Thus,
\[
\nu^\ast(K_{\cal F}+\Delta) = K_{C^\nu}+\Delta_{C^\nu},
\] 
where $\nu\colon C^\nu \rightarrow C$ is the normalization, and $\Supp(\lfloor \Delta_{C^\nu}\rfloor) \supset \nu^{-1}(\text{sing}(\cal F) \cup \lfloor \Delta \rfloor)$.
\newline
Finally, observe $\nklt(\cal F, \Delta)$ and all its strata are supported on $\text{sing}(\cal F) \cup \lfloor \Delta \rfloor$
to conclude that the normalization of $C - Z'$ is $\bb P^1$ or $\bb A^1$ where $Z'$ are
all the strata of $\nklt(\FF, \Delta)$ meeting $C$.  This is our desired contradiction.
 
{\bf Case 2}: {\it We assume that $(\FF, \Delta)$ is lc and we reduce the proof to Case 1}.
\newline
Let $\pi\colon Y \to X$ be an F-dlt modification for the pair $(\FF, \Delta)$, 
\[
K_{\FF_Y}+\Gamma = f^\ast (K_\FF+\Delta).
\]
Hence, also $K_{\FF_Y}+\Gamma$ is not nef and by Case 1 there is  is a rational curve $C \subset Y$ tangent to $\FF_Y$ such that $C \cdot (K_{\FF_Y}+ \Gamma) < 0$;
moreover, the normalization morphism $C^\nu \to Y$ induces either a non-constant morphism $f \colon \mathbb{A}^1 \to Y \setminus \nklt(\FF_Y, \Gamma)$ or a non-constant morphism $f \colon \mathbb{A}^1 \to \bar{S}$, for some stratum $S$ of $\nklt(\FF_Y, \Gamma)$.
The curve $\pi(C)$ is tangent to $\FF$, thus, it is $\FF$-invariant, since $\FF_Y$ has rank 1.
If $C \cap (Y \setminus \nklt(\FF_Y, \Gamma)) \neq \emptyset$, it follows from Theorem \ref{t_sp_dlt_mod} and adjunction that $\pi \circ f \colon \mathbb{A}^1 \to X \setminus \nklt(\FF, \Delta)$ is a well-defined morphism.
\newline
Hence we can assume that $C$ is an lc center of $(\FF_Y, \Gamma)$ and that $\bar{C}$ is a copy of $\mathbb{A}^1$ embedded in $Y$.
But then, again, the adjunction formula and Theorem \ref{t_sp_dlt_mod} imply that $\pi(\bar{C})$ is also a copy of $\mathbb{A}^1$ embedded in $X$, thus proving the proposition.
\end{proof}

\section{Some questions}

The proof of Theorem \ref{t_lc_sep} and its generalizations and possible applications raise
several questions.

\begin{q}
\label{q_klt_lc_sep}
Let $0 \in X$ be a germ of a klt singularity and $\cal F$ a log canonical co-rank one foliation on $X$.
Does $\cal F$ admit a separatrix at $0$?
\end{q}

\begin{q}
\label{q_cy_invariant}
Let $\cal F$ be a co-rank 1 foliation on a klt variety $(X, \Delta)$ with $c_1(K_{\cal F}) = 0$ and $-(K_X+\Delta)$ big.
\begin{enumerate}
\item \label{i_1} Does $\cal F$ admit an invariant divisor? 
\item \label{i_3} Is $\text{sing}(\cal F)$ non-empty?
\item \label{i_2} For $p \in \text{sing}(\cal F)$ is every separatrix at $p$ algebraic?
\end{enumerate}
\end{q}

More generally, one may wonder if log canonical singularities of foliations all dimension admit separatices.
By examples of Gomez-Mont and Luengo \cite{GomezMontLuengo92} it is known that a vector field on $\bb C^3$ does
not always admit a separatrix, however the examples given there are not log canonical.

\begin{q}
Let $\cal F$ be a foliation of any rank on $\bb C^m$.  Let $0$ be an log canonical singularity
of $\cal F$.  Does $\cal F$ admit a separatrix at $0$?
\end{q}

In the proof of existence of flips given in \cite{CS18} the existence of separatrices played a central role, and thus the methods
given there do not immediately imply the existence of log canonical flips.  With Theorem \ref{t_lc_sep} in mind we ask the following.

\begin{q}
Do log canonical foliation flips exist?
\end{q}

This extension seems to be important to apply the methods of the foliated MMP to several classes of folations of interest:
Fano foliations, for instance, have worse than canonical singularities.

\bibliography{math}

\providecommand{\bysame}{\leavevmode\hbox to3em{\hrulefill}\thinspace}
\providecommand{\MR}{\relax\ifhmode\unskip\space\fi MR }
% \MRhref is called by the amsart/book/proc definition of \MR.
\providecommand{\MRhref}[2]{%
  \href{http://www.ams.org/mathscinet-getitem?mr=#1}{#2}
}
\providecommand{\href}[2]{#2}
\begin{thebibliography}{KKMSD73}

\bibitem[AD13]{AD13}
C.~Araujo and S.~Druel, \emph{On {F}ano foliations}, Adv. Math. \textbf{238}
  (2013), 70--118. \MR{3033631}

\bibitem[Bir20]{Bir20}
C.~Birkar, \emph{On connectedness of non-klt loci of singularities of pairs},
  2020, ArXiv e-print,
  \href{https://arxiv.org/abs/2010.08226}{arXiv:2010.08226v1}.

\bibitem[BM16]{BMc01}
F.~Bogomolov and M.~McQuillan, \emph{Rational curves on foliated varieties},
  Foliation theory in algebraic geometry, Simons Symp., Springer, Cham, 2016,
  pp.~21--51. \MR{3644242}

\bibitem[Bru00]{Brunella00}
M.~Brunella, \emph{Birational geometry of foliations}, Monograf\'{\i}as de
  Matem\'{a}tica. [Mathematical Monographs], Instituto de Matem\'{a}tica Pura e
  Aplicada (IMPA), Rio de Janeiro, 2000, Available electronically at
  http://www.impa.br/Publicacoes/Monografias/Abstracts/brunella.ps.
  \MR{1948251}

\bibitem[Cam88]{Camacho88}
C.~Camacho, \emph{Quadratic forms and holomorphic foliations on singular
  surfaces}, Math. Ann. \textbf{282} (1988), no.~2, 177--184. \MR{963011}

\bibitem[Can04]{Cano}
F.~Cano, \emph{Reduction of the singularities of codimension one singular
  foliations in dimension three}, Ann. of Math. (2) \textbf{160} (2004), no.~3,
  907--1011. \MR{2144971 (2006f:32041)}

\bibitem[CC92]{CC92}
F.~Cano and D.~Cerveau, \emph{Desingularization of nondicritical holomorphic
  foliations and existence of separatrices}, Acta Math. \textbf{169} (1992),
  no.~1-2, 1--103. \MR{1179013}

\bibitem[CLN08]{CLN08}
D.~Cerveau and A.~Lins~Neto, \emph{Frobenius theorem for foliations on singular
  varieties}, Bull. Braz. Math. Soc. (N.S.) \textbf{39} (2008), no.~3,
  447--469. \MR{2473858}

\bibitem[CM92]{CM92}
F.~Cano and J.~Mattei, \emph{Hypersurfaces int\'egrales des feuilletages
  holomorphes}, Annales de l'institut Fourier \textbf{42} (1992), no.~1-2,
  49--72.

\bibitem[CN85]{CN85}
C.~Camacho and A.L. Neto, \emph{Geometric theory of foliations}, Birkh\"{a}user
  Boston, Inc., Boston, MA, 1985. \MR{824240}

\bibitem[CRV15]{CRV15}
F.~Cano and M.~Ravara-Vago, \emph{Local {B}runella's alternative {II}.
  {P}artial separatrices}, Int. Math. Res. Not. IMRN (2015), no.~23,
  12840--12876. \MR{3431638}

\bibitem[CRVS15]{CRVS15}
F.~Cano, M.~Ravara-Vago, and M.~Soares, \emph{Local {B}runella's alternative
  {I}. {RICH} foliations}, Int. Math. Res. Not. IMRN (2015), no.~9, 2525--2575.
  \MR{3344680}

\bibitem[CS21]{CS18}
P.~Cascini and C.~Spicer, \emph{Mmp for co-rank one foliations on threefolds},
  Inventiones Math. (2021), to appear, ArXiv e-print,
  \href{https://arxiv.org/abs/1808.02711}{arXiv:1808.02711v1}.

\bibitem[DO19]{DruelOu19}
S.~Druel and W.~Ou, \emph{Codimension one foliations with trivial canonical
  class on singular spaces {II}}, 2019, ArXiv e-print,
  \href{https://arxiv.org/abs/1912.07727}{arXiv:1912.07727v2}.

\bibitem[Dru18]{Druel18}
S.~Druel, \emph{Codimension one foliations with numerically trivial canonical
  class on singular spaces}, 2018, ArXiv e-print,
  \href{https://arxiv.org/abs/1809.06905}{arXiv:1809.06905}.

\bibitem[Fle81]{Flenner81}
H.~Flenner, \emph{Divisorenklassengruppen quasihomogener {S}ingularit\"{a}ten},
  J. Reine Angew. Math. \textbf{328} (1981), 128--160. \MR{636200}

\bibitem[FS20]{FS20}
S.~Filipazzi and R.~Svaldi, \emph{On the connectedness principle and dual
  complexes for generalized pairs}, 2020, ArXiv e-print,
  \href{https://arxiv.org/abs/2010.08018}{arXiv:2010.08018v2}.

\bibitem[GKKP11]{GKKP11}
D.~Greb, S.~Kebekus, S.~J. Kov{\'a}cs, and T.~Peternell, \emph{Differential
  forms on log canonical spaces}, Publ. Math. Inst. Hautes \'Etudes Sci.
  (2011), no.~114, 87--169. \MR{2854859}

\bibitem[GKP16]{GKP16}
D.~Greb, S.~Kebekus, and T.~Peternell, \emph{\'etale fundamental groups of
  {K}awamata log terminal spaces, flat sheaves, and quotients of abelian
  varieties}, Duke Math. J. \textbf{165} (2016), no.~10, 1965--2004.
  \MR{3522654}

\bibitem[GML92]{GomezMontLuengo92}
X.~G\'{o}mez-Mont and I.~Luengo, \emph{Germs of holomorphic vector fields in
  {$\bold C^3$} without a separatrix}, Invent. Math. \textbf{109} (1992),
  no.~2, 211--219. \MR{1172688}

\bibitem[Har77]{Hartshorne77}
R.~Hartshorne, \emph{Algebraic geometry}, Springer-Verlag, New York-Heidelberg,
  1977, Graduate Texts in Mathematics, No. 52. \MR{0463157}

\bibitem[Keb13]{Kebekus13}
S.~Kebekus, \emph{Pull-back morphisms for reflexive differential forms}, Adv.
  Math. \textbf{245} (2013), 78--112. \MR{3084424}

\bibitem[KKMSD73]{KKMS73}
G.~Kempf, F.~F. Knudsen, D.~Mumford, and B.~Saint-Donat, \emph{Toroidal
  embeddings. {I}}, Lecture Notes in Mathematics, Vol. 339, Springer-Verlag,
  Berlin-New York, 1973. \MR{0335518}

\bibitem[KM98]{KM98}
J.~Koll\'{a}r and S.~Mori, \emph{Birational geometry of algebraic varieties},
  Cambridge Tracts in Mathematics, vol. 134, Cambridge University Press,
  Cambridge, 1998, With the collaboration of C. H. Clemens and A. Corti,
  Translated from the 1998 Japanese original. \MR{1658959}

\bibitem[LPT18]{LPT11}
F.~Loray, J.~V. Pereira, and F.~Touzet, \emph{Singular foliations with trivial
  canonical class}, Invent. Math. \textbf{213} (2018), no.~3, 1327--1380.
  \MR{3842065}

\bibitem[Mal76]{Malgrange76}
B.~Malgrange, \emph{Frobenius avec singularit\'{e}s. {I}. {C}odimension un},
  Inst. Hautes \'{E}tudes Sci. Publ. Math. (1976), no.~46, 163--173.
  \MR{0508169}

\bibitem[McQ98]{McQuillan98}
M.~McQuillan, \emph{Diophantine approximations and foliations}, Inst. Hautes
  \'{E}tudes Sci. Publ. Math. (1998), no.~87, 121--174. \MR{1659270}

\bibitem[McQ08]{McQuillan08}
\bysame, \emph{Canonical models of foliations}, Pure Appl. Math. Q. \textbf{4}
  (2008), no.~3, part 2, 877--1012.

\bibitem[MM80]{MM80}
J.-F. Mattei and R.~Moussu, \emph{Holonomie et int\'egrales premi\`eres}, Ann.
  Sci. \'Ecole Norm. Sup. (4) \textbf{13} (1980), no.~4, 469--523. \MR{608290}

\bibitem[MP13]{mp13}
M.~McQuillan and D.~Panazzolo, \emph{Almost \'{e}tale resolution of
  foliations}, J. Differential Geom. \textbf{95} (2013), no.~2, 279--319.
  \MR{3128985}

\bibitem[MS19]{MolinaSamper19}
B.~Molina-Samper, \emph{Invariant surfaces for toric type foliations in
  dimension three}, 2019, ArXiv e-print,
  \href{https://arxiv.org/abs/1905.00836}{arXiv:1905.00836}.

\bibitem[Siu69]{Siu69}
Y.-T. Siu, \emph{Extending coherent analytic sheaves}, Ann. of Math. (2)
  \textbf{90} (1969), 108--143. \MR{0245837}

\bibitem[Spi20]{Spicer17}
C.~Spicer, \emph{Higher-dimensional foliated {M}ori theory}, Compos. Math.
  \textbf{156} (2020), no.~1, 1--38. \MR{4036447}

\bibitem[Sva19]{Svaldi14}
R.~Svaldi, \emph{Hyperbolicity for log canonical pairs and the cone theorem},
  Selecta Math. (N.S.) \textbf{25} (2019), no.~5, Paper No. 67, 23.
  \MR{4030221}

\bibitem[TX17]{TX17}
Z.~Tian and C.~Xu, \emph{Finiteness of fundamental groups}, Compos. Math.
  \textbf{153} (2017), no.~2, 257--273. \MR{3604863}

\bibitem[Wan19]{1907.06705}
J.~Wang, \emph{On the {I}itaka {C}onjecture ${C}_{n,m}$ for {K}ähler {F}ibre
  {S}paces}, 2019, ArXiv e-print,
  \href{https://arxiv.org/abs/1907.06705}{arXiv:1907.06705v1}.

\end{thebibliography}
\bibliographystyle{amsalpha}

%\newpage
%\input{../Comments/response_add}

\end{document}